\let\@wraptoccontribs\wraptoccontribs
\numberwithin{equation}{section}
\title{Rel leaves of the Arnoux-Yoccoz surfaces}  
\author[Hooper]{W. Patrick Hooper}
\address{City College of New York and CUNY Graduate Center,
%160 Convent Ave, New York, NY, USA 10463 
{\tt whooper@ccny.cuny.edu}}
\author[Weiss]{Barak Weiss}
\address{Tel Aviv University, 
{\tt barakw@post.tau.ac.il}}
\address{Tel Aviv University, 
{\tt barylior@post.tau.ac.il}}
\address{Tel Aviv University, 
{\tt markshus@mail.tau.ac.il}}
\address{Scuola Normale Superiore, 
{\tt u.zannier@sns.it}}
\font\sn = cmssi8 scaled \magstep0
\font\si = cmti8 scaled \magstep0
\long\def\compat#1{\ifdraft{{\color{blue}\si Pat says ``#1'' }}\else\ignorespaces\fi}
\long\def\combarak#1{\ifdraft{\color{red}\sn Barak says ``#1'' }\else\ignorespaces\fi}
\newcommand\hol{\mathrm{hol}}
\newif\ifdraft\drafttrue
\newcommand\name[1]{\label{#1}{\ifdraft{\sn [#1]}\else\ignorespaces\fi}}
\newcommand\eq[2]{{\ifdraft{\ \tt [#1]}\else\ignorespaces\fi}\begin{equation}\label{#1}{#2}\end{equation}}
\newcommand {\equ}[1]{\eqref{#1}}
\newcommand{\MM}{{\mathcal{M}}}
\newcommand{\HH}{{\mathcal{H}}}
\newcommand{\Q}{{\mathbb {Q}}}
\newcommand{\GG}{{\mathcal{G}}}
\newcommand{\R}{{\mathbb{R}}}
\newcommand{\TTT}{{\mathbb{T}}}
\newcommand{\Res}{{\mathrm{Res}}}
\newcommand{\Z}{{\mathbb{Z}}}
\newcommand{\C}{{\mathbb{C}}}
\newcommand{\IE}{{\mathcal{IE}}}
\newcommand{\E}{{\mathbf{e}}}
\newcommand{\A}{{\mathbf{a}}}
\newcommand{\B}{{\mathbf{b}}}
\newcommand{\GL}{\operatorname{GL}}
\newcommand{\Mod}{\operatorname{Mod}}
\newcommand{\SL}{\operatorname{SL}}
\newcommand{\diag}{{\rm diag}}
\newcommand {\ignore}[1]  {}
\newcommand{\spa}{{\rm span}}
\newcommand{\EE}{{\mathcal{E}}}
\newcommand{\bq}{{\mathbf{q}}}
\newcommand{\LL}{{\mathcal L}}
\newcommand{\FF}{{\mathcal{F}}}
\newcommand{\x}{{\bf x}}
\newcommand{\til}{\widetilde}
\newcommand{\supp}{{\rm supp}}
\newcommand{\rel}{{\mathrm{Rel}}}
\newcommand{\sm}{\smallsetminus}
\newcommand{\vre}{\varepsilon}
\newcommand{\genus}{\mathbf{g}}
\newcommand{\q}{\mathbf{q}}
\newtheorem{thm}{Theorem}[section]
\newtheorem{lem}[thm]{Lemma}
\newtheorem{prop}[thm]{Proposition}
\newtheorem{cor}[thm]{Corollary}
\newtheorem{remark}[thm]{Remark}
\newtheorem{dfn}[thm]{Definition}
\newcommand{\defeq}{\vcentcolon=}
\newtheorem{theorem}{Theorem}[section]
\newtheorem{proposition}[theorem]{Proposition}
\newtheorem{corollary}[theorem]{Corollary}
\newcommand{\lemref}[1]{\hyperref[#1]{Lemma \ref*{#1}}}
\newcommand{\thmref}[1]{\hyperref[#1]{Theorem \ref*{#1}}}
\newcommand{\propref}[1]{\hyperref[#1]{Proposition \ref*{#1}}}
\newcommand{\corref}[1]{\hyperref[#1]{Corollary \ref*{#1}}}
\begin{document}
\date{\today}

\begin{abstract}
We analyze the rel leaves of the Arnoux-Yoccoz translation surfaces. We
show that for any genus $\genus \geq 3$, the leaf is dense in the
connected component of the stratum
$\HH(\genus -1 ,\genus -1)$ to which it belongs, and the one-sided
imaginary-rel trajectory of the 
surface is divergent. For one surface on this trajectory, namely the
Arnoux-Yoccoz surface itself, the horizontal
foliation is
invariant under a pseudo-Anosov map (and in particular is uniquely
ergodic), but for all other surfaces, the horizontal foliation is 
completely periodic. 
The appendix proves a field theoretic result needed for denseness of the leaf:
for any $n \geq 3$,
the field extension of $\Q$ obtained by adjoining a root of
$X^n-X^{n-1}-\ldots-X-1$ has no totally real subfields other than
$\Q$. 
\end{abstract}
\maketitle

\section{Introduction}
A translation surface is a compact oriented surface equipped with a
geometric structure 
which is Euclidean everywhere except at finitely
many cone singularities and which has trivial rotational holonomy around loops. 
A stratum is a moduli space of translation
surfaces of the same genus whose singularities share the same
combinatorial characteristics. In recent years, intensive study has
been 
devoted to the study of dynamics of group actions and foliations on
strata of translation surfaces. See \S \ref{sec: basics} for precise
definitions, and see \cite{MT, zorich survey} for surveys.

Let $x$ be a translation surface with $k>1$ singularities. There
is a local deformation of $x$ obtained by moving its singularities
with respect to each other while keeping the translational holonomies of closed
curves on $x$ fixed. This local deformation gives rise to a foliation
of the stratum $\HH$ containing $x$, with leaves of real dimension $2(k-1)$. In the literature this foliation
has appeared under various names (see \cite[\S 9.6]{zorich
  survey}, \cite{schmoll}, \cite{McMullen-twists} and references therein), and we refer to it as the {\em rel
  foliation}, since nearby surfaces in the same leaf differ only in their relative
periods. A sub-foliation of this foliation, which we will refer to as the
{\em real-rel foliation}, is obtained by only
varying the horizontal holonomies of vectors, keeping all vertical
holonomies fixed. The {\em imaginary-rel} foliation is defined by
switching the roles of horizontal and vertical.  Although none of
these foliations are given by a group 
action, the obstructions to flowing along the real-rel and imaginary-rel foliations are
completely understood (see \cite{MW}). Specializing to
$k=2$, it makes sense to discuss the real-rel trajectory
$\{\rel_r^{(h)}x: 
r \in \R \}$ 
(respectively, the imaginary-rel trajectory $\{\rel_s^{(v)}x:
s \in \R \}$) of any surface $x$ without horizontal (resp. vertical)
saddle connections
joining distinct labeled singularities. 

\ignore{
In genus 2, results of McMullen \cite{McMullen-isoperiodic,
  McMullen-cascades} give a detailed understanding of
the closure of rel leaves in the eigenform locus. These results should
be viewed as a
companion to McMullen's celebrated result classifying closed
sets invariant under the action of $G=\SL_2(\R)$ in genus 2 \cite{McMullen-Annals}. 
A recent breakthrough result of Eskin, Mirzakhani and Mohammadi
\cite{EMM2} has shed light on the $G$-invariant closed
subsets for arbitrary strata, and the purpose of this paper is to
contribute to the 
study of  the topology of closures of rel leaves. Here we focus on the
rel leaf of the Arnoux-Yoccoz surface $x_0$ of genus $\genus \geq 3$ 
(see Figure \ref{fig:AY} for a picture, and see \S \ref{sec: AY surface} for the definition), which lies in  the 
%connected component $\HH(\genus-1, \genus-1)^{\mathrm{odd}}$
%of the genus $\genus$ 
stratum $\HH(\genus-1, \genus-1)$.
 Note that the surface depends on $\genus$ but we suppress $\genus$ from
the notation. We also discuss a related rel leaf
in the genus 2 stratum $\HH(1,1)$. These  
surfaces, introduced by
Arnoux and Yoccoz in \cite{AY}, are a source of interesting
examples for the theory of translation surfaces. See in particular
\cite{HL, Bowman}, and the work of Hubert, Lanneau and M\"oller \cite{HLM} in
which the orbit-closure $\overline{Gx_0}$ was determined in case
$\genus =3$.  
}

A primary purpose of this paper is to
contribute to the 
study of  the topology of closures of rel leaves. We focus on the
rel leaf of the Arnoux-Yoccoz surface $x_0$ of genus $\genus \geq 3$ 
(see Figure \ref{fig:AY} for a picture when $\genus=3$, and see \S \ref{sec: AY surface} for the definition), which lies in  the 
%connected component $\HH(\genus-1, \genus-1)^{\mathrm{odd}}$
%of the genus $\genus$ 
stratum $\HH(\genus-1, \genus-1)$.
 Note that the surface depends on $\genus$ but we suppress $\genus$ from
the notation. We also discuss a related rel leaf
in the genus 2 stratum $\HH(1,1)$. These  
surfaces, introduced by
Arnoux and Yoccoz in \cite{AY}, are a source of interesting
examples for the theory of translation surfaces as we discuss in \S \ref{sss: AY}.
%See in particular
%\cite{HL, Bowman}, and the work of Hubert, Lanneau and M\"oller \cite{HLM} in
%which the orbit-closure $\overline{Gx_0}$ was determined in case
%$\genus =3$.  

\begin{figure}
  \hfill
  \begin{minipage}[c]{0.65\textwidth}
   \vspace{0pt}\raggedright
   \includegraphics[scale=0.60]{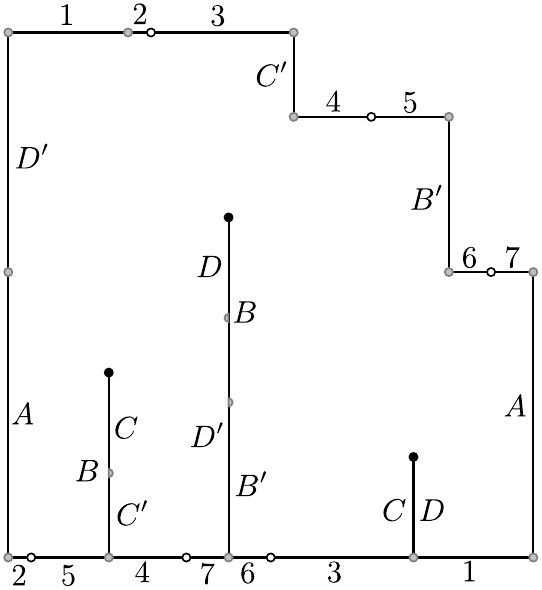}
  \end{minipage}
   \hfill
  \begin{minipage}[c]{0.3\textwidth}
\vspace{0pt}\raggedright
\begin{tabular}{ll}
Label & Edge length \\
\hline
\vspace{-1em} \\
$1$ & $\frac{1-\alpha}{2}$ \\
$2$ & $\alpha-\frac{1}{2}$ \\
$3$ & $\frac{\alpha}{2}$ \\
$4$, $5$ & $\frac{\alpha^2}{2}$ \\
$6$, $7$ & $\frac{\alpha^3}{2}$ \\
$A$ & $\alpha$ \\
$B$ &  $\frac{\alpha+\alpha^3}{2}$ \\
$B'$ & $\alpha^2$ \\
$C$, $D$ & $\frac{\alpha^2+\alpha^4}{2}$ \\
$C'$ & $\alpha^3$ \\
$D'$ & $\alpha^2+\alpha^3$
\end{tabular}  
  \end{minipage}
  \caption{The Arnoux-Yoccoz surface of genus $\genus=3$ with distinguished
    singularities and the presentation from \cite[pp. 496-498]{Arnoux88}.
    Edges with the same label are identified 
  by translation, and their lengths are provided by the chart. The points $\bullet$ and $\circ$
  are the two singularities of the surface. Grey dots denote regular points.}
  \name{fig:AY}
  \end{figure}

Our main results are stated below. Refer to \S \ref{sec: basics} for
detailed definitions, to \S \ref{sect:related} for connections to other mathematical ideas,
and to \S \ref{sect: outline} for an outline of the proofs. 

\begin{thm}\name{thm: real rel}
Suppose $\genus \geq 3$ and $x_0$ is the Arnoux-Yoccoz surface in
genus $\genus$. Then the imaginary-rel trajectory
of $x_0$ is divergent in $\HH$. Moreover, for any 
$s
%\neq 
>
0$, there is a horizontal cylinder decomposition of
$\rel_s^{(v)} x_0$, and the circumferences of the horizontal cylinders
tend 
uniformly to zero as $s \to %\pm 
+\infty$. 
\end{thm}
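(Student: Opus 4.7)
The Arnoux-Yoccoz surface carries an affine pseudo-Anosov self-map $\phi: x_0 \to x_0$ whose derivative is a diagonal matrix with eigenvalue $\alpha \in (0,1)$ along the horizontal direction and $1/\alpha$ along the vertical, where $\alpha$ is the unique positive real root of $X+X^2+\cdots+X^\genus = 1$. In particular $x_0$ is a fixed point in the stratum of the Teichm\"uller element $g_{t_0} \in \SL_2(\R)$ with $e^{t_0} = \alpha$. Since Teichm\"uller flow intertwines with imaginary-rel via $g_t \cdot \rel^{(v)}_s = \rel^{(v)}_{e^{-t}s} \circ g_t$, we obtain the renormalization identity
\[
g_{t_0} \cdot \rel^{(v)}_s x_0 \;=\; \rel^{(v)}_{s/\alpha} x_0, \qquad s \in \R.
\]
Under $g_{t_0}$ horizontal lengths are scaled by $\alpha$, so any horizontal cylinder decomposition of $\rel^{(v)}_s x_0$ is carried to one of $\rel^{(v)}_{s/\alpha} x_0$ with circumferences all multiplied by $\alpha$. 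Iterating, cylinder decompositions at $s_n = s_0/\alpha^n$ have circumferences shrinking geometrically to $0$.

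\textbf{Explicit decomposition on a fundamental interval.} The main task is to exhibit a horizontal cylinder decomposition of $\rel^{(v)}_s x_0$ for every $s$ in a fundamental interval $I_0 := [\alpha,1]$ of the renormalization $s\mapsto s/\alpha$. I would work from the polygonal presentation of $x_0$ generalizing Figure~\ref{fig:AY} and its description as a zippered-rectangle suspension of the self-similar Arnoux-Yoccoz interval exchange transformation. Imaginary-rel by $s>0$ vertically slides one of the two labeled singularities; a horizontal separatrix from a singularity becomes a horizontal saddle connection on $\rel^{(v)}_s x_0$ exactly when, tracked in $x_0$-coordinates, it terminates on the vertical fiber of the other singularity at an appropriate height determined by $s$. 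The central claim is that for every $s \in I_0$ finitely many horizontal separatrices close up simultaneously into saddle connections that partition the surface into horizontal cylinders. I would verify this by identifying, through the Arnoux-Yoccoz combinatorics, a natural family of candidate separatrices whose termination heights fill $I_0$ (apart from a discrete transition set at which the combinatorial type of the decomposition jumps), and then checking directly that the resulting saddle connections bound a full cylinder decomposition; the circumferences are explicit $\Z[\alpha]$-linear combinations of the horizontal edge lengths.

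\textbf{Conclusion and divergence.} Combining the two steps above, horizontal cylinder decompositions exist for every $s > 0$, and their circumferences are bounded uniformly by $C\alpha^n$ on $\alpha^{-n} I_0$; hence they tend uniformly to $0$ as $s \to +\infty$. Divergence of $\rel^{(v)}_s x_0$ in the stratum $\HH$ then follows from the standard compactness criterion: the length of the shortest saddle connection is bounded above by the smallest cylinder circumference, which tends to $0$, so $\rel^{(v)}_s x_0$ eventually leaves every compact subset of $\HH$.

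\textbf{Main obstacle.} The decisive step is the explicit construction of the cylinder decomposition on $I_0$. For arbitrary genus $\genus \geq 3$ one must carry out a careful combinatorial analysis of horizontal separatrices on the Arnoux-Yoccoz zippered rectangles, determine how their termination heights are distributed as $s$ varies across $I_0$, and confirm that the assembled saddle connections produce a genuine cylinder decomposition rather than a partial one. A natural route is to build the decomposition at one explicit value $s_0 \in I_0$ and then use a combination of continuity across each stratum of combinatorial types and the renormalization identity above to propagate it throughout $I_0$.
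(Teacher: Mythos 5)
Your first and third steps are sound: the commutation relation $g_t\,\rel^{(v)}_s = \rel^{(v)}_{e^{-t}s}\, g_t$ (Proposition \ref{prop: rel and G commutation}) applied at the fixed point $x_0$ of $\til g$ does give the renormalization identity, and once a cylinder decomposition exists on a fundamental interval for $s\mapsto s/\alpha$, the geometric decay of circumferences and the divergence conclusion follow exactly as you say. (You should also note explicitly that $\rel^{(v)}_s x_0$ is defined for all $s$, which follows from the absence of vertical saddle connections on a surface fixed by a diagonal hyperbolic element, together with Corollary \ref{cor: real rel main}.) The genuine gap is your second step. Producing a horizontal cylinder decomposition of $\rel^{(v)}_s x_0$ for every $s$ in a fundamental interval is the entire content of the theorem, and your proposal only describes a plan — ``identify candidate separatrices whose termination heights fill $I_0$ \ldots and check directly'' — without carrying it out. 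This is not a routine verification: the horizontal foliation of $x_0$ itself is uniquely ergodic and minimal, so there is no periodic structure on $x_0$ to perturb, and tracking when horizontal separatrices close up under an imaginary-rel deformation of the explicit polygon of Figure \ref{fig:AY} for general $\genus$ is precisely the hard combinatorial problem.

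It is worth seeing how the paper sidesteps this difficulty, because the detour is the main idea of \S\ref{sect: rel ray}--\S\ref{sec: AY surface}. Rather than deforming $x_0$ directly, one starts at a different point of the same imaginary-rel leaf: an explicitly horizontally periodic suspension $q_0$ of the Arnoux--Yoccoz IET built from $\genus$ stacked rectangles. For $q_0$ the slit construction makes $\rel^{(v)}_s q_0$ completely transparent for $0<s<\alpha$ (Proposition \ref{prop:complete periodicity} lists the $\genus+1$ cylinders and their dimensions), and the IET renormalization of Theorem \ref{thm:renormalization} yields an explicit affine homeomorphism $q_\alpha\to q_0$ with derivative $\til g$ (Lemma \ref{lem:affine homeomorphism}), which propagates the decomposition to the whole ray. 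The remaining work — which your outline has no counterpart for — is to prove that this ray actually terminates at the Arnoux--Yoccoz surface, i.e.\ that $x_0=\lim_{t\to 0+}x_t$ is $x_{\mathrm{AY}}$; this uses that the renormalization mapping class is pseudo-Anosov for $\genus\ge 3$ (Lemma \ref{lem:mapping class 1}) and the uniqueness of its invariant measured foliations (Theorem \ref{thm: identifying AY}). So either you carry out your direct combinatorial analysis on $x_0$ in full, or you need both halves of the paper's substitute: the explicit computation at the periodic basepoint and the identification of the limit surface.
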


The pseudo-Anosov map of \cite{AY} preserves both the horizontal and
vertical directions on $x_0$. Nevertheless, the vertical and
horizontal directions play a different role in Theorem \ref{thm: real
  rel}.  
In case $\genus=3$ the surface $x_0$ has additional symmetries (see
\cite{Bowman10}), and exploiting them we obtain the following stronger
statement: 
\begin{cor}\name{cor: real rel}
Suppose $\genus =3$. Then for any $s \neq 0$, the surface
$\rel^{(v)}_s x_0$ has a horizontal cylinder decomposition and the
surface $\rel^{(h)}_s x_0$ has a vertical cylinder decomposition. As
$s \to +\infty$ and as $s \to -\infty$, the circumferences of the
cylinders in these decompositions tend uniformly to zero.  
\end{cor}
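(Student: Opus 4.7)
The plan is to deduce the Corollary from \thmref{thm: real rel} by exploiting the additional affine symmetries of the genus~$3$ Arnoux-Yoccoz surface established by Bowman in \cite{Bowman10}. The key fact is that $x_0$ carries an affine automorphism $J$ whose derivative is the rotation $R_{\pi/2} = \begin{pmatrix} 0 & -1 \\ 1 & 0 \end{pmatrix}$, so in particular $J^2$ is an involution with derivative $-I$. The rel flow intertwines with affine automorphisms in the expected manner: if $\phi$ is an affine automorphism of a translation surface $x$ with derivative $A$, then
\[ \phi\bigl(\rel_{\vv}\, x\bigr) = \rel_{\epsilon_\phi A \vv}\, \phi(x), \]
where $\epsilon_\phi \in \{\pm 1\}$ is $+1$ if $\phi$ preserves the labeling of the two singularities and $-1$ if it swaps them. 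Moreover, because $A$ has determinant $\pm 1$ and acts isometrically on the lines through the origin, $\phi$ sends any cylinder decomposition of $x$ in direction $\vv$ to a cylinder decomposition of $\phi(x)$ in direction $A\vv$, with identical circumferences.

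First I would apply this with $\phi = J$ and $\vv = (0,1)$: since $R_{\pi/2}\vv = (-1,0)$, we obtain $J \cdot \rel^{(v)}_s x_0 = \rel^{(h)}_{-\epsilon_J s} x_0$ for every $s>0$. By \thmref{thm: real rel} the left-hand side has a horizontal cylinder decomposition whose circumferences tend uniformly to $0$ as $s \to +\infty$, so $\rel^{(h)}_r x_0$ has a vertical cylinder decomposition whose circumferences tend uniformly to $0$ along one of the two rays $r \to \pm\infty$. To capture the other sign, I would apply the intertwining relation with $\phi = J^2$: its derivative $-I$ sends $(0,1)$ to $(0,-1)$, giving $J^2 \cdot \rel^{(v)}_s x_0 = \rel^{(v)}_{-\epsilon_{J^2} s} x_0$. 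If $\epsilon_{J^2} = +1$ this is precisely the desired sign reversal for the imaginary-rel statement, and a further application of $J$ transports the same information to $\rel^{(h)}$. If instead $\epsilon_{J^2} = -1$, then necessarily $\epsilon_J = -1$ as well, and in that case the $J$-relation itself already relates $\rel^{(v)}_s x_0$ to $\rel^{(h)}_{-s} x_0$ for all $s > 0$ while $\rel^{(v)}_s x_0$ for $s < 0$ is obtained directly by composing $J$ with another element of the Veech group whose derivative is a reflection (such elements exist in abundance in the genus~$3$ case by \cite{Bowman10}).

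The main technical obstacle is the combinatorial identification of the sign $\epsilon_J$, i.e., the question of whether the Bowman automorphism $J$ preserves or swaps the two labeled singularities $\bullet, \circ$ of Figure \ref{fig:AY}. Either answer is consistent with the argument above: in both scenarios, combining \thmref{thm: real rel} with the intertwining relations exhausts the four sign combinations required by the Corollary. This combinatorial check amounts to inspecting the action of $J$ on the polygonal presentation, so I expect the entire argument to reduce to careful bookkeeping of signs, with no further analytic input beyond \thmref{thm: real rel}; the uniform decay of circumferences transfers automatically because all symmetries used have derivative of determinant $\pm 1$.
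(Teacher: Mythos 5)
Your overall strategy is exactly the one the paper uses: transport the one ray $\{\rel^{(v)}_s x_0 : s>0\}$ handled by Theorem \ref{thm: real rel} onto the other three rays by means of affine symmetries of $x_0$ that fix the two labeled singularities, using the commutation relation of Proposition \ref{prop: rel and G commutation}, and note that applying a fixed matrix to a cylinder decomposition only rescales circumferences by a bounded factor. The problem is the pivotal symmetry you invoke. The genus-$3$ Arnoux--Yoccoz surface has \emph{no} affine automorphism $J$ with derivative $R_{\pi/2}$. Its orientation-preserving Veech group is $\langle \til g\rangle\times\{\pm I\}$ (Hubert--Lanneau--M\"oller), which contains no element of order $4$; and the orientation-reversing affine maps found by Bowman have antidiagonal derivatives of the form $\pm\bigl(\begin{smallmatrix}0&\alpha^{n+1}\\ \alpha^{-n-1}&0\end{smallmatrix}\bigr)$, whose two nonzero entries have the same sign, so $R_{\pi/2}=\bigl(\begin{smallmatrix}0&-1\\1&0\end{smallmatrix}\bigr)$ is not among them either. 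Indeed, if a derivative-$R_{\pi/2}$ automorphism existed, the horizontal and vertical measured foliations of $x_0$ would be exchanged by an isometry, whereas the pseudo-Anosov $\til g$ forces them to be exchanged only up to the scaling $\alpha$ versus $\alpha^{-1}$. So your argument does not get off the ground as written.

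The fix is to replace $J$ by what actually exists: Bowman's orientation-reversing involution with derivative $h=\bigl(\begin{smallmatrix}0&\alpha\\ \alpha^{-1}&0\end{smallmatrix}\bigr)$, together with the hyperelliptic involution, whose derivative is $-I$. Both fix the singularities (this is \cite[Lemma 2.1]{HLM} and \cite{Bowman10}, so your sign $\epsilon$ is $+1$ and the case analysis you leave open is unnecessary -- and your fallback in the case $\epsilon_{J^2}=-1$, which appeals to unspecified reflections ``in abundance,'' would otherwise need the same singularity-labeling verification you are trying to avoid). With these two maps one gets $-I(\rel^{(v)}_t x_0)=\rel^{(v)}_{-t}x_0$ and $h(\rel^{(v)}_t x_0)=\rel^{(h)}_{\alpha t}x_0$, so the group $\langle -I, h\rangle$ acts transitively on the four rel rays leaving $x_0$, and the Corollary follows from Theorem \ref{thm: real rel} exactly as you intend. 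This is precisely the paper's proof.
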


\begin{remark}
In genus $\genus=4$, experimental evidence (attained with the {\em sage-flatsurf} software package \cite{flatsurf}) suggests that $\rel^{(v)}_s x_0$ is divergent in $\HH$ in negative time as well, though not all $\rel^{(v)}_s x_0$ with $s<0$ admit horizontal cylinder decompositions. It would be interesting to understand
the behavior of $\rel^{(v)}_s x_0$ as $s \to -\infty$ and the behavior of $\rel^{(h)}_s x_0$ as $s \to \pm \infty$. The present article sheds little light on this.
\end{remark}

The behavior of the full rel leaf is very different. We will denote by 
$\HH_0$ the subset of the connected component of $\HH(\genus-1, \genus-1)$
containing $x_0$, consisting of surfaces whose area is equal that of $x_0$. (In the notation of \cite{KZ}, when
$\genus \geq 3$ is odd, the connected component of $x_0$ is $\HH(\genus-1, \genus-1)^{\mathrm{odd}}$
and when $\genus \geq 4$ is even, it is $\HH(\genus-1, \genus-1)^{\mathrm{non-hyp}}$. 
This follows from Proposition \ref{prop:stratum}.)
\begin{thm} \name{thm: rel}
For $\genus \geq 3$, the rel leaf of $x_0$ is dense in 
$\HH_0$. 
\end{thm}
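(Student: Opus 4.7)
The plan is to show that the closure of the rel leaf of $x_0$ is invariant under a group strictly larger than the rel foliation, identify it as an affine invariant submanifold, and then use the algebraic rigidity from the appendix to show that this submanifold must equal $\HH_0$.

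First, I would use \thmref{thm: real rel} as the starting point. The imaginary-rel trajectory $\{\rel_s^{(v)} x_0 : s > 0\}$ leaves every compact set of $\HH$, but the surfaces on it carry horizontal cylinder decompositions whose circumferences shrink to zero. To extract limit points inside $\HH_0$, I would renormalize by the Teichm\"uller geodesic flow $g_t = \diag(e^{-t}, e^{t})$, using the relation $g_t \rel_{s}^{(v)} = \rel_{e^{t} s}^{(v)} g_t$. Choosing $t = t(s)$ so that the shortest horizontal cylinder on $g_{t(s)} \rel^{(v)}_s x_0$ has unit circumference, the renormalized surfaces stay in a compact part of $\HH_0$ and accumulate on some $y \in \HH_0$ that lies in the closure of the $\SL_2(\R)$-saturation of the rel leaf.

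Second, I would argue that $\overline{\rel \cdot x_0}$ is horocycle-invariant. Each $\rel^{(v)}_s x_0$ carries Dehn multi-twists coming from its horizontal cylinder decomposition, whose affine derivatives are elements $h_{\tau(s)}$ of the horocycle subgroup. As the moduli become finely distributed (a consequence of the pseudo-Anosov structure on $x_0$ and the uniformly shrinking circumferences), the set of realizable twist parameters becomes asymptotically dense in $\R$. In the limit, this produces complete horocycle orbits inside $\overline{\rel \cdot x_0}$. By the Eskin-Mirzakhani and Eskin-Mirzakhani-Mohammadi theorems, the closure is therefore an affine invariant submanifold $\MM \subseteq \HH_0$ containing $x_0$. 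To identify $\MM$ with $\HH_0$, I would then invoke the algebraic structure of $x_0$. An affine invariant submanifold is cut out locally by $\Q$-linear equations on relative periods whose coefficients lie in a number field; by the work of Wright and Filip, this field is totally real and the relations pull back through the period map to give real-linear constraints on absolute periods with coefficients in a totally real subfield of the trace field. Since the absolute periods of $x_0$ generate $\Q(\alpha)$, where $\alpha$ is the Arnoux-Yoccoz expansion factor (a root of $X^\genus - X^{\genus-1} - \cdots - X - 1$), the appendix shows that the only totally real subfield of this trace field is $\Q$ itself. The constraints must therefore be defined over $\Q$; a direct check on the $\Q$-periods of $x_0$ then shows there are no nontrivial $\Q$-linear relations beyond those satisfied by every surface in $\HH_0$, so $\MM = \HH_0$.

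The main obstacle is the second step: promoting the cylinder-twist structure on the divergent imaginary-rel trajectory to genuine horocycle invariance of $\overline{\rel \cdot x_0}$. Because the rel flow and the horocycle flow do not commute, producing horocycle orbits inside the rel leaf closure itself, rather than in an $\SL_2(\R)$-translate of it, requires careful commutator estimates together with explicit control over the cylinder combinatorics of $\rel^{(v)}_s x_0$ as $s \to +\infty$. This is the technical heart that links the dynamical input of \thmref{thm: real rel} to the algebraic input of the appendix.
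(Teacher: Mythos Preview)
Your overall architecture---get the horocycle orbit $Ux_0$ inside the rel-leaf closure, then use Eskin--Mirzakhani--Mohammadi together with the field-theoretic input of the appendix---matches the paper's. But two of your steps have genuine gaps.

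\textbf{Getting $Ux_0$ into the closure.} Your mechanism (``Dehn multi-twists whose derivatives are horocycle elements, with moduli becoming finely distributed'') does not work as stated. A Dehn twist in a horizontal cylinder is an affine automorphism of the surface; applying it returns the \emph{same} point of $\HH$, so it produces no new points in the closure. What the paper actually does is different and more delicate: for each small $r>0$ the surface $x_r=\rel^{(v)}_r x_0$ is horizontally periodic, and the \emph{real}-rel flow $\rel^{(h)}_s$ acts linearly on the twist torus $\mathcal{HT}_{x_r}$; the closure of $\{\rel^{(h)}_s x_r\}$ is a $\genus$-dimensional subtorus $\mathcal{O}_r$. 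The key computation (Theorem~\ref{thm: crucial}) is that the tangent spaces $T_r$ to these tori, pushed forward by powers of the pseudo-Anosov, converge projectively to the horocycle direction $\big(\hol_{\mathrm y}(x_0),0\big)$. So $Ux_0\subset\overline{\bigcup_n\mathcal{O}_{\alpha^n r}}$. Your divergent-end renormalization and ``moduli distribution'' heuristic do not substitute for this argument; in particular you never explain how to stay inside the rel leaf closure rather than its $\SL_2(\R)$-saturate.

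\textbf{From $Ux_0$ to $\HH_0$.} Two problems. First, EMM does not say that an arbitrary $U$-invariant closed set is an affine invariant manifold; it classifies $\overline{Px}$ and $\overline{Gx}$. The paper bridges this via Proposition~\ref{prop: V suffices}: since $x_0$ lies on a closed Teichm\"uller geodesic, $\overline{Ux_0}=\overline{Gx_0}$, which \emph{is} an affine manifold $\MM$. Second, and more seriously, your conclusion ``field of definition $\Q$ plus a direct check on $\Q$-periods gives $\MM=\HH_0$'' is false in genus $3$: there $\overline{Gx_0}$ is the hyperelliptic locus, a \emph{proper} affine submanifold defined over $\Q$. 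What the appendix actually buys (via Theorem~\ref{thm: full rank Wright idea}) is that $\MM$ has \emph{full rank}, i.e.\ $\Res(T\MM)=H^1(S;\R^2)$. The paper then finishes with Proposition~\ref{prop: HLM}: the rel-saturation of any full-rank affine manifold is open and $G$-invariant, hence dense in $\HH_0$; since the rel-leaf closure is closed and rel-saturated and contains $\MM$, it equals $\HH_0$. Your argument omits this full-rank-plus-rel step, and without it the proof fails already for $\genus=3$.
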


\ignore{
The assumption of Theorem \ref{thm: rel} holds for all prime $\genus$,
since the trace field of $x_0$ is not itself totally real. We have
also verified it by direct computation in the cases $\genus = 3, \ldots,
1000$, and Mark Shusterman has proved that it holds for all even
$\genus$. We are not aware of any $\genus$ for which it is not
satisfied. 
\combarak{Hopefully Shusterman and Bary-Soroker will prove that it
  holds for all $\genus$ in which case we need to cite them and remove
  the condition from the statement.}
}
Theorem \ref{thm: rel} gives the first explicit examples of  
dense rel leaves in these strata.

In case $\genus=2$ the analogue of the Arnoux-Yoccoz surface was
defined by Bowman in the appendix of \cite{Bowman}. It is a
disconnected surface (more accurately, two tori attached at a node)
in $\HH(0) \times \HH(0)$, which can be thought of as part of a 
boundary of the genus two stratum $\HH(1,1)$  (see
\cite{Bainbridge}). In fact, in the terminology of McMullen \cite{McMullen-Annals}, it is an
eigenform surface for discriminant $D=5$.  Although it does not make sense to discuss the rel
leaf of $x_0$, one can view it as a terminal point of an imaginary rel
leaf in the $D=5$ eigenform locus $\EE_5(1,1) \subset \HH(1,1)$. Using this point of
view both of our main results extend to $\genus=2$ as follows: 
\begin{thm}\name{thm: g=2}
When $\genus =2$ there is a surface $x_1 \in \EE_5(1,1)$, such that
surfaces $x_s= \rel^{(v)}_{s-1} x_1$ have the
following properties: 
\begin{itemize}
\item[(i)]
The surfaces $x_s$ are defined for all $s >0$.
\item[(ii)]
The surfaces $x_s$ are horizontally periodic, and as $s \to +\infty$,
the circumferences of the horizontal cylinders tend uniformly to $0$.  
\item[(iii)]
In a suitable compactification of $\HH(1,1)$, the limit $x_0 = \lim_{s
  \to 0+}x_s$ exists and is the ``genus 2 Arnoux-Yoccoz surface''
of \cite{Bowman}.
\item[(iv)]
The rel-leaf of $x_1$ is dense in $\EE_5(1,1)$. 
\end{itemize}
\end{thm}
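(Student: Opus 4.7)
The plan is to adapt the arguments of \thmref{thm: real rel} and \thmref{thm: rel} to the genus 2 setting, exploiting the fact that the ``genus 2 Arnoux-Yoccoz surface'' is a nodal degeneration that can be opened via imaginary-rel. First I would construct $x_1$ explicitly. Bowman's description presents $x_0$ as two flat tori joined at a node, whose moduli are governed by the golden ratio $\alpha = (\sqrt{5}-1)/2$ (a root of $X^2+X-1$, the genus 2 analogue of the Arnoux-Yoccoz polynomial). Opening the node along an imaginary direction and parametrizing the opening by arclength gives a family of surfaces $x_s \in \HH(1,1)$ for $s > 0$, and I would set $x_1$ to be the member at arclength $1$. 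The eigenform condition $x_1 \in \EE_5(1,1)$ will follow from the fact that all absolute periods of $x_1$ lie in $\ratls(\sqrt{5})$ and are preserved by the natural action of the order in $\ratls(\sqrt{5})$ inducing real multiplication on $\Jac(x_1)$.

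For parts (i)--(iii) I would proceed by explicit computation, analogously to the analysis of the imaginary-rel trajectory in \thmref{thm: real rel}. Using the self-similarity of the Arnoux-Yoccoz construction under scaling by $\alpha$, I would write down polygonal representatives of each $x_s$ together with a horizontal cylinder decomposition whose circumferences and heights are given in closed form as polynomials in $\alpha$ and exponentials in $s$. From these formulas, horizontal periodicity of $x_s$ for every $s>0$ is manifest (ii), and the absence of a horizontal saddle connection between distinct singularities gives (i). The uniform decay of circumferences as $s \to +\infty$ also drops out of the formulas. For (iii), one checks that as $s \to 0+$ the two ``separating'' cylinders in the decomposition pinch while the two complementary pieces converge to flat tori, so in Bainbridge's compactification \cite{Bainbridge} of $\HH(1,1)$ the limit is precisely Bowman's nodal surface $x_0$.

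Part (iv) is the main substantive step. Since rel flow preserves absolute periods, and since $\EE_5(1,1)$ is defined by a condition on absolute periods (real multiplication by an order in $\ratls(\sqrt 5)$), the rel leaf of $x_1$ is contained in $\EE_5(1,1)$. I would then invoke McMullen's analysis of rel leaves in genus 2 eigenform loci (from \cite{McMullen-isoperiodic, McMullen-cascades}), according to which a rel leaf in $\EE_D(1,1)$ for a non-square fundamental discriminant $D$ is either closed in the stratum or dense in $\EE_D(1,1)$. The rel leaf of $x_1$ is not closed: by (ii) the imaginary-rel trajectory exits every compact subset of $\HH$ as $s \to +\infty$. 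Therefore the leaf is dense in $\EE_5(1,1)$.

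The main obstacle is making rigorous the application of the McMullen dichotomy to $x_1$ and ruling out intermediate closed invariant subsets; this requires identifying the precise locus to which the classification applies (the eigenform locus for the discriminant matching the golden-ratio trace field) and verifying $x_1$ is not itself a Veech surface, which again follows from the divergence established in (ii). The construction and (i)--(iii) should be routine once the polygonal model is in place; the content of the theorem lies in correctly matching Bowman's boundary surface with the $s \to 0+$ limit of our family and in deducing density from non-closedness.
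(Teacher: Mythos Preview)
Your treatment of (i)--(iii) is close to the paper's, but inverted: you propose to construct $x_1$ by opening the node of Bowman's $x_0$, whereas the paper builds $x_1 = q_0$ directly as a horizontally periodic suspension of the Arnoux--Yoccoz IET (the construction of \S\ref{sect: rel rays} and Theorem~\ref{thm:completely periodic ray} are stated for all $\genus \geq 2$, so (i) and (ii) are immediate once one notes that $\beta = 1$ when $\genus = 2$), and only afterward verifies in Theorem~\ref{thm: this terminology} that $\lim_{s\to 0+} x_s$ is the noded surface. Your direction is awkward because rel is not defined at the boundary point $x_0$: you would have to give some independent construction of $x_s$ for small $s$ and then check it coincides with the imaginary-rel flow once inside the stratum. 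A small correction: what collapses as $s \to 0+$ is a pair of vertical \emph{saddle connections}, not cylinders.

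For (iv) there is a genuine gap. The paper simply cites McMullen \cite[Theorem~9.3]{McMullen-foliations}, which already gives density of every leaf of the absolute-period foliation in $\EE_5(1,1)$; no dichotomy argument is required. Your proposed implication ``the imaginary-rel ray diverges, hence the leaf is not closed, hence dense'' fails at the first step: a closed subset of a non-compact space can perfectly well be unbounded, so divergence of one rel ray says nothing about closedness of the full two-dimensional leaf. (Similarly, divergence of a rel ray does not decide whether $x_1$ is a Veech surface --- that is a statement about the $G$-orbit.) To salvage a dichotomy argument you would have to identify exactly what the non-dense leaves in McMullen's classification look like and verify $x_1$ is not on one, but at that point you are re-deriving the theorem you mean to invoke.
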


\subsection{Related ideas and works}
\name{sect:related}

\subsubsection{Linear deformations of IETs}
Theorem \ref{thm: real rel} has implications for the study of unique
ergodicity of interval exchange transformations (IETs), which we now describe. 
Let $\R^d_+$ denote the vectors in $\R^d$ with positive
entries. 
For a fixed permutation $\sigma$ on $d$ symbols, and 
$\mathbf{a} = (a_1, \ldots, a_d) \in \R^d_+$, let $\mathcal{IE} =
\mathcal{IE}_\sigma(\mathbf{a}): I \to I$ be the interval exchange transformation
obtained by partitioning the interval $I=\left[0, \sum a_i \right)$ into
subintervals of lengths $a_1, \ldots, a_d$ and permuting them
according to $\sigma$. Assuming irreducibility of $\sigma$, it is known that for almost all choices of
$\mathbf{a}$, $\IE_\sigma(\mathbf{a})$ is uniquely ergodic, but
nevertheless that
the set of non-uniquely ergodic interval exchanges is not very
small (see \cite{MT} for definitions, and a survey of this intensively studied
topic). It is natural to expect that all line segments in $\R^d_+$,
other than some obvious 
counterexamples, should inherit the prevalence of uniquely ergodic
interval exchanges. A conjecture in this regard was made by the
second-named author in \cite[Conj. 2.2]{cambridge}, and partial
positive results supporting the conjecture were obtained in 
\cite[Thm. 6.1]{MW}. 
Namely, a special case of \cite[Thm. 6.1]{MW} asserts that 
for any uniquely ergodic $\IE_0 =
  \IE_\sigma(\mathbf{a}_0)$, 
there is an explicitly given hyperplane $\mathcal{L} \subset \R^d_+$
containing $\mathbf{a}_0$, 
such that for any line segment $\ell  = \{\mathbf{a}_s : s \in I\}
\subset \R^d_+$ with $\ell \not \subset \mathcal{L}$, there is an
interval $I_0 \subset I$ containing $0$, such that for almost every $s
\in I_0$, $\IE_\sigma(\mathbf{a}_s)$ is uniquely ergodic. 

\ignore{
\combarak{Instead of stating the result with Yair as you suggested,
  which is technically involved and maybe slows things down, I put in
  an informal discussion in the above paragraph, what do you think?
  Note there is a brief discussion of the paper with Yair in the last
  paragraph of \S \ref{subsection: iets}. There is a version of the
results with Yair (relatively painless special case, but still
slightly intimidating) commented out in
the tex file, if you want to look at it. } \compat{I don't think this
theorem is actually that technical as stated below, and it makes this
work more surprising, so I vote to include it. Maybe less of a
discussion is needed in \S \ref{subsection: iets} if we do this.}

\begin{thm} For any uniquely ergodic $\IE_0 =
  \IE_\sigma(\mathbf{a}_0)$, 
there is an explicit hyperplane $\mathcal{L} \subset \R^d_+$
containing $\mathbf{a}_0$, 
such that for any line segment $\ell  = \{\mathbf{a}_s : s \in I\}
\subset \R^d_+$ with $\ell \not \subset \mathcal{L}$, there is an
interval $I_0 \subset I$ containing $0$, such that for almost every $s
\in I_0$, $\IE_\sigma(s)$ is uniquely ergodic. 
\end{thm}

}
This expectation is false. In fact counterexamples to 
%Nevertheless our results provide a  strong
%counterexample to 
\cite[Conj. 2.2]{cambridge} were exhibited by Dynnikov\footnote{Dynnikov's
  counterexamples appeared (in a 
  different context and 
  using different terminology) three
  years before \cite[Conj. 2.2]{cambridge} was formulated. However the authors only became
  aware of this after a preliminary version of this 
  paper was circulated.} \cite{Dynnikov}, and further related results were
obtained in the subsequent work 
%of Dynnikov, De Leo, Skripchenko, Avila and Hubert
\cite{DD, Dynnikov-Skripchenko, Hubert-Skripchenko}. 
In particular it was shown that there are  linear subspaces $\mathcal{L} \subset \R^d$ such that
almost every point in $\mathcal{L}$ is non-minimal, and yet
$\mathcal{L}$ contains an uncountable closed set of uniquely ergodic
points. Our results can be used to give a stronger refutation
of  \cite[Conj. 2.2]{cambridge}. Recall that a standard
construction of an interval exchange transformation, is to fix a
translation surface $q$ with a segment $\gamma$ transverse to horizontal
lines, and define $\IE(q, \gamma)$ to be the first return map to
$\gamma$ along horizontal leaves on $q$ (where we parameterize $\gamma$
using the transverse measure $dy$). If $L = \{x(s) : s \in I\}$ is a
sufficiently small 
straight line segment in a stratum of
translations surfaces, $\gamma$ can be chosen uniformly for all $s \in
I$ and the interval exchanges $\IE(x(s), \gamma)$ can be written as
$\IE_{\sigma} (\mathbf{a}(s))$ for some fixed permutation $\sigma$ and some
line $\ell = \{\mathbf{a}(s) : s \in I\} \subset \R^d_+$ (see
\cite{MW} for more details). The number of intervals one obtains for
surfaces in $\HH(\genus-1, \genus-1)$ is $2\genus +1$. Thus, taking $x(s) = \rel^{(v)}_s x_0$,
Corollary \ref{cor: real rel} implies:

\begin{cor}\name{cor: Boshernitzan}
%For any odd $d \geq 7$, 
For $d=7$, there is a uniquely ergodic self-similar interval exchange
transformation $\IE_0$ on $d$ intervals, 
and a line segment $\ell \subset \R^d_+$ such that $\IE_0 =
\IE_\sigma(\mathbf{a}_0)$ with $\mathbf{a}_0$ in the interior of $\ell$,
and such that for all $\mathbf{a} \in \ell \sm \{\mathbf{a}_0\}$,
$\IE_\sigma(\mathbf{a})$ is periodic.  
\end{cor}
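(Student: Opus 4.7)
The plan is to specialize Corollary \ref{cor: real rel} to $\genus=3$ and translate the imaginary-rel family $s\mapsto \rel^{(v)}_s x_0$ into a family of IETs on $d=2\genus+1=7$ intervals sharing a single permutation. All the dynamical content is already present in the previous results; what remains is the correct bookkeeping to pass from translation surfaces to IETs via a chosen transversal.

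First I would fix a transversal $\gamma$ on $x_0$ whose endpoints lie at the two singularities and which meets every horizontal separatrix, so that the first-return map along the horizontal foliation, $\IE(x_0,\gamma)$, is an interval exchange on exactly seven subintervals. Invoking the setup of \cite{MW} on how IETs vary within a rel leaf, I would then observe that there is an open interval $I\subset\R$ with $0$ in its interior such that for every $s\in I$ the same $\gamma$ serves as a transversal on $\rel^{(v)}_s x_0$, the combinatorial data of how $\gamma$ meets horizontal separatrices does not change, and
\[
\IE(\rel^{(v)}_s x_0,\gamma)=\IE_\sigma(\mathbf{a}(s)),
\]
with a single permutation $\sigma$ and with $s\mapsto\mathbf{a}(s)\in\R^7_+$ affine and nonconstant. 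Set $\mathbf{a}_0=\mathbf{a}(0)$; the image $\ell=\{\mathbf{a}(s):s\in I\}$ is then a genuine line segment containing $\mathbf{a}_0$ in its interior.

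At $s=0$ the IET $\IE_\sigma(\mathbf{a}_0)$ is the first-return map of the horizontal foliation of $x_0$, which is preserved by the Arnoux-Yoccoz pseudo-Anosov. Descending this pseudo-Anosov action to $\gamma$ gives a renormalization of $\IE_\sigma(\mathbf{a}_0)$ onto a rescaled copy of itself, showing self-similarity; unique ergodicity is the well-known consequence of pseudo-Anosov invariance and is recorded in \S \ref{sss: AY}. For $s\in I\setminus\{0\}$, Corollary \ref{cor: real rel} (where it is crucial that $\genus=3$, since only in that case do we have horizontal cylinder decompositions on \emph{both} sides of $s=0$, allowing $\mathbf{a}_0$ to be an interior point of $\ell$) says that $\rel^{(v)}_s x_0$ decomposes into horizontal cylinders. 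Every horizontal orbit therefore closes up, and the first-return map of each cylinder piece of $\gamma$ is a rigid rotation whose return time equals the cylinder circumference; since this holds piecewise on all of $\gamma$, $\IE_\sigma(\mathbf{a}(s))$ is periodic in the sense that every point has a finite orbit.

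The main obstacle is the first step: one has to confirm that a \emph{single} permutation $\sigma$ works throughout an honest two-sided neighborhood of $0$, and that $\mathbf{a}(s)$ really does move (so $\ell$ is not a point). Both follow from the general formalism of \cite{MW} together with the fact that imaginary rel shifts the two singularities vertically relative to each other, which in turn redistributes the horizontal widths cut out by $\gamma$; but verifying that the combinatorics remain constant near $s=0$ requires checking that no horizontal saddle connection degenerates, and that the direction of variation of $\mathbf{a}(s)$ is not accidentally zero. Once this technical setup is nailed down, the desired dichotomy — uniquely ergodic and self-similar at the interior point $\mathbf{a}_0$, and periodic at every other point of $\ell$ — is exactly what Corollary \ref{cor: real rel} delivers.
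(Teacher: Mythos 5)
Your proposal matches the paper's argument: the corollary is deduced exactly as you describe, by taking the family $x(s)=\rel^{(v)}_s x_0$ in genus $3$, invoking the formalism of \cite{MW} to realize the first-return maps to a fixed transversal as $\IE_\sigma(\mathbf{a}(s))$ with a single permutation on $2\genus+1=7$ intervals and an affinely varying length vector, and then applying Corollary \ref{cor: real rel} for periodicity at $s\neq 0$ and pseudo-Anosov invariance of the horizontal foliation of $x_0$ for self-similarity and unique ergodicity at $s=0$. The technical points you flag (constancy of $\sigma$ near $s=0$ and nondegeneracy of the direction of variation) are handled in the paper simply by reference to \cite{MW}, so your write-up is essentially the paper's proof.
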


\subsubsection{Rel leaves}
It is well-known that some imaginary-rel trajectories exit their stratum in
finite time due to the collapse of vertical saddle
connections. In this case the imaginary-rel trajectory is not defined for
all time. Theorem \ref{thm: real rel} shows that an imaginary-rel trajectory
may be defined for all time and still be divergent in its
stratum. 
This should be contrasted with the horocycle flow for which
there are no divergent trajectories (see \cite{Veech - fractals}).

Although we only discuss the rel
leaves of the Arnoux-Yoccoz surfaces in this paper, it is quite likely
that the rel foliation is ergodic in any stratum with more than one singularity, and thus almost all rel
leaves are dense in all such strata. Recently 
 Calsamiglia, Deroin and Francaviglia \cite{Deroin} and Hamenst\"adt
 \cite{Hamenstadt} have proved ergodicity in principal strata,
 i.e. strata where all singularities are simple. The
 methods of proof used in these papers are very different from the one
 used in this paper.

Individual rel leaves have a natural translation structure (see
\cite{McMullen-isoperiodic, MW}). In
\cite[Thm. 1.1]{McMullen-isoperiodic} it was shown that in a suitable
bordification of the stratum, this geometric structure is complete
(note that the conventions used in \cite{McMullen-isoperiodic} differ from those used in
this paper by taking the quotient by a finite permutation group,
e.g. if $k=2$ then with our conventions rel leaves are translation
surfaces and with McMullen's, they are half-translation
surfaces). Furthermore, in genus 2, results of McMullen \cite{McMullen-foliations, McMullen-isoperiodic} give a detailed understanding of
the closure of rel leaves in the eigenform locus. These results should
be viewed as a
companion to McMullen's celebrated result classifying closed
sets invariant under the action of $G=\SL_2(\R)$ in genus 2 \cite{McMullen-Annals}. 

\subsubsection{The Arnoux-Yoccoz IET and the SAF invariant}
\label{sss: AY}
This work began with computer experiments in the case $\genus=3$
which demonstrated the complete periodicity of the vertical foliation
on real-rel deformations of $x_0$. These computer experiments were
suggested by Michael Boshernitzan, and 
the observed results are now described by Corollary \ref{cor: real rel}.

These experiments were in turn motivated by several related
observations. First, the Arnoux-Yoccoz IETs have vanishing SAF
invariant. This is evident from their construction in \cite{AY}, and
follows from the various functorial properties of the SAF invariant;
see \cite[\S IV]{Arnoux88} or \cite[\S 2.1]{HL-handbook}. These
properties of the SAF invariant also imply that the invariant is
well-defined for the vertical foliation on any translation surface
suspension 
of the IET, and 
is rel-invariant. Thus every rel deformation of $x_0$ has vanishing
SAF invariant. A vanishing SAF invariant is often associated with a
completely periodic IET, so it was natural to wonder if rel
deformations of $x_0$ are often vertically completely periodic; and a
precursor  
to this work \cite{case g=3} investigated
this question.

\subsubsection{Algebraic dynamics}
For any interval exchange map $T_0:[0,1] \to [0,1]$, observe that the
displacements $T_0(x)-x$ lie in a finitely generated subgroup of
$\R/\Z$. One can then consider the Cayley graph of this group where
the generators are taken to be the possible values of $T_0(x)-x$. Any
$T_0$-orbit then gives rise to a parameterized path $n \mapsto
T^n_0(x)-x$ in this Cayley graph. The SAF-invariant may be interpreted
as the average displacement in this subgroup, so vanishing
SAF-invariant is suggestive of prevalent closed paths or paths which
diverge sublinearly (though other behavior is possible). This makes
these paths particularly interesting when the SAF-invariant vanishes. 

The discontinuity points of an IET $T_0$ have a natural partition into
subsets corresponding to singular points of any suspension of the IET,
and a perturbation $T_t$ of the IET in which all discontinuity points
corresponding to one singular point are moved by the same $t \in \R$
(where $t$ is small enough so that no interval collapses) corresponds to a rel deformation. 
Such perturbations change the IET
without affecting the group generated by the displacements, so the Cayley graph above remains invariant.
Thus it is
interesting to consider how the paths change as we
vary $t$. Note that the paths that arise vary continuously in $t$
where we use the topology of pointwise convergence on paths viewed as
functions from $\Z$ to the Cayley graph.  

In \cite{LPV}, a variant these paths were studied for the
Arnoux-Yoccoz IET $\IE_0$. In this case the group generated by
possible displacements (modulo $1$) is isomorphic to $\Z^2$ and the
Cayley graph is isomorphic to the hexagonal tiling of the plane. The
article \cite{LPV} found combinatorial space filling curves for
$\IE_0$. The paths for $\IE_0$ were also discussed in \cite[\S
5]{McMullen-cascades} as well as paths for other IETs. A closed path was found in \cite[end of \S
5]{McMullen-cascades} associated to an IET arising from one small rel-deformation of the Arnoux-Yoccoz surface of genus $3$. We find that
under rel deformations, the closed combinatorial orbits associated to
the closed orbits of $\IE_t$ fill larger and larger regions in the
group as $t \to 0$; see Figure \ref{fig: sequence} and the further
discussion in \S \ref{sect: genus three}. This phenomenon may be
viewed as a consequence of continuity and the space filling results in
\cite{LPV}. 

This sort of algebraic dynamics does not seem to have been used
heavily in the study of IETs except in the case when the subgroup of
$\R/\Z$ mentioned above is isomorphic a finite extension of $\Z$. This
case was considered by  
Arnoux \cite[\S IV.8]{Arnoux88}, Boshernitzan  \cite{Boshernitzan} and
McMullen \cite{McMullen-Acta} and plays a role in the result that
ergodic IETs in this case have non-vanishing SAF-invariant.

\begin{comment}
(See \cite{case g=3} for an unpublished work considering only
$\genus=3$.) It was shown by Arnoux and Yoccoz \cite{AY} that the
surface 
 $x_s$ have vanishing SAF invariant in the horizontal direction, and
 when $\genus =3$, the `drift' for the interval exchange 
obtained by moving along the horizontal direction on $x_s$
is sublinear and can be modeled arithmetically in the ring of integers
in a cubic field. Study of this arithmetic system has revealed
`combinatorial space-filling curves', leading to the heuristic that
non-periodicity should be unlikely within this family.  We refer to
\cite{Arnoux88, LPV, 
  McMullen-cascades} for discussions of this fascinating
topic. Figure \ref{fig: sequence} shows the combinatorics of periodic
trajectories for surfaces $x_s$ for different values of $s$ (see
\cite[\S5]{McMullen-cascades} for related results and figures). Specifics
of the $\genus=3$ case including the proof of Corollary 
\ref{cor: real rel} are discussed in \S \ref{sect: genus three}. 
\end{comment}

\begin{figure}
   \includegraphics[width=5in]{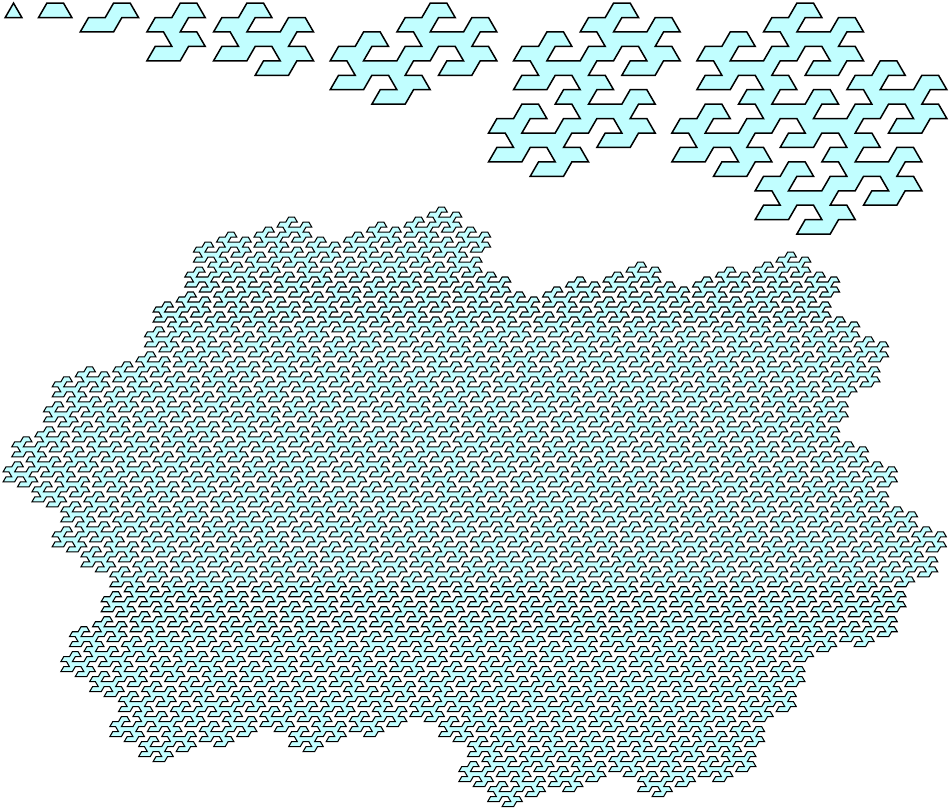}
  \caption{Depictions of the algebraic dynamics of periodic orbits of
    the rel deformed Arnoux-Yoccoz IET in genus three. The 1st through
    8th and 15th shortest combinatorial orbits are shown. See \S
    \ref{sect: genus three} for more detail.
    } 
  \name{fig: sequence}
  \end{figure}

  \begin{comment}
(See \cite{case g=3} for an unpublished work considering only $\genus=3$.) It was shown by Arnoux and Yoccoz \cite{AY} that the surface
 $x_s$ have vanishing SAF invariant in the horizontal direction, and when $\genus =3$, the `drift' for the interval exchange
obtained by moving along the horizontal direction on $x_s$
is sublinear and can be modeled arithmetically in the ring of integers
in a cubic field. Study of this arithmetic system has revealed
`combinatorial space-filling curves', leading to the heuristic that
non-periodicity should be unlikely within this family.  We refer to
\cite{Arnoux88, LPV, 
  McMullen-cascades} for discussions of this fascinating
topic. Figure \ref{fig: sequence} shows the combinatorics of periodic
trajectories for surfaces $x_s$ for different values of $s$ (see
\cite[\S5]{McMullen-cascades} for related results and figures). Specifics
of the $\genus=3$ case including the proof of Corollary 
\ref{cor: real rel} are discussed in \S \ref{sect: genus three}. 
\end{comment}

\subsubsection{The work of Dynnikov}
We remark that the Arnoux-Yoccoz surface in genus $\genus=3$ is a
suspension of the Arnoux-Yoccoz IET, which  
belongs to a family of IETs parameterized by three real numbers whose
interval lengths vary linearly in the parameters and can be understood
through methods of Dynnikov \cite{Dynnikov talk} which differ greatly
from the approach of this article. 
Lebesgue almost every IET in the family is non-minimal, and there is a
renormalization scheme which can be used to understand these IETs. In
particular, a phenomenon similar to Corollary \ref{cor: real rel} 
is exhibited in \cite[\S 7]{Dynnikov}, where complete periodicity
holds in a parameter neighborhood of a minimal IET.

\subsubsection{$\SL_2(\R)$-orbit closures}
Let $G=\SL_2(\R)$ and let $U,V \subset G$ be the unipotent subgroups
which shear in the horizontal and
vertical directions respectively. 
A recent breakthrough of Eskin, Mirzakhani and Mohammadi
\cite{EMM2} describes the $G$-invariant closed
subsets for arbitrary strata. The
following result, which we deduce from their work, is important for
our proof of Theorem \ref{thm: rel}, 
but should also be of independent interest: 

\begin{prop}\name{prop: V suffices}
Suppose that $x$ is a translation surface and $\{g_t x : t \in
\R\}$ is a periodic trajectory for the geodesic flow. Then the orbit closures
$\overline{Ux}$, $\overline{Vx}$ and $\overline{Gx}$ coincide.
\end{prop}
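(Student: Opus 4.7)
The plan is to combine the orbit closure theorem of Eskin, Mirzakhani and Mohammadi with the rigidity forced by the periodic geodesic hypothesis $g_Tx = x$ for some $T>0$. Writing $P = AU$ for the upper triangular subgroup and setting $\MM = \overline{Gx}$, one has $\MM = \overline{Px}$: by \cite{EMM2} the set $\overline{Px}$ is an affine invariant submanifold (hence $G$-invariant) and it contains $x$, so $\overline{Gx} \subseteq \overline{Px}$, while the reverse inclusion is automatic. It therefore suffices to show $\overline{Ux} = \MM$ and $\overline{Vx} = \MM$.

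For $\overline{Ux} = \MM$ the strategy is to invoke the $U$-equidistribution theorem of \cite{EMM2}: provided the $U$-orbit through $x$ is neither periodic nor divergent, the time averages converge to a $P$-invariant affine probability measure whose support is an affine invariant submanifold containing $x$, and this support must then coincide with the smallest such submanifold, namely $\MM$. Non-divergence holds because the periodic $g_t$-trajectory makes $x$ Birkhoff recurrent, and the Minsky-Weiss non-divergence theorem for horocycle flow on moduli space then rules out escape to infinity. Non-periodicity is where the hypothesis is essential: the stabilizer $\operatorname{Stab}_G(x)$ is a discrete subgroup of $G$ containing $g_T$, and if it also contained some $u(s_0)\neq 1$, then the conjugates $g_T^{-n} u(s_0) g_T^n = u(e^{-2nT}s_0)$ would lie in $\operatorname{Stab}_G(x)$ for all $n$, producing nontrivial unipotents converging to the identity as $n\to +\infty$ and contradicting discreteness. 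Hence $\operatorname{Stab}_G(x) \cap U = \{1\}$, the $U$-orbit is non-periodic, and EMM applies to give $\overline{Ux} = \MM$.

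The $V$-case I would reduce to the $U$-case by conjugation. Let $r \in \SO(2)$ be rotation by $\pi/2$, so that $V = rUr^{-1}$ and $rg_tr^{-1} = g_{-t}$; set $y = r^{-1}x$. Then $g_{-T}y = r^{-1}g_Tx = y$, so $y$ also has a periodic geodesic orbit, and the previous paragraph applied to $y$ yields $\overline{Uy} = \overline{Gy} = \overline{Gx}$ (the last equality since $r\in G$ gives $Gy = Gx$). Writing $Vx = rUy$ and using $G$-invariance of $\overline{Gx}$ one concludes $\overline{Vx} = r\overline{Uy} = r\overline{Gx} = \overline{Gx}$. The main obstacle in the argument is the invocation of $U$-equidistribution in the second paragraph: the exceptional periodic and divergent behaviors of the $U$-orbit must be ruled out (as they can genuinely occur for other $x$, e.g.\ at periodic horocycles), and the hypothesis $g_Tx = x$ is precisely what accomplishes this via the discreteness argument above together with Birkhoff recurrence.
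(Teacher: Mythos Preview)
Your proposal has a genuine gap in the second paragraph. The ``$U$-equidistribution theorem of \cite{EMM2}'' you invoke does not exist in the form you need. What Eskin--Mirzakhani--Mohammadi prove (recorded here as \equ{eq: P action}) is equidistribution of the \emph{$P$-averages}
\[
\frac{1}{T}\int_0^T\int_0^1 (g_t u_s)_*\delta_x\,ds\,dt \longrightarrow \mu,
\]
not equidistribution of individual $U$-orbits. Classifying $U$-invariant measures and $U$-orbit closures in strata remains open in general; ruling out periodicity and divergence of the $U$-orbit is not sufficient to force the $U$-time averages to converge, let alone to an affine $P$-invariant measure supported on $\MM$. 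Your discreteness argument for non-periodicity and your rotation-conjugation reduction from $V$ to $U$ are both correct, but they rest on this unproved claim.

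The paper's proof supplies exactly the missing step, using the periodicity hypothesis against the $P$-equidistribution result rather than appealing to a nonexistent $U$-theorem. Writing $g_{p_0}x=x$, one splits the $P$-average over $[0,mp_0]\times[0,1]$ as $\frac{1}{p_0}\int_0^{p_0}\nu_{p,m}\,dp$, where each $\nu_{p,m}$ is (via $g_{ip_0}x=x$ and $g_\tau u_s=u_{e^{2\tau}s}g_\tau$) a genuine $U$-average along the orbit of $g_p x$, and moreover $\nu_{p,m}=g_{p*}\nu_{0,m}$. Any subsequential weak-$*$ limit $\nu$ of $\nu_{0,m}$ is $U$-invariant and satisfies $\mu=\frac{1}{p_0}\int_0^{p_0}g_{p*}\nu\,dp$; since $\mu$ is $U$-ergodic by the Mautner property, $g_{p*}\nu=\mu$ for a.e.\ $p$, hence $\nu=\mu$. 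Thus the $U$-averages of $\delta_x$ themselves converge to $\mu$, giving $\overline{Ux}=\supp\mu=\overline{Gx}$. This decomposition-and-ergodicity argument is the substantive content of the proposition, and your outline bypasses it.
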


The proof makes use of results of \cite{EMM2} on the orbit closures
under the upper-triangular subgroup of $G$.

In case $\genus =3$, Hubert,
Lanneau and M\"oller
\cite{HLM} determined $\overline{Gx_0}$, proving that it coincides
with the set of surfaces in $\HH_0$ admitting a hyperelliptic
involution. In case $\genus \geq 4$, 
we use an idea of Alex Wright to show that
$\overline{Gx_0}$ is an affine manifold of full rank (see \S
\ref{subsec: full rank}). The argument requires the following algebraic fact, which was proved
in response to our question (see Appendix
\ref{appendix: BSZ},  Corollary \ref{NoTrCor}): 

\begin{thm}[Bary-Soroker, Shusterman and Zannier]
\name{thm: no real subfields}
Let $\genus \geq 3$ and let $\alpha$ be the unique real number in
$[0,1]$ satisfying $\alpha + 
\cdots + \alpha^\genus =1$. Then $\Q(\alpha)$ contains no totally real
subfields other than $\Q$.  
\end{thm}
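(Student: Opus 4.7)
The plan is to reduce the theorem to a Galois-theoretic statement about the minimal polynomial of $\alpha$, and then to conclude via a short group-theoretic argument using complex conjugation. Setting $\beta \df 1/\alpha$ and multiplying the defining relation $\alpha + \alpha^2 + \cdots + \alpha^\genus = 1$ by $\beta^\genus$, one sees that $\beta$ is a root of
\[
P(X) \df X^n - X^{n-1} - \cdots - X - 1, \qquad n \df \genus,
\]
and that $\Q(\alpha) = \Q(\beta)$. The core of the proof is to establish (i) that $P$ is irreducible over $\Q$, and (ii) that its Galois group $G \df \Gal(P/\Q)$, viewed as a subgroup of $S_n$ via its action on the roots, is the full symmetric group $S_n$.

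For (i) I would invoke the classical fact (due to Selmer) that $\beta > 1$ is a Perron number while every other root of $P$ lies in the open unit disk: if $P = QR$ were a nontrivial monic factorization over $\Z[X]$, then the factor not containing $\beta$ as a root would have all its roots of modulus $<1$, so its constant term would be an integer of absolute value $<1$, hence $0$; but $P(0) = -1$, a contradiction. For (ii), which is the main technical obstacle and must be carried out uniformly in $n$, the strategy is to prove that $G$ is primitive and contains a transposition, so that Jordan's classical theorem forces $G = S_n$. The transposition is to be extracted from the Frobenius at a prime $p$ at which $P \pmod p$ is squarefree and factors as the product of one irreducible quadratic and $n-2$ distinct linear factors; primitivity, together with the uniform existence of such primes, is the essential content of the appendix.

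Granting (i) and (ii), the theorem follows quickly. Let $L$ be the splitting field of $P$ and identify $G = \Gal(L/\Q)$ with $S_n$. Let $K \subseteq \Q(\beta)$ be a totally real subfield and let $M \df \Gal(L/K) \subseteq G$ be its corresponding subgroup. Then $M \supseteq H \df \Gal(L/\Q(\beta))$, which under $G \cong S_n$ is the point stabilizer of $\beta$ and hence isomorphic to $S_{n-1}$; crucially, $H \not\subseteq A_n$, since $S_{n-1}$ contains odd permutations. Fix an embedding $L \hookrightarrow \C$ and let $c \in G$ be the resulting complex conjugation; the totally real hypothesis forces every $G$-conjugate of $c$ to lie in $M$. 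A direct sign analysis of $P$ shows that for $n \geq 3$ it has exactly one real root when $n$ is odd and exactly two when $n$ is even, so in either case at least one pair of non-real conjugate roots exists and $c$ acts on the roots as a nontrivial product of $s \geq 1$ disjoint transpositions. A short case check (using that for $n=3,4$ this $c$ is a single odd transposition and for $n \geq 5$ the only proper nontrivial normal subgroup of $S_n$ is $A_n$) shows that the normal closure of $c$ in $S_n$ always contains $A_n$; combined with $H \not\subseteq A_n$, this forces $M = S_n$ and hence $K = \Q$, as desired.
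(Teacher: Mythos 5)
Your reduction to the polynomial $P(X)=X^n-X^{n-1}-\cdots-X-1$, your irreducibility argument via the Pisot property of $\beta=1/\alpha$ (all roots other than $\beta$ lie strictly inside the unit disk, so a nontrivial monic factor avoiding $\beta$ would have constant term of absolute value $<1$ yet dividing $P(0)=-1$), your count of real roots, and your final group-theoretic step (given $\Gal(P/\Q)=S_n$, the point stabilizer $S_{n-1}$ is maximal and not contained in $A_n$, while the totally real hypothesis forces the fixed group of $K$ to contain all conjugates of complex conjugation) are all sound. For even $n$ this matches the paper's argument, which simply cites Martin's theorem that the Galois group is $S_n$ in that case and then uses maximality of the point stabilizer together with the existence of a non-real root.

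The genuine gap is your step (ii) for odd $n$. You assert that $\Gal(P/\Q)=S_n$ for \emph{all} $n\geq 3$, to be proved ``uniformly in $n$'' by exhibiting primitivity and a transposition (via a prime where $P$ factors mod $p$ as one irreducible quadratic times $n-2$ distinct linear factors), and you attribute this to the appendix. But the appendix proves nothing of the sort, and no such uniform argument is given anywhere: neither primitivity (which does not follow from irreducibility alone) nor the existence, for every $n$, of a prime with the prescribed factorization type is accessible by an elementary uniform argument — this is precisely why the full symmetric Galois group was only known (from Martin) for even $n$. For odd $n$ the theorem is trivial when $n$ is prime (subfield degrees divide $n$, and $\Q(\beta)$ itself is not totally real), but for odd composite $n$ your proof has no content. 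The appendix circumvents the Galois group computation entirely by a different mechanism: it proves that if $G_K$ is a pro-$p$ group for some prime $p$ not dividing $[F:\Q]$, then the maximal totally-$K$ subextension of $F$ has degree at most the number of embeddings of $F$ into $K$. Taking $K$ to be the field of real algebraic numbers, so that $G_K\cong\Z/2\Z$ is pro-$2$, and $n$ odd so that $2\nmid n$, this bounds the degree of the maximal totally real subfield of $\Q(\alpha)$ by the number of real roots of the minimal polynomial, which is $1$. You would need either to supply this Sylow-type argument, or to actually prove $\Gal(P/\Q)=S_n$ for odd composite $n$ — and the latter is a substantially harder problem than the theorem you are trying to prove.
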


The following result is of independent interest and follows from
Theorem \ref{thm: no real subfields} and a recent preprint of
Mirzakhani and Wright \cite{Mirzakhani-Wright}, via an argument of Alex Wright. It
could be used to simplify some of the steps in the proof of Theorem
\ref{thm: rel}, see Remark \ref{rem: Mirzakhani Wright}.

%The following result is of independent interest and follows from the above and a recent preprint of Mirzakhani-Wright \cite{Mirzakhani-Wright}, but we do not use it in proofs of our main results because\cite{Mirzakhani-Wright} postdates this article.

\begin{thm}
\name{thm: orbit closure}
If $\genus \geq 4$, then $\overline{G x_0}=\HH_0$.
\end{thm}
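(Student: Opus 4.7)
The strategy is to show that $\MM \defeq \overline{Gx_0}$ is a full-rank affine invariant submanifold of $\HH_0$ that cannot be a hyperelliptic locus, then conclude via the Mirzakhani--Wright classification. The three key inputs are the Eskin--Mirzakhani--Mohammadi classification of $G$-orbit closures \cite{EMM2}, the Mirzakhani--Wright classification of full-rank affine invariant submanifolds \cite{Mirzakhani-Wright}, and Theorem \ref{thm: no real subfields}.

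First, by \cite{EMM2}, $\MM$ is an affine invariant submanifold of the connected component of $\HH(\genus-1,\genus-1)$ containing $x_0$, and since $G$ preserves area, $\MM \subseteq \HH_0$. Let $k = k(\MM)$ denote the affine field of definition of $\MM$ in the sense of Wright. It is a totally real number field that embeds into the trace field of every surface in $\MM$; in particular into the trace field of $x_0$, which from the period data of Figure \ref{fig:AY} equals $\Q(\alpha)$. By Theorem \ref{thm: no real subfields}, $\Q(\alpha)$ has no totally real subfields other than $\Q$, and therefore $k(\MM) = \Q$.

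The main step is to upgrade the equality $k(\MM) = \Q$ to the assertion that $\MM$ has full rank, $r(\MM) = \genus$. Here I would apply an argument of Alex Wright (cf.\ \S \ref{subsec: full rank}). The point is that $\Jac(x_0)$ carries real multiplication by $\Q(\alpha)$ of degree $\genus$, so that $H^1(x_0;\Q)$ is a two-dimensional vector space over $\Q(\alpha)$. A proper sub-Hodge-structure realizing the absolute-cohomology projection of $T_{x_0}\MM$ would, together with the rank bound $r(\MM)\cdot [k(\MM):\Q]\le \genus$ from Wright, force the existence of a proper totally real subfield of $\Q(\alpha)$ governing that sub-Hodge-structure, contradicting Theorem \ref{thm: no real subfields}. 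Hence the absolute-period projection of $T_{x_0}\MM$ is all of $H^1(x_0;\R)$, i.e., $\MM$ is of full rank. I expect this to be the main obstacle: it requires carefully matching the $\Q$-linear structure supplied by $k(\MM) = \Q$ with the $\Q(\alpha)$-module structure supplied by the real multiplication on $\Jac(x_0)$.

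Finally, by the Mirzakhani--Wright classification \cite{Mirzakhani-Wright}, a full-rank affine invariant submanifold is either a connected component of a stratum or a hyperelliptic locus. For $\genus \geq 4$, Proposition \ref{prop:stratum} places $x_0$ in a non-hyperelliptic component of $\HH(\genus-1,\genus-1)$, which rules out the hyperelliptic alternative. Combined with the area normalization, this forces $\MM = \HH_0$, completing the proof.
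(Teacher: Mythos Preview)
Your overall strategy matches the paper's: establish full rank via Wright's argument and Theorem \ref{thm: no real subfields}, then apply Mirzakhani--Wright, then rule out the hyperelliptic locus. However, your final step contains a genuine error.

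You invoke Proposition \ref{prop:stratum} to say that $x_0$ lies in a non-hyperelliptic \emph{component} and conclude this rules out the hyperelliptic alternative from Mirzakhani--Wright. This does not follow: the ``hyperelliptic locus'' in the Mirzakhani--Wright classification is the locus of surfaces admitting a hyperelliptic involution, and such a locus can sit inside a non-hyperelliptic Kontsevich--Zorich component. Indeed, the case $\genus=3$ is a counterexample to your reasoning: there $x_0 \in \HH^{\mathrm{odd}}(2,2)$, which is not the hyperelliptic component, yet $\overline{Gx_0}$ \emph{is} precisely the hyperelliptic locus inside $\HH^{\mathrm{odd}}(2,2)$ (this is the result of \cite{HLM}). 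So the component label alone cannot exclude the hyperelliptic locus for $\genus \geq 4$ either. The paper instead uses Proposition \ref{prop: no involutions}, which shows directly that for $\genus \geq 4$ the surface $x_0$ admits no involution with derivative $-\mathrm{Id}$, hence cannot lie in any hyperelliptic locus.

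A secondary point: your full-rank step is vague and slightly off. The inequality $r(\MM)\cdot[k(\MM):\Q]\le \genus$ with $k(\MM)=\Q$ only gives $r(\MM)\le \genus$, which is trivial. The paper's argument (Theorem \ref{thm: full rank Wright idea}, due to Wright) is more direct: since the tangent space $V=\Res(W)$ is defined over $\Q$ and contains the $2$-dimensional subspace spanned by $\hol_{\mathrm x}(x_0),\hol_{\mathrm y}(x_0)$ (defined over the degree-$\genus$ holonomy field), it must contain all $\genus$ Galois conjugates of this subspace; these are linearly independent by \cite{Wright_field}, yielding $\dim_{\C} V \geq 2\genus$ and hence full rank.
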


\subsubsection{Twist coordinates for horizontally periodic surfaces}
Let $x$ be a horizontally completely periodic translation surface in the stratum $\HH$.
Observe that surfaces in the horocycle orbit $Ux$ all have a horizontal cylinder decomposition with the same
circumferences, same cylinder widths, and same lengths of horizontal saddle connections. 
By deforming $x$ (in any manner) while leaving these quantities constant, a subset of $\HH$ is obtained
which is naturally the image of a torus, and {\em twist coordinates} can be placed on this torus. 
In \cite{calanque}, it was shown that the horocycle dynamics on the orbit closure $\overline{U x}$ can be described as linear flow on this torus in twist coordinates, and this observation was used to describe horocycle orbit closures of horizontally completely periodic surfaces.

In this article we observe that twist coordinates can also be used to describe real rel orbits of $x$, and the dynamics are again linear flow on a torus. In these coordinates, the horocycle flow orbit-closure is determined
by the moduli of the cylinders while the real-rel orbit-closure is
determined by their circumferences. We use this here to show the horocycle orbit of the Arnoux-Yoccoz surface $x_0$ is contained in the closure of the rel leaf of $x_0$. 

\subsection{Outline of paper and proofs}
\name{sect: outline}
We now discuss the proofs of our results. 

Sections \ref{sect: rel ray} and \ref{sec: AY surface} contain the proof of Theorem \ref{thm: real rel} (as well as Theorem \ref{thm: g=2} and  Corollary \ref{cor: real rel}). We obtain a description of surfaces $x_t=\rel^{(v)}_t x_0$ indirectly. Section \ref{sect: rel ray}
concentrates on studying a mystery imaginary rel trajectory, and \S  \ref{sec: AY surface} shows it is the imaginary rel trajectory of the Arnoux-Yoccoz surface.
\begin{itemize}[leftmargin=3ex]
\item In \S \ref{sect:renormalization} we review the definition of
the Arnoux-Yoccoz interval exchange $\IE$ on $2 \genus +1$ intervals. Theorem \ref{thm:renormalization} describes a renormalizing
symmetry of $\IE$ known to Arnoux and Yoccoz. We prove this result for completeness.
\item In \S \ref{sect: suspensions} we discuss suspensions of
$\IE$, all of which lie in the stratum $\HH(\genus-1, \genus-1)$.
We find the connected component of the stratum containing suspensions of $\IE$ in Proposition \ref{prop:stratum}.
We associate to the renormalization of $\IE$ a canonical mapping class $[\phi]$ in Corollary \ref{cor:second suspension}
and Definition \ref{def: renormalization mapping class}.
\item In \S \ref{sect: rel rays}, we choose a special suspension
$q_0 \in \HH(\genus-1, \genus-1)$ of $\IE$ which is horizontally completely periodic.
The imaginary rel trajectory is denoted $q_s=\rel^{(v)}_s q$ whenever defined. Theorem \ref{thm:completely periodic ray}
shows that these surfaces exhibit a useful symmetry (which explains our choice of $q_0$):
After reparameterizing the trajectory as $x_t=q_{t-\beta}$ for $\beta=\frac{\alpha^2}{1-\alpha}$ we observe
the coincidence $\tilde g x_{\alpha^{-1} t}= x_t$ for all $t$, where $\tilde g \in \SL(2,\R)$ is a diagonal matrix expanding the vertical direction by a factor of $\alpha$. Furthermore, the mapping class determined by deforming $x_t$ into $x_{\alpha^{-1} t}$ by applying vertical rel and then applying $\tilde g$ to get back to $x_t$ is $[\phi]$ mentioned above. We show all this by explicit computations,
and along the way demonstrate that the surfaces $x_t$ for $t>0$ are all horizontally completely periodic. From the symmetry above,
it suffices to show $x_t$ is horizontally completely periodic for $t$
taken from the fundamental domain $[\beta, \beta/\alpha]$ for the $\tilde g$
action on the vertical rel ray. 
\item In \S \ref{sect:cylinders}, we prove a lemma about the horizontal cylinders of $x_t$ for $t>0$.
This will be used later to understand how twist coordinates change when we apply the real rel flow to $x_t$
for $t>0$, which is an ingredient in the proof that the full rel leaf is dense in the sense of Theorem \ref{thm: rel}.
\item We consider the case of $\genus \geq 3$ in \S \ref{subsec: AY surfaces general genus}. Lemma \ref{lem:mapping class 1} shows that in this case the mapping class $[\phi]$ is pseudo-Anosov. We argue that suspensions of $\IE$ have infinite vertical rel trajectories,
and thus there is a surface $x_0$ so that $\tilde g x_0=x_0$. Theorem \ref{thm: identifying AY}
uses Thurston's theory of pseudo-Anosov homeomorphisms of surfaces
to argue that $x_0$ must be the Arnoux-Yoccoz surface. This completes the proof of Theorem \ref{thm: real rel}.
\item In \S \ref{sect: genus two} we discuss the case of $\genus=2$ and prove Theorem \ref{thm: g=2}.
\item In \S \ref{sect: genus three} we describe additional symmetries that show up when $\genus=3$ and prove Corollary \ref{cor: real rel}.
We note that most of these additional symmetries do not persist when $\genus \geq 4$. We discuss
the arithmetic dynamics in the case of $\genus=3$ as mentioned above.
\end{itemize}

\begin{comment}
% Former discussion of sections 3 and 4.
The main point is to show that the 
renormalizing symmetry of $\IE$ implies the existence of a distinguished
element $\til g$ in $G$, whose action on $\HH(\genus-1, \genus-1)$ maps the
imaginary-rel leaf  $\mathcal{R} = \{\rel^{(v)}_sq_0 : s \in \R\}$
to itself. To prove the existence of $\til g$ we introduce in \S
\ref{sect: suspensions} the notion of a `renormalizing mapping class' of a suspension
of an IET with a renormalizing symmetry, and compute (see Theorem
\ref{thm:completely periodic ray}) the action of this mapping class on $\mathcal{R}$. 
The commutation relations of the geodesic flow and
imaginary-rel imply that $\til g$ acts on
$\mathcal{R}$ by a contraction with a fixed point. By more
explicit computations done in \S \ref{sect: rel rays} we find that
complete periodicity in the horizontal direction persists for a
full period for the action of $\til g$ on $\mathcal{R}$. In \S
\ref{sec: AY surface} we determine the geometry of the fixed point for
$\til g$ on
$\mathcal{R}$ and find that it  must coincide with the surface of Arnoux
and Yoccoz. 
\end{comment}

For the proof of Theorem \ref{thm: rel}, we successively show that the
closure of the rel leaf of the Arnoux-Yoccoz surface $x_0$ is larger
and larger. From this point of view, there are two steps, which we
undertake in reverse order. In \S \ref{sec: minimal sets} we show that
the  orbit $U x_0$ is in this closure. In \S \ref{subsec:
  breakthroughs}, we show that it follows that the closure of the
rel leaf is the stratum connected component $\HH_0$.  Here is a more
detailed description of these sections. 

Section \ref{subsec: breakthroughs} reviews the relevant results from
Eskin, Mirzakhani and Mohammadi, 
and then discusses consequences used in the proof of Theorem \ref{thm: rel}:
\begin{itemize}[leftmargin=3ex]
\item In \S \ref{sect:horocyle} we prove Proposition \ref{prop: V
    suffices}. 
\item After reviewing results of Filip and Wright, along with some
  ideas due to Wright, \S \ref{subsec: full 
    rank} culminates in the observation that when $\genus \geq 3$ the
  only closed subset of $\HH_0$ which is rel-invariant and contains $U
  x_0$ is $\HH_0$; 
see Corollary \ref{cor: corollary V}. This means that in order to show the
closure of the rel leaf contains $\HH_0$ it suffices to show that this
closure contains the horocycle orbit $U x_0$. 
\end{itemize}

In \S \ref{sec: minimal sets} we analyze the interaction of real-rel
and the horocycle flow on surfaces which have a decomposition into
parallel horizontal cylinders.
\begin{itemize}[leftmargin=3ex]
\item  It \S \ref{sect: twist coordinates} we consider deforming a horizontally completely periodic surface while preserving cylinder dimensions and horizontal saddle connection lengths. We describe how to place twist coordinates on this deformation space and observe that the subset of the stratum obtained in this way is an orbifold quotient of a torus.
\item In \S \ref{sect: rel twist} we consider the real rel-flow applied to a horizontally completely periodic translation surface with no horizontal saddle connections joining distinct
singularities. We find that in twist coordinates, the real rel-flow can be explicitly described as linear flow on a torus (or its quotient as above).
\item In \S \ref{subsec: deformations etc} we specialize the results
  of the preceding subsections 
to the case of the horizontally periodic surfaces $x_s=\rel^{(v)}_s x_0$ where $x_0$ is the Arnoux-Yoccoz surface. By taking closures of real real deformations of such surfaces $x_s$, we find a 
family of  tori 
$\{\mathcal{O}_s\}$ of dimension greater than 1, defined for all but a discrete set of $s>0$.
Our explicit parameterization
enables us to compute the behavior of the tangent planes $T_s$ to
$\mathcal{O}_s$, 
as $s \to 0+$. 
Recalling that twist coordinates can also be used to describe horizontal horocycle orbits (following \cite{calanque}), we show that the topological
limit of the $\mathcal{O}_s$ 
contains the entire horocycle orbit $Ux_0$, concluding the proof. 
\end{itemize}

The appendix by Bary-Soroker, Shusterman and Zannier
contains the proof of Theorem \ref{thm:
  no real subfields} (which is crucial to our arguments in \S \ref{subsec: full rank}.)

\subsection{Acknowledgments} 
%In a previous preprint \cite{case g=3}
%which this works subsumes, the authors obtained results 
%for the special case $\genus=3$. 
This work was stimulated by insightful
comments of Michael Boshernitzan, who conjectured Corollary \ref{cor:
  Boshernitzan}. We thank Alex Wright for directing our 
attention to the case $\genus=2$ and for his proofs of Theorems
\ref{thm: orbit closure} and 
\ref{thm: full rank Wright idea}. Theorem \ref{thm: no real subfields}, which is a
crucial step in our proof of Theorem \ref{thm: rel}, was proved in
response to our queries by Lior Bary-Soroker, Mark Shusterman, and
Umberto Zannier. We thank them for agreeing to include their results
in Appendix \ref{appendix: BSZ} of this paper. 
%We thank them for agreeing to include the proof in
%the appendix to this paper. 
We thank Ivan Dynnikov, Pascal Hubert and Sasha
Skripchenko for pointing out the connections to their prior work and
other insightful remarks. We are also grateful to
David Aulicino, Josh Bowman, Duc-Manh Nguyen and John Smillie for useful
discussions. We also are grateful to the anonymous referee for useful comments which helped to improve the paper.
This collaboration was supported by BSF grant 2010428.
The first author's work is supported by NSF grant DMS-1500965 as well as a PSC-CUNY
Award (funded by The Professional Staff Congress and The City University of New
York). The second author's work was supported by  ERC
starter grant DLGAPS 279893.

\section{Basics}\name{sec: basics} 
\subsection{Translation surfaces, strata, \texorpdfstring{$G$}{G}-action, cylinders}
\name{sec: translation surfaces}
In this section we define our objects of study and review their basic
properties. We refer to \cite{MT}, \cite{Wright survey} and \cite{zorich survey} for more information
on translation surfaces and related notions, and for references for
the statements given in this subsection.

Let $S$ be a compact oriented surface of genus $\genus \geq 2$, let $\Sigma= \{\xi_1, \ldots, 
\xi_k\} \subset S$
be a finite set of points
and let $\mathbf{r} = (r_1,
\ldots, r_k)$ be a list of non-negative integers such that $\sum r_i = 2\genus-2$. 
A {\em translation atlas} of type $\mathbf{r}$ on $(S, \Sigma)$
is an atlas of charts $(U_{\alpha}, \varphi_{\alpha})$,
where:
\begin{itemize}
\item For each $\alpha$, the set $U_\alpha \subset S\sm \Sigma$ is open, and the map
$$\varphi_\alpha:U_\alpha \to \R^2$$
is continuous and injective.
\item The union over $\alpha$ of $U_\alpha$ is $S \sm \Sigma$. 
\item Whenever the sets $U_\alpha$ and $U_\beta$ intersect, the
  transition functions are local translations, i.e., the maps
%\eq{eq: subs strata}{
$$
\varphi_{\beta} \circ
  \varphi^{-1}_{\alpha}: \varphi_\alpha(U_\alpha \cap U_\beta) \to
\R^2
$$
%} 
are differentiable with derivative equal to the
identity. 
\item Around each $\xi_j \in
\Sigma$ the charts glue together to form a cone singularity with cone angle 
$2\pi(r_j+1)$. 
\end{itemize}
A {\em translation surface structure} on $(S,\Sigma)$ of type $\mathbf{r}$ is an equivalence class of such translation atlases, where $(U_{\alpha}, \varphi_{\alpha})$ and $(U'_{\beta},
\varphi'_{\beta})$ are equivalent if there is an orientation preserving  
homeomorphism $h: S\to S$, fixing all points of $\Sigma$, such that 
$(U_{\alpha}, \varphi_{\alpha})$  
is compatible with  $\left(h(U'_{\beta}), \varphi'_{\beta} \circ h^{-1}\right).$
A {\em marked translation surface structure} is an equivalence class of such atlases
subject to the finer equivalence relation where $(U_{\alpha}, \varphi_{\alpha})$ and $(U'_{\beta},
\varphi'_{\beta})$ are equivalent if $h$ can be taken to be isotopic to the identity via an isotopy fixing $\Sigma$.
Thus, a marked translation surface $\bq$ determines a translation surface
$q$ by {\em forgetting the marking}, and we write $q =\pi(\bq)$ to denote this operation.
Note that our convention is that all singularities are labeled.

Pulling back $dx$ and $dy$ from the coordinate charts we obtain two
well-defined closed 1-forms, which we can integrate along any path
$\gamma$ on $S$. If $\gamma$ is a cycle or has endpoints in $\Sigma$
(a relative cycle), then we define
$$\hol_{\mathrm x}(\gamma, \bq)=\int_\gamma dx \quad \text{and} \quad \hol_{\mathrm y}(\gamma, \bq)=\int_\gamma dy.$$
These integrals only depend on the homology class of $\gamma$ in
$H_1(S, \Sigma)$ and the pair of these integrals is the {\em holonomy}
of $\gamma$, 
\eq{eq: defn hol}{\hol(\gamma, \bq) = \left(\begin{matrix} \hol_{\mathrm x}(\gamma, \bq) \\
\hol_{\mathrm y}(\gamma, \bq) \end{matrix} \right) \in \R^2.
}
We let $\hol(\bq) = \hol(\cdot, \bq)$ be the corresponding element of $H^1(S, \Sigma;
\R^2)$, with coordinates ${\mathrm x}(\bq)$ and ${\mathrm y}(\bq)$ in $H^1(S, \Sigma; \R)$. 

 The set of all (marked) translation surfaces on $(S,
\Sigma)$ of type $\mathbf{r}$ is called the {\em stratum of
(marked) translation surface of type $\mathbf{r}$} and is denoted by
$\HH(\mathbf{r})$ (resp. $\HH_{\mathrm{m}}(\mathbf{r})$). 
%We have suppressed
%the dependence on $\Sigma$ from the notation since for a given type
%$\mathbf{r}$ there is an
%isomorphism between the corresponding set of translation surfaces on
%$(S, \Sigma)$ and on $(S, \Sigma')$ for any other finite subset
%$\Sigma' = \left(\xi'_1, \ldots, \xi'_k\right)$. 
%We denote by $\Sigma_\HH$ the set of singularties of some (any) $q \in
%\HH$. 
The map $\hol: \HH_{\mathrm{m}} (\mathbf{r})\to H^1(S, \Sigma; \R^2)$ just defined gives
local charts for $\HH_{\mathrm{m}} (\mathbf{r})$, endowing it (resp. $\HH(\mathbf{r})$) with the
structure of an affine manifold (resp. orbifold).
%To see how this works, fix a triangulation $\tau$ of $S$ with vertices
%in $\Sigma$. Then $\hol(\bq)$ associates a vector in the plane to each
%oriented edge in $\tau$, and hence associates an oriented Euclidean
%triangle to each oriented triangle of $\tau$. If all the orientations
%are consistent, then a translation structure with the same holonomy as
%$\bq$ can be realized explicitly by gluing the Euclidean triangles to
%each other. Let $\HH_{\mathrm{m}}_\tau$ be the set of all translation
%structures obtained in this way (we say that $\tau$ is {\em realized
%  geometrically} in such a structure). Then the restriction $\hol:
%\HH_{\mathrm{m}}_{\tau} \to H^1(S, \Sigma; 
%\R^2)$ is injective and maps onto an open subset. Conversely every
%$\bq$ admits some geometric triangulation (e.g. a Delaunay
%triangulation as in \cite{MS}) and hence $\HH$ is covered
%by the $\HH_{\tau}$, and so these provide an atlas for a linear
%manifold structure on $\HH_{\mathrm{m}}$. We should remark that a topology on
%$\HH_{\mathrm{m}}$ can be defined independently of this, by considering nearly
%isometric comparison maps between different translation structures,
%and that this topology is the same as that induced by the charts of
%hol.  

Let $\Mod(S, \Sigma)$ denote the mapping class group, i.e. the
orientation preserving homeomorphisms of $S$ fixing $\Sigma$
pointwise, up to an isotopy fixing $\Sigma$. 
The map hol is $\Mod(S, \Sigma)$-equivariant. 
%This means that for
%any $\varphi \in \Mod(S, \Sigma)$, $\hol(\bq \circ \varphi) =
%\varphi^* \hol(\bq)$. 
%, which is nothing more than the linearity of the
%holonomy map with respect to its first argument. 
%
The %One can show that the 
$\Mod(S, \Sigma)$-action on $\HH_{\mathrm{m}}$ 
is properly discontinuous. Thus $\HH (\mathbf{r})= \HH_{\mathrm{m}} (\mathbf{r})/\Mod(S, \Sigma)$ is a
linear orbifold and $\pi: \HH_{\mathrm{m}} (\mathbf{r})\to \HH
(\mathbf{r})$ is an orbifold covering map. 
In particular, if 
%$\{q_s:s\in [0,1]\}$ is a continuous path in $\HH$ and 
$\{\mathbf{q}_s : s \in [0,1]\}$ is a continuous 
%lift 
path in $\HH_{\mathrm{m}}$ with
$\pi(\mathbf{q}_0) = \pi(\mathbf{q}_1)$ then there is a corresponding
element of $\Mod(S, \Sigma)$ which maps $\mathbf{q}_0$ to
$\mathbf{q}_1$ and is independent of the choice of the lift. 
%Since $\Mod(S, \Sigma)$ contains a finite index torsion-free subgroup
%(see e.g. \cite[Chap. 1]{Ivanov}), there is a finite cover
%$\hat{\HH} \to \HH$ such that $\hat{\HH}$ is a manifold, and 
We have
\eq{eq: dimension of HH}{
\dim \HH (\mathbf{r})= \dim \HH_{\mathrm{m}} (\mathbf{r})= %\dim \HH= 
\dim H^1(S, \Sigma; \R^2)= 2(2g+k-1).
}
%The Poincar\'e-Hopf index theorem implies that 
%\eq{eq: Gauss Bonnet}{\sum r_j = 2g-2.}
%See \cite{EMZ, MT, with Yair}
%for more details.
%We will always assume that $\Sigma \neq \varnothing$ which by \equ{eq:
%Gauss Bonnet} implies that $g \geq 2.$ 

There is an action of $G = \SL_2(\R)$ on $\HH(\mathbf{r})$ and on
$\HH_{\mathrm{m}}(\mathbf{r})$ by post-composition on each 
chart in an atlas. The projection $\pi : \HH_{\mathrm{m}}(\mathbf{r})
\to \HH(\mathbf{r})$ is $G$-equivariant.  
The $G$-action is linear in the homology coordinates, namely, given a
marked translation surface structure $\bq$ and $\gamma \in 
H_1(S, \Sigma)$, and given $g \in G$, we have
\eq{eq: G action}{
\hol(\gamma, g\bq) = g \cdot \hol(\gamma, \bq),
}
where on the right hand side, $g$ acts on $\R^2$ by matrix
multiplication. 

We will write 
\begin{equation}
\name{eq: matrices}
u_s 
= \left(\begin{array}{cc} 1 & s \\ 0 & 1 
\end{array}
\right),
 \ \ \ \ \, \ 
g_t = \left(\begin{array}{cc} e^{t} & 0 \\ 0 & e^{-t} 
\end{array}
\right), \ \ \ \ \ 
v_{s}=
\left(\begin{array}{cc}
1 & 0 \\
s & 1 
\end{array}
\right).
%r_{\theta}=
%\left(\begin{array}{cc}
%\cos \theta & -\sin \theta \\
%\sin \theta & \cos \theta 
%\end{array}
%\right).
\end{equation}
Also we will denote
$$
U = \{u_s : s\in \R\}, \ \ A = \{g_t: t \in \R \}, 
$$
$$
V = \{v_s: s
\in \R\}, \ \ P=AU =
\left(\begin{matrix} * & * \\ 0 & * \end{matrix} 
\right) \subset G. 
$$

The connected components of strata $\HH(\mathbf{r})$ have been
classified by Kontsevich and Zorich \cite{KZ}. We will be interested in the
particular connected component of
$\HH(\genus-1,\genus -1 )$ containing
$x_0$. For any $\mathbf{r}$, the area of surfaces in 
$\HH(\mathbf{r})$ is preserved by the action of $G$ and we  let $\HH$
be a fixed-area sub-locus of a connected component
of $\HH(\mathbf{r})$. 
A common  convention is to normalize area by setting
$\HH$ to be the locus of area-one surfaces, but it will be more
convenient for us to fix the area equal to some constant, e.g. the area of $x_0$.
There is a globally supported measure on $\HH$ which is defined using 
Lebesgue measure on $H^1(S, \Sigma; \R^2)$ and a `cone construction'. It was shown by Masur that the
$G$-action is ergodic with respect to this measure, and in particular,
almost every $G$-orbit is dense. 

\ignore{
\begin{remark}
Formally, we have defined $q$ to be a translation surface structure on $S$, i.e., an equivalence class of atlases.
We abuse this formalism by thinking of $q$ as a translation surface, i.e. a topological surface endowed with $dx$ and $dy$ and the notion of holonomy as above. 
\compat{Maybe it is too pedantic to be bothered by this?}
\end{remark}
}
Let $\vec{u} \in \R^2$ be a unit vector. The {\em straight-line flow} in direction $\vec{u}$
on a translation surface $q$ is the flow $F_t:q \to q$ defined so that holonomy of a trajectory of length $t$,
$$[0,t] \to q; \quad s \mapsto F_s(p),$$
is $t \vec{u}$. Because of the cone singularities, the straight-line
flow does not need to be defined for all time. The straight-line flow
in direction $(0,1)$ will be called the {\em vertical straight-line
  flow}. 

A {\em separatrix} on a translation surface is a straight-line trajectory leaving a singularity.
A {\em saddle connection} is a straight-line trajectory terminating at
singularities in forward and backward time. 

An {\em affine automorphism} of a translation surface $q$ is a
homeomorphism of $q$ which leaves invariant the set of singular points and which
is affine in charts. 
%Some authors require affine automorphisms to
%preserve orientation but we will allow orientation reversing affine
%automorphisms. \compat{Do we use orientation reversing affine
%automorphisms? I can't think of where... If not, then I suggest not
%revealing our stance on the issue :).} 
The derivative of an affine automorphism is a $2 \times
2$ real matrix of determinant $\pm 1$. If this matrix is hyperbolic (i.e. has distinct real
eigenvalues) then the affine automorphism is called a {\em
  pseudo-Anosov map}. If the matrix is parabolic (i.e. is nontrivial
and has both eigenvalues equal to 1) then the affine automorphism is
called {\em parabolic}. The group of derivatives of orientation
preserving affine automorphisms of $q$ is called the {\em Veech
  group} of $q$. 

The field $k$ generated
over $\Q$ by the traces of all derivatives of pseudo-Anosov maps of $q$ is called the
{\em trace field of $q$}. In fact, by a theorem of Kenyon and Smillie
\cite{KS}, $k$ is generated by the trace of any single 
pseudo-Anosov map of $q$. The {\em holonomy  field} of $q$ is the
smallest field $k$ such that there is $g \in \GL_2(\R)$ and a marked
surface $\mathbf{q}$ with $\pi(\q)=q$ such
that for all $\gamma \in H_1(S; \Z)$, $g \cdot \hol(\gamma, \mathbf{q}) \in
k^2$. In \cite{KS} it is shown
that if $q$ has a pseudo-Anosov map then the holonomy field coincides
with the trace field.

Let $I \subset \R$ be a closed interval with interior, let $c>0$ and
let $\R/c\Z$ be the circle of circumference $c$. 
A {\em cylinder} on a translation surface is a subset homeomorphic to
an annulus
which is the image of $I \times \R /c\Z$ for some $I$ and  $c$ as above, under a map
which is a local Euclidean isometry, and which is maximal in the sense
that the local isometry does not extend to $J \times \R/c\Z$ for an
interval $J$ properly containing $I$. The parameter $c$ is called the
{\em circumference} of the cylinder, and the image of $\{t\} \times
\R/c\Z$ for some $t \in \mathrm{int}( I)$ is called a {\em core
  curve}. In this case the two boundary components of the cylinder are
unions of saddle connections whose holonomies are all parallel to that
of the core curve. If a
translation surface $q$ can be
represented as a union of cylinders, which intersect along their
boundaries, then the directions of the holonomies of the
core curves of the cylinders are all the same, and we say that this
direction is {\em completely periodic} and that  $q$ {\em has
a cylinder decomposition} in that direction. 

If $C$ is a cylinder with an oriented core curve on a translation surface modeled on $S$,
we use $C^\ast$ to denote the associated cohomology class in $H^1(S,\Sigma; \Z)$ defined
by
\eq{eq:cohomology class of cylinder}{
C^\ast(\gamma)=\gamma \cap C \quad \text{for $\gamma \in H_1(S,\Sigma; \Z)$,}
}
where $\cap$ denotes the algebraic intersection pairing between $\gamma$
and the oriented core curve. Recall that the intersection pairing is
nondegenerate as a bilinear form on $H_1(S, \Sigma) \times H_1( S \sm
\Sigma).$ 

\subsection{Interval exchange maps, suspensions, and pseudo-Anosov maps}
\name{sect:IETs}
We define interval exchange tranformations (IETs) on the circle $\TTT =
\R/\Z$ rather
than the interval, because the original Arnoux-Yoccoz IET was defined
on the circle. An {\em interval exchange transformation} of the circle
is a  bijective piecewise rotation 
$$\IE:\TTT \to \TTT; \quad \IE(x)=x+t_j \quad \text{for $x \in I_j$},$$
where $\big\{I_j=[a_j,b_j)+\Z\big\}$ is a partition of $\TTT$ into
finitely many half open intervals, 
each $t_j$ is in $\TTT$, and addition is considered mod $ \Z$.
Then each restriction $\IE|_{I_j}$ rotates $I_j$ by $t_j$. We call the endpoints of the intervals $I_j$
the {\em discontinuities} of $\IE$.

A {\em suspension} of an interval exchange map $\IE:\TTT \to \TTT$ is a pair $(q,\gamma)$,
where $q$ is a translation surface and $\gamma:\TTT \to q
\smallsetminus \Sigma$ is a parameterized simple closed  
curve which transversely passes rightward over the vertical foliation
of $q$ such that (informally) $\gamma$ conjugates $\IE$ to the return
map of the vertical straight-line flow to $\gamma(\TTT)$. Formally,
we insist: 
\begin{itemize}
\item If $x \in \TTT$ is not a discontinuity, then the vertical straight-line trajectory
of $\gamma(x)$ hits $\gamma \circ \IE(x)$ before hitting any
singularities and before hitting any other point on $\gamma(\TTT)$. 
\item If $x \in \TTT$ is a discontinuity, then $\gamma(x)$ hits a
  singularity under the vertical straight-line flow. 
\end{itemize}

We call such a suspension {\em measure preserving} if $\gamma$ sends
Lebesgue measure 
on $\TTT$ to the measure on $\gamma(\TTT)$ induced by the Lebesgue
transverse measure to the vertical foliation on $q$. 
There are many possible suspensions of an IET but there is a natural
class of suspensions which gives the simplest possible
surfaces. Namely the
following was shown in 
\cite[\S 6]{Veech - Gauss}, \cite[\S 3]{Masur} (see also the discussion in \cite[\S 1.2]{KZ}):

\begin{prop}
\name{prop:unique stratum}
There is a unique stratum $\HH$ of translation surfaces containing
suspensions of $\IE$ so that if $(q,\gamma)$ is a suspension then $q
\in \HH$ if and only if every vertical separatrix on $q$ intersects
$\gamma(\TTT)$. Every such suspension lies in the same connected component of $\HH$.
\end{prop}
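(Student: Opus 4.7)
The plan is to extract the stratum purely from the combinatorial data of $\IE$, construct an explicit suspension to show nonemptiness, and then argue connectedness of the space of valid suspensions.

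First I would analyze the separatrix condition. For a suspension $(q,\gamma)$ in which every vertical separatrix meets $\gamma(\TTT)$, each downward vertical separatrix emanating from a singularity must first cross $\gamma(\TTT)$ at a point where the forward return map fails to be continuous, i.e., at a discontinuity of $\IE$. Symmetrically, each upward separatrix first crosses $\gamma(\TTT)$ at a discontinuity of $\IE^{-1}$. Conversely, by the definition of a suspension, each discontinuity of $\IE$ (respectively $\IE^{-1}$) flows downward (resp.\ upward) into a singularity. Under the separatrix condition these bijections exhaust all separatrices, so the cone points and their separatrices are entirely accounted for by the endpoints of the intervals $I_j$.

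Second I would recover the singularity data combinatorially from the permutation $\sigma$. Two separatrices belong to the same singularity iff one can pass from one to the other by a local rotation around that cone point. Concretely, starting at a forward discontinuity of $\IE$, one turns around the singularity just below $\gamma$ and exits along a specific upward separatrix determined by $\sigma$; iterating this matching yields the cyclic list of separatrices at each singularity, hence the number of singularities $k$ and their cone angles $2\pi(r_j+1)$. Since these quantities depend only on $\sigma$, the type $\mathbf{r}$ and thus the stratum $\HH(\mathbf{r})$ is uniquely determined by $\IE$. Existence of a suspension in this stratum I would establish by pointing to Veech's zippered-rectangles construction, which produces such a suspension directly from $(\sigma,\mathbf{a})$.

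Third, to see that all such suspensions lie in a single connected component, I would parameterize the space of suspensions satisfying the separatrix condition. Fixing the parameterization of $\gamma$ by arclength, the interval lengths $a_j$ are determined by $\IE$, so the remaining freedom consists in choosing the vertical heights $h_j$ of the rectangles in a zippered-rectangles presentation, subject to open linear inequalities ensuring that the first-return map is exactly $\IE$ and that no extraneous separatrix crossings occur. This cuts out an open convex subset of a Euclidean space, which is connected; the continuous map from this parameter space into $\HH(\mathbf{r})$ therefore has connected image. Any two suspensions satisfying the condition can thus be joined by a path inside the stratum.

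The main obstacle is the combinatorial bookkeeping in the second step: one must verify that the separatrix condition genuinely forces every cone angle to be detected by $\gamma$, so that no hidden singularities can alter $\mathbf{r}$. This follows once one observes that the finite area of $q$ prevents any separatrix from failing to return to $\gamma(\TTT)$ in finite time under the stated hypothesis, making the local rotation analysis around each singularity depend only on $\sigma$.
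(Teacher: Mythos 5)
Your argument is essentially correct, but note that the paper does not prove this proposition at all: it is stated as a known result with citations to Veech (\S 6 of the Gauss-measures paper) and Masur (\S 3), and the combinatorial recovery of the singularity partition from the auxiliary permutation $\sigma$ is exactly what the paper later invokes when defining compatible labelings. Your sketch reproduces that standard argument --- matching separatrices to discontinuities of $\IE$ and $\IE^{-1}$, reading off the cone angles from the cycles of the local rotation determined by $\sigma$, and deducing connectedness from the convexity of the zippered-rectangles parameter space --- so there is nothing methodologically new relative to the cited sources. One small imprecision: the paper's notion of suspension does not require $\gamma$ to push Lebesgue measure to the transverse measure (that is the separate notion of a \emph{measure preserving} suspension), so the widths of the rectangles over $\gamma(\TTT)$ are \emph{not} forced to equal the $a_j$; they are the $dx$-measures of the arcs $\gamma(I_j)$ and can be arbitrary positive numbers. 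This does not hurt you --- the parameter space merely acquires extra width coordinates and remains an open convex cone, hence connected --- but as written your step three only covers the measure-preserving suspensions, so you should either enlarge the parameter space or observe that the conclusion concerns the surfaces $q$ rather than the pairs $(q,\gamma)$ and that every such $q$ admits a zippered-rectangles presentation with some positive width vector.
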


Given $\IE$, 
we will call this stratum the {\em stratum of minimal complexity}
corresponding to $\IE$. 
Recall that our strata have labeled singularities. It will be
convenient to specify 
a labeling of singular points of a suspension, which depends on $\IE$
but is independent of the suspension. To this end note that  
if $(q,\gamma)$ is a suspension of $\IE$, then the
vertical straight-line flow of the image under $\gamma$ of a
discontinuity of $\IE$ must hit a singularity. Now suppose $q$ is in
the minimal complexity stratum $\HH$ of Proposition \ref{prop:unique
  stratum}. Then a singularity of order $r$  
is associated to $r+1$ discontinuities of $\IE$. The choice of
$q$ thus determines a partition $P$
of the discontinuities of $\IE$. Conversely, it was shown by Veech
that the partition $P$ can be recovered from the combinatorics  
of  $\IE$ (as orbits of the auxiliary permutation $\sigma$
of \cite[\S 2]{Veech - Gauss}) and is independent of the choice of the
suspension. 
So if $(q,\gamma)$ is a suspension with $q \in \HH$ then there is a
bijection between the singularities of 
$q$ and elements of
the partition $P$, where the number of partition elements is the number of
singularities and a singularity of cone angle $2\pi (k+1)$ corresponds
to a block of size $k+1$ in $P$. We label the
singularities of surfaces in $\HH$ by elements of $P$, and given $\IE$, we 
will say that the suspension $(q,\gamma)$ is {\em compatibly labeled}
if for any partition element $[d] \in P$ and any discontinuity $d' \in
[d]$, the vertical straight-line flow applied to $\gamma(d')$
hits the singularity labeled by $[d]$. 
 
Fix $\IE$ and determine $\HH$ using the Proposition above. Let $(q_1,
\gamma_1)$ and $(q_2, \gamma_2)$ 
be two compatibly labeled suspensions of $\IE$ with $q_1, q_2 \in
\HH$. Corresponding to $(q_1, \gamma_1)$ and $(q_2,
\gamma_2)$, there is a {\em canonical isotopy class of
  homeomorphisms} $\varphi: q_1 \to q_2$. Homeomorphisms in this class
preserve the labeling of singularities 
and the isotopies are taken to fix the singularities. 
This class contains a homeomorphism $\varphi:q_1 \to q_2$ such that
\begin{enumerate}
\item $\gamma_2(t)=\varphi \circ \gamma_1(t)$ for all $t \in \TTT$.
\item $\varphi$ maps vertical leaves to vertical leaves, preserving
  their orientation.
\end{enumerate}
This information does not completely specify $\varphi$, but any two
such maps differ by precomposition 
by a homeomorphism of $q_1$ which preserves the vertical foliation and
preserves $\gamma_1(\TTT)$ pointwise.
All such maps are isotopic, so the isotopy class
$[\varphi]$ is well-defined and 
depends only on the suspensions.

\subsection{Rel, Real-rel, Imaginary-rel}
We describe the rel foliation as a foliation on 
$\HH_{\mathrm{m}}(\mathbf{r})$ which descends to a well-defined foliation on
$\HH(\mathbf{r})$. 
%The two summands in the splitting
%\eq{eq: splitting}{ 
%H^1(S, \Sigma; \R^2) \cong H^1(S, \Sigma; \R) \oplus H^1(S, \Sigma;
%\R)}
%induce two foliations on $\HH_{\mathrm{m}}(\mathbf{r})$, which we call the {\em
%real foliation} and {\em imaginary foliation} respectively. Also, considering the
We view our cohomology classes as equivalence classes of linear maps
from the associated homology spaces. 
Observe there is a restriction map 
$$\mathrm{Res}:H^1(S, \Sigma ; \R^2) \to H^1(S; \R^2)$$
which is obtained by mapping a cochain $H_1(S, \Sigma; \R) \to \R^2$
to its restriction to the `absolute periods'
$H_1(S; \R) \subset H_1(S, \Sigma; \R)$. This restriction map is part
of the exact sequence in cohomology,  
\eq{eq: defn Res}{
H^0(S) \to H^0(\Sigma ) \to H^1(S, \Sigma )
\stackrel{\mathrm{Res}}{\to} H^1(S) \to \{0\}
}
(coefficients in $\R^2$), 
and we obtain a natural subspace 
\eq{eq: natural subspace}{
\mathfrak{R} =\ker \mathrm{Res} \subset H^1(S, \Sigma;
\R^2),
}
consisting of the cohomology classes which
vanish on $H_1(S) \subset
H_1(S, \Sigma).$ 
%Since hol is equivariant with respect to the action of the group $\Mod(S,
%\Sigma)$ on $\HH_{\mathrm{m}}$ and $H^1(S, \Sigma; \R^2)$, and 
Since
%, by naturality
%of the splitting \equ{eq:
%  splitting} and 
the sequence \equ{eq: defn Res} is invariant under homeomorphisms in
$\Mod(S, \Sigma)$, the subspace %s $V_1$, 
$\mathfrak{R}$ %and $W = V_1 \cap \ker \, \Res$ are 
is $\Mod(S, \Sigma)$-invariant. 
Since hol is equivariant with respect to the action of the group $\Mod(S,
\Sigma)$ on $\HH_{\mathrm{m}}(\mathbf{r})$ and $H^1(S, \Sigma;
\R^2)$, 
the foliation of $H^1(S, \Sigma;
\R^2)$ by cosets of the subspace $\mathfrak{R}$ induces by pullback a
foliation of $\HH_{\mathrm{m}}(\mathbf{r})$, and descends to a well-defined
foliation on $\HH(\mathbf{r}) = \HH_{\mathrm{m}}(\mathbf{r})/\Mod(S,
\Sigma)$. The area of a translation surface can 
be computed using the cup product pairing in absolute cohomology and
hence the foliation preserves the area of surfaces, and in particular
we obtain a foliation of a fixed  area  sublocus $\HH$ (see
\cite{BSW} for more details). This foliation is called the {\em rel} 
%and{\em real-rel} 
foliation.  Two nearby translation
surfaces $q$ and $q'$ are in the same plaque if the integrals of the flat structures along all 
closed curves are the same on $q$ and $q'$. 
%, and if the integrals of
%curves joining distinct singularities only differ in their
%horizontal component.  
Intuitively, $q'$ is obtained from $q$ by fixing one singularity as a
reference point and moving the other singularity.
%horizontally.
Recall our convention that singularities are labeled, that is $\Mod(S,
\Sigma)$ does not permute the singular points. Using this one can show
that $\Mod(S, \Sigma)$ acts trivially on $\mathfrak{R} \cong H^0(\Sigma; \R)/H^0(S, \R)$ and hence 
the leaves of the rel foliation
are equipped with a natural translation structure, modeled on $\mathfrak{R}$. 
 The leaves of the rel foliation have (real)
dimension $2(k-1)$ 
%and the leaves of the real-rel foliation have dimension $k-1$ 
(where $k=|\Sigma|$). In this paper we will focus on the case $k=2$, so that
rel leaves are 2-dimensional. We can integrate a cocycle $c  \in
\mathfrak{R}$ on any path joining distinct singularities and the
resulting 
vector in $\R^2$ will be independent of the path, since any two paths
differ by an element of $H_1(S)$. Thus in the case $k=2$ we obtain an  identification of $\mathfrak{R}$ with
$\R^2$ by the map $u \mapsto u(\delta)$ for any path joining the
singularities. Our convention for this identification will be that we
take a path $\delta$  oriented from  $\xi_1$ to $\xi_2$.

The existence of a translation structure on rel leaves implies that
any vector $u \in \mathfrak{R}$ determines
an everywhere-defined vector field on $\HH$. We can apply standard
facts about ordinary differential equations to integrate this vector
field. This gives rise to paths $\psi(t) = \psi_{q,u}(t)$ such that
$\psi(0) = q$ and $\frac{d}{dt} \psi(t) \equiv u.$ We will denote the maximal domain
of definition of $\psi_{q,u}$ by $I_{q,u}$. When  $1 \in
I_{q,u}$ we will say that $\rel^uq$ is defined and write
$\psi_{q,u}(1) = \rel^uq$. Also, in the case $k=2$  we will write 
$$
\rel^{(h)}_rq = \rel^uq \text{ when } u = (r,0),$$
and 
$$\rel^{(v)}_sq = \rel^uq \text{ when } u = (0,s). 
$$
These trajectories are called respectively the {\em real-rel} and {\em
  imaginary-rel} trajectories. 
We will use identical notations for $\bq \in \HH_{\mathrm{m}}(\mathbf{r})$, noting that
since $\pi: \HH_{\mathrm{m}} (\mathbf{r})\to \HH(\mathbf{r})$ is an
orbifold covering map, $I_{\bq, u} = 
I_{q,u}$ and $\pi(\rel^u \bq) = \rel^u q$. 

Note that the trajectories need not be defined for all time, i.e. $I_{q,u}$
need not coincide with $\R$. For instance this will happen when 
a saddle connection
on $q$ is made to have length zero, i.e. if `singularities
collide'. It was shown in \cite{MW} that this is the only
obstruction to completeness of leaves. 
%(see also
%\cite{McMullen-twists, BSW}). 
Namely, in the case $k=2$, the following holds: 

\begin{prop}\name{prop: real rel main}
Let $\HH$ be a stratum with two singular points, let $\bq \in
\HH_{\mathrm{m}}$, and let $u \in \mathfrak{R}$. Then 
the following are equivalent: 
\begin{itemize}
\item 
$\rel^u\bq $ is defined. 
\item
For all saddle connections $\delta$ on $\bq$, and all $s \in [0,1]$,
$$\hol(\bq, \delta) + s \cdot u( \delta)  \neq 0.$$
\end{itemize}
\end{prop}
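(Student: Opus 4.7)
The plan is to prove both implications of the equivalence, with the linearity of holonomy along the rel trajectory as the central technical tool.

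The starting observation, which is used in both directions, is that because $u \in \mathfrak{R} = \ker \mathrm{Res}$, the rel flow preserves absolute periods while shifting relative ones linearly: along the trajectory one has
\[
\hol(\psi_{\bq, u}(s), c) = \hol(\bq, c) + s \cdot u(c)
\]
for any class $c \in H_1(S, \Sigma; \Z)$ and any $s$ in the maximal domain of existence $I_{\bq,u}$. In particular, the area is preserved. Moreover, a saddle connection $\delta$ which is a loop at a single singularity lies in absolute homology, so $u(\delta) = 0$ and its holonomy is constant (and nonzero, since saddle connections have positive length). Thus only saddle connections joining the two distinct labeled singularities are relevant for either failure of the flow or vanishing of the holonomy condition.

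For the forward direction, I would assume $\rel^u \bq$ is defined and let $\delta$ be a saddle connection on $\bq$ joining the two singularities. Suppose for contradiction that $\hol(\bq, \delta) + s^* u(\delta) = 0$ for some $s^* \in [0,1]$. By openness of the saddle connection condition, there is a maximal interval $[0, s_1] \subset [0, s^*]$ on which $\delta$ remains a genuine straight segment on $\psi_{\bq,u}(s)$, with length exactly $|\hol(\bq,\delta) + s\,u(\delta)|$. If $s_1 = s^*$, the segment has length zero there, forcing the two endpoints to coincide in the flat metric, so $\psi_{\bq,u}(s^*) \notin \HH$, a contradiction. If $s_1 < s^*$, then at $s_1$ the segment bifurcates through a singularity (necessarily $\xi_1$ or $\xi_2$, since $|\Sigma|=2$), producing a decomposition $\delta = \delta_1 + \delta_2$ in homology, where $\delta_1, \delta_2$ are saddle connections on $\psi_{\bq,u}(s_1)$; by linearity at least one of them inherits vanishing holonomy at $s^*$, and one iterates. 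Since the stratum is finite-dimensional, only finitely many bifurcations occur on the compact interval, and the iteration must terminate in an honest saddle-connection collapse.

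For the reverse direction, suppose the condition of (ii) holds on $[0,1]$ and set $s_0 = \sup\{s \in [0,1] : \psi_{\bq, u}(s) \text{ is defined}\}$. If $s_0 < 1$, then by area preservation and a Mumford-style compactness criterion for strata (shortest saddle connection bounded below on an area-normalized family implies compactness), the only way for $\psi_{\bq,u}(s)$ to escape compact subsets of $\HH$ as $s \to s_0^-$ is for some saddle connections $\gamma_n$ on $\psi_{\bq, u}(s_n)$ to shrink to zero length for a sequence $s_n \to s_0^-$. Each $\gamma_n$ must join the two distinct singularities. Tracking the homology class $[\gamma_n] \in H_1(S, \Sigma; \Z)$ backward through the (finitely many) bifurcation events on $[0, s_n]$, I would express $[\gamma_n]$ as a sum of saddle-connection classes on $\bq$, and then use that $|\hol(\bq, \gamma_n)| = |\hol(\psi_{\bq,u}(s_n), \gamma_n) - s_n u(\gamma_n)|$ stays bounded to extract a subsequential limit $\delta \in H_1(S,\Sigma;\Z)$ which is represented by a saddle connection on $\bq$ and satisfies $\hol(\bq, \delta) + s_0 u(\delta) = 0$, contradicting the hypothesis.

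The main obstacle is the bifurcation analysis in both directions: saddle connections of $\bq$ need not persist as saddle connections along the rel trajectory, so one must carefully track how shrinking or vanishing holonomies are inherited by the pieces of a saddle connection after it splits through a singularity, and conversely how a shrinking saddle connection on an intermediate surface is related to one on $\bq$. The two-singularity hypothesis simplifies matters substantially here, since every bifurcation must pass through one of the only two possible singular points, ensuring the combinatorial bookkeeping of the decomposition $[\delta] = \sum_i [\delta_i]$ remains tractable.
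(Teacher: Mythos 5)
The paper does not prove this proposition at all: it is quoted from \cite{MW} (``It was shown in \cite{MW} that this is the only obstruction\dots''), with the general-direction version worked out in \cite{BSW}. So there is no internal argument to compare yours against, and what you are really attempting is a re-proof of a cited theorem. Judged on its own terms, your proposal has a genuine gap, and it sits exactly where you flag the ``main obstacle'': the bifurcation bookkeeping does not close up. In the forward direction your induction step is in fact provably false. Write $\bq_s = \psi_{\bq,u}(s)$, and suppose the straight segment representing $\delta$ degenerates at $s_1 < s^*$ because a singularity enters its interior. Since $|\Sigma|=2$ and $\delta$ joins $\xi_1$ to $\xi_2$, the two pieces are necessarily a loop $\delta_1$ based at one singularity (so $u(\delta_1)=0$) and a segment $\delta_2$ joining the two singularities (so $u(\delta_2)=u(\delta)$); at $s=s_1$ they are subsegments of the still-straight $\delta$, so $\hol(\bq_{s_1},\delta_i)=\lambda_i\, v$ with $v = \hol(\bq_{s_1},\delta)$, $\lambda_1+\lambda_2=1$, $\lambda_i\in(0,1)$. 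The hypothesis gives $v=-(s^*-s_1)\,u(\delta)$, whence $\hol(\bq_{s},\delta_1)\equiv -\lambda_1(s^*-s_1)u(\delta)\neq 0$ for all $s$, and $\hol(\bq_{s^*},\delta_2)=\lambda_2 v+(s^*-s_1)u(\delta)=\lambda_1(s^*-s_1)u(\delta)\neq 0$. Thus \emph{neither} piece inherits vanishing holonomy at $s^*$, and your iteration never terminates in an honest collapse.

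The idea that is missing, and that makes the forward direction work, is parallelism: $\hol(\bq,\delta)=-s^*u(\delta)$ with $u(\delta)\neq 0$ forces $\delta$ to be parallel to the vector $u\in\R^2$ (under the identification of $\mathfrak{R}$ with $\R^2$), i.e.\ $\delta$ is a leaf segment of the directional foliation of $\bq$ in direction $u$. The flow $s\mapsto \rel^{su}\bq$ preserves that foliation together with its transverse measure, moving points only within leaves (this is the content of \S\ref{subsec: rel and suspension} and the slit construction of \S\ref{sect: rel rays}); hence such a $\delta$ cannot bifurcate transversally at all, and its length $|\hol(\bq,\delta)+s\,u(\delta)|$ shrinks linearly to $0$ as $s\to s^*$ unless the flow has already died at an earlier collapse --- either way $\rel^u\bq$ is undefined, via Mumford compactness. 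Your reverse direction has the mirror-image gap: ``tracking $[\gamma_n]$ backward through the bifurcation events'' would at best express $[\gamma_n]$ as a \emph{sum} of saddle-connection classes of $\bq$, and a relation $\sum_i\big(\hol(\bq,\delta_i)+s_0\,u(\delta_i)\big)=0$ does not contradict hypothesis (ii), which concerns individual saddle connections; nor is the finiteness of the bifurcation events as $s\to s_0^-$ justified, precisely when lengths are degenerating. This backward-persistence problem is the genuinely hard half of the statement and is what the quantitative counting arguments of \cite{MW} (see also \cite{BSW}) are for; it cannot be dispatched by homological bookkeeping alone.
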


\begin{cor}\name{cor: real rel main}
If $q$ has two singular points and no horizontal (resp. vertical) saddle connections joining
distinct singularities, then $\rel^{(h)}_rq$ (resp. $\rel^{(v)}_sq$)
is defined for all $r,s\in \R$. 
\end{cor}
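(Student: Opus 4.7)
The plan is to derive the corollary as a direct unpacking of Proposition \ref{prop: real rel main}. For the real-rel statement I would fix $r \in \R$, take $u = (r,0)$ under the identification $\mathfrak{R} \cong \R^2$ (using a fixed reference path $\delta_0$ oriented from $\xi_1$ to $\xi_2$), and verify the non-vanishing criterion
$$\hol(\bq,\delta) + s \cdot u(\delta) \neq 0$$
for every saddle connection $\delta$ and every $s \in [0,1]$. Since the verification will be uniform in $r$, this gives the conclusion for all $r \in \R$. The imaginary-rel case is symmetric, with horizontal and vertical swapped.

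The key step is a case split on the endpoints of a saddle connection $\delta$. If $\delta$ is a loop based at a single singularity, then its class lies in absolute homology $H_1(S;\Z) \subset H_1(S,\Sigma;\Z)$, and because $u \in \mathfrak{R} = \ker\Res$ vanishes on absolute cycles we have $u(\delta)=0$. The expression then reduces to $\hol(\bq,\delta)$, which is nonzero because a saddle connection has strictly positive length.

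If instead $\delta$ joins the two distinct singularities, then $\delta \pm \delta_0$ is an absolute cycle, so $u(\delta) = \pm u(\delta_0) = \pm(r,0)$. Thus the vertical component of $\hol(\bq,\delta)+s\cdot u(\delta)$ equals $\hol_{\mathrm y}(\bq,\delta)$, which is nonzero precisely by the hypothesis that there are no horizontal saddle connections joining the two distinct singularities. This is the unique place where the hypothesis enters, and it closes out the verification.

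The main ``obstacle'' is really just recognizing that Proposition \ref{prop: real rel main} reduces the problem to a finite-looking condition on holonomies, and that the hypothesis of the corollary is exactly tailored to eliminate the one family of saddle connections on which the vertical component can drop to zero under real-rel. Nothing deeper seems required.
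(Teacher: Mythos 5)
Your proposal is correct and is exactly the deduction the paper intends: the corollary is stated immediately after Proposition \ref{prop: real rel main} with no separate proof, being treated as an immediate consequence, and your case split (loops have $u(\delta)=0$ since $u\in\ker\Res$; connectors have $u(\delta)=\pm(r,0)$, so the vertical holonomy component is unchanged and nonzero by hypothesis) is the standard way to unpack it. No gaps.
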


From standard results about ordinary differential equations we have
that 
the map $(q,u) \mapsto \rel^u q$ is continuous on its domain of
definition, and 
$$
\rel^{(h)}_{r_1}(\rel^{(h)}_{r_2} q) = \rel^{(h)}_{r_1+r_2 } (q), \ \
\  \rel^{(v)}_{s_1}(\rel^{(v)}_{s_2} q) = \rel^{(v)}_{s_1+s_2 } (q)
$$
(where defined). 
On the other hand we caution the reader that the rel
plane field need not integrate as a group action, i.e. it is easy to
find examples for which 
$$
\rel^{(h)}_{r} \left(\rel^{(v)}_{s} q \right) \neq \rel^{(v)}_{s}
\left(\rel^{(h)}_r q \right).
$$
We let $G$ act on the stratum $\HH$ in the
usual way and also let $G$ act on $\R^2$ by its standard linear
action. The action of $G$ is equivariant for the map $\hol$ used to
define the translation structure on rel leaves, and so this induces an
action of $G$ on the subspace $\mathfrak{R}$, and this
leads to the following result (see \cite{BSW}
for more details):
\begin{prop}\name{prop: rel and G commutation}
Let $x \in \HH$ and let $u \in
\mathfrak{R}$. If $\rel^u(x)$ is defined and $g\in G$ then
$\rel^{gu}(gx)$ is defined 
and $g(\rel^u(x))=\rel^{gu}(gx)$. In particular, if $q$ has two
singular points and has no
horizontal saddle connections joining distinct singularities,  then for all $r,s,t \in \R$, 
\eq{eq: rel commutation}{
g_t \rel^{(h)}_r q = \rel^{(h)}_{e^tr} g_t q \ \text{ and } \ g_t \rel^{(v)}_s q = \rel^{(v)}_{e^{-t}s} g_t q. 
%\ u_s
%\rel^{(h)}_r q = \rel^{(h)}_r  u_sq.
}
\end{prop}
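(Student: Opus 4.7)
The plan is to work in period coordinates, where both the rel flow and the $G$-action are linear and therefore manifestly compatible.

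First I would observe that the $G$-action preserves the subspace $\mathfrak{R}$. Indeed, by \equ{eq: G action}, the action of $g \in G$ on $H^1(S,\Sigma;\R^2)$ is obtained by postcomposing each cochain with the linear map $g:\R^2 \to \R^2$. Since the restriction map $\mathrm{Res}$ of \equ{eq: defn Res} is defined by restricting cochains to $H_1(S;\R) \subset H_1(S,\Sigma;\R)$ and is therefore also equivariant for the postcomposition action, $G$ preserves $\mathfrak{R} = \ker \mathrm{Res}$. Hence $gu \in \mathfrak{R}$ whenever $u \in \mathfrak{R}$, so the statement $\rel^{gu}(gx)$ makes sense.

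Next I would show that, up to lifting to $\HH_{\mathrm{m}}(\mathbf r)$ and applying $\hol$, the path $t \mapsto g \cdot \psi_{\bq,u}(t)$ coincides with $t \mapsto \psi_{g\bq, gu}(t)$. By definition $\hol(\psi_{\bq,u}(t)) = \hol(\bq) + t\,u$ on the maximal interval $I_{\bq,u}$, and by \equ{eq: G action} we have $\hol(g \psi_{\bq,u}(t)) = g \cdot \hol(\bq) + t \cdot g u = \hol(g\bq) + t\,(gu)$. Because $\hol$ is a local chart on $\HH_{\mathrm{m}}$ and both candidate paths start at $g\bq$ with constant derivative $gu$, uniqueness of integral curves gives $g \psi_{\bq,u}(t) = \psi_{g\bq,gu}(t)$ on the intersection of their domains.

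The remaining point is that the domain of $\psi_{g\bq,gu}$ contains $[0,1]$ whenever the domain of $\psi_{\bq,u}$ does. Here I would invoke Proposition \ref{prop: real rel main}: existence of $\rel^u\bq$ is equivalent to $\hol(\bq,\delta) + s\,u(\delta) \neq 0$ for every saddle connection $\delta$ and every $s\in[0,1]$. Applying $g$ to these vectors, which are in $\R^2$, gives $g\cdot \hol(\bq,\delta) + s\cdot gu(\delta) = \hol(g\bq,\delta) + s\cdot (gu)(\delta)$; since $g \in G$ is invertible, this vector is nonzero if and only if the original is, so the criterion of Proposition \ref{prop: real rel main} is satisfied for $(g\bq, gu)$ precisely when it is for $(\bq,u)$. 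Descending to $\HH$ via $\pi$ yields $g(\rel^u x) = \rel^{gu}(gx)$.

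Finally, for the special case $g = g_t$ in \equ{eq: matrices}: the vector $u = (r,0)$ is sent to $g_t u = (e^t r, 0)$ and $u=(0,s)$ is sent to $(0, e^{-t}s)$, which gives both identities in \equ{eq: rel commutation}. The only part requiring any care is the domain-of-definition check in the last paragraph, but once Proposition \ref{prop: real rel main} is in hand this is immediate from the linearity of the $G$-action on holonomy.
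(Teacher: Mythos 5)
Your argument is correct and is essentially the one the paper intends: the paper states this proposition as a direct consequence of the $G$-equivariance of $\hol$ and defers the details to \cite{BSW}, and your period-coordinate computation plus the domain check supplies exactly those details. One minor remark: Proposition \ref{prop: real rel main} as stated covers only the case of two singular points, but your second paragraph already exhibits $t \mapsto g\,\psi_{\mathbf{q},u}(t)$ as an integral curve of the constant field $gu$ on all of $I_{\mathbf{q},u}$, so maximality of $I_{g\mathbf{q},gu}$ yields the domain containment in full generality without invoking that proposition.
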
 

\subsection{Rel and suspensions}\name{subsec: rel and suspension}
For any $\bq$ in any stratum $\HH_{\mathrm{m}}$ of marked surfaces,
one can pull back the foliation of $\R^2$ by vertical lines, via
planar charts, to obtain a 
topological singular foliation $\GG(\bq)$ on the surface $S$. Now suppose
surfaces in $\HH_{\mathrm{m}}$ have two singular points, and
$\bq'=\rel^{(v)}_s \bq$ for some $s \in \R$. Then the pulled back
foliation $\GG(\bq')$ is well-defined and topologically isomorphic to
$\GG$. That is, there is a homeomorphism $\rho: \bq \to \bq'$, which
maps each point to another point in its vertical leaf, and preserves
the measure transverse to vertical leaves. The
homeomorphism $\rho$ depends on certain choices but different choices give
rise to homeomorphisms which are isotopic to $\rho$ via an isotopy moving points
along vertical leaves. See \cite[Theorems 1.2 and 12.2]{MW}. 
%  \combarak{That paper will include exactly what we claim here.}
%  \compat{Ok. We should cite the exact location when it comes out.}

Since $\rho$ only moves points inside their vertical leaf, and the
definition of a suspension involves only motion inside vertical
leaves, the following is an immediate consequence. 
\begin{prop}\name{prop: rel and suspension}
Let $q'=\rel^{(v)}_s q$ and $\rho: q \to q'$ be as above. 
% and let $\rho : q \to q'$ which 
%respects the vertical measured foliations (sends leaves to leaves and
%preserves the Lebesgue-transverse measures of
%transversals). Furthermore if 
If $(q, \gamma)$ is a suspension of the
$\IE: \TTT \to \TTT$ in the sense of \S \ref{sect:IETs} then $(q',
\rho \circ \gamma)$ is also a suspension of $\IE$. 
\end{prop}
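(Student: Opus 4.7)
The plan is to verify directly that the pair $(q', \rho \circ \gamma)$ satisfies the two defining conditions of a suspension of $\IE$ given in \S \ref{sect:IETs}, using only the properties of $\rho$ stated in \cite[Theorems 1.2 and 12.2]{MW}: namely that $\rho:q\to q'$ is a homeomorphism which carries each vertical leaf of $q$ bijectively onto the corresponding vertical leaf of $q'$ (preserving the orientation of the vertical straight-line flow) and which preserves the transverse measure to the vertical foliation. In particular $\rho$ sends singularities of $q$ to singularities of $q'$ and sends regular points to regular points, so $\rho \circ \gamma$ is a map from $\TTT$ to $q'\smallsetminus \Sigma$.

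First I would check that $\gamma' \defeq \rho \circ \gamma$ is a simple closed curve transversally crossing the vertical foliation of $q'$ rightward. Simplicity follows because $\rho$ is injective and $\gamma$ is simple. The transversality and rightward-crossing property follow from the transverse-measure preservation: the pushforward of Lebesgue measure on $\TTT$ under $\gamma'$ is exactly the transverse measure along $\gamma'(\TTT)$ inherited from the vertical foliation on $q'$, which forces $\gamma'$ to cross each vertical leaf it meets transversally (and with the correct sign, since $\rho$ preserves the orientation of vertical leaves).

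Next I would verify the two conditions concerning the return map. Fix $x \in \TTT$ not a discontinuity, and consider the vertical trajectory $\tau$ in $q$ from $\gamma(x)$ to $\gamma(\IE(x))$; by hypothesis $\tau$ meets no singularity and no other point of $\gamma(\TTT)$. Because $\rho$ maps vertical leaves to vertical leaves bijectively and continuously, the image $\rho(\tau)$ is precisely the vertical trajectory in $q'$ from $\gamma'(x)$ to $\gamma'(\IE(x))$. Since $\rho$ is a homeomorphism sending regular points to regular points, $\rho(\tau)$ meets no singularity of $q'$, and since $\rho$ is injective and $\rho(\gamma(\TTT))=\gamma'(\TTT)$, the trajectory $\rho(\tau)$ meets $\gamma'(\TTT)$ only at its endpoints. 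For a discontinuity $x$, the vertical forward trajectory of $\gamma(x)$ hits a singularity of $q$; applying $\rho$ and using that $\rho$ preserves vertical leaves and maps singularities to singularities, the vertical forward trajectory of $\gamma'(x)$ hits a singularity of $q'$.

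The only point requiring a little care is the behavior of $\rho$ at singularities, since strictly speaking $\rho$ is defined on complements of finite sets and extends continuously across the collapses/expansions of saddle connections; but throughout the argument we only use $\rho$ away from singularities and its continuous extension to trajectories terminating at a singularity, so this presents no real obstacle. With both conditions verified, $(q', \gamma')$ is a suspension of $\IE$.
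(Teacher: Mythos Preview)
Your proof is correct and follows the same approach as the paper, which simply asserts the result as ``an immediate consequence'' of the fact that $\rho$ moves points only within their vertical leaves while the definition of a suspension involves only motion along vertical leaves. You have written out in detail precisely the verification the paper leaves implicit; the only superfluous part is your final caveat about behavior at singularities, since here $\rel^{(v)}_s q$ is assumed defined, so $\rho$ is a genuine homeomorphism of the whole surface and no collapses occur.
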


%\subsection{Cylinder decompositions}

\ignore{
\subsection{Interval exchange transformations} \name{subsection: iets}
Here we review the terminology appearing in the formulation of
Corollary \ref{cor: Boshernitzan} and derive it from Theorem \ref{thm:
  real rel}. 

Suppose $\sigma$ is a permutation on $d$ symbols. 
For each 
$$\A \in \R^d_+ = \left\{(a_1, \ldots, a_d) \in \R^d :
\forall i, \, a_i>0 \right \}$$
we have an interval exchange transformation $\IE_{\sigma}(\A)$ defined by
dividing the interval 
%$$I=I_\A = 
$\left[0, \sum a_i \right)$
into subintervals of lengths $a_i$ and permuting them
according to $\sigma.$ 
We say
that $\IE$ is {\em uniquely ergodic} if the only invariant measure for
$\IE$, up to scaling, is Lebesgue measure.  

On a
translation surface we have vertical and horizontal foliations
obtained by pulling back the corresponding foliations of $\R^2$. If $x$ is a
translation surface and $\gamma$ is a parameterized curve everywhere
transverse to the vertical foliation then we can parameterize points
on $\gamma$ by integrating the pullback of the 1-form $dx$. Then the
first return map to $\gamma$ along vertical leaves is an interval
exchange transformation. It is periodic if and only if there is a
vertical cylinder decomposition on $x$, and it is uniquely ergodic if
and only if integration w.r.t. $dx$ is the unique (up to scaling) invariant transverse
measure for the vertical foliation. 
%$\MM_\IE$ contains the
%Lebesgue  
%measure only. We say that $\IE$ is {\em without connections}
%or {\em i.d.o.c.} 
%if $\LL_\IE =
%$\varnothing.$  
%We will say that $\A \in \R^d_+$ is minimal or uniquely ergodic if
%$\IE_{\sigma}(\A)$ is.  

An interval exchange $\IE_\sigma(\A): I \to I$ is said to be {\em self-similar}
if there is a proper subinterval $J \varsubsetneq I$ such that the
first return map $\IE'$ of $\IE$ to $J$ is a rescaling of $\IE$; that
is there is $c \in (0,1)$ such that $|J| = c|I|$ and $\IE' = \IE_\sigma(\mathbf{b})$
where $b_i = ca_i$ for each $i$. It is well-known that self-similar
interval exchange transformations are uniquely ergodic. If
$x$ is fixed by $g_t$ for some $t>0$ and the transversal $\gamma$ is
properly chosen (e.g. as a path joining singularities) then this interval exchange will
be self-similar. 

Fix a straight line segment $\til \ell$ in a stratum $\HH$ (with
respect to the affine structure on $\HH$), fix $q_0 \in \til \ell$ and
fix a path $\gamma$ joining
singularities on $q_0$ which is everywhere transverse to the vertical
foliation on $q_0$. Let $\IE_{\sigma}(\A_0)$ be the corresponding
interval exchange transformation. Then there is a subsegment $\til \ell_0$
containing $q_0$ in its interior, and a segment $\ell_0 \in \R^d_+$, such that for all $q \in \til
\ell_0$, the path $\gamma$ is everywhere transverse to the vertical
foliation on $q$, and the return map to $\gamma$ in $q$ is
$\IE_\sigma(\A)$ for some $\A = \A(q) \in \ell_0$. Moreover $q \mapsto
\A(q)$ is affine. 
With this background it is clear that Corollary \ref{cor:
  Boshernitzan} is an immediate consequence of Theorem \ref{thm: real
  rel}. 
\ignore{
To put Corollary \ref{cor: Boshernitzan} in context,  
let $Q$ be the alternating bilinear form given by 
\eq{eq: defn Q1}{
Q(\E_i, \E_j) = \left\{\begin{matrix}1 && i>j, \, \sigma(i)<\sigma(j) \\ -1
&& i<j, \, \sigma(i) >\sigma(j) \\ 0 && \mathrm{otherwise}
\end{matrix} \right. 
}
where $\E_1, \ldots, \E_d$ is the standard basis of $\R^d$. Then it
was shown in \cite[Thm. 6.1]{MW} that if $\IE_\sigma(\A)$ is uniquely ergodic
and $Q(\A, \B) \neq 0$, then there is $\vre>0$ such that for almost
every $t \in (-\vre, \vre)$ we have that $\IE_\sigma(\A+t\B)$ is
uniquely ergodic. Corollary \ref{cor: Boshernitzan} provides us with
directions $\B \in \R^d$ for which this statement fails (and in
particular $Q(\A, \B)=0$; in fact $\B$ which track the real-rel
direction belong to $\ker Q$). 

\combarak{Delete preceding paragraph if we end up giving a discussion
  in the introduction. }
}
}

\section{The Arnoux-Yoccoz IET, Suspensions, and Rel Rays}
\name{sect: rel ray}
Arnoux and Yoccoz defined the surface $x_0$ by first introducing an interval exchange
transformation of which this surface is a suspension. We will employ a
similar strategy; in fact all surfaces in the imaginary-rel ray we are
interested in will be suspensions of this IET. 

\subsection{Renormalization of the IET}
\name{sect:renormalization}
Fix an integer $\genus \geq 2$.
Let $\alpha$ be the unique positive real solution to the polynomial equation
\eq{eq: alpha}{
\alpha+\alpha^2+ \cdots + \alpha^\genus=1.}
It is known by work of \cite{AY} that the multiplicative inverse
$\alpha^{-1}$ of $\alpha$, is a Pisot number (i.e., the 
algebraic conjugates of $\alpha$ all lie in the interior of the unit
circle) and that for $\genus \geq 3$ the field $\Q(\alpha)$ is of
degree $\genus$ and is not totally real.
The value of $\alpha$ is decreasing as $\genus \to \infty$, dropping from the
reciprocal of the golden mean (when $\genus=2$) to $\frac{1}{2}$ in
the limit as $\genus \to \infty$.  

Associated to $\genus$ is an IET
$\IE:\TTT \to \TTT$. When
specifying coordinates we identify $\TTT$ with the half-open interval
$[0,1)$.  
The interval exchange $\IE$ is a composition of two involutions,
$\IE=I_2 \circ I_1$. We define $I_1:[0,1) \to [0,1)$ by partitioning
$[0,1)$ into $\genus$ subintervals: 
\eq{eq:intervals}{
J_1=[0,\alpha), ~ J_2=[\alpha, \alpha+\alpha^2), ~ \ldots, ~ J_\genus=[1-\alpha^\genus,1),}
where interval $J_k$ has length $\alpha^k$.  
Each interval $J_k$ is invariant under the map $I_1$, and the action of $I_1$ on $J_k$ satisfies
$$I_1 |_{J_k}:J_k \to J_k; \quad t \mapsto t+\frac{\alpha^k}{2} \pmod{\alpha^k}.$$
That is, $I_1$ treats each $J_k$ as a circle $\R/\alpha^k \Z$ and rotates this circle halfway around. 
The second involution does the same to the full interval $[0,1)$;
$$I_2:[0,1) \to [0,1); \quad t \mapsto t+\frac{1}{2} \pmod{1}.$$

For us, the most important aspect of the IET $\IE$ is that it
is renormalizable in the strongest sense: there is a return map to a
subinterval which is essentially the same as the original map (up to
uniform scaling and rotation).

\begin{thm}[Arnoux-Yoccoz \cite{AY}]
\name{thm:renormalization}
Let $\widehat \IE:\R/\alpha \Z \to \R/\alpha \Z$ be the first return
map of $\IE$ to $[0,\alpha)$, with $[0,\alpha)$ viewed as a
fundamental domain for $\R/\alpha\Z$. Define 
$$\psi:\R/\alpha \Z \to \TTT; \quad s \mod \alpha \Z ~ \mapsto ~ \alpha^{-1}
s+\frac{\alpha^{-1}-1}{2} \mod\Z.$$
(This map scales $\R/\alpha \Z$ to become $\R/\Z$ and then rotates by $\frac{\alpha^{-1}-1}{2}$.)
Then $\psi \circ \widehat \IE=\IE \circ \psi$.
\end{thm}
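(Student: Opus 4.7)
The strategy is a direct piecewise verification of the identity $\psi \circ \widehat\IE = \IE \circ \psi$, i.e.\ that $\widehat\IE = \psi^{-1} \circ \IE \circ \psi$ as piecewise isometries of $\R/\alpha\Z$. Since both sides are IETs of the circle $\R/\alpha\Z$, it suffices to match their intervals of continuity and the translation constant on each. The explicit form of $\psi$, namely $\psi^{-1}(t) = \alpha t - \tfrac{1-\alpha}{2} \pmod{\alpha}$, reduces the problem to a finite, genus-dependent computation that should close up using the defining identity $\alpha + \alpha^2 + \cdots + \alpha^\genus = 1$.

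First, I would set up the explicit piecewise formula for $\IE$ itself. Because $I_1$ restricts to a half-rotation of each $J_k$, on $J_k$ the map $\IE$ is given by $x \mapsto x \pm \tfrac{\alpha^k}{2} + \tfrac{1}{2} \pmod 1$, with the sign depending on which half of $J_k$ the point lies in. This partitions $[0,1)$ into $2\genus$ affine pieces; on each piece $\IE$ is a translation $\pmod 1$.

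Next, I would compute the return map $\widehat\IE$ to $J_1 = [0,\alpha)$ inductively. A point $x \in J_1$ satisfies $\IE(x) \in [1/2, 1/2 + \alpha) \pmod 1$ (for $\genus \geq 3$; the case $\genus = 2$ requires a small modification), and using $\alpha + \cdots + \alpha^\genus = 1$, I would decompose $[1/2, 1/2+\alpha) \pmod 1$ into subintervals, each contained in a single $J_k$ with $k \geq 2$. I then track forward orbits: starting from a point in $J_k \cap \IE(J_1)$, iterate $\IE$ until the orbit first re-enters $J_1$. I expect, and will verify by induction on $\genus - k$, that this return time is finite and depends only on which $J_k$ the point lies in, and that the net translation $\IE^{n_k}$ reduces $\pmod 1$ to a simple linear expression in $\alpha$ (again collapsed using $\sum_{i} \alpha^i = 1$). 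Assembling these pieces yields $\widehat\IE$ as an explicit IET on $[0,\alpha)$ with at most $2\genus+1$ intervals.

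Finally, I would apply $\psi$ to the formula for $\widehat\IE$ from the previous step and compare to $\IE$ interval by interval. Under $\psi$, the subinterval of $[0,\alpha)$ of length $\alpha^{k+1}$ (the piece on which the return time equals $n_k$) is sent to an interval of length $\alpha^k$; I expect this image to be exactly $J_k \subset [0,1)$ after translation by $\tfrac{\alpha^{-1}-1}{2}$. A final algebraic check, once more resting on $\alpha + \cdots + \alpha^\genus = 1$, will confirm that the translation constants of $\psi \circ \widehat\IE \circ \psi^{-1}$ on each piece match those of $\IE$.

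The main obstacle is the bookkeeping: for each $J_k$ with $k \geq 2$, determining the return time $n_k$ and the correct branch of $I_1$ at each intermediate step, and then tracking the modular reductions through $I_2$. The algebra for lining up the translation constants under $\psi$ is straightforward only because the defining identity of $\alpha$ forces cancellations among terms of the form $\tfrac{1-\alpha^j}{2}$. Once the pieces and constants are matched, the theorem is immediate.
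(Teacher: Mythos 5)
Your plan is a legitimate route to the theorem and would close up, but it anticipates far more bookkeeping than is actually present, and the paper's proof rests on two structural observations that your outline misses. First, the return time of $\IE$ to $[0,\alpha)$ is at most $2$: writing $\IE=I_2\circ I_1$ and noting that $I_1$ preserves each $J_k$, any point $x\in[\alpha,1)$ is sent by $I_2$ to $x-\tfrac12\in[0,\tfrac12)\subset[0,\alpha)$ (using $\alpha>\tfrac12$). Hence $\widehat\IE(s)$ equals $\IE(s)$ or $\IE^2(s)$, and your proposed induction on $\genus-k$ to determine return times $n_k$ is vacuous --- there is no orbit to track beyond one extra step. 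Second, rather than expanding $\IE$ into its $2\genus$ affine branches and matching translation constants piece by piece, the paper isolates a short lemma (for $s\in[0,\alpha)$: if $\IE(s)\ge\alpha$ then $\psi(s)=\alpha^{-1}\IE(s)-1$; if $\IE(s)\le\alpha$ then $\psi(s)\in J_\genus$ and $\psi\circ\IE(s)=\IE\circ\psi(s)$, proved from the three-branch formula for $\IE$ restricted to $[0,\alpha)$) together with the observation that the rescaling $\zeta:x\mapsto\alpha^{-1}x-1$ carries $J_k$ affinely onto $J_{k-1}$ and therefore commutes with $I_1$. The two-step case then becomes $\IE\circ\psi(s)=I_2\circ\zeta\circ I_1\circ\IE(s)$ and reduces to the one-line identity $I_2\circ\zeta=\psi\circ I_2$ on $[\alpha,1)$; at no point is $I_1$ expanded into its $2\genus$ branches. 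What your approach buys is an explicit combinatorial description of $\widehat\IE$ (which the paper never needs); what it costs is exactly the branch-by-branch constant matching you flag as the main obstacle. If you do carry out your plan, establish the return-time bound first --- it collapses most of the work --- and note that $\genus=2$ needs no special treatment anywhere in this argument.
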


Arnoux and Yoccoz stated this for $\genus \geq 3$  and Bowman observed
that it also holds for $\genus=2$ (see \cite[\S 2]{AY} and
\cite[Appendix]{Bowman}). We give the proof for completeness
below. Before doing this, we will prove the following lemma  
which contains information on the interplay between $\IE$ and the map
$\psi$ defined above. This lemma will also be used for later arguments
involving rel rays. 

\begin{lem} Let $s \in [0,\alpha)$. Viewing $\IE$ as a map $[0,1) \to [0,1)$, we
  have: 
\label{lem:renormalization}
\begin{enumerate}
\item If $\IE(s) \geq \alpha$ then $\psi(s)=\alpha^{-1} \IE(s)-1$.
\item If $\IE(s) \leq \alpha$ then $\psi(s) \in J_\genus$ and $\IE \circ \psi(s)=\psi \circ \IE(s)$. 
\end{enumerate}
\end{lem}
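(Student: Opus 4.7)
The plan is to verify both items by direct computation, based on the key algebraic identity
$$\alpha^{-1}-1 \;=\; 1-\alpha^\genus,$$
obtained by dividing \eqref{eq: alpha} by $\alpha$. This relation links the shift $\tfrac{\alpha^{-1}-1}{2}$ built into $\psi$ to the length $\alpha^\genus$ of $J_\genus$, and it will cause several intermediate expressions to collapse.

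First I would make the action of $\IE=I_2\circ I_1$ on $[0,\alpha)=J_1$ explicit. Since $I_1$ preserves $J_1$ by rotating by $\alpha/2$, and since \eqref{eq: alpha} forces $\alpha>1/2$, the breakpoints of $I_2$ split $[0,\alpha)$ into subintervals on each of which $\IE$ is an explicit translation. Tracking these, one finds that $\IE(s)\leq \alpha$ holds exactly on $\bigl[\tfrac{1-\alpha}{2},\tfrac{3\alpha-1}{2}\bigr)$, while $\IE(s)>\alpha$ on its complement in $J_1$.

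For case (1), I would substitute the appropriate translation for $\IE(s)$ into $\alpha^{-1}\IE(s)-1$ on each subinterval where $\IE(s)>\alpha$, simplify, and check that the result agrees with $\alpha^{-1}s+\tfrac{\alpha^{-1}-1}{2}$ modulo an integer and lies in $[0,1)$; it therefore equals $\psi(s)$ without any further reduction. For the first assertion in case (2), I would compute the affine image of $\bigl[\tfrac{1-\alpha}{2},\tfrac{3\alpha-1}{2}\bigr)$ under $\psi$, obtain $\psi(s)\in [\alpha^{-1}-1,1)$, and use the identity to rewrite $\alpha^{-1}-1=1-\alpha^\genus$, placing $\psi(s)$ inside $J_\genus=[1-\alpha^\genus,1)$.

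For the second assertion in case (2), I would compute $\IE\circ\psi(s)$ and $\psi\circ \IE(s)$ separately. Since the threshold $1-\alpha^\genus/2$ governing the branch of $I_1$ on $J_\genus$ equals $\alpha^{-1}/2$ by the identity, the branches of $I_1$ and the subsequent $I_2$ can be determined cleanly, and after collapsing terms by the identity, $\IE(\psi(s))$ reduces to a simple linear function of $s$. Affine substitution into the definition of $\psi$ gives the same linear function for $\psi(\IE(s))$, once the correct integer is subtracted for the mod~$1$ reduction. The main obstacle is bookkeeping: in every subcase one must correctly identify which branch of each piecewise map applies and confirm that all intermediate values land in the correct $[0,1)$ interval so that the ``mod~$1$'' in the definition of $\psi$ resolves unambiguously. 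Once this is handled, the identity $\alpha^{-1}-1=1-\alpha^\genus$ makes everything align.
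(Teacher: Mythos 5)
Your proposal is correct and follows the same basic strategy as the paper's proof: write out the three-branch formula for $\IE$ on $J_1=[0,\alpha)$ and verify both items by direct substitution using the identity $\alpha^{-1}=2-\alpha^{\genus}$ (equivalently your $\alpha^{-1}-1=1-\alpha^{\genus}$). However, your case decomposition is genuinely finer than the paper's, and this is not mere bookkeeping. Since $\alpha>\tfrac12$, the set where $\IE(s)<\alpha$ is the interval $\bigl[\tfrac{1-\alpha}{2},\tfrac{3\alpha-1}{2}\bigr)$, which straddles the breakpoint $s=\tfrac{\alpha}{2}$ of the piecewise formula: on $\bigl[\tfrac{\alpha}{2},\tfrac{3\alpha-1}{2}\bigr)$ one has $\IE(s)=s+\tfrac{1-\alpha}{2}<\alpha$. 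The paper's proof of item (2) computes only with the branch $s\in\bigl[\tfrac{1-\alpha}{2},\tfrac{\alpha}{2}\bigr)$, where $\psi(s)$ lands in the left half of $J_\genus$ and both sides equal $\alpha^{-1}s$; your extra subcase, where $\psi(s)$ lands in the right half of $J_\genus$ (to the right of the midpoint $1-\tfrac{\alpha^\genus}{2}=\tfrac{\alpha^{-1}}{2}$) and both sides work out to $\alpha^{-1}s-\alpha^{\genus}$, is a piece the published argument omits. Likewise for item (1): your insistence on checking that $\alpha^{-1}\IE(s)-1$ actually lies in $[0,1)$, rather than only verifying the congruence mod $1$, is exactly the content carried by the hypothesis $\IE(s)\ge\alpha$, and the paper is terser on this point.

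One small caveat: at the single point $s=\tfrac{3\alpha-1}{2}$ one has $\IE(s)=\alpha$, so the hypothesis of (2) as literally stated is satisfied there while $\psi(s)=0\notin J_\genus$. Your identification of $\{s:\IE(s)\le\alpha\}$ with the half-open interval is therefore off by this one point, but the defect is really in the lemma's non-strict inequality rather than in your argument; in the only place (2) is used, namely the proof of Theorem \ref{thm:renormalization}, it is invoked only under the strict hypothesis $\IE(s)<\alpha$, so nothing is affected.
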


\begin{proof}
Under the assumption that $s  \in [0,\alpha)$, by definition of $\IE$, we have
\eq{eq: IE(s)}{
\IE(s)=\begin{cases}
s+\frac{\alpha+1}{2} & \text{if $s<\frac{1-\alpha}{2}$,}\\
s-\frac{1-\alpha}{2} & \text{if $\frac{1-\alpha}{2}\leq s < \frac{\alpha}{2}$,}\\
s+\frac{1-\alpha}{2} & \text{if $\frac{\alpha}{2} \leq s.$}\end{cases}}
These values all lie in $[0,1)$. 

Statement (1) involves the first and third cases of (\ref{eq: IE(s)}). 
Here
$$\alpha^{-1} \IE(s)-1=\begin{cases}
\alpha^{-1} s+\frac{-1+\alpha^{-1}}{2} & \text{if $s<\frac{1-\alpha}{2}$,}\\
\alpha^{-1} s+\frac{\alpha^{-1}-3}{2} & \text{if $\frac{\alpha}{2} \leq s.$}\end{cases}$$
In either case we have $\alpha^{-1}\IE(s)-1=\alpha^{-1} s+\frac{\alpha^{-1}-1}{2} \pmod{1}$,
which is the definition of $\psi$. 

To see (2), we observe by computing with the second case of (\ref{eq: IE(s)})
that $\psi \circ \IE(s)=\alpha^{-1}s$. Also since $s \in
\left[\frac{1-\alpha}{2}, \frac{\alpha}{2} \right)$, 
we can compute
$$\psi(s)=\alpha^{-1} s +\frac{\alpha^{-1}-1}{2} \in \left
  [\alpha^{-1}-1, \frac{\alpha^{-1}}{2} \right).$$
Using the identity $\alpha^{-1}=2-\alpha^\genus$, we see that
$\psi(s) \in \left[1-\alpha^\genus, 1-\frac{\alpha^\genus}{2} \right)$, which is
the left half of the interval $J_\genus$. Using the same identity, 
$$\IE \circ \psi(s)=\psi(s)+\frac{\alpha^\genus}{2}-\frac{1}{2}=\alpha^{-1}s.$$
\end{proof}

\begin{proof}[Proof of Theorem \ref{thm:renormalization}]
Let $s \in [0,\alpha)$. We consider two cases. First suppose that $\IE(s) < \alpha$. 
Then by definition of the first return map, we have $\widehat \IE(s)=\IE(s)$.
So, the conclusion that $\psi \circ \widehat \IE(s)=\IE \circ \psi(s)$ follows from statement (2)
of Lemma \ref{lem:renormalization}.

Now suppose that $\IE(s) \geq \alpha$. Observe that $\widehat
\IE(s)=\IE^2(s)$. Using statement (1) of Lemma \ref{lem:renormalization}, we have 
$$\IE \circ \psi(s)=\IE\big(\alpha^{-1} \IE(s)-1\big).$$
Observe that the map $\zeta:x \mapsto \alpha^{-1} x-1$ sends the
interval $J_k$ affinely onto $J_{k-1}$ for 
$k \in \{2,\ldots, \genus\}$. Therefore, we observe that it commutes
with the first involution $I_1$. So, 
$$\IE \circ \psi(s)=I_2 \circ \zeta \circ I_1 \circ \IE(s).$$
If we can show that $I_2 \circ \zeta(x)=\psi \circ I_2(x)$ for $x=I_1 \circ \IE(s)$,
then we will complete the proof that $\psi \circ \widehat \IE(s)=\IE \circ \psi(s)$.
One side is
$$I_2 \circ \zeta(x)=I_2(\alpha^{-1} x-1)=\alpha^{-1} x+\frac{1}{2}+\Z.$$
Now note that $\IE(s) \geq \alpha$, so $x=I_1 \circ \IE(s) \geq \alpha$ and
therefore:
$$\psi \circ I_2(x)=\alpha^{-1} \left(x-\frac{1}{2}
\right)+\frac{\alpha^{-1}-1}{2} \mod  \Z=
\alpha^{-1} x - \frac{1}{2} \mod \Z.$$
We have verified $I_2 \circ \zeta(x)$ and $\psi \circ I_2(x)$ are equal in $\TTT$ as required.
\end{proof}

\subsection{Suspensions}
\name{sect: suspensions}
Fix $\genus \geq 2$. Consider the Arnoux-Yoccoz IET $\IE:\TTT \to
\TTT$. We work out the associated stratum of minimal complexity, and
its connected component containing suspensions of $\IE$. We recall
from \cite{KZ} that the stratum $\HH(1,1)$ is connected, and that $\HH(\genus-1, \genus-1)$ has three
connected components $\HH^{\mathrm{hyp}}(\genus-1, \genus-1), \HH ^{\mathrm{odd}} (\genus-1,
\genus-1), \HH^{\mathrm{even}}(\genus-1, \genus-1)$ when $\genus$ is odd and two
components $ \HH^{\mathrm{hyp}}(\genus-1, \genus-1), \HH^{\mathrm{non-hyp}}(\genus-1,
\genus-1)$ when $\genus \geq 4$ is even. 
\begin{prop}
\name{prop:stratum} 
The stratum of minimal complexity for $\IE$ is 
%Then every vertical separatrix of $q$ intersects $\gamma(\TTT)$ if and only if $q$ lies
%in 
$\HH(\genus-1, \genus-1)$. If $\genus \geq 3$ is odd, then
the connected component 
is $\HH^{\mathrm{odd}}(\genus-1, \genus-1)$, and if  $\genus \geq 4$
is even then it is 
$\HH^{\mathrm{non-hyp}} (\genus-1, \genus-1)$. 
\end{prop}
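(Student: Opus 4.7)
The plan is to first identify the stratum and then pin down the connected component. The first part is a combinatorial count; the second amounts to computing topological invariants on a convenient suspension.

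For the first part, I would count the discontinuities of $\IE$ on $\TTT$ and then apply the discussion preceding Proposition \ref{prop:unique stratum}. The map $I_2\colon t\mapsto t+\tfrac{1}{2}$ is a continuous rotation of the circle, so contributes nothing, and the discontinuities of $\IE=I_2\circ I_1$ agree with those of $I_1$. From the definition of $I_1$, these are the $\genus$ midpoints of the intervals $J_k$ together with the $\genus$ boundary points between consecutive $J_k$'s, for a total of $2\genus$. Since the number of discontinuities of $\IE$ equals $\sum_i(r_i+1)$ for the minimal-complexity stratum $\HH(\mathbf{r})$, and $\sum_i r_i=2\genus-2$, we conclude $k=2$. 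To see that both singularities have order $\genus-1$ rather than some other split, one may either compute the Veech auxiliary permutation $\sigma$ associated to $\IE$ and check that it partitions the $2\genus$ discontinuities into two orbits of size $\genus$, or exhibit an explicit suspension and read off the cone angles directly (for instance, inspect Figure \ref{fig:AY} when $\genus=3$ and its analogue for higher $\genus$, as in \cite[pp.~496--498]{Arnoux88}).

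For the connected component, the strategy is to identify which Kontsevich-Zorich component contains an explicit suspension; the most natural choice is the Arnoux-Yoccoz surface $x_0$ itself (or the equivalent surface $q_0$ constructed in \S\ref{sect: rel rays}). Step one is to rule out the hyperelliptic component. In $\HH(\genus-1,\genus-1)$ a surface in $\HH^{\mathrm{hyp}}$ admits an affine involution with derivative $-I$ interchanging the two singularities. I would check that this fails for $x_0$ when $\genus\geq 3$ either by confronting such an involution with the known affine symmetries of $x_0$ (the pseudo-Anosov of \cite{AY} has Pisot dilatation $\alpha^{-1}$, which is incompatible with a hyperelliptic involution for $\genus\geq 3$) or, more concretely, by direct inspection of the polygon model.

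Step two, for odd $\genus\geq 3$, is to compute the spin parity (Arf invariant) of the flat structure on $x_0$ and show it equals $1$. Via the formula of Kontsevich-Zorich \cite{KZ}, this reduces to evaluating a quadratic form on a symplectic basis of $H_1(S;\Z/2\Z)$ built from the indices (mod $2$) of smooth closed representatives of the basis elements. I would choose a symplectic basis adapted to a horizontal cylinder decomposition of $q_0$, whose cylinders are described explicitly in \S\ref{sect: rel rays}, and carry out the computation uniformly in $\genus$. The main obstacle is precisely this step: handling all odd $\genus\geq 3$ simultaneously requires a clean combinatorial packaging of the cylinder structure and the cycle indices, while the remaining steps are largely mechanical once a convenient explicit suspension is in hand.
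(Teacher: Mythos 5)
Your overall skeleton---identify the stratum via an explicit suspension, rule out the hyperelliptic component, then compute the Kontsevich--Zorich spin parity for odd $\genus$---is the same as the paper's, which works with a concrete suspension $q_\star$ built from $\genus+1$ tilted squares (Figure \ref{fig:suspension}). But two of your three steps have genuine problems.

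The proposed shortcut for ruling out $\HH^{\mathrm{hyp}}(\genus-1,\genus-1)$ is wrong for $\genus=3$: the genus-$3$ Arnoux--Yoccoz surface \emph{is} hyperelliptic and carries an affine involution with derivative $-\mathrm{Id}$ preserving the singularities (\cite{HLM}, \cite{Bowman10}; see \S\ref{sect: genus three}), so a Pisot pseudo-Anosov dilatation is in no way incompatible with hyperellipticity. What places a surface of $\HH(\genus-1,\genus-1)$ in the hyperelliptic component is not the mere existence of such an involution but how it acts on the two labelled zeros, so in genus $3$ a soft symmetry argument cannot work and you must inspect the geometry. The paper does this for $q_\star$: its Delaunay decomposition forces any $-\mathrm{Id}$ involution to preserve each of the small squares, and the gluings then rule such an involution out. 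Your ``direct inspection of the polygon model'' would need to be carried out at this level of concreteness, uniformly in $\genus$.

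The spin computation, which you correctly flag as the main obstacle, is also not merely mechanical with your choice of surface. The horizontal decomposition of $q_0$ from \S\ref{sect: rel rays} supplies only $\genus$ disjoint core curves; the formula of \cite{KZ} needs a full symplectic basis of $2\genus$ curves with computable winding-number indices, and the curves dual to the horizontal cylinders of $q_0$ are not cylinder core curves, so their indices require a separate (and not obviously clean) computation. The surface $q_\star$ is engineered precisely so that two transverse families of cylinders (slopes $-2$ and $\tfrac12$) furnish all $2\genus$ basis elements as core curves, each of index zero, whence the parity is $\genus \bmod 2$ with no further work. Finally, a smaller gap: the count of $2\genus$ discontinuities gives $2g-2+k=2\genus$ only once you know the genus $g$ of the minimal suspension equals $\genus$, and establishing that (as well as the fact that the two zeros have equal order) requires either Veech's auxiliary permutation or an explicit suspension --- so the explicit construction is not an optional verification but is where the content of the first assertion actually lies.
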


\begin{figure}
   \includegraphics[width=2.4in]{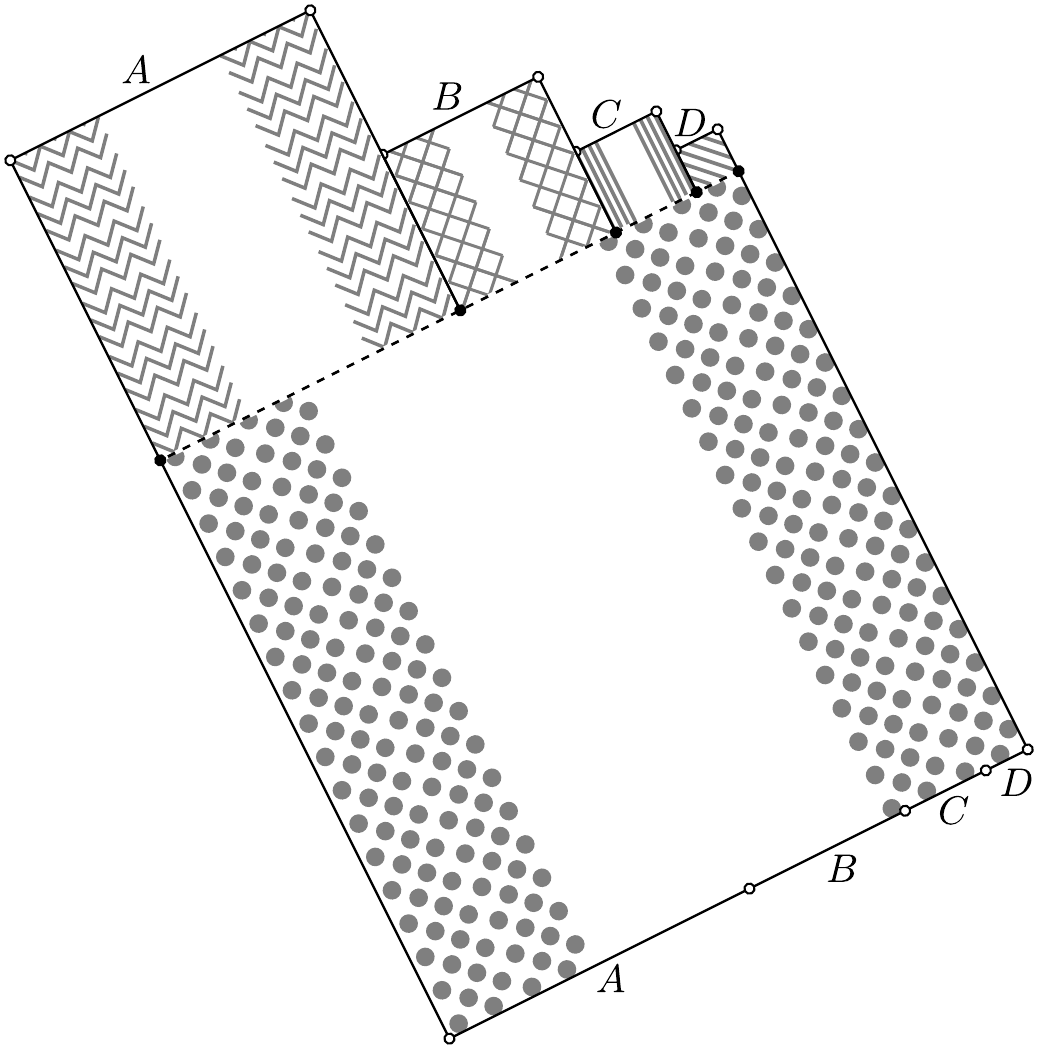}
   \includegraphics[width=2.4in]{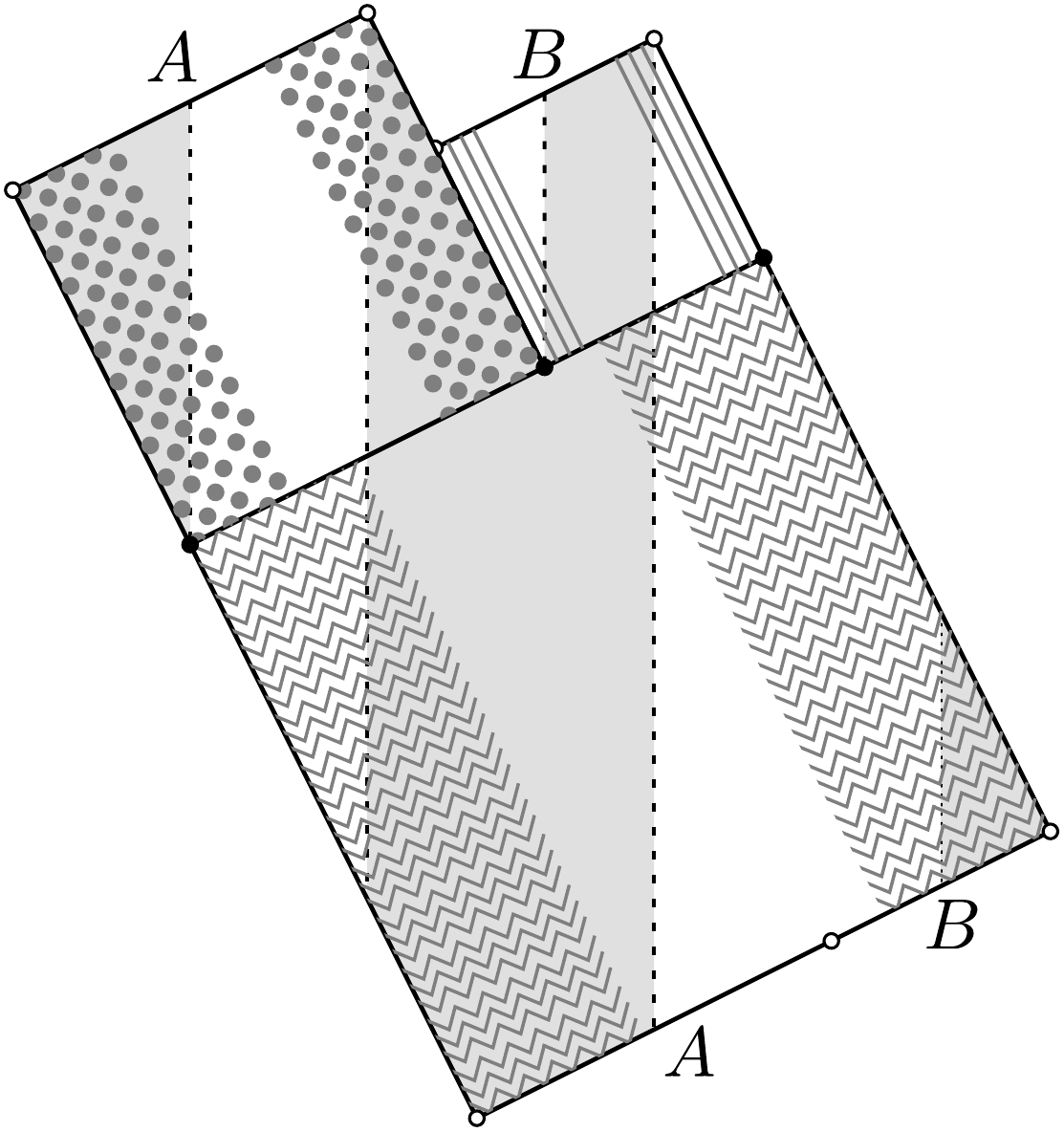}
  \caption{The surface $q_\star$ from the proof of Proposition
\ref{prop:stratum} with $\genus=4$ (left) and $\genus=2$ (right). Side
identifications for edges of slope $-2$ are indicated by the hatching. }
  \label{fig:suspension}
  \end{figure}

\begin{proof}
We will exhibit an explicit suspension $(q_\star,\gamma_\star)$ of $\IE$ with the property
that every vertical leaf of $q_\star$ intersects $\gamma_\star$.  
%with this property \combarak{Which property?}
By Proposition \ref{prop:unique stratum} (following Veech), $q_\star$
must then lie in the stratum of minimal complexity. We will then
determine the connected component of the corresponding stratum. 
%statement holds for any interval exchange. \combarak{I didn't
%  understand this. What is ``this statement''?} The only question is which
%connected component of a stratum makes the statement true. We can
%therefore determine the component by exhibiting an explicit suspension  
%with this property \combarak{Which property?}. 

The suspension
$q_\star$ will be made of $\genus+1$ squares of dimensions $1 \times 1$, $\alpha
\times \alpha$, \ldots, $\alpha^\genus \times \alpha^\genus$. See
Figure \ref{fig:suspension} for examples. These squares are tilted so
that the edges have slope $\frac{1}{2}$ and $-2$. On
each square, opposite sides of
slope $-2$ are identified, to form a cylinder. The choice of slopes guarantees that a half
rotation is performed when following the vertical straight-line flow
from the bottom to the top of each of these cylinders, where 
the rotation is measured relative to the vertices of the square. The
slope $\frac{1}{2}$ edges of the smaller squares are glued to the
slope $\frac{1}{2}$ edge of the largest square. As we move from right
to left over either slope $\frac{1}{2}$ edge of the largest square we
glue to the other squares in order of largest to smallest. This
defines $q_\star$.  

The gluing rules imply that all the bottom corners of the small
squares are identified with each other and all the top corners of the
small squares are identified with each other. Thus there are two cone angles each of cone angle
$2 \pi \genus$; one lies on the top of the large square, and the other
lies on the bottom. Therefore the surface $q_\star$ lies in the stratum
$\HH(\genus-1,\genus-1)$.  

Let $\gamma_\star:\TTT \to q_\star$ be a constant speed parametrization of a geodesic core curve 
of the cylinder formed by the largest square, which we parameterize in
such a way that $\gamma_\star(0)$ hits the top corner of this largest square
under vertical straight-line flow. Based on the definition of $\IE$ as
a product of half rotations, we see that  
$\gamma_\star$ conjugates $\IE$ to the return map of the vertical
straight-line flow to $\gamma_\star(\TTT)$. That is, $(q_\star, \gamma_\star)$ is a
suspension of $\IE$. It also can be explicitly checked that every
vertical separatrix intersects 
$\gamma_\star$, so $q_\star$ lies in the connected component of the minimal
complexity stratum containing suspensions of $\IE$. 

The connected component of the stratum can we worked out following 
\cite{KZ}. When $\genus \geq 3$, the surface $q_\star$ is not hyperelliptic.
(If $q_\star$ admitted an affine automorphism with derivative $-\mathrm{Id}$,
it would have to preserve all but the largest square, because the
smaller squares are part of the Delaunay decomposition of
$q_\star$. But because of the way these squares are glued to the
largest square, rotating each of these squares halfway around does not
extend to an automorphism of the full surface.) 
Using the classification of connected components 
\cite[Theorem 1]{KZ}, 
we see that this determines the connected component containing these
suspensions when $\genus $ is even. 
When $\genus \geq 3$ is odd, we need to consider the spin
invariant. This can be computed using a symplectic basis 
for $H_1(S;\Z)$ where $S$ is the topological surface underlying the
translation structure provided by $q_\star$.  
A symplectic basis is given by core curves of the $\genus$ cylinders
of slope $-2$ (each of which pass through one of the smaller squares and the
largest square) together with core curves of the smallest $\genus$
cylinders of slope $\frac{1}{2}$ (all but the one through the largest
square). Since these are cylinders, the index of the field tangent to
each curve is zero; see \cite[Lemma 2]{KZ}. Thus the parity of the
spin structure on $q_\star$ is given by  
$\genus \pmod{2}$; see \cite[equation (4)]{KZ}. So when $\genus \geq
3$ is odd, $q_\star \in \HH^{\mathrm{odd}}(\genus-1,\genus-1)$.  
\end{proof} 

We will use $\bullet$ and $\circ$ to label the singularities of
surfaces in $\HH(\genus-1, \genus-1)$. 
A suspension $(q, \gamma)$ with $q \in \HH(\genus-1, \genus-1)$ is
{\em compatibly labeled} if 
\begin{enumerate}
\item[(a)] Whenever $t$ is an endpoint of $J_k$ from
  (\ref{eq:intervals}) for some $k$, the vertical straight-line flow
  of $\gamma(t)$ 
hits the singularity labeled $\bullet$. 
\item[(b)] Whenever $t'$ is a midpoint of an interval $J_k$, the
  vertical straight-line flow applied to $\gamma(t')$ 
hits the singularity labeled $\circ$. 
\end{enumerate}
To see that this definition gives a well-defined labeling (i.e., that
the label of a singularity is independent of
the choice of $k$), we just need to check on one surface. The reader
may 
consider the surfaces from the proof above. Statements (a) and (b)
above define a partition of the discontinuities of $\IE$, and
the reader can verify that the labeling is compatible in the sense 
of \S \ref{sect:IETs}, justifying our terminology.

The renormalizing map $\psi$ from Theorem \ref{thm:renormalization}
interacts with the suspensions of $\IE$. 
Recall that $\psi:\R/\alpha \Z \to \TTT$ scales uniformly and rotates. 
Let $(q, \gamma)$ be a suspension of $\IE$ with $q \in \HH(\genus-1,
\genus-1)$, and consider the map $\gamma \circ \psi^{-1}:\TTT \to q$
(where we treat $\psi^{-1}$ as a map $\TTT \to [0,\alpha)$). This 
map is not continuous, because $\gamma(0) \neq \gamma(\alpha)$. We
will fix this by pushing $\gamma \circ \psi^{-1}$ along the leaves of
the vertical foliation.
Observe that both $0$ and $\alpha$ are discontinuities for the map $\IE$, and
$$\IE(0)=\lim_{t \to \alpha-} \IE(t)=\frac{1+\alpha}{2}.$$
Let $F_s: q \to q$ be the vertical straight-line flow. From the above, we have
$$\gamma \left(\frac{1+\alpha}{2}\right)=\lim_{t \to 0+}
F_{r(t)} \gamma(t)=\lim_{t \to \alpha-} F_{r(t)}
\gamma(t),$$ 
where $r(t)$ denotes the return time of $\gamma(t)$ to $\gamma(\TTT)$
under the flow $\{F_t\}$. 
Choose a continuous function $s:[0,\alpha] \to \R$
satisfying $s(0)=r(0)$, $s(\frac{\alpha}{2})=0$, $s(\alpha)=r(\alpha)$  
and $0 \leq s(t) \leq r(t)$ for all $t \in (0, \alpha)$. (The value
$\frac{\alpha}{2}$ is singled out because it is the only discontinuity
of $\IE$ in the open interval $(0,\alpha)$). These choices make the
following a well-defined continuous curve: 
\eq{eq: hat gamma}{
\hat \gamma:\TTT \to q; \quad t \mapsto 
\begin{cases}
\gamma(\frac{1+\alpha}{2}) & \text{if $\psi^{-1}(t)=0$}, \\
F_{s(t)} \circ \gamma \circ \psi^{-1}(t) & \text{otherwise}.
\end{cases}
}
This new curve is well-defined because when $\psi^{-1}(t) \neq 0$, 
the vertical straight-line flow of $\gamma \circ \psi^{-1}(t)$ does not hit a singularity
up to time $s(t)$. Although $\hat \gamma$
depends on the choice of the function $s(t)$, different choices will
only change $\hat \gamma$ by an isotopy along vertical leaves. As this
will not affect our arguments we suppress the dependence on $s(t)$
from the notation. We denote by $\gamma \mapsto \hat \gamma$ this operation which produces
a new curve $\hat \gamma$ from $(q,\gamma)$.

We have the following
corollary to Theorem \ref{thm:renormalization}. The proof is a simple calculation which we leave to the reader.

\begin{cor}
\name{cor:second suspension}
If $(q,\gamma)$ is a compatibly labeled suspension of $\IE$ with $q
\in \HH(\genus-1, \genus-1)$, then so is $(q,\hat \gamma)$. 
\end{cor}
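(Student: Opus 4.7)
The plan is to verify both defining properties: (i) $(q,\hat\gamma)$ is a suspension of $\IE$, and (ii) its labeling of singularities is compatible.

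For (i), observe that $\hat\gamma$ is obtained from $\gamma \circ \psi^{-1}$ by flowing along vertical leaves for time $s(t) \in [0, r(t)]$. Because $s$ never exceeds the first return time, this isotopy crosses no vertical leaf twice and passes through no singularity; hence $\hat\gamma$ is a continuous simple closed curve in $q \setminus \Sigma$ transverse to the vertical foliation. The first-return map of the vertical flow to $\hat\gamma(\TTT)$, expressed in the coordinate $t$, is therefore conjugate (via $t \leftrightarrow \psi^{-1}(t)$) to the first-return map to $\gamma([0,\alpha))$, which by definition equals $\hat\IE$. After reparameterization this becomes $\psi \circ \hat\IE \circ \psi^{-1}$, which equals $\IE$ by Theorem \ref{thm:renormalization}.

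For (ii), the crucial observation is that $\hat\gamma(t)$ and $\gamma \circ \psi^{-1}(t)$ lie on a common vertical leaf with no intervening singularity, so their forward vertical trajectories terminate at the same singularity. Compatibility of $(q,\hat\gamma)$ thus reduces to showing that, under the bijection $\psi^{-1}$ from $\IE$-discontinuities in $\TTT$ to $\hat\IE$-discontinuities in $[0,\alpha)$, endpoints (respectively midpoints) of the $J_k$ correspond to $\hat\IE$-discontinuities whose vertical flow in $(q,\gamma)$ hits $\bullet$ (respectively $\circ$). Since $\psi$ is a topological conjugacy of circle maps, it preserves the Veech-orbit structure on discontinuities, so it suffices to verify one representative per orbit. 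For example, $\alpha/2 \in [0,\alpha)$ is simultaneously the midpoint of $J_1$ (labeled $\circ$ in $(q,\gamma)$) and a discontinuity of $\hat\IE$, and Lemma \ref{lem:renormalization}(2) gives $\psi(\alpha/2) = 1 - \alpha^\genus/2$, the midpoint of $J_\genus$, which is also $\circ$. This anchors the $\circ$-orbit, and the $\bullet$-orbit is then identified by elimination.

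I expect the main point requiring care to be precisely this combinatorial matching in (ii); once one confirms that the affine conjugacy $\psi$ respects the endpoint/midpoint dichotomy on a single orbit representative, the preservation of the full Veech-orbit structure forces the remainder of the correspondence, and Lemma \ref{lem:renormalization} supplies the explicit relations (case (1) via $\psi(s) = \alpha^{-1}\IE(s) - 1$, case (2) via $\IE \circ \psi = \psi \circ \IE$) needed for any additional verification.
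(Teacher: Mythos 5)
Your proof is correct and follows essentially the argument the paper intends (the paper itself leaves this as "a simple calculation for the reader," and its implicit proof is exactly yours: the suspension property via the conjugacy $\IE=\psi\circ\widehat\IE\circ\psi^{-1}$ of Theorem \ref{thm:renormalization}, and compatibility of labels by checking the single point $\psi(\alpha/2)=1-\alpha^{\genus}/2$, using that Veech's partition of the discontinuities is intrinsic to $\IE$). The only detail you gloss over is the exceptional parameter $t$ with $\psi^{-1}(t)=0$, where $\hat\gamma(t)$ is defined separately as $\gamma\left(\frac{1+\alpha}{2}\right)$ rather than by flowing from $\gamma(0)$; this case needs the one-line observation that $\frac{1+\alpha}{2}=\IE(0)>\alpha$, so the return map is still computed correctly there.
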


\begin{dfn}
\name{def: renormalization mapping class}
In the context of the corollary, 
let $\varphi:q \to q$ be a homeomorphism from the canonical isotopy
class corresponding to the suspensions  $(q,\gamma)$ and $(q, \hat
\gamma)$ (see \S \ref{sect:IETs}). 
%such that $\gamma=\varphi \circ \hat \gamma$ and $\varphi$ maps
%vertical leaves to vertical leaves.
We call $[\varphi]$ the {\em renormalization mapping class} of the
suspension $(q, \gamma)$.  
\end{dfn}

\subsection{Completely periodic rel rays}
\name{sect: rel rays}
In this section, we will describe an imaginary-rel ray in $\HH(\genus-1,\genus-1)$
so that every surface on the ray is horizontally completely
periodic. In this section we fix $\genus \geq 2$. The argument is 
 the same for each $\genus$. 

\begin{figure}
   \includegraphics[width=4in]{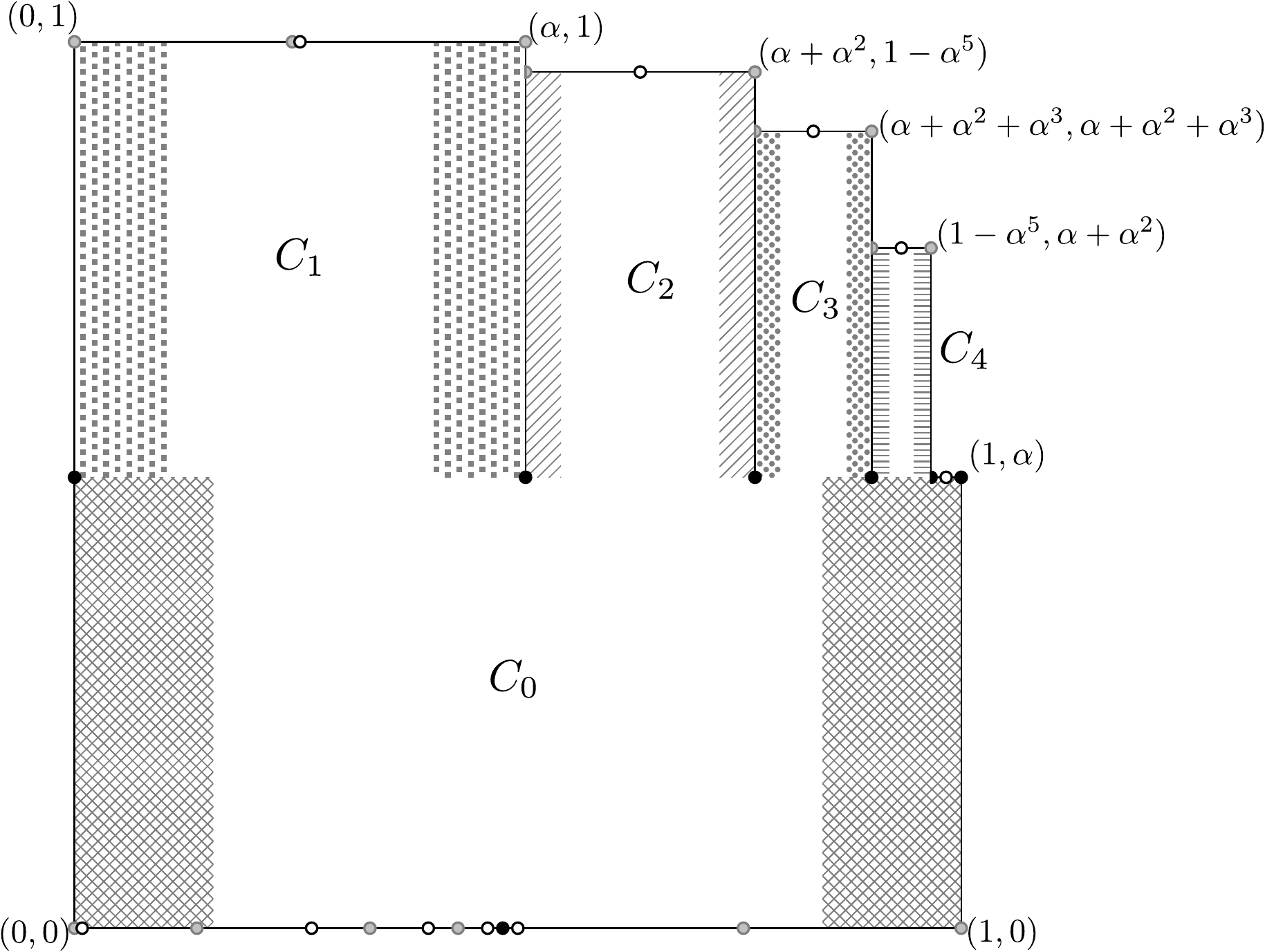}
  \caption{The surface $q_0$ with genus $\genus=5$, is made up of five horizontal cylinders. A point $(x,y)$ on a top boundary is glued to the point $(\IE(x),0)$.}
  \name{fig:start}
  \end{figure}

We will describe a surface $q_0 \in \HH(\genus-1,
\genus-1)$ which is horizontally completely periodic and is a
suspension of $\IE$. We present $q_0$ as a disjoint union of rectangles
with edge identifications. 
We define the rectangles
\eq{eq:old rectangles}{
\begin{array}{c}
\displaystyle R_0=[0,1] \times [0,\alpha],\\
\displaystyle 
R_k=\overline{J_k} \times \left[\alpha,\sum_{j=1}^{\genus-k+1} \alpha^{j}\right]\quad \text{for $k \in \{1,\ldots, \genus-1\}$}
\end{array}}
and with $\overline{J_k}$ the closure of the interval $J_k$ from
(\ref{eq:intervals}).
The edge identifications follow:
\begin{itemize}
\item We identify opposite vertical sides of $R_k$ for $k \in \{0,
  \ldots, \genus-1\}$ by a horizontal translation, 
so that each $R_k$ forms a horizontal cylinder $C_k$ on $q_0$. 
\item We glue the bottom edge of $R_k$ for $k \in \{1,\ldots,
  \genus-1\}$ with the top edge of $C_0$ by the identity map (the
  bottom edge of each $R_k$ is a subset of the top edge of $R_0$,
  viewed as subsets of $\R^2$).  
\item If $(x,y)$ is in the top edge of $R_k$ for $k \in \{1,\ldots,
  \genus-1\}$ or if $(x,y)$ is in the top edge of $C_0$ with $x \in
  J_\genus$, then we identify $(x,y)$ with $(\IE(x),0)$ in the bottom of $C_0$.
\end{itemize}
We label the singularities so that $q_0$ is a compatibly labeled suspension of $\IE$.
Figure \ref{fig:start} shows an example of the surface $q_0$. 

Note that $q_0$ is naturally a measure preserving suspension of
$\IE$. To see this define $\gamma_0$ to be a core curve of $C_0$ via 
$$\gamma_0:\TTT \to q_0; \quad x \mapsto \left(x, \frac{\alpha}{2}\right) \in C_0.$$
It should be clear that because of the identifications above,
$\gamma_0$ conjugates $\IE$ to the first return  
map of the vertical straight-line flow to $\gamma_0(\TTT)$. Recall
that Corollary \ref{cor:second suspension} 
described a second suspension $(q_0, \hat \gamma_0)$ constructible
from $(q_0, \gamma_0)$. Reviewing the definitions, we see that:  
\begin{prop}
\name{prop: gamma hat}
Up to isotopy along vertical leaves, the curve
$\hat \gamma_0$ on $q_0$ is defined by  
\eq{eq: parametrization of hat gamma}{
\hat \gamma_0: \TTT \to q_0; \quad x \mapsto \left(\psi^{-1}(x),
  \alpha+\frac{\alpha}{2}\right)}
which is a parameterized core curve of the cylinder $C_1$. 
\end{prop}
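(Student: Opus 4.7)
The plan is to unwind the general definition of $\hat \gamma$ in \eqref{eq: hat gamma} for the explicit surface $q_0$ with $\gamma = \gamma_0$. Recall that for $\psi^{-1}(t) \neq 0$ one has $\hat \gamma_0(t) = F_{s(\psi^{-1}(t))}(\psi^{-1}(t),\alpha/2)$, where $F_r$ denotes the time-$r$ vertical straight-line flow, while $\hat \gamma_0(t_0) = \gamma_0(\tfrac{1+\alpha}{2}) = (\tfrac{1+\alpha}{2},\alpha/2)$ at the single exceptional $t_0 \in \TTT$ with $\psi^{-1}(t_0) = 0$. Since $\hat \gamma_0$ is defined only up to vertical isotopy, the claim amounts to showing that for every $t$ the proposed image $(\psi^{-1}(t), 3\alpha/2)$ lies on the same vertical leaf of $q_0$ as $\hat \gamma_0(t)$, and that the associated pointwise vertical isotopies vary continuously in $t$.

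First I would verify that the formula \eqref{eq: parametrization of hat gamma} is well-defined and is a continuous parametrization of a core curve of $C_1 = R_1 = [0,\alpha]\times[\alpha,1]$. The height $3\alpha/2$ lies in the open interval $(\alpha,1)$ since $\alpha<2/3$ for all $\genus\geq 2$ (the sum $\sum_{k=1}^{\genus}(2/3)^k \geq 10/9 > 1$ forces the defining root to be smaller than $2/3$). The apparent jump of $\psi^{-1}$ from $\alpha$ to $0$ at $t_0$ is matched by the cylinder identification of the vertical sides of $R_1$, so the map is continuous on $\TTT$ and traverses the core curve exactly once.

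The key geometric observation is the following. For $t$ with $x := \psi^{-1}(t) \in (0,\alpha)$, the upward vertical leaf from $(x,\alpha/2) \in C_0$ crosses the top of $R_0$ at $(x,\alpha)$ after time $\alpha/2$; since $x \in J_1$, the gluing identifies this with the bottom of $R_1$, and the leaf continues upward into $R_1$, reaching $(x, 3\alpha/2)$ after total flow time $\alpha$. No singularity is encountered on this segment, because the only corners of $R_0$ or $R_1$ lying on it would require $x \in \{0,\alpha\}$, which is excluded. Since $F_r$ preserves vertical leaves, $\hat \gamma_0(t)$ lies on the same leaf as $(x,3\alpha/2)$, and linearly interpolating the flow parameter from $s(x)$ to $\alpha$ (both inside the regular interval $[0,r(x)]$, where $r(x)=\alpha/2+(1-\alpha)+\alpha/2=1 \geq \alpha$) realizes a continuous isotopy on $\TTT \setminus \{t_0\}$.

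The main obstacle is extending this isotopy continuously across $t_0$. I would handle this by following the vertical leaf from $\hat \gamma_0(t_0) = (\tfrac{1+\alpha}{2},\alpha/2)$ in the \emph{downward} direction: flowing down by $\alpha/2$ reaches $(\tfrac{1+\alpha}{2},0)$ on the bottom of $C_0$, which the gluing $(x,1)_{R_1} \sim (\IE(x),0)_{R_0}$ together with $\IE(0)=\tfrac{1+\alpha}{2}$ identifies with $(0,1)=(\alpha,1) \in R_1$; a further descent by $1 - 3\alpha/2$ then arrives at $(0, 3\alpha/2) = (\alpha, 3\alpha/2) \in R_1$, which is the value of the core curve at $t_0$. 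A cone-angle count at the identified vertex confirms it is regular (two $\pi/2$ corners of $R_1$ glued to an interior point of the bottom of $R_0$ give total angle $2\pi$), so the leaf is non-singular. The subtle point is that the one-sided limits $\psi^{-1}(t) \to 0^+$ and $\psi^{-1}(t) \to \alpha^-$ yield the same leaf path in $q_0$ via the $R_1$ cylinder identification, so the isotopy defined on $\TTT \setminus \{t_0\}$ extends continuously through $t_0$, completing the proof.
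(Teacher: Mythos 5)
Your proof is correct and is exactly the direct verification from the definitions that the paper leaves to the reader (the paper offers no argument beyond ``reviewing the definitions''). The only genuinely nontrivial point is continuity at the exceptional parameter $t_0$ with $\psi^{-1}(t_0)=0$, and you handle it correctly by checking that the identified vertex $(0,1)\sim(\alpha,1)\sim(\tfrac{1+\alpha}{2},0)$ has cone angle $2\pi$ and by realizing the isotopy as a downward flow from $\hat\gamma_0(t)$, which depends continuously on $t$ across $t_0$.
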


We will be investigating the imaginary rel leaf through $q_0$. Define  $q_s=\rel^{(v)}_s q_0$
whenever this definition makes sense. By Proposition \ref{prop: rel
  and suspension}, 
we can push $\gamma_0$ through the rel leaf obtaining parameterized
curves $\gamma_s$ in $q_s$ 
taken from the same homotopy class (up to this deformation) which make
$(q_s,\gamma_s)$ also 
a compatibly labeled measure preserving suspension of $\IE$. 

The following is our main result on the imaginary rel leaf through $q_0$. 

\begin{thm}
\name{thm:completely periodic ray}
Fix $\genus \geq 2$, and let $q_0$ be the surface defined above. 
Define 
\eq{eq: beta}{
\beta=\frac{\alpha^2}{1-\alpha}=\sum_{j=-\infty}^{-2}
\alpha^{-j}=\alpha^2+\alpha^3+\alpha^4+\cdots.
}
Then the surface $q_s=\rel^{(v)}_s q_0$ is well-defined for every $s
> -\beta$. 
Define $x_{t}=q_{t-\beta}$ for all $t>0$. Then:
\begin{enumerate}
\item 
For every $t>0$, we have 
\eq{eq: crucial property}{
\til g x_{\alpha^{-1}t} = x_t,
\quad \text{where} \quad
\til g=\left(\begin{array}{rr}
\alpha^{-1} & 0 \\
0 & \alpha \end{array}\right).}
Thus there is an affine homeomorphism $\varphi_t:x_{\alpha^{-1} t} \to
x_{t}$ with derivative $\til g$. Let $\rho_t:x_t \to x_{\alpha^{-1}
  t}$ be a homeomorphism obtained by the rel deformation  as in 
\S \ref{subsec: rel and suspension}. Then the composition $\varphi_t \circ \rho_t:x_t \to x_t$
lies in the renormalization mapping class of $(x_t,
\gamma_{t-\beta})$.
\item Every surface $x_t$ with $t>0$ is horizontally completely
  periodic.  Concretely, let $k$ be the integer so that 
\eq{eq: alpha interval}{
\sum_{j=-\infty}^k \alpha^{-j} \leq t < \sum_{j=-\infty}^{k+1} \alpha^{-j}.}
There are two distinct combinatorial cases:
\begin{enumerate}
\item If equality is attained on the left of (\ref{eq: alpha
    interval}) then $x_t$ has a horizontal cylinder decomposition with
  $\genus$ cylinders with circumferences $\alpha^{k+2}$,
  $\alpha^{k+3}$, $\alpha^{k+4}$, \ldots, $\alpha^{k+\genus+1}$. 
\item Otherwise, $x_t$ has a horizontal cylinder decomposition with
  $\genus+1$ cylinders with circumferences $\alpha^{k+2}$,
  $\alpha^{k+3}$, \ldots, $\alpha^{k+\genus+2}$. 
In this case, each cylinder has only one singularity in each of its
boundary components, where the cylinder of greatest circumference has 
$\bullet$ appearing on its top component, and $\circ$ appearing
on its bottom component, and the other cylinders have $\circ$ on their
top component and $\bullet$ on their bottom component. 
\end{enumerate}
\end{enumerate}
\end{thm}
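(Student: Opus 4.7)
The proof proceeds in three stages: establish a base case for the self-similarity by direct geometric computation, extend it to all $t>0$ by iterating $\til g$, then deduce the mapping class identification.

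\textbf{Stage 1 (Base case).} I would first verify, via Proposition \ref{prop: real rel main}, that $q_s$ is defined for $s \in [0,\alpha]$ by identifying the saddle connections of $q_0$ and checking that none collapse in this range. I would then describe $q_s$ explicitly. Observing that in $q_0$ the top of $C_0$ over $J_\genus$ is already glued (via $\IE$) to the bottom of $C_0$ over $\IE(J_\genus)$, I would show that as $s$ moves into $(0,\alpha)$ a new horizontal cylinder of circumference $\alpha^\genus$ splits off from $C_0$, yielding $\genus+1$ horizontal cylinders of circumferences $1,\alpha,\ldots,\alpha^\genus$ (case (b) of the theorem), while $C_0$ shrinks in height as $s$ grows. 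At $s=\alpha$ the cylinder of circumference $1$ collapses, leaving $\genus$ cylinders of circumferences $\alpha,\alpha^2,\ldots,\alpha^\genus$. A direct comparison of gluings then gives the key equality
\[ q_\alpha = \til g^{-1} q_0, \quad \text{equivalently,} \quad \til g \, x_{\alpha^{-1}\beta} = x_\beta. \]
The matching of the cylinder identifications on both sides is exactly the content of the renormalization identity $\psi\circ\widehat{\IE}=\IE\circ\psi$ from Theorem \ref{thm:renormalization}.

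\textbf{Stage 2 (Induction and extension).} From the commutation $\til g\circ\rel^{(v)}_r = \rel^{(v)}_{\alpha r}\circ\til g$ (Proposition \ref{prop: rel and G commutation}) and the base case, I would iterate to obtain $\til g\, x_{\alpha^{-1}t} = x_t$ for all $t>0$. The same iteration extends the rel trajectory from $s\in[0,\alpha]$ to all $s>-\beta$ (by applying positive powers of $\til g$ to surfaces already known to exist) and propagates the cylinder decomposition to all $t>0$. Since $\til g$ scales horizontal circumferences by $\alpha^{-1}$, the list of cylinder circumferences shifts by one index under $t\mapsto\alpha^{-1}t$; the two cases in part (2) correspond to whether equality holds on the left of \eqref{eq: alpha interval} (the moment when the "extra" cylinder degenerates) or $t$ lies strictly in the open interval.

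\textbf{Stage 3 (Mapping class).} Let $\rho_t:x_t\to x_{\alpha^{-1}t}$ be a rel-deformation homeomorphism from \S\ref{subsec: rel and suspension}; by Proposition \ref{prop: rel and suspension} it carries $\gamma_{t-\beta}$ to $\gamma_{\alpha^{-1}t-\beta}$ and preserves vertical leaves, and $\varphi_t:x_{\alpha^{-1}t}\to x_t$ is affine with derivative $\til g$. Hence $\varphi_t\circ\rho_t$ preserves vertical leaves. I would verify in the base case, using the description from Stage 1, that $(\varphi_t\circ\rho_t)(\gamma_{t-\beta})$ is isotopic along vertical leaves to $\hat\gamma_{t-\beta}$: under the identification $q_\alpha=\til g^{-1}q_0$, the core curve of the largest cylinder of $q_\alpha$ is sent by $\til g$ precisely to the core curve of the cylinder $C_1$ of circumference $\alpha$ in $q_0$, which by Proposition \ref{prop: gamma hat} is $\hat\gamma_0$. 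By Definition \ref{def: renormalization mapping class}, $\varphi_t\circ\rho_t$ therefore lies in the renormalization mapping class. The main obstacle is the verification in Stage 1 that the cylinder gluings themselves (not merely their combinatorial count) match across $q_\alpha = \til g^{-1} q_0$; Theorem \ref{thm:renormalization} is essential here to promote the IET-level renormalization to an equality of marked translation surfaces.
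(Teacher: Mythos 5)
Your proposal follows the paper's proof essentially step for step: the explicit description of $q_s$ for $0<s<\alpha$ and of $q_\alpha$ (the paper's Proposition \ref{prop:complete periodicity} and the rectangles $R_k'$), the base identity $q_\alpha=\til g^{-1}q_0$ verified via Lemma \ref{lem:renormalization} (the paper's Lemma \ref{lem:affine homeomorphism}), the iteration using the commutation relation \equ{eq: rel commutation} to get $x_{\alpha^{-k}(\beta+s)}=\til g^{-k}q_s$, and the identification of $\hat\gamma_0$ with the core curve of $C_1$ via Proposition \ref{prop: gamma hat}. Stages 1 and 2 are sound and are exactly what the paper does.

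The one place your sketch is thinner than it needs to be is the passage from the base case to general $t$ in Stage 3. As written, you verify that $\varphi\circ\rho$ lies in the renormalization mapping class only at $t=\beta$ and then assert the conclusion for all $t$. For general $t$ one must first pin down \emph{which} affine homeomorphism $\varphi_t$ with derivative $\til g$ is meant (the identity $\til g x_{\alpha^{-1}t}=x_t$ only gives existence of some such map), and then show its composition with $\rho_t$ lands in the right isotopy class. The paper does this by defining $[\varphi_t]$ as the composite $x_{\alpha^{-1}t}\to x_{\alpha^{-1}\beta}\xrightarrow{\varphi} x_\beta\to x_t$ along the rel leaf, checking by a holonomy computation that this class contains an affine representative with derivative $\til g$, and then observing that the resulting element of $\Mod(S,\Sigma)$ depends continuously on $t$ and hence, by discreteness of the mapping class group, is constant and equal to its value at $t=\beta$. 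You should either add this continuity-plus-discreteness step or carry out the vertical-leaf verification of Stage 3 for an arbitrary $t$ in the fundamental domain $[\beta,\beta/\alpha)$ and conjugate by powers of $\til g$; without one of these the mapping-class claim of part (1) is only proved at a single point of the ray.
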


The proof will show that the heights of the cylinders in $x_t$ can
also be explicitly described, see Figure \ref{fig:cylinder height
  graph}.

\begin{figure}
   \includegraphics[width=3.5in]{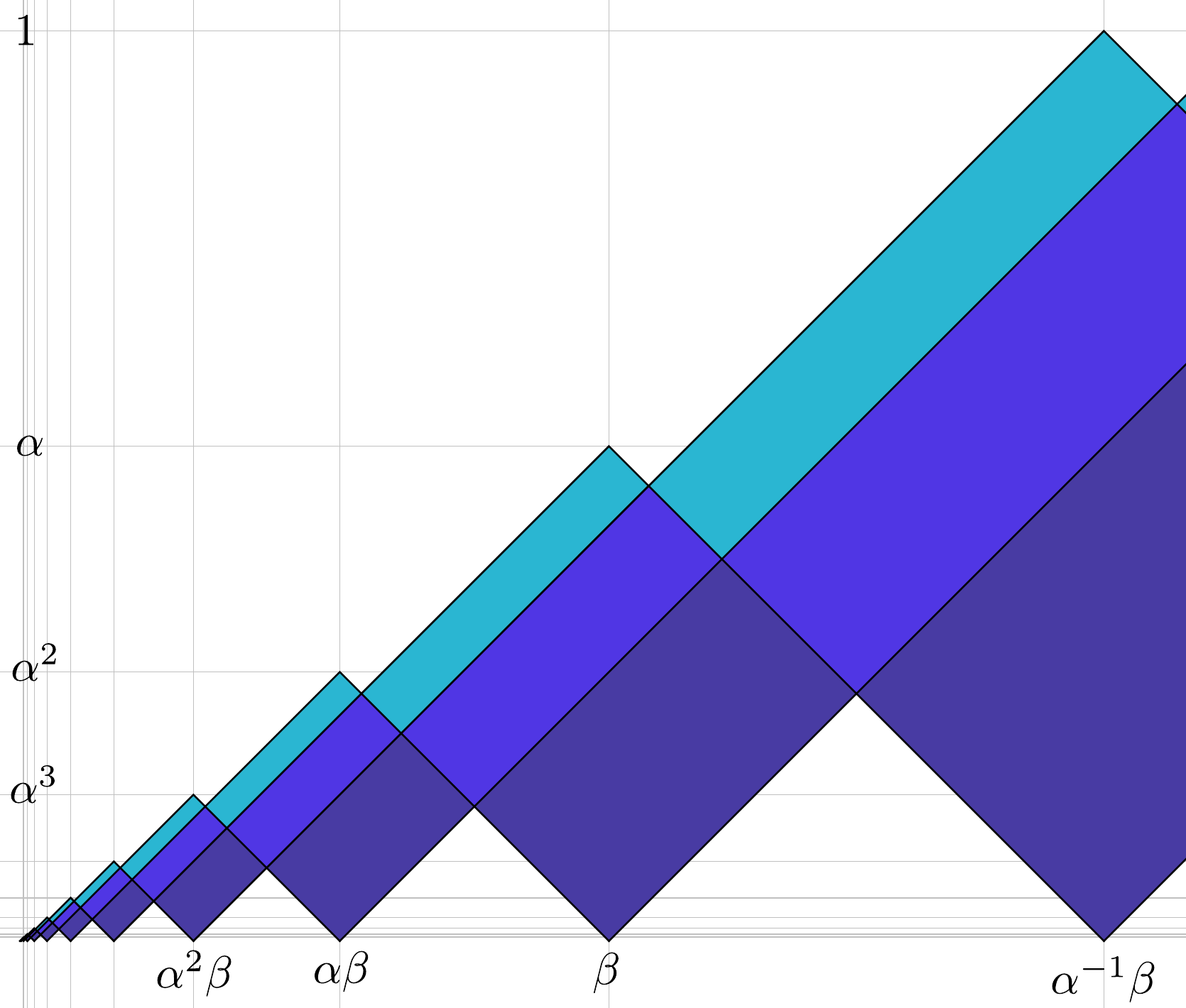}
  \caption{Black lines 
    represent heights of cylinders
    in $x_t$ as a function of $t$ (with $\genus=3$).}
  \name{fig:cylinder height graph}
  \end{figure}

%\begin{remark}[Construction of pseudo-Anosovs]
%Theorem \ref{thm:completely periodic ray} defines the imaginary rel ray $\{x_t:~t>0\}$. As
%long as $x_0$ represents a point in the stratum, we 
%see that $\til g$ stabilizes $x_0$ by continuity of the action of $\til g$ on the ray. 
%We will later show that $x_0$ is the Arnoux-Yoccoz surface (which is a
%noded surface if $\genus=2$). \compat{Say where we show this.} 
%\end{remark}

The proof of Theorem \ref{thm:completely periodic ray} occupies the
rest of this subsection.
Consider the imaginary-rel flow applied to $q_0$ for distance
$s$, by which we defined the surface $q_s$.
This flow moves the white singularity $\circ$ upward, or equivalently,
moves the black singularity $\bullet$ downward. We will describe an
explicit construction of $\rel^{(v)}_s q_0$ (following
\cite{McMullen-twists}). Since $\bullet$ has cone angle $2 \pi 
\genus$, there are $\genus$ downward pointing prongs starting at this
singularity, and we form downward pointing slits starting at these
prongs of length $s$. This construction works as long as $\bullet$ is
not the top point of a vertical saddle connection of length at most $s$.
Define a new translation surface by
using the same 
polygonal presentation, keeping all gluing maps the same, except for
the gluing along the slits. Along the slits we reglue so that the former location
of the black singularity is replaced by $\genus$ regular points. This
determines the gluings of the slits, and the bottom endpoints of the
slits are forced to be identified into a $2 \pi \genus$ cone
singularity which we label $\bullet$ in the new surface. 
By examining the effect on holonomies of paths, it can be
observed that the resulting surface is indeed $\rel^{(v)}_s q_0$; that
is for every $\gamma$ joining a singular point to itself, its holonomy
is unchanged by this surgery, and for any directed $\gamma$ from the
black singularity to the white singularity, its holonomy changes by
adding $(0,s)$. We will refer to this presentation of the imaginary-rel surgery
as the {\em slit construction.}  

\begin{figure}
   \includegraphics[width=4in]{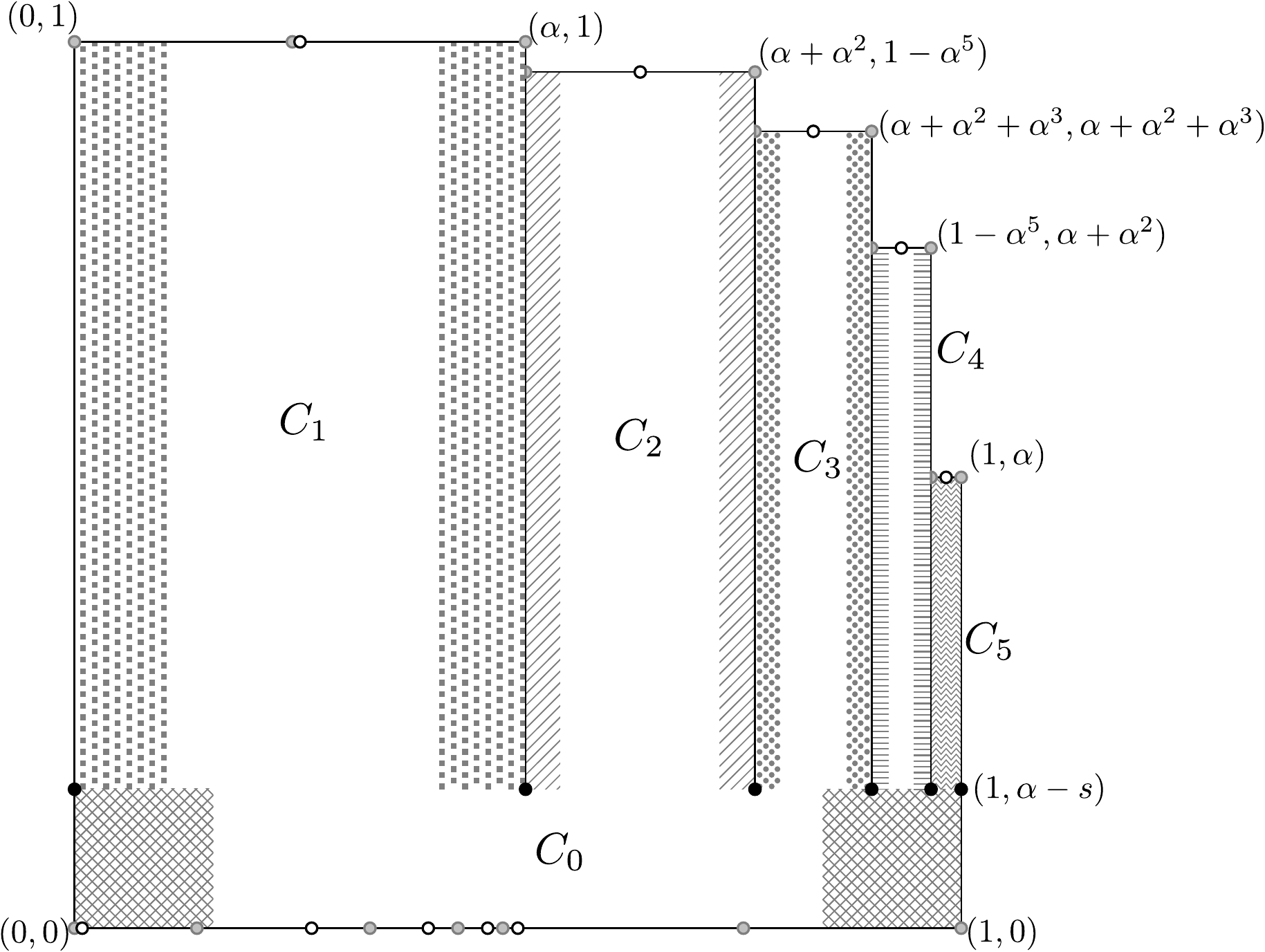}
  \caption{The surface $q_s$ with $0<s<\alpha$ when
    $\genus=5$. Again, a point $(x,y)$ on a top boundary is glued to
    the point $(\IE(x),0)$. } 
  \name{fig:partial rel}
  \end{figure}

First consider the slit construction applied to construct $q_s=\rel^{(v)}_s q_0$
for $0<s<\alpha$. The
$\genus$ downward slits cut into but not entirely  
through the cylinder $C_0$. Regluing the slits has the effect of
shortening the cylinder $C_0$ by $s$ and  
heightening the cylinders $C_1,\ldots,C_{\genus-1}$ by $s$.
This surgery also creates a new cylinder which we call $C_\genus$.
This cylinder emerges from the interval 
$J_{\genus} \times \{\alpha\}$, where the top boundary of $C_0$ touches
the bottom boundary of $C_0$. The segment $J_{\genus} \times \{\alpha\}$ has
$\circ$ as its midpoint and $\bullet$ as its endpoints. The slits
we cut move downward from these two endpoints. Since the
other edges of the slits have been accounted for, we must be gluing
the slits on either side of this interval together to make a new
cylinder. A surface of this form is shown in Figure \ref{fig:partial rel}.

\begin{prop}
\name{prop:complete periodicity}
For $0 < s<\alpha$, the surface $q_{s}$ admits a horizontal
cylinder decomposition with $\genus+1$ cylinders $C_0$, \ldots, $C_\genus$. The
circumferences of cylinder $C_k$ is $\alpha^k$, while the height $h_k$
of the cylinder $C_k$ is described by 
$h_0=\alpha-s$ and 
$$h_k=s+\sum_{j=2}^{\genus-k+1} \alpha^j= s+\beta-\alpha^{\genus-k}
\beta \quad \text{for $k=1,\ldots,\genus$.}$$ 
The homology classes $[C_k]$ of core curves of these cylinders oriented rightward 
are linearly independent as elements of $H_1(S \smallsetminus \Sigma,
\Q)$.
%, and the the class 
%\eq{eq:loop}{
%[C_0]-[C_1]-[C_2]-\ldots-[C_\genus]
%}
%can be represented by a loop which wraps once around $\bullet$ in the counterclockwise direction.
%\compat{Added this last sentence and the proof below.}
\end{prop}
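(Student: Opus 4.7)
The plan is to use the slit construction for imaginary rel recalled in the paragraph preceding the proposition. Since $s < \alpha$ equals the height of $C_0$, the $\genus$ downward slits of length $s$ descending from the prongs of $\bullet$ on the top of $C_0$ (at the $\genus$ endpoint positions $a_1,\ldots,a_\genus$ of the intervals $J_k$) stay entirely inside $C_0$; the cylinders $C_1,\ldots,C_{\genus-1}$ above are not touched directly by the slits.

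I would identify the cylinders of $q_s$ by describing the regluing explicitly. For each $k$, the right side of the slit at $a_k$ is identified with the left side of the slit at $a_{k+1}$ (indices taken cyclically modulo $\genus$). This glues together the left and right edges of the strip $J_k \times [\alpha - s,\alpha] \subset C_0$, turning this strip into a cylinder of circumference $\alpha^k = |J_k|$ and height $s$. For $k \in \{1,\ldots,\genus-1\}$ this cylinder shares the base $J_k \times \{\alpha\}$ with $R_k$ and glues beneath it, yielding the cylinder $C_k$ of circumference $\alpha^k$ and height $s + \sum_{j=2}^{\genus-k+1}\alpha^j$. For $k = \genus$ there is no $R_\genus$; the top edge $J_\genus \times \{\alpha\}$ remains identified (by the unchanged polygonal gluing) with $\IE(J_\genus)\times\{0\} \subset \partial C_0$, so the strip is itself the standalone cylinder $C_\genus$ of circumference $\alpha^\genus$ and height $s$. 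The remainder of $C_0$ below $y = \alpha - s$ is the shortened cylinder $C_0$ of circumference $1 = \alpha^0$ and height $\alpha - s$, with the new $\bullet$ on its top boundary (at the bottoms of the slits).

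The reformulation $s + \sum_{j=2}^{\genus-k+1}\alpha^j = s + \beta - \alpha^{\genus-k}\beta$ is immediate from $\beta = \sum_{j=2}^\infty \alpha^j$ upon splitting off the tail $\sum_{j=\genus-k+2}^\infty \alpha^j = \alpha^{\genus-k}\beta$, with the case $k = \genus$ recovering $h_\genus = s$. As a sanity check, the coefficient of $s$ in the total area $\sum_{k=0}^\genus \alpha^k h_k$ equals $-1 + (\alpha + \cdots + \alpha^{\genus-1}) + \alpha^\genus = 0$ by $\sum_{j=1}^\genus \alpha^j = 1$, consistent with rel-invariance of area.

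For the linear independence of $[C_0],\ldots,[C_\genus]$ in $H_1(S \sm \Sigma;\Q)$, I would use the intersection pairing. Each $C_k$ is a maximal horizontal cylinder, so both of its horizontal boundaries must contain a singularity. Choosing a simple arc $\delta_k \subset C_k$ from a singularity on the bottom boundary of $C_k$ to one on its top produces a class $\delta_k \in H_1(S,\Sigma;\Q)$ that meets the rightward core curve of $C_k$ once (with positive sign) and is disjoint from $C_j$ for $j \neq k$. The intersection matrix $(\delta_k \cdot [C_j])$ is therefore the identity, and nondegeneracy of the intersection pairing $H_1(S \sm \Sigma;\Q) \times H_1(S,\Sigma;\Q) \to \Q$ forces linear independence. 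The main delicate step is the slit-regluing analysis in the second paragraph, especially the identification of the new cylinder $C_\genus$ from the slits flanking $J_\genus$.
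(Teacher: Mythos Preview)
Your argument for the cylinder decomposition and the heights is correct and matches the paper's approach exactly: the paper simply says ``all but the last sentence should be clear from the discussion above,'' referring to the same slit construction you spell out in more detail.

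Your proof of linear independence is correct but takes a different route from the paper. You exhibit dual arcs $\delta_k \in H_1(S,\Sigma;\Q)$ crossing $C_k$ once and disjoint from the other cylinders, so the intersection matrix $(\delta_k \cap [C_j])$ is the identity; this is a general argument showing that core curves of any complete cylinder decomposition are independent in $H_1(S\smallsetminus\Sigma;\Q)$. The paper instead uses the arithmetic of $\alpha$: since $\hol_x([C_i])=\alpha^i$ and $\alpha$ has degree $\genus$, the classes $[C_0],\ldots,[C_{\genus-1}]$ are independent by injectivity of $\hol_x$ on their span $W$; then one checks that $[C_0]-[C_1]-\cdots-[C_\genus]$ is the class of a small loop around $\bullet$, which is nontrivial in $H_1(S\smallsetminus\Sigma)$ but has zero holonomy, hence $[C_\genus]\notin W$. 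Your argument is cleaner and more robust; the paper's argument additionally records the relation $[C_0]-\sum_{k\geq 1}[C_k]=$ (loop around $\bullet$), a piece of structural information about how the cylinder classes sit inside $H_1(S\smallsetminus\Sigma;\Q)$. One small remark: you do not actually need nondegeneracy of the pairing, only its well-definedness --- the identity intersection matrix already forces any $\Q$-relation $\sum a_k[C_k]=0$ to have $a_j=0$ upon pairing with $\delta_j$.
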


\begin{proof}
All but the last sentence should be clear from the discussion above.
%and the fact that (\ref{eq:loop}) is as claimed can be inferred from the description of $q_s$ provided above.
Consider that the holonomy map $\hol:H_1(S \smallsetminus \Sigma, \Q) \to \R^2$
is $\Q$-linear. Observe $\hol_x([C_i])=\alpha^i$.
Letting $W=\spa_\Q \{[C_0], \ldots, [C_{\genus-1}]\}$ (omitting $[C_\genus]$) we see
$\hol_x(W)=\Q(\alpha)$. Since $\alpha$ is an algebraic number of
degree $\genus$ \cite[Lemma 3]{AY}, we know that $\hol|_W$ is
injective and $\dim_\Q(W)=\genus$. 
Using the description of $q_s$ provided above we can see that 
$[C_0]-[C_1]-[C_2]-\cdots-[C_\genus]$ can be represented by a
loop which wraps once around $\bullet$ in the counterclockwise direction.
This nontrivial class is not in $W$ since $\hol|_W$ is injective, and hence
$[C_\genus] \notin W$. Thus  the homology classes of these $\genus+1$ cylinders are linearly
independent over $\Q$.
\end{proof}

Now consider using the slit construction to produce
$q_{\alpha}$. We think of the slit construction as
manipulating the presentation of $q_0$. Slits of length $\alpha$ cut
entirely across the rectangle $R_0$, and after the identifications 
of slits each of the rectangles $R_k$ for $k \in \{1,\ldots,
\genus-1\}$ 
has been made taller by $\alpha$. Also, a new rectangle is formed
below the interval $J_\genus \times \{\alpha\}$.  
We see that the surface $q_\alpha$ can be presented as a
disjoint union of rectangles with edge identifications. These
rectangles are  
\eq{eq:new rectangles}{R_k'=\overline{J_k} \times
  \left[0,\sum_{j=1}^{\genus-k+1} \alpha^{j}\right]\quad \text{for $k
    \in \{1,\ldots, \genus\}$}.} 

Edge identifications are given by:
\begin{itemize}
\item Opposite vertical edges of each $R_k'$ are glued by translation
  to form cylinders $C_k'$ for $k \in \{1,\ldots, \genus\}$. 
\item If $(x,y)$ is a point in the interior of a top edge of some
  $R_k$, we identify it to $(\IE(x),0)$ in the bottom edge of one of
  the rectangles. We extend this identification continuously to the
  corners. 
\end{itemize}
Figure \ref{fig:reled} shows an example presentation.

\begin{figure}
   \includegraphics[width=4in]{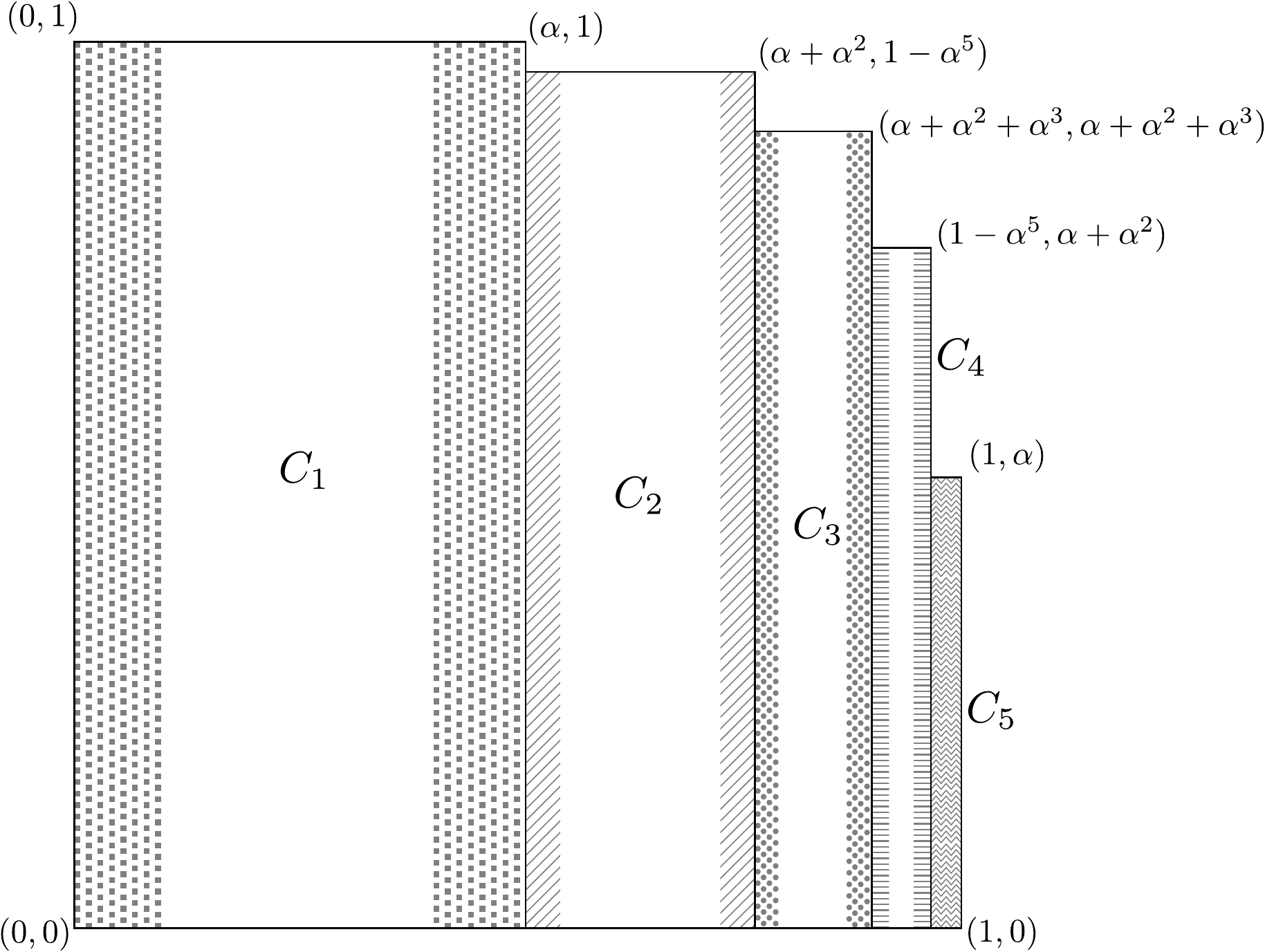}
  \caption{The surface $q_\alpha$ when
    $\genus=5$. Again, a point $(x,y)$ on a top boundary is glued to
    the point $(\IE(x),0)$.} 
  \label{fig:reled}
  \end{figure}

The following is the main ingredient in the proof of Theorem
\ref{thm:completely periodic ray}. 

\begin{lem}
\label{lem:affine homeomorphism} 
There is an affine homeomorphism $\varphi:q_\alpha \to q_0$ with
derivative $\til g$ (as in \eqref{eq: crucial property}), which
carries each cylinder $C_{i+1}'$ of $q_\alpha$ to cylinder $C_i$ of
$q_0$. Let $\rho:q_0 \to q_\alpha$ 
be a homeomorphism moving points in their vertical leaf, obtained from
the deformation through the imaginary 
rel leaf (see \S \ref{subsec: rel and suspension}). Then $\varphi \circ \rho$ 
is a representative of the renormalization mapping class of the
suspension $(q_0, \gamma_0)$. 
%from Definition \ref{def: renormalization mapping class}.
\end{lem}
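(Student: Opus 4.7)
The plan is to build $\varphi$ as a piecewise affine map on the rectangular decomposition of $q_\alpha$ and to verify compatibility with the gluings using the renormalization identity of Theorem \ref{thm:renormalization}. On each rectangle $R_k'$ (for $k = 1, \ldots, \genus$) I would define $\varphi$ to be the unique affine map with derivative $\til g$ whose image is the rectangle $R_{k-1}$ of $q_0$ (with $R_0$ interpreted as the large bottom rectangle). Concretely, set
\[
\varphi|_{R_1'}(x,y) = (\alpha^{-1}x,\, \alpha y) \quad \text{and} \quad \varphi|_{R_k'}(x,y) = (\alpha^{-1}x-1,\, \alpha y + \alpha) \text{ for } k \geq 2.
\]
Using $\sum_{j=1}^\genus \alpha^j = 1$ together with the identity $\alpha^{-1} J_k = J_{k-1}+1$ for $k \geq 2$, one checks that each $\varphi|_{R_k'}$ is a bijection onto $R_{k-1}$ (modulo the cylinder gluings), so that $C_k'$ is sent bijectively onto $C_{k-1}$ and in particular the vertical-side identifications on each rectangle are respected.

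The substantive step is verifying compatibility with the top-to-bottom $\IE$-gluings of $q_\alpha$. In $q_\alpha$, a point $(x, h_k')$ on the top of $R_k'$ is identified with $(\IE(x), 0)$ on the bottom of whichever $R_l'$ contains $\IE(x)$. Under $\varphi$, $(x, h_k')$ lands on the top edge of $R_{k-1}$ (or on the top of $R_0$ when $k = 1$), while $\varphi(\IE(x), 0)$ lands either on the bottom of $R_0$ (when $\IE(x) \in J_1$) or at $y = \alpha$ on the bottom edge of some $R_{l-1}$, which in $q_0$ is glued by the identity to a portion of the top of $R_0$. I would verify case by case that these two images agree as points in $q_0$. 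The key input is that the horizontal component of $\varphi$, restricted to appropriate subintervals, is up to an integer translation the map $\psi$ of Theorem \ref{thm:renormalization}, so the compatibility reduces either to Lemma \ref{lem:renormalization} (when $\IE(x)$ stays in $[0,\alpha)$) or to the identity $\psi \circ \widehat\IE = \IE \circ \psi$ (when the two-step dynamics of $\IE$ through the upper cylinders of $q_0$ must be invoked).

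For the mapping class assertion, the homeomorphism $\rho : q_0 \to q_\alpha$ from Proposition \ref{prop: rel and suspension} moves points along vertical leaves, so $(q_\alpha, \rho\circ\gamma_0)$ is a compatibly labeled suspension of $\IE$. The parameterized curve $\varphi \circ \rho \circ \gamma_0$ in $q_0$ is then, via the explicit formula for $\varphi$ and Proposition \ref{prop: gamma hat}, isotopic through a vertical-leaf-preserving isotopy to $\hat\gamma_0 : x \mapsto (\psi^{-1}(x), \alpha + \alpha/2)$. By Definition \ref{def: renormalization mapping class}, $\varphi \circ \rho$ therefore represents the renormalization mapping class of $(q_0, \gamma_0)$. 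The main obstacle is the combinatorial bookkeeping in the gluing verification: the $\IE$-gluings of $q_\alpha$ live entirely at $y = 0$, whereas the corresponding gluings of $q_0$ are split between $y = 0$ (by $\IE$, for $x \in J_\genus$) and $y = \alpha$ (by the identity, for $x \in J_1 \cup \cdots \cup J_{\genus-1}$); matching these across the case analysis is precisely where the renormalization identity is essential.
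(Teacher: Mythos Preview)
Your overall strategy---defining $\varphi$ piecewise on the rectangles $R_k'$ and checking gluing compatibility via the renormalization identity---is exactly the paper's approach, and your formula on $R_k'$ for $k\ge 2$ coincides with the paper's. The problem is your formula on $R_1'$. The paper sets $\varphi|_{R_1'}(x,y)=(\psi(x),\alpha y)$ with $\psi(x)=\alpha^{-1}x+\tfrac{\alpha^{-1}-1}{2}\pmod 1$, not $(\alpha^{-1}x,\alpha y)$. Your assertion that the horizontal component agrees with $\psi$ ``up to an integer translation'' is false: using $\alpha^{-1}=2-\alpha^\genus$ one has $\tfrac{\alpha^{-1}-1}{2}=\tfrac{1-\alpha^\genus}{2}\in(0,\tfrac12)$, which is never an integer.

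This is not cosmetic; it breaks the gluing check. Take $x\in[0,\tfrac{1-\alpha}{2})$, so $\IE(x)=x+\tfrac{1+\alpha}{2}\ge\alpha$ lands in some $J_k$ with $k\ge 2$. In $q_\alpha$ the point $(x,1)$ on the top of $R_1'$ is identified with $(\IE(x),0)$ on the bottom of $R_k'$. Under your map the first goes to $(\alpha^{-1}x,\alpha)$ on the top of $C_0$ while the second goes to $(\alpha^{-1}\IE(x)-1,\alpha)$ on the bottom of $R_{k-1}$; in $q_0$ these are identified only if their $x$-coordinates agree modulo $1$, but the difference is $\alpha^{-1}\cdot\tfrac{1+\alpha}{2}-1=\tfrac{\alpha^{-1}-1}{2}\notin\Z$. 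With the correct formula $\varphi_1(x,y)=(\psi(x),\alpha y)$, this same check becomes exactly the identity $\psi(x)=\alpha^{-1}\IE(x)-1$ of Lemma~\ref{lem:renormalization}(1), and the remaining cases likewise reduce to Lemma~\ref{lem:renormalization}(2) and Theorem~\ref{thm:renormalization}.

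For the mapping-class assertion, the paper tracks $\hat\gamma_0$ (the core curve of $C_1$) rather than $\gamma_0$ (the core curve of $C_0$). This is the cleaner route: $C_1$ survives the rel deformation as $C_1'$, so $\rho\circ\hat\gamma_0$ is visibly a core curve of $C_1'$ with the $\psi^{-1}$-parametrization, and then the corrected $\varphi_1$ sends it to $\gamma_0$ by inspection. Your route requires following $\rho\circ\gamma_0$ through $q_\alpha$, where $C_0$ has been destroyed by the slits, which makes the verification considerably harder to carry out explicitly.
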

\begin{proof}
We will explicitly define an affine homeomorphism $\varphi:q_\alpha
\to q_0$ with derivative $\til g$. We begin by defining $\varphi$ on
the cylinder $C_1'$. Recall that we think of $C_1'$ as $R_1'$ with
vertical edges identified by horizontal translation. 
Consider the map 
$$\varphi_1:R_1' \to C_0; \quad (x,y) \mapsto \Big(
\psi(x),\alpha y\Big),$$
where $\psi$ is the renormalizing map from Theorem \ref{thm:renormalization}. 
Observe that this map descends to a well-defined affine map $C_1' \to C_0$ with derivative $\til g$.
For $k \in \{2, \ldots \genus\}$, we define affine bijections
$$\varphi_k:R_k' \to R_{k-1}; \quad (x,y) \mapsto \til g (x,y)+(-1,\alpha).$$
The check that this is really an affine bijection is left to the reader.
These maps descend to well-defined maps $C_k'\to C_{k-1}$ for $k \geq 2$. 

\begin{comment}
To see this is an affine bijection, it suffices to check that the southwest and northeast vertices of $R_k'$
map to the respective vertices of $R_{k-1}$. We'll denote the
coordinates of the southwest and northeast corner of a rectangle by
$sw(R)$ and $ne(R)$ respectively. Using (\ref{eq:old rectangles}) and
(\ref{eq:new rectangles}), 
we compute:
$$\begin{array}{rcl}
\varphi_k \circ sw(R_k') & = & \til g \left(\sum_{j=1}^{k-1} \alpha^j, 0\right)+(-1,\alpha)=
 \left(-1+\sum_{j=1}^{k-1} \alpha^{j-1}, \alpha \right) \\
 & = & 
 \left(\sum_{j=1}^{k-2} \alpha^{j}, \alpha \right)=sw(R_{k-1}).\end{array}$$
$$\begin{array}{rcl}
\varphi_k \circ ne(R_k') & = & \til g \left(\sum_{j=1}^{k} \alpha^j, \sum_{j=1}^{\genus-k+1} \alpha^{j}\right)+(-1,\alpha) \\ 
& = &
 \left(-1+\sum_{j=1}^{k} \alpha^{j-1}, \alpha+\sum_{j=1}^{\genus-k+1} \alpha^{j+1} \right) \\
 & = & 
 \left(\sum_{j=1}^{k-1} \alpha^{j}, \sum_{j=1}^{\genus-k+2} \alpha^{j} \right)=ne(R_{k-1}).\end{array}$$
\end{comment}

We define $\varphi:q_{\alpha} \to q_0$ by requiring that its
restriction to each $C'_k$ is $\varphi_k$, for $k \in \{1, \ldots,
\genus\}$. For this to be well-defined, we need to show that the maps
$\varphi_k$ agree  
on the horizontal boundaries of the cylinders. We'll write $(x_1,y_1)_j \sim (x_2,y_2)_k$ to denote
that $(x_1,y_1) \in R_j$ is identified with $(x_2,y_2) \in R_k$ along
a horizontal edge in $q_0$. We'll use similar notation 
but with the equivalence relation $\approx$ to denote this equivalence for $q_\alpha$. 

First consider a point $(x,1)$ taken from $C_1'$ such that $\IE(x) \geq \alpha$. Then we have 
$$(x,1)_1 \approx \big(\IE(x),0\big)_k \quad \text{for some  $k \in \{2, \ldots, \genus\}$.}$$
We need to check that the images of these points are identified, i.e., that
$$\varphi_1 (x,1)_0 \sim \varphi_k\big(\IE(x),0\big)_{k-1}.$$
We compute:
$$\varphi_1 (x,1)=\big(\psi(x),\alpha\big) \quad \text{and} \quad
\varphi_k\big(\IE(x),0\big)=\big(\alpha^{-1} \IE(x)-1,\alpha).$$
Equality of these values points is given by (1) of Lemma \ref{lem:renormalization} so these points are identified $x_\alpha$. 

Now suppose that $(x,1)$ is taken from $C_1'$ and $\IE(x)<\alpha$. Then, 
$$(x,1)_1 \approx \big(\IE(x),0\big)_1.$$
We need to check that 
\eq{eq:check2}{
\varphi_1 (x,1)_0 \sim \varphi_1\big(\IE(x),0\big)_{0}.}
We have $\varphi_1(x,1)=\big(\psi(x),\alpha\big)$. By statement (2) of Lemma \ref{lem:renormalization},
we know that $\psi(x) \in J_\genus$. So from our description of $x_\alpha$,
$$\big(\psi(x),\alpha\big)_1 \sim \big(\IE \circ \psi(x),0\big)_1.$$
On the other hand, we have 
$$\varphi_1\big(\IE(x),0\big)=\big(\psi \circ \IE(x),0\big)=\big(\IE \circ \psi(x),0\big)$$
by the remainder of statement (2). We have shown (\ref{eq:check2}) as desired.

We have shown that our map is well defined on the top edge of $C_1'$. Now consider a point $(x,y)$
in the top of $C_k'$ for some $k \in \{2, \ldots, \genus\}$. Observe that $\IE(x) \in J_1$ because $x \geq \alpha$. 
From our description of $x_\alpha$, we have
$$(x,y)_k \approx \big(\IE(x),0\big)_1.$$
So, we need to check that 
\eq{eq:check3}{
\varphi_k (x,y)_{k-1} \sim \varphi_1\big(\IE(x),0\big)_{0}.}
Let $x_0=\IE^{-1}(x)$, which is in the interval $J_1$ by definition of $\IE$. Then,
$$\varphi_k(x,y)_{k-1}=\big(\alpha^{-1} \IE(x_0)-1,\alpha(y+1)\big)=\big(\psi(x_0), \alpha(y+1)\big),$$
where we use (2) of Lemma \ref{lem:renormalization}. The $y$-coordinate must lie on a top edge by prior work. 
Using the gluing rules in $q_\alpha$, we have
$$\varphi_k(x,y)_{k-1} \sim \big(\IE \circ \psi(x_0), 0 \big)_0.$$
Letting $\widehat \IE$ be the return map to $J_1$ as in Theorem \ref{thm:renormalization}.
Note that $\widehat \IE(x_0)=\IE^2(x_0)$, and using this theorem, we see
$$\IE \circ \psi(x_0)=\psi \circ \widehat \IE(x_0)=\psi \circ \IE^2(x_0).$$
Since $\varphi_1\big(\IE(x),0\big)=\big(\psi \circ \IE^2(x_0),0\big),$
we have verified (\ref{eq:check3}).

Now we will discuss the remaining statements. We can realize $\rho$ as
in the Lemma by a homeomorphism $\rho:q_0 \to q_\alpha$ as stated in
the Lemma which carries each cylinder $C_k$ to $C_k'$ for $k=1,
\ldots, \genus-1$. 
By Proposition \ref{prop: gamma hat},
the curve $\hat \gamma_0$ can be taken to be a core curve of $C_1$
with the provided parametrization (\ref{eq: parametrization of hat gamma}). 
Then up to homotopy along the leaves, the image under $\rho$ of $\hat
\gamma_0$ can be taken to be a core curve of $C_1'$ with the
$x$-coordinate parameterized as in (\ref{eq: parametrization of hat gamma}). We observe that
our definition of $\varphi$ 
carries $\rho \circ \hat \gamma_0$ to $\gamma_0$ up to homotopy along
the leaves. Since the composition 
$\varphi \circ \rho$ also respects the vertical foliation, it is in
the renormalization mapping class 
of the suspension $(q_0, \gamma_0)$. 
\end{proof}

\begin{proof}[Proof of Theorem \ref{thm:completely periodic ray}]
Define $\beta=\sum_{j=-\infty}^{-2} \alpha^{-j}$ and $x_t=q_{t-\beta}$ as in the
statement of the theorem. By definition $x_{t}=\rel^{(v)}_{t-\beta}
x_\beta$.  

We will begin by arguing that $x_t$ is well defined for $t>0$.
Let $\til g$ be as in the statement of the theorem. Through the use of
(\ref{eq:  rel commutation}), we have 
\eq{eq: commutation 2}{
\rel^{(v)}_{\alpha^{-k} s} \circ \til g^{-k}(x) =\til g^{-k} \circ
\rel^{(v)}_{s}(x), }
for any $x \in \HH$ and $k \in \Z$ for which at least one side of this equation is defined.
Lemma \ref{lem:affine homeomorphism} tells us that
\eq{eq: one step}{
\rel^{(v)}_\alpha x_{\beta}=\til g^{-1} x_\beta.}
Define $r(k)=\sum_{j=-\infty}^{k-2} \alpha^{-j}$. We claim
\eq{eq: inductive claim}{
x_{r(k)} =\til g^{-k} x_\beta \quad \text{for each $k \in \Z$.}
}
Recalling that $\beta=\sum_{j=-\infty}^{-2} \alpha^{-j}=r(0)$, we see this is tautologically true when $k=0$
and true when $k=1$ by (\ref{eq: one step}). We can verify this for
all $k \in \Z$ by an induction using (\ref{eq: commutation 2}) which
we leave to the reader. 
%
%\compat{Here is the corrected inductive argument, which we left to the reader. COMMENT OUT STARTING HERE.}
%It suffices to show that the statement holds for $k \in \Z$ if and only if it holds for $k+1$. 
%If it holds for $k$, then 
%$$x_{r(k+1)}=\rel^{(v)}_{\alpha^{1-k}} x_{r(k)}=\rel^{(v)}_{\alpha^{1-k}} \til g^{-k} x_{\beta}=
%\til g^{-k} \rel^{(v)}_{\alpha} x_\beta=\til g^{-k-1} x_\beta,$$
%so it holds for $k+1$. On the other hand, if it holds for $k+1$, then 
%$$\begin{array}{rcl}
%x_{r(k)} & = & \rel^{(v)}_{-\alpha^{1-k}} x_{r(k+1)}=
%\rel^{(v)}_{-\alpha^{1-k}} \til g^{-k-1} x_\beta \\
%& = & 
%\til g^{-k} \rel^{(v)}_{-\alpha} \til g^{-1} x_\beta=
%\til g^{-k} \rel^{(v)}_{-\alpha} \rel^{(v)}_{\alpha} x_\beta=\til g^{-k} x_\beta,
%\end{array}$$
%so it holds for $k$. 
%This proves (\ref{eq: inductive claim}). 
%\compat{COMMENT OUT ENDING HERE}
%\compat{Minor change to the following sentence}
Observe that $r(k)=\alpha^{-k} \beta$, so we can rewrite (\ref{eq: inductive claim}) as
\eq{eq: proven by induction}{
x_{\alpha^{-k}\beta}=\til g^{-k} x_\beta \quad \text{for all $k \in \Z$}.
}
Let $s \in [0,\alpha)$. Then, we have
\eq{eq: after a bit more rel}{
\begin{array}{rcl}
x_{\alpha^{-k}(\beta+s)} & = & \rel^{(v)}_{\alpha^{-k} s} x_{\alpha^{-k}\beta}=
\rel^{(v)}_{\alpha^{-k} s} \til g^{-k} x_\beta \\
& = &
\til g^{-k} \rel^{(v)}_{s} x_\beta=
\til g^{-k} x_{\beta+s}=\til g^{-k} q_s.
\end{array}}
This shows that $x_t$ is defined for $t>0$. 

%Now consider statement (1) of the theorem. Let $t>0$. Then by (\ref{eq: commutation 2}),
%$$\til g x_{\alpha^{-1} t}=\til g \rel^{(v)}_{\alpha^{-1}-1} x_t=
%\rel^{(v)}_{1-\alpha} \til g x_t=$$
%
%Now fix $t>0$. 
%We can choose an unique $k \in \Z$ and $s \in [0,\alpha)$ so that 
%$t=\alpha^{-k}(\beta+s)$. Then by (\ref{eq: after a bit more rel}),
%\eq{eq: simple}{
%x_t=\til g^{-k} q_s.}

We will now prove statement (1) of the theorem. Recall that Lemma \ref{lem:affine homeomorphism}
provided us with an affine homeomorphism $\varphi:x_{\alpha^{-1}\beta}
\to x_\beta$ with derivative $\til g$. Recall from \S \ref{subsec:
  rel and suspension} that there is a natural
isotopy class of homeomorphisms between surfaces on the same rel leaf,
obtained by deforming
along the leaf. Abusing notation we will choose a homeomorphism from
this isotopy class and denote it by
$\rel^{(v)}_s$. For all $t>0$ define an isotopy class of 
homeomorphisms $[\varphi_t]$ from $x_{\alpha^{-1}t}$ to $x_t$ by the
composition of maps 
\eq{eq:chain}{x_{\alpha^{-1} t}
  \xrightarrow{\rel^{(v)}_{\alpha^{-1}(\beta-t)}} x_{\alpha^{-1}
    \beta} \xrightarrow{\varphi} x_{\beta}
  \xrightarrow{\rel^{(v)}_{t-\beta}} x_{t}.} 
%where we interpret
%$\rel$ as defining an isotopy class of homeomorphisms between the
%surfaces.  
We will check that there is an affine homeomorphism
$\varphi_t:x_{\alpha^{-1}t} \to x_t$ with derivative $\til g$ in this
isotopy class.  
It suffices to check that 
\eq{eq: holonomy check}{
\hol\big(\varphi_t(\gamma), x_t\big)=\til g \hol(\gamma, x_{\alpha^{-1} t}).}
for any relative cycle $\gamma \in H_1(S, \Sigma; \Z)$. Clearly this holds for an 
absolute cycle since the only thing in the composition which changes
holonomy of an absolute cycle is the action of $\varphi$, which has
derivative $\til g$. Now consider a path $\gamma$ joining $\bullet$ to
$\circ$ in 
$x_{\alpha^{-1} t}$. The action of
$\rel^{(v)}_{\alpha^{-1}(\beta-t)}$ changes its holonomy by adding $\big(0, \alpha^{-1}(\beta-t)\big)$,
then the action of $\varphi$ scales by $\til g$, then the action of
$\rel^{(v)}_{t-\beta}$ changes holonomy by adding $(0, t-\beta)$.  
Thus
$$\begin{array}{rcl}
\hol\big(\varphi_t(\gamma), x_t\big) & = & \til g \Big(\hol(\gamma, x_t) + \big(0, \alpha^{-1}(\beta-t)\big)\Big)+
(0, t-\beta)\\
& = & \til g \hol(\gamma, x_t)+(0,\beta-t)+(0, t-\beta)=\til g \hol(\gamma, x_t).\end{array}$$
This verifies (\ref{eq: holonomy check}), and implies the existence of
an
% on a generating set, and the
%statement holds in full generality. 
%We conclude that there is an
affine homeomorphism $\varphi_t$ in the class $[\varphi_t]$. Now
consider the mapping class $\varphi_t \circ \rho_t$, where $\rho_t:x_t
\to x_{\alpha^{-1} t}$ is obtained by deforming through the imaginary
rel leaf. We may consider the mapping class of the composition
$\varphi_t \circ \rho_t$  
as determined by deforming $x_t$ through a loop in moduli
space, namely a loop which follows the vertical rel leaf and then returns along
the geodesic given by the $A$-action. Correspondingly, there is an
element $\gamma_t \in \Mod(S, \Sigma)$ obtained by lifting this loop
to $\HH_{\mathrm{m}}$. The map $t \mapsto \gamma_t$ depends
continuously on $t$, but since $\Mod(S, \Sigma)$ is discrete, it must
be constant. By Lemma \ref{lem:affine
  homeomorphism}, in the case $t = \beta$ we find that $\gamma_\beta$ lies in the
renormalization isotopy class of $x_\beta=q_0$, and hence 
$\varphi_t \circ \rho_t$ lies in
the renormalization isotopy class of $x_t$.  
This proves statement (1) of the theorem.

%\compat{For the use turning paths through $\HH$ into isotopy classes
%of homeomorphisms, we need to know our surfaces do not admit
%translation automorphisms.}  

We will now prove statement (2). Consider statement (a) where
$t=\sum_{j=-\infty}^k \alpha^{-j}=\alpha^{k+2} \beta$. By \eqref{eq:
  proven by induction} we have
$x_t=\til g^{k+2} x_\beta$. Based on our presentation of $q_0$, we see
that  
$x_t$ has $\genus$ cylinders whose circumferences
are $\alpha^{k+2}$ times as large as those of $q_0$. Now consider
statement (b). Here 
$t=\alpha^{k+2}(\beta+s)$ for some $s \in (0,\alpha)$. Using (\ref{eq:
  after a bit more rel}), we see that $x_t=\til g^{k+2} q_{s}$. A
cylinder decomposition of $q_s$ is described by Proposition
\ref{prop:complete periodicity} including the dimensions of the
cylinders. The circumferences of $x_t$ must be $\alpha^{k+2}$ times as
large. 
The positions of the singularities can also be determined by
considering $q_s$, see the discussion preceding Proposition \ref{prop:complete periodicity}.
\end{proof}

\subsection{Behavior of cylinders}
\label{sect:cylinders}
We will now investigate the behavior of homology classes of cylinders
as we move through the imaginary rel leaf $\{x_t\}_{t>0}$. We may
identify topological objects associated to the family of surfaces
$\{x_t\}$ using homeomorphisms $\rho$ 
obtained by moving through the rel leaf as in \S \ref{subsec: rel and suspension}.
With a view to the applications considered in \S \ref{subsec:
  deformations etc}, we will assume in this section that 
\eq{eq:assumption}{
t(1-\alpha)  \text{ is not an integral power of $\alpha$,}}
which guarantees that $x_t$ has $\genus+1$ horizontal cylinders; see
Theorem \ref{thm:completely periodic ray}(2). 

For any such $t$, let 
$$C_0(t), C_1(t), \ldots, C_{\genus}(t) \in H_1(S \smallsetminus \Sigma; \Z)$$ 
denote the homology classes of core curves of the horizontal cylinders
of $x_t$ oriented rightward and ordered
from longest to shortest circumference. From Theorem
\ref{thm:completely periodic ray}(1), we know 
\eq{eq:phi action on cylinders}{
\varphi_\ast \big(C_i(\alpha^{-1} t)\big)=C_i(t) \quad \text{for all
  such $t$ and all $i=0,\ldots, \genus$,}} 
where $\varphi: S \to S$ denotes the (common) renormalization mapping
class of the suspensions.  
%Every $x_t$ with $t$ satisfying (\ref{eq:assumption}) differs from some $q_s$ with $s \in (0,\alpha)$ by
%some power of $\varphi$, so Proposition \ref{prop:complete periodicity} implies that we always have
%$$C_0(t)-C_1(t)-\ldots-C_\genus(t)=[\bullet]$$
%where $[\bullet] \in H_1(S \smallsetminus \Sigma; \Z)$ denotes the class represented by a loop wrapping once around $\bullet$ in the counterclockwise direction.

Recall from (\ref{eq:cohomology class of cylinder}) that if $C_i$ is a cylinder,
$C_i^\ast \in H^1(S,\Sigma; \Z)$ denotes the cohomology class where
$C_i^\ast(\gamma)=\gamma \cap C_i$ for all $\gamma \in H_1(S,\Sigma; \Q)$.
The span of these classes in $H^1(S,\Sigma; \Z)$ will be important in
\S \ref{sec: minimal sets}, when we study twist coordinates for
analyzing the horizontal rel orbits of $x_t$. This span is closely connected to 
vertical holonomy for the family of surfaces $\{x_t\}$ as a linear
map. Let $\Q(\alpha)+\Q t$ denote the $\Q$-vector space of expressions
of the form $a+bt$ where $t$ is a free variable, $a \in \Q(\alpha)$
and $b \in \Q$. 
Then we can consider
the {\em vertical holonomy map of the family} to be 
\eq{eq: holonomy of family}{
\widetilde \hol_{y}: H^1(S,\Sigma; \Q) \to \Q(\alpha)+\Q t; \quad
\gamma \mapsto \hol_y(\gamma; x_t).} 

\begin{lem} \name{lem: span of cylinders}
For any $t_0>0$ satisfying (\ref{eq:assumption}),
the span over all horizontal cylinders $C$ of $x_{t_0}$ of the cohomology classes
$C^\ast \in H^1(S,\Sigma; \Q)$ is the $\genus+1$ dimensional
$\Q$-subspace
\eq{eq: P2}{
\{L \circ \widetilde \hol_{y}~:~ L \text{ is a
  $\Q$-linear map } \Q(\alpha)+\Q t \to \Q\}\subset H^1(S,\Sigma; \Q).} 
\end{lem}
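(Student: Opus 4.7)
The plan is to make the identification $\widetilde{\hol}_y(\gamma) = \sum_{i=0}^{\genus} h_i(t)\, C_i^*(\gamma)$ explicit and then reduce the lemma to a $\Q$-linear independence statement about the cylinder heights $h_i(t)$. Since $x_{t_0}$ is horizontally completely periodic with cylinders $C_0(t_0),\ldots,C_\genus(t_0)$ of heights $h_i(t_0)$, the vertical holonomy of any relative cycle $\gamma$ satisfies $\hol_y(\gamma;x_t) = \sum_i h_i(t)\, C_i^*(\gamma)$, as vertical holonomy picks up $\pm h_i$ each time $\gamma$ crosses $C_i$ algebraically. Using the explicit cylinder data from Proposition \ref{prop:complete periodicity} together with the conjugacy $x_{\alpha^{-k}(\beta+s)} = \tilde g^{-k} q_s$ supplied in the proof of Theorem \ref{thm:completely periodic ray}, one checks that on any of the maximal open intervals of $t$ singled out by (\ref{eq:assumption}), each $h_i(t)$ is affine of the form $a_i + b_i t$ with $a_i \in \Q(\alpha)$ and $b_i \in \{+1,-1\} \subset \Q$. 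Consequently $\widetilde{\hol}_y$ indeed lands in $\Q(\alpha) + \Q t$.

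The containment $\{L \circ \widetilde{\hol}_y\} \subseteq \mathrm{span}_\Q\{C_0^*,\ldots,C_\genus^*\}$ is then immediate, since any $\Q$-linear $L \colon \Q(\alpha)+\Q t \to \Q$ yields $L \circ \widetilde{\hol}_y = \sum_i L(h_i(t))\, C_i^*$. For the reverse inclusion, it suffices to find $L_i$ with $L_i \circ \widetilde{\hol}_y = C_i^*$; such $L_i$ exist as soon as the $\genus+1$ elements $h_0(t),\ldots,h_\genus(t)$ are $\Q$-linearly independent in the $(\genus+1)$-dimensional space $\Q(\alpha) + \Q t$, by taking $L_i$ dual to $h_i$. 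The common dimension $\genus+1$ follows from Proposition \ref{prop:complete periodicity}: the classes $[C_i]$ are linearly independent in $H_1(S\sm\Sigma;\Q)$, and the nondegenerate intersection pairing between $H_1(S,\Sigma;\Q)$ and $H_1(S\sm\Sigma;\Q)$ transfers this to independence of the functionals $C_i^* \in H^1(S,\Sigma;\Q)$.

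The main technical step is therefore the $\Q$-linear independence of the heights. Given a hypothetical relation $\sum_i c_i h_i(t) = 0$ with $c_i \in \Q$, the coefficient of $t$ forces $c_0 = c_1 + \cdots + c_\genus$. Substituting this into the constant part, multiplying through by $(1-\alpha)$ to absorb $\beta = \alpha^2/(1-\alpha)$, and extracting a nonzero power of $\alpha$ reduces the relation to $\sum_{j=1}^{\genus} c_j(1 - \alpha^{\genus-j+1}) = 0$ in $\Q(\alpha)$. The successive differences $(1-\alpha^i)-(1-\alpha^{i-1}) = \alpha^{i-1}(1-\alpha)$ convert the family $\{1-\alpha^i : i = 1,\ldots,\genus\}$ into $(1-\alpha)\{1,\alpha,\ldots,\alpha^{\genus-1}\}$, which is a $\Q$-basis of $\Q(\alpha)$ since $1-\alpha \neq 0$. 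This forces all $c_j = 0$, and then $c_0 = 0$ as well. The chief nuisance in writing the argument out in full will be bookkeeping the constant terms uniformly across the different ranges of $t$; the computation above is carried out verbatim in the base range $t \in (\beta, \alpha^{-1}\beta)$, but the conjugacy by $\tilde g$ merely rescales the $\Q(\alpha)$-valued constants by powers of $\alpha$, so the same independence argument transports to all other ranges.
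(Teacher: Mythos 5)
Your proof is correct, and the easy inclusion ($L\circ\widetilde\hol_y \in \spa_\Q\{C_i^\ast\}$ via the height formula) together with the transport between $t$-ranges by the $\til g$-conjugacy is the same as in the paper. Where you genuinely diverge is in the reverse inclusion. The paper argues by a pure dimension count: it quotes Proposition \ref{prop:complete periodicity} for the $\Q$-linear independence of the homology classes $[C_i]$ (proved there from the \emph{horizontal} circumferences $1,\alpha,\dots,\alpha^{\genus}$ and $\deg_\Q\alpha=\genus$), giving $\dim V=\genus+1$, and simply asserts that $L\mapsto L\circ\widetilde\hol_y$ is a bijection onto $W$, giving $\dim W=\genus+1$. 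You instead prove the $\Q$-linear independence of the heights $h_0(t),\dots,h_\genus(t)$ inside $\Q(\alpha)+\Q t$ by the explicit computation reducing to the independence of $\{1-\alpha^i\}_{i=1}^{\genus}$ (your triangular change of basis to $(1-\alpha)\{1,\alpha,\dots,\alpha^{\genus-1}\}$ is correct, and the bookkeeping across ranges works because $\til g^{-k}$ sends $a_i+b_it$ to $\alpha^{-k}a_i+b_it$), and then exhibit each $C_i^\ast$ as $L_i\circ\widetilde\hol_y$ for the dual functional $L_i$. This uses the \emph{vertical} data rather than the horizontal data, and it has the merit of actually substantiating the injectivity of $L\mapsto L\circ\widetilde\hol_y$, which the paper leaves implicit; once the $h_i$ form a basis of $\Q(\alpha)+\Q t$ your dual-functional construction gives both inclusions and the dimension simultaneously, so your appeal to Proposition \ref{prop:complete periodicity} for $\dim V$ is, strictly speaking, redundant. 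The cost is a longer computation in place of the paper's one-line citation.
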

\begin{proof}
Let $C_0^\ast,\ldots, C_{\genus}^\ast$ denote the cohomology classes
to the cylinders of $x_{t_0}$ ordered from longest to shortest
circumference as above.  
Let $V=\spa_\Q \{C_0^\ast, \ldots, C_\genus^\ast\}$,
and let $W$ be the space defined in (\ref{eq: P2}). We will first show that
$W \subset V$. Let $L: \Q(\alpha)+\Q t \to \Q$ be $\Q$-linear. Because
$t_0$ satisfies (\ref{eq:assumption}), 
the cylinders survive for $t$ in a neighborhood of $t_0$.  
For any $t$ in this neighborhood and 
any $\gamma \in H_1(S,\Sigma; \Q)$, we have
$$L \circ \widetilde \hol_{y}(\gamma)=- \sum_{i=0}^\genus L(h_i) C_i^\ast(\gamma),$$
where $h_i \in \Q(\alpha)+\Q t$ denotes the height of $C_i$, which varies linearly with $t$.
This shows $W \subset V$. To see $V = W$ it suffices to show that they
have the same $\Q$-dimension. Observe that $W$ has $\Q$-dimension
$\genus+1$ because its elements 
are in bijection with the choice of $L$ from the dual space to $\Q(\alpha)+\Q t$.
The space $V$ has $\Q$-dimension $\genus+1$ when $t \in
(\beta,\beta+\alpha)$ (corresponding to $q_s$ for $s \in (0,\alpha)$)
by Proposition \ref{prop:complete periodicity}, 
and has this same dimension for $t_0$ by considering that we can find
a $k \in \Z$ so that $\alpha^{-k} t_0 \in (\beta,\beta+\alpha)$ and
applying (\ref{eq:phi action on cylinders}). 
\end{proof}
\section{Recognizing the Arnoux-Yoccoz surfaces}
\name{sec: AY surface}
In \S \ref{sect: rel ray} we produced an imaginary rel ray
$\{x_r\}_{r>0}$ consisting of horizontally completely periodic
surfaces for each genus $\genus \geq 2$. 
In this section we explain that $x_r$ limits on the Arnoux-Yoccoz
surface of genus $\genus$ as $r \to 0$ and consider consequences.

\subsection{The Arnoux-Yoccoz surfaces}\name{subsec: AY surfaces
  general genus}
Here we define the Arnoux-Yoccoz surfaces in genus $\genus \geq 3$. 
We will follow Arnoux and Yoccoz, who describe this family of surfaces as the
canonical suspension of the interval  exchange $\IE$ of \S
\ref{sect:renormalization}. This relies on work of Levitt, and on
Thurston's classification of surface homeomorphisms
\cite{Thurston, FLP}.   

We recall a construction of \S \ref{sect: suspensions}: if
$(q, \gamma)$ 
is a canonically labeled suspension of $\IE$ with $q \in \HH(\genus-1,
\genus-1)$, then so is $(q, \hat \gamma)$, and there is a
corresponding renormalization mapping class $[\varphi]$ where $\varphi: q \to q$. 
See Corollary
\ref{cor:second suspension} and Definition \ref{def: renormalization
  mapping class}. We need the following:  
\begin{lem}
\name{lem:mapping class 1}
Let $\genus \geq 3$, let $(q,\gamma)$ be a suspension of $\IE$ with
$q \in \HH(\genus-1,\genus-1)$, and let $[\varphi]$ be the
renormalization mapping class.  
Then $[\varphi]$ is pseudo-Anosov.
Furthermore if $q$ is a measure preserving suspension then the action of 
$[\varphi]$ scales the vertical measured foliation of $q$ by
$\alpha^{-1}$. 
\end{lem}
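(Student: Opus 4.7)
The plan is to reduce to the specific suspension $q=q_0$ from \S \ref{sect: rel rays} and then invoke Thurston's characterization of pseudo-Anosov mapping classes via invariant measured foliations. For the reduction: any two compatibly labeled measure-preserving suspensions $(q_1,\gamma_1)$ and $(q_2,\gamma_2)$ of $\IE$ in $\HH(\genus-1,\genus-1)$ are related by the canonical isotopy class of homeomorphisms from \S \ref{sect:IETs}, which conjugates their renormalization mapping classes and (being leaf-preserving with a measure-preserving restriction to the transversal) carries the vertical measured foliation of $q_1$ to that of $q_2$. Since being pseudo-Anosov and the scaling factor on an invariant measured foliation are both preserved under such conjugation, it suffices to verify the lemma for $q=q_0$.

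For $q_0$, \lemref{lem:affine homeomorphism} provides a representative of the renormalization mapping class of the form $\Phi=\varphi\circ\rho:q_0\to q_0$, where $\rho:q_0\to q_\alpha$ is a rel deformation (moving points only along vertical leaves, hence preserving the transverse measure of the vertical foliation), and $\varphi:q_\alpha\to q_0$ is affine with derivative $\til g=\diag(\alpha^{-1},\alpha)$. A direct computation using the $x$-derivative $\alpha^{-1}$ of $\varphi$ shows that $\Phi$ carries the vertical foliation on $q_0$ to itself and rescales its transverse measure by $\alpha^{\pm 1}$; fixing the convention for the direction of the canonical isotopy class yields the factor $\alpha^{-1}$ claimed in the lemma.

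To conclude that $\Phi$ is pseudo-Anosov, I would apply Thurston's theorem (see \cite{Thurston, FLP}): a mapping class preserving a minimal measured foliation with dilatation different from $1$ is pseudo-Anosov, and the preserved foliation is one of its two invariant transverse foliations. The vertical foliation on $q_0$ is the suspension of $\IE$, so its minimality is a classical consequence of the self-similarity of \thmref{thm:renormalization} and is established in \cite{AY}. Since the dilatation $\alpha^{-1}$ is not equal to $1$, Thurston's theorem applies and the lemma follows.

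The main obstacle I anticipate is bookkeeping: careful tracking of pullback versus pushforward conventions and the direction of the canonical isotopy class is needed to read off exactly the scaling factor $\alpha^{-1}$ (rather than $\alpha$) from the composition, and the measure-preserving hypothesis in the statement must be used to guarantee that the conjugation in the reduction step actually preserves the transverse measure (and not merely the topological foliation). The invocation of Thurston's theorem itself is routine once the invariant minimal measured foliation with non-unit dilatation is in hand.
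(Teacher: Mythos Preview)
Your reduction to $q_0$ and the computation of the scaling factor $\alpha^{-1}$ via \lemref{lem:affine homeomorphism} are fine. The gap is in the final step, where you invoke Thurston's criterion. The version of that criterion you need is: if $[\varphi]$ preserves a measured foliation $(\mathcal{F},\mu)$ with $\varphi_*\mu=\lambda\mu$ for $\lambda\neq 1$, and $\mathcal{F}$ is \emph{arational} (every essential simple closed curve has positive transverse measure), then $[\varphi]$ is pseudo-Anosov. You supply only dynamical minimality of the vertical foliation, deduced from self-similarity of $\IE$. But minimality of an IET does not rule out vertical saddle connections in a suspension, and a union of such saddle connections can form an essential simple closed curve with zero transverse measure, so the foliation need not be arational.

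This is not hypothetical: for $\genus=2$ the Arnoux-Yoccoz IET is still self-similar and minimal, yet \lemref{lem:mapping class 2} shows the suspension has two vertical saddle connections forming a separating curve and the renormalization mapping class is reducible. Your argument nowhere uses the hypothesis $\genus\geq 3$, so if it worked it would prove the same conclusion for $\genus=2$, which is false. Establishing arationality directly would amount to proving there are no vertical saddle connections, which in the paper is Corollary~\ref{cor: above} and is deduced \emph{from} the present lemma, so you cannot cite it without circularity. The paper instead rules out reducibility by an algebraic argument on $H_1$: an invariant subsurface of lower genus would force $\alpha^{-k}$ to be Galois-conjugate to $\alpha^{k}$, which is impossible precisely when $\genus\geq 3$. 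That is where the genus hypothesis actually enters.
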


% ALTERNATE CONCLUSION IN CASE WE DO NOT INCLUDE THE FORMAL PROOF.
\ignore{
To prove $[\varphi]$ is pseudo-Anosov it suffices to show that it acts neither periodically
nor reducibly (periodically on an isotopy class of simple closed
curves) \cite[Theorem 9.16]{FLP}. This was argued 
in \cite{AY}; see in particular Lemma 2. The statement involving the vertical measured foliation
of a measure preserving suspension follows
from the definition of 
$\psi$ in Theorem \ref{thm:renormalization} and its relation to the
renormalization mapping class. We leave a formal proof to the reader.
}

We include the argument here for completeness and to indicate the differences between
the case of $\genus \geq 3$ and $\genus=2$ which is handled in the next subsection.
\begin{proof}
It suffices to take $(q, \gamma)$ to be a measure preserving suspension of $\IE$. 
Let $\varphi:q \to q$ be a homeomorphism which satisfies
$\gamma=\varphi \circ \hat \gamma$ and which carries vertical leaves
to vertical leaves. By definition $\varphi$ is in the renormalization
mapping class.  

Let $({\mathcal F}^{(v)},\mu)$ be the vertical foliation on $q$ together with its Lebesgue transverse measure.
Observe that $\varphi({\mathcal F}^{(v)},\mu)=({\mathcal F}^{(v)},\alpha^{-1} \mu)$.
This is because $\psi$ (of Theorem \ref{thm:renormalization}, used in
definition of $\hat \gamma$) scales measure uniformly by a factor of
$\alpha^{-1}$.  
This measured foliation has a representative homology class in
absolute homology $H_1(q; \R)$, and thus the action  
$\varphi_\ast:H_1(q; \R) \to H_1(q; \R)$ has an eigenvalue of
$\alpha^{-1}$. We will see that this allows us to classify $[\varphi]$ as pseudo-Anosov
in the sense of Thurston's classification of surface homeomorphisms
\cite[Theorem 9.16]{FLP}.

According to 
Thurston's classification of surface homeomorphisms, 
$[\varphi]$ is either periodic, reducible (acts periodically on an
isotopy class of simple closed curves), or pseudo-Anosov.  

First observe $[\varphi]$ can not be periodic, since the eigenvalue
$\alpha^{-1}$ of $\varphi_\ast$ is not a root of unity.  

Now suppose $[\varphi]$ was reducible. We will replicate the argument of \cite[Lemma 2]{AY}.
By definition of reducibility, a positive power $[\varphi^k]$ leaves
invariant a subsurface $S \subset q$ of lower genus.  
The induced action on absolute homology of the subsurface has an eigenvalue of $\alpha^{-k}$, since it preserves the intersection of the vertical measured foliation with the subsurface.
We note that because $\alpha^{-1}$ is Pisot, $\alpha^{-k}$ also must have degree $\genus$.
Observe that the rank of $H_1(S;\R)$ is twice the genus of $S$, which is strictly less than $2g$.
Since the action $\varphi^k_\ast|_S$ of $[\varphi^k]$ on $H_1(S;\R)$ is by a integer matrix, we see that $\alpha^{-k}$ and all its $\genus$ algebraic conjugates must be eigenvalues. Since
this action is symplectic, $\alpha^{k}$ and all its $\genus$ conjugates must appear as eigenvalues
of $\varphi^k_\ast|_S$ as well. Because the rank of $H_1(S;\R)$ is
less than $2g$, some of these values must coincide, meaning 
that $\alpha^{-k}$ is conjugate to $\alpha^k$. Observe that $\alpha^{-k}$ is Pisot,
and the product of all its conjugates is $(-1)^k \alpha^k$.
When the degree $\genus \geq 3$, this coincidence is impossible. This
contradicts the irreducibility of $[\varphi]$ and shows that
$[\varphi]$ must be pseudo-Anosov. (This argument fails when
$\genus=2$, because $\alpha^{-2}$ is conjugate to $\alpha^2$.)  
\end{proof}

We will now use Lemma \ref{lem:mapping class 1} to construct the
Arnoux-Yoccoz surface when $\genus  \geq 3$. 
Fix a measure preserving suspension $(q,\gamma)$ and let 
$\varphi$ be a representative of the renormalization mapping class as
in the Lemma. Let 
%The action of 
%$[\varphi]$ scales the vertical measured foliation 
$({\mathcal F}^{(v)},\mu)$ denote the vertical measured foliation on
$q$, so that $\varphi$ preserves $\mathcal{F}^{(v)}$, scaling $\mu$ by
$\alpha^{-1}$. 
Let $S$ be the topological
surface underlying the translation surface $q$. We identify $q$ with
$S$ topologically.
%as noted in the proof above. 
Since $\varphi$ is pseudo-Anosov it
leaves invariant a unique second projective class of measured
foliations on $S$, say the class 
$$\{({\mathcal F}^{(h)}, c \nu):~c>0\}.$$  
Integrating the pair of measures $(\mu,c \nu)$ defines a new
translation structure on $S$. That is, in the charts of this alternate
translation structure, the transverse
measures $\mu, c\nu$ correspond respectively to the 1-forms
$\hol_{\mathrm{x}}, \hol_{\mathrm{y}}.$ 
The area of this translation surface varies linearly with $c$, and we
define the Arnoux-Yoccoz surface in genus $\genus$ 
to be the surface obtained by choosing $c$ so that the area
is 
\eq{eq:A}{
\alpha^\genus \cdot
\alpha+\alpha^{\genus-1}\cdot (\alpha+\alpha^2)+\alpha^{\genus-2}\cdot (\alpha+\alpha^2+\alpha^3)+\cdots+\alpha \cdot 1.} We
denote this surface by $x_{\mathrm{AY}}$.
We choose this normalization of the area in order to be consistent with \cite{AY}
and \cite{Bowman}. Note that the surfaces $q_s$ and $x_t$ of \S \ref{sect: rel rays} have this area.

The two translation surfaces $q$ and $x_{\mathrm{AY}}$ are both marked by the
same underlying surface $S$, and the map $q \to x_{\mathrm{AY}}$ maps vertical
leaves to vertical leaves, preserving the transverse measure
$\hol_{\mathrm{x}}$. Using this topological identification, the curve
$\gamma$ on $q$ is mapped to a curve on $x_{\mathrm{AY}}$ which we also denote
by $\gamma$, and we see that $(x_{\mathrm{AY}}, \gamma)$ is also a canonically
labeled suspension of $\IE$.  
Since $[\varphi]$ scales $({\mathcal F}^{(v)},\mu)$ and $({\mathcal F}^{(h)}, \nu)$,
there is a canonical affine automorphism with diagonal derivative,
$\varphi_{\mathrm{AY}}:x_{\mathrm{AY}} \to x_{\mathrm{AY}}$, which is taken from the mapping
class $[\varphi]$. Since $\varphi_{\mathrm{AY}}$ scales $({\mathcal
  F}^{(v)},\mu)$ by $\alpha^{-1}$ and preserves area, it must scale
$({\mathcal F}^{(h)},\nu)$ by $\alpha$. Thus, $D \varphi_{\mathrm{AY}}=\til g$
with $\til g$ as in (\ref{eq: crucial property}). Also note that because
$\varphi_{\mathrm{AY}}$ is taken from the mapping class $[\varphi]$, it is in
the renormalization isotopy class of $x_{\mathrm{AY}}$.

As a consequence of these observations we find:
\begin{cor}\name{cor: above}
If $(q,\gamma)$ is a suspension of $\IE$ with $q \in
\HH(\genus-1,\genus-1)$, with $\genus \geq 3$, then $q$ has no vertical  saddle connections.
\end{cor}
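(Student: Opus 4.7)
The plan is to reduce the claim about an arbitrary suspension to the single reference surface $x_{\mathrm{AY}}$ constructed in \S \ref{subsec: AY surfaces general genus}, and to verify the claim there using the pseudo-Anosov structure. The key observation is that on a compatibly labeled suspension $(q,\gamma)$ of $\IE$ in $\HH(\genus-1,\genus-1)$, each vertical separatrix emanating from a singularity, followed until its first hit to the transversal $\gamma(\TTT)$, lands at a discontinuity of $\IE$; this gives a bijection between the set of vertical separatrices on $q$ and the set of discontinuities of $\IE$. A vertical saddle connection of $q$ is therefore encoded by a pair of discontinuities $t,t'$ of $\IE$ and an integer $n \geq 0$ such that $n$ iterations of $\IE$ (with a consistent one-sided convention at each discontinuity) carry $t$ to $t'$. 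This condition is purely combinatorial in $\IE$ and does not involve the particular suspension. Hence it suffices to prove the statement for one convenient suspension.

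That convenient suspension is $x_{\mathrm{AY}}$: by construction it is a compatibly labeled suspension of $\IE$ in $\HH(\genus-1,\genus-1)$, and its vertical measured foliation is the stable foliation of the pseudo-Anosov map $\varphi_{\mathrm{AY}}$ with derivative $\til g = \diag(\alpha^{-1},\alpha)$. Suppose for contradiction that $x_{\mathrm{AY}}$ carries a vertical saddle connection $\sigma$ of length $\ell$. Let $N$ be the order of the (finite) permutation of the singular set induced by $\varphi_{\mathrm{AY}}$. Then $\varphi_{\mathrm{AY}}^N$ fixes each singularity, and it sends $\sigma$ to a vertical saddle connection between the same two endpoints of length $\alpha^N \ell < \ell$; iterating yields vertical saddle connections between the same two endpoints of length $\alpha^{kN}\ell \to 0$, contradicting the standard fact that the set of lengths of saddle connections of a translation surface is discrete.

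Combining the two ingredients proves the corollary: the combinatorial condition characterizing vertical saddle connections of suspensions of $\IE$ cannot hold, since it fails on $x_{\mathrm{AY}}$. I expect the main obstacle to be formalizing the first paragraph, namely the precise bijection between vertical separatrices on a suspension and one-sided $\IE$-orbits of discontinuities, and the check that the presence/absence of saddle connections is indeed read off from this combinatorial data independently of the choice of suspension. Once this dictionary is set up carefully, the pseudo-Anosov contraction argument closes the proof in a few lines.
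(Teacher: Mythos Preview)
Your proposal is correct and follows the same two-step strategy as the paper: show $x_{\mathrm{AY}}$ has no vertical saddle connections via the pseudo-Anosov contraction, then transfer this to an arbitrary suspension. The only difference is in the transfer step. You encode vertical saddle connections combinatorially in terms of $\IE$-orbits of discontinuities; the paper instead invokes directly the canonical homeomorphism between any two compatibly labeled suspensions in the minimal-complexity stratum (set up in \S\ref{sect:IETs}), which maps vertical leaves to vertical leaves and hence carries vertical saddle connections to vertical saddle connections. This bypasses the dictionary you flag as the ``main obstacle,'' so the paper's route is shorter. Two minor simplifications on your side: since $\Mod(S,\Sigma)$ fixes the singularities pointwise, $\varphi_{\mathrm{AY}}$ already fixes each singularity and you may take $N=1$; and the minimal-complexity hypothesis (every vertical separatrix meets $\gamma(\TTT)$) is what guarantees your combinatorial encoding is exhaustive.
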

\begin{proof}
Observe that $x_{\mathrm{AY}}$ has no vertical saddle connections because it
has an affine automorphism with derivative $\til g$. As we have seen, 
%By Proposition \ref{prop:stratum}, $\HH(\genus-1,\genus-1)$ is
%the stratum of minimal complexity for $\IE$. Thus, 
there is a homeomorphism $q \to
x_{\mathrm{AY}}$ which carries vertical leaves to vertical leaves. So $q$ also
does not have any vertical saddle connections. 
\end{proof}

Now recall the vertical rel leaf considered in \S \ref{sect: rel
  rays}. We observe that $q_0=x_\beta \in  \HH(\genus-1,\genus-1)$ is
a measure preserving suspension of $\IE$ with area given by \eqref{eq:A}. By 
Corollary \ref{cor: above}, $q_0$ has no vertical saddle connections. Therefore by
Proposition  \ref{prop: real rel main}, the imaginary rel leaf $\{x_t\}$ is defined for all $t$. 

We will show that $x_0=x_{\mathrm{AY}}$.  
Recall from Theorem \ref{thm:completely periodic ray}
that for $t>0$ we have an affine homeomorphism
$\varphi_t:x_{\alpha^{-1} t} \to x_t$ 
with derivative $\til g$. Furthermore, if $\rho_t:x_t \to
x_{\alpha^{-1} t}$ is obtained by imaginary rel  deformation, then
$\varphi_t \circ \rho_t$ is in the renormalization isotopy class. Now
let $t \to 0^+$. Since $\rho_t$ is the map induced by
$\rel^{(v)}_{\alpha^{-1}t-t}$, the limit of $\rho_t$ is the identity
map on $x_0$. The affine
homeomorphisms converge 
to an affine automorphism $\varphi_0:x_0 \to x_0$ with derivative
$\til g$. Furthermore by discreteness of the mapping class group, we know that $[\varphi_0]$ is in the renormalization isotopy class of $x_0$. Because the derivative of $\varphi_0$ is $\til g$, the mapping class preserves the horizontal and vertical foliations of $x_0$. 
Up to the canonical identification between suspensions, the mapping
classes $[\varphi_0]$ of $x_0$ and  $[\varphi_{\mathrm{AY}}]$ of $x_{\mathrm{AY}}$ are
the same. By the uniqueness of the expanding and contracting measured
foliations, we see that $x_0$ and $x_{\mathrm{AY}}$ have the same horizontal
and vertical measured foliations. Therefore, these two surfaces
represent the same translation surface. We have shown: 

\begin{thm}\name{thm: identifying AY}
Let $\genus \geq 3$. Let $\{x_t:t>0\}$ be the imaginary rel ray
discussed in Theorem \ref{thm:completely  periodic ray}. Then this rel
ray extends to an imaginary rel line $\{x_t: t\in \R\}$ and the
surface $x_0$ is the Arnoux-Yoccoz surface of genus $\genus$.  
\end{thm}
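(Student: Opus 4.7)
The plan is to piece together the ingredients already established in the excerpt. First, I would argue that the ray extends to a line. Since $q_0 = x_\beta$ is a suspension of $\IE$ with $q_0 \in \HH(\genus-1,\genus-1)$, Corollary \ref{cor: above} tells us $q_0$ has no vertical saddle connections. Applying Proposition \ref{prop: real rel main} (or more directly Corollary \ref{cor: real rel main}), the imaginary rel trajectory through $q_0$ is defined for all $s \in \R$, so $\{x_t : t \in \R\}$ is well defined and in particular $x_0 = \lim_{t \to 0^+} x_t$ exists in $\HH$.

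Next I would identify $x_0$ with the Arnoux-Yoccoz surface. Theorem \ref{thm:completely periodic ray}(1) provides, for each $t > 0$, an affine homeomorphism $\varphi_t : x_{\alpha^{-1}t} \to x_t$ with derivative $\til g$ such that $\varphi_t \circ \rho_t$ lies in the renormalization mapping class of $x_t$, where $\rho_t : x_t \to x_{\alpha^{-1}t}$ is obtained by deforming through the imaginary rel leaf. I would let $t \to 0^+$. The map $\rho_t$ is induced by $\rel^{(v)}_{\alpha^{-1}t - t}$, whose parameter tends to $0$, so $\rho_t$ converges to the identity on $x_0$ (up to isotopy along vertical leaves). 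The affine maps $\varphi_t$ have constant derivative $\til g$ and thus converge to an affine automorphism $\varphi_0 : x_0 \to x_0$ with derivative $\til g$.

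Now I would invoke discreteness. The composition $\varphi_t \circ \rho_t$ defines a continuous family of mapping classes $t \mapsto [\varphi_t \circ \rho_t]$ in $\Mod(S, \Sigma)$, which is discrete; since this mapping class equals the renormalization mapping class for all $t > 0$, its limit $[\varphi_0]$ is also the renormalization mapping class. In particular $\varphi_0$ is an affine representative of a pseudo-Anosov mapping class (Lemma \ref{lem:mapping class 1}) with derivative $\til g$, so its stable and unstable measured foliations are respectively the horizontal and vertical measured foliations of $x_0$, scaled in accordance with the eigenvalues of $\til g$.

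The main obstacle, and the conclusion, is to pin down $x_0$ from this data. Both $x_0$ and $x_{\mathrm{AY}}$ are measure preserving suspensions of $\IE$ in $\HH(\genus-1, \genus-1)$ of the area prescribed in \eqref{eq:A}, carrying an affine automorphism in the renormalization mapping class with derivative $\til g$. Via the canonical identification of suspensions (\S\ref{sect:IETs}), the mapping classes $[\varphi_0]$ and $[\varphi_{\mathrm{AY}}]$ agree as elements of $\Mod(S, \Sigma)$. By Thurston's uniqueness of the expanding and contracting measured foliations of a pseudo-Anosov map, the horizontal and vertical measured foliations of $x_0$ and $x_{\mathrm{AY}}$ coincide, and therefore $x_0 = x_{\mathrm{AY}}$ as translation surfaces. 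The only delicate point to verify carefully is that the normalization of the transverse measures matches, which I would handle by noting that both surfaces have the same area and both come from genuine measure preserving suspensions of $\IE$, so the horizontal transverse measure is determined by the return measure to $\gamma$.
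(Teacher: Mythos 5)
Your proposal is correct and follows essentially the same route as the paper: Corollary \ref{cor: above} plus Proposition \ref{prop: real rel main} to extend the ray to a line, then the limit $t \to 0^{+}$ of the affine homeomorphisms $\varphi_t$ and the maps $\rho_t$, discreteness of $\Mod(S,\Sigma)$ to identify $[\varphi_0]$ with the renormalization (pseudo-Anosov) mapping class, and uniqueness of the invariant measured foliations together with the area normalization \eqref{eq:A} to conclude $x_0 = x_{\mathrm{AY}}$. No substantive differences from the paper's argument.
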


Figure \ref{fig:AY} shows the Arnoux-Yoccoz surface $x_{\mathrm{AY}}=x_0$ of genus
$\genus=3$. 
The surfaces of higher genus have similar presentations (see \cite{Bowman}).

\begin{proof}[Proof of Theorem \ref{thm: real rel}]
In Theorem \ref{thm:completely periodic ray}, for each $\genus \geq 2$
we exhibited an imaginary rel-ray $\{x_t : t>0\}$ consisting of
horizontally periodic surfaces. Since $\alpha <1$, statement (2) of
Theorem \ref{thm:completely periodic ray} shows that as $t \to
+\infty$, the circumferences of the horizontal cylinders on $x_t$ tend
uniformly to zero. Theorem
\ref{thm: identifying AY} shows that for $\genus \geq 3$, $x_t = \rel^{(v)}_t x_0$ for
$t>0$, concluding the proof.  
\end{proof}

\subsection{The noded surface in genus two}
\name{sect: genus two}
When $\genus=2$, there is also a ``canonical suspension'' of $\IE$ as a pair of tori attached at a node.
These observations on the genus $2$ version of the Arnoux-Yoccoz
surface are due to Bowman 
\cite[Appendix]{Bowman},  so we will call this surface the {\em
  Arnoux-Yoccoz-Bowman} noded surface. 
We construct this surface below and then connect it to the imaginary rel
ray in $\HH(1,1)$ described in \S \ref{sect: rel rays}. 

\begin{lem}~
\name{lem:mapping class 2}
Suppose $\genus=2$ and $(q, \gamma)$ is a suspension of $\IE$ with $q \in \HH(1,1)$. 
Then there are two vertical saddle connections on $q$ each of which
pass through one of the images under $\gamma$ of the midpoints of the
intervals $J_1$ and $J_2$ of (\ref{eq:intervals}). These saddle
connections  separate $q$ into two tori (glued along a slit), the
action of $[\varphi^2]$ swaps the saddle connections and the tori. The
action of $[\varphi^2]$ restricted to either torus is pseudo-Anosov. 
\end{lem}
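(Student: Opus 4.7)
The plan is to proceed in four stages: first exhibit the vertical saddle connections explicitly on a concrete suspension, then transport them to any suspension, then verify the topological decomposition into two tori, and finally analyze the action of the renormalization mapping class.

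For the first stage, my starting point is the horizontally completely periodic suspension $q_0 \in \HH(1,1)$ of $\IE$ constructed in \S \ref{sect: rel rays} with $\genus = 2$, presented as the union of rectangles $R_0 = [0,1] \times [0,\alpha]$ and $R_1 = [0,\alpha] \times [\alpha, 1]$ with the explicit identifications of that section. I would trace the vertical leaves through the two points $\gamma_0(\alpha/2)$ and $\gamma_0((1+\alpha)/2)$ in both directions, using the identifications at the polygon boundaries to follow each leaf across rectangles. A finite combinatorial computation shows that both leaves meet a singular point in bounded time in each direction, producing the claimed vertical saddle connections $\sigma_1, \sigma_2$ on $q_0$. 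For an arbitrary compatibly labeled suspension $(q, \gamma)$ with $q \in \HH(1,1)$, the canonical isotopy class of homeomorphisms $q_0 \to q$ from \S \ref{sect:IETs} respects vertical leaves and the labeling of singularities, and pushing forward yields the saddle connections through $\gamma(\alpha/2)$ and $\gamma((1+\alpha)/2)$ on $q$.

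The topological decomposition comes from the observation that $\Gamma = \sigma_1 \cup \sigma_2$ is a simple closed curve (two arcs joining the same pair of singular points). I would verify that $\Gamma$ is null-homologous by computing its algebraic intersections on $q_0$ with the core curves of the two horizontal cylinders; signed counting shows both vanish because $\sigma_1$ and $\sigma_2$ traverse each cylinder with opposite relative orientations. It then follows that $\Gamma$ separates $q$, and an Euler characteristic count with $\chi(q) = -2$ and $\chi(\Gamma) = 0$ forces the two complementary pieces to be once-punctured tori, which is the meaning of ``two tori glued along a slit.''

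The analysis of the renormalization mapping class exploits the special identity $\alpha^{-1} = 1 + \alpha$ valid only for $\genus = 2$, which yields $\psi(\alpha/2) = (1+\alpha)/2$. Combined with the definition \eqref{eq: hat gamma} of $\hat\gamma$ (using $s(\alpha/2) = 0$) and the relation $\gamma = \varphi \circ \hat\gamma$ characterizing the renormalization mapping class, this gives $\varphi(\gamma(\alpha/2)) = \gamma((1+\alpha)/2)$ and conversely. Since $\varphi$ respects the vertical foliation and the labeling, it interchanges $\sigma_1 \leftrightarrow \sigma_2$ and swaps the two tori, so that $[\varphi^2]$ preserves each saddle connection and each torus as required. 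The map $\varphi^2$ scales the vertical measured foliation by $\alpha^{-2}$, since $\psi$ uniformly scales lengths by $\alpha^{-1}$ by its definition in Theorem \ref{thm:renormalization}. Hence for each complementary torus $T$, the induced symplectic action on $H_1(T; \Z) \cong \Z^2$ has trace $\alpha^{-2} + \alpha^{2} = (2+\alpha) + (1-\alpha) = 3$, using $\alpha+\alpha^2=1$ to compute $\alpha^{-2} = (1+\alpha)^2 = 2 + \alpha$. Since $3 > 2$ and $\alpha^{\pm 2}$ are irrational, this action is hyperbolic and $[\varphi^2]|_T$ is pseudo-Anosov with dilatation $\alpha^{-2}$.

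The main obstacle will be the first stage: the two rectangles carry many corner and edge identifications, and one must keep careful track of which polygon points collapse to the two cone singularities of $q_0$ rather than to regular points, in order to correctly trace the saddle connections through the gluings.
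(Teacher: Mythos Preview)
Your approach differs from the paper's: you work with the horizontally periodic suspension $q_0$ from \S\ref{sect: rel rays}, while the paper uses the tilted-square suspension $q_\star$ from Figure~\ref{fig:suspension}, where the decomposition into two tori glued along a vertical slit is visible directly from the picture. The paper then invokes Thurston's trichotomy for the pseudo-Anosov step (not periodic since $\alpha^{-2}$ is not a root of unity, not reducible since $H_1$ of a torus is two-dimensional), whereas your trace computation $\alpha^{-2}+\alpha^{2}=3>2$ is a pleasant and more explicit alternative. Your identity $\psi(\alpha/2)=(1+\alpha)/2$ for the swapping is correct, though note the defining relation is $\hat\gamma=\varphi\circ\gamma$, not $\gamma=\varphi\circ\hat\gamma$ as you wrote; this does not affect the conclusion.

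There is, however, a genuine gap in your separation argument. Checking that $[\sigma_1-\sigma_2]$ has zero algebraic intersection with the core curves $[C_0],[C_1]$ does \emph{not} establish null-homology: these two classes span only a Lagrangian in $H_1(S;\Z)\cong\Z^4$, and any class in that Lagrangian automatically pairs trivially with both. Your stated reason (``opposite relative orientations'') is also not what happens---both $\sigma_1$ and $\sigma_2$ are upward and each crosses both $C_0$ and $C_1$ exactly once in the same direction. To close the gap from intersection data alone you would need, for instance, to combine it with the holonomy observation $\hol(\sigma_1)=\hol(\sigma_2)=(0,1)$ to force the coefficients in $a[C_0]+b[C_1]$ to vanish, or else carry out a direct cutting argument. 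Be warned that the first-stage trace is more delicate than it appears: the point $((1+\alpha)/2,0)$ on the bottom of $R_0$ is the image of the discontinuity $0$ under $\IE$, but it is a \emph{regular} point of the surface (the two $\IE^{-1}$-preimages $0^+$ and $\alpha^-$ are adjacent on the $C_1$-circle), so $\sigma_2$ continues through it into $R_1$ and has length $1$, not $\alpha$. Missing this would make the separation claim false. The paper's choice of $q_\star$ sidesteps all of these subtleties.
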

\begin{proof}
First consider the suspension of $\IE$ shown in the
right hand side of Figure \ref{fig:suspension}, where we take $\gamma$
to be a unit speed parametrization of a straightline of slope
$\frac{1}{2}$ passing through the cylinder represented by the large
square. 
The surface can be described as two tori glued along a vertical slit
to form a genus $2$ surface. The
reglued slits form the two vertical saddle connections on the surface
(shown as dotted lines in the Figure \ref{fig:suspension}). 
The suspension is a measure preserving suspension, and the
midpoints of the bottom edges (of slope $\frac{1}{2}$) of the smaller squares lie on the vertical leaves passing through the images under $\gamma$ of the midpoints of $J_1$  and
$J_2$. 
We have verified the first assertion of the lemma holds for one choice of $(q,\gamma)$ 
with $q \in \HH(1,1)$. This assertion follows for any choice, because
the canonical homeomorphism between two 
suspensions sends saddle  connections to saddle connections. See \S \ref{sect:IETs}.

It can be observed that the saddle connections and hence the tori
are swapped by the renormalizing  homeomorphism $\varphi:q \to q$.  
Fix one of the tori. To see that the restriction of $\varphi^2$ to
this torus is pseudo-Anosov we argue as follows. 
%as by a  modification of the proof of Lemma \ref{lem:mapping class
%1}. 
The action of $\varphi^2$ on each torus expands the vertical measured
foliation by the quadratic irrational $\alpha^{-2}$. This is not a
root of unity so the action is not periodic. 
Since first absolute homology is $2$-dimensional, the action cannot
be reducible. Thus $\varphi^2$ restricted to the torus is
pseudo-Anosov. 
\end{proof}

We will now construct $x_{\mathrm{AYB}}$. Let $(q, \gamma)$ be a measure preserving suspension 
of $\IE$ with $q \in \HH(1,1)$. Let $S$ be the topological surface
underlying the translation structure on $q$.  
Let $({\mathcal F}^{(v)}, \mu)$ be the vertical measured foliation. By the above result, there are
vertical saddle connections in $q$ which 
separate $q$ into two tori. The action of $[\varphi^2]$ on either of
the tori is pseudo-Anosov. The action of $[\varphi^2]$ scales $({\mathcal F}^{(v)}, \mu)$
by $\alpha^{-2}$. Because the action $[\varphi^2]$ on each torus is
pseudo-Anosov, there is a projective class of measured foliations  
on each torus which is contracted by a factor of $\alpha^2$. 
Choose the measured foliations $({\mathcal F}_1^{(h)},\nu_1)$
 and $({\mathcal F}_2^{(h)},\nu_2)$
from these classes so that the areas of the tori as measured by $\mu
\times \nu_i$ is half of the value in (\ref{eq:A}). 
Then the action of $[\varphi]$ satisfies
$$\varphi_\ast({\mathcal F}^{(h)}_1,\nu_1)=({\mathcal F}^{(h)}_2,\alpha \nu_2)
\quad \text{and} \quad \varphi_\ast({\mathcal F}^{(h)}_2,\nu_2)=({\mathcal
  F}^{(h)}_1,\alpha \nu_1).$$ 
Let $\sigma$ be one of the vertical saddle connections on $q$ in the
common boundary of the two tori. Then for each $i$,
$\varphi^2(\sigma)=\sigma$ and $\varphi^2_\ast(\nu_i)(\sigma)=\alpha^2
\nu_i(\sigma)$, 
so $\nu_i(\sigma)=0$. Let 
$$({\mathcal F}^{(h)},\nu)=({\mathcal F}^{(h)}_1\cup {\mathcal F}^{(h)}_2,\nu_1+\nu_2).$$
Then the structure on $q$ induced by the product of this horizontal transverse measure
with $({\mathcal F}^{(v)}, \mu)$ gives $S$ a noded translation structure,
which we call the {\em Arnoux-Yoccoz-Bowman noded surface} $x_{\mathrm{AYB}}$.
It is noded in the sense that the simple closed curve consisting of
the union of the two vertical saddle connections has been collapsed to
a point (it gets no transverse measure with respect to either $\mu$ or
$\nu$). 

We now summarize some results regarding a bordification of
$\HH(1,1)$. See \cite[\S 5]{Bainbridge} for more details, and proofs
of the following facts. 
The stratum $\HH(1,1)$ can be viewed as a subset of the bundle of holomorphic 1-forms $(X, \omega)$
where $X$ is a Riemann surface of genus 2 (because of our conventions
that singularities are labeled, in order to make this
precise, one should pass to a twofold cover of this bundle). Deligne
and Mumford defined a compactification of the
moduli space $\mathcal{M}_2$ of genus 2 surfaces by adjoining moduli
spaces of stable curves as boundary components of
$\mathcal{M}_2$, and the bundle of holomorphic 1-forms on $\mathcal{M}_2$ can be
extended to a bundle of `stable forms' over the
Deligne-Mumford compactification. An example of a stable curve
in the boundary of $\mathcal{M}_2$ is two 
tori attached at a node, and a stable form over
such a surface is a  pair of nonzero holomorphic
1-forms on a torus with one marked point. Thus, in our notation, the
union 
$$
\HH(1,1) \cup \left[ (\HH(0) \sm \{0\}) \times (\HH(0) \sm \{0\})
\right ]
$$
inherits a topology from the bundle of stable 1-forms over the
Deligne-Mumford compactification of $\mathcal{M}_2$. This topology is
compatible with polygonal presentations of translation surfaces, in
the following sense. Suppose $\{x_t: t>0\}$ is a collection of translation surfaces in
$\HH(1,1)$ which are a connected sum of two tori glued
along a slit, such that the length of the slit
goes to zero as $t \to 0+$, and the polygonal representation of  each
of the connected components of the complement of 
the slit has a Hausdorff limit of positive area. Then $\lim_{t \to
  0+} x_t$ exists as a stable 1-form on two tori attached at a
node, and is represented in $(\HH(0) \sm \{0\}) \times (\HH(0) \sm \{0\})$
 by the Hausdorff limit of the polygonal presentations of the $x_t$. 

Using this terminology we have:
\begin{thm}\name{thm:
  this terminology}
Let $\genus = 2$. Let $\{x_t:t>0\}$ be the imaginary rel ray discussed
in Theorem \ref{thm:completely  periodic ray}
and defined by $x_t=\rel^{(v)}_{t-\beta} q_0$. Then $x_{\mathrm{AYB}}=\lim_{t \to
  0+} x_t$. 
\end{thm}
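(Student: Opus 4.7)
The plan is to analyze the limit of $x_t$ as $t\to 0^+$ by combining Lemma~\ref{lem:mapping class 2}, which provides two vertical saddle connections separating $q_0$ into two tori, with the scaling relation $\tilde g\, x_{\alpha^{-1}t}=x_t$ from Theorem~\ref{thm:completely periodic ray}(1). The key idea is that the same $\tilde g$--symmetry that was used to identify $x_0$ with $x_{\mathrm{AY}}$ in the genus $\geq 3$ case now forces the collapse of a pair of vertical saddle connections and, simultaneously, pins down the limiting stable form.

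First, applying Lemma~\ref{lem:mapping class 2} to the measure preserving suspension $(q_0,\gamma_0)=(x_\beta,\gamma_0)$ yields two vertical saddle connections $\sigma_1,\sigma_2$ on $q_0$ separating it into two tori $T_1,T_2$. For $t>0$ Proposition~\ref{prop: real rel main} ensures that each $\sigma_i$ persists on $x_t$ as a vertical saddle connection of length $\ell_i(t)$ depending continuously on $t$. The affine homeomorphism $\varphi_t^2:x_{\alpha^{-2}t}\to x_t$ of derivative $\tilde g^2=\operatorname{diag}(\alpha^{-2},\alpha^2)$, obtained by squaring the map from Theorem~\ref{thm:completely periodic ray}(1), carries vertical saddle connections to vertical saddle connections and scales their lengths by $\alpha^2$. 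Since Lemma~\ref{lem:mapping class 2} says $[\varphi^2]$ permutes $\sigma_1,\sigma_2$, we obtain the functional equation $\ell_1(t)+\ell_2(t)=\alpha^2\bigl(\ell_1(\alpha^{-2}t)+\ell_2(\alpha^{-2}t)\bigr)$, whose iteration along $t=\alpha^{2n}\beta$ forces $\ell_1(t)+\ell_2(t)\to 0$ as $t\to 0^+$.

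Next I would check that the polygonal presentations of $x_t$ converge in the sense described in the paragraph preceding the theorem. Theorem~\ref{thm:completely periodic ray}(2) describes the horizontal cylinder decomposition of $x_t$ explicitly, with bounded circumferences and with heights that stay bounded and bounded below on any fundamental domain for the $\tilde g$-action. This guarantees that no horizontal cylinder collapses and that the areas of the two tori $T_1, T_2$ stay bounded away from $0$. Hence the polygonal presentations of $x_t$ satisfy the hypotheses of the compactification paragraph preceding the theorem: two tori are being pinched along a vanishing slit, each component of the complement has a Hausdorff limit of positive area, and therefore $x_0^\ast\defeq \lim_{t\to 0^+}x_t$ exists in the bordification as a stable form on two tori attached at a node, lying in $(\HH(0)\setminus\{0\})\times(\HH(0)\setminus\{0\})$.

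Finally, I would identify $x_0^\ast$ with $x_{\mathrm{AYB}}$. The affine homeomorphisms $\varphi_t^2$ of derivative $\tilde g^2$ extend continuously to an affine automorphism $\psi:x_0^\ast\to x_0^\ast$ of derivative $\tilde g^2$, whose isotopy class is $[\varphi^2]$ by discreteness of the mapping class group, exactly as in the proof of Theorem~\ref{thm:completely periodic ray}(1). By Lemma~\ref{lem:mapping class 2}, $\psi$ restricts to an affine pseudo-Anosov homeomorphism of each torus $T_i$ scaling the vertical measured foliation by $\alpha^{-2}$. Uniqueness of the expanding and contracting projective measured foliations of a pseudo-Anosov then determines the horizontal and vertical measured foliations on each $T_i$ up to a common scale, forcing them to coincide with the foliations used in the construction of $x_{\mathrm{AYB}}$. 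Since imaginary rel preserves area and the total area of $x_t$ matches the normalization~\eqref{eq:A}, the areas match on the nose, and the two stable forms coincide. The main obstacle I anticipate is verifying that the limit really lives in the stratum of two tori joined at a \emph{single} node, rather than a more degenerate configuration; this is controlled by the explicit cylinder geometry of Theorem~\ref{thm:completely periodic ray}(2) and the scaling relation for the $\ell_i$.
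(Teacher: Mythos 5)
Your route is genuinely different from the paper's. The paper proves this by an explicit polygonal computation: it takes the concrete presentation of $q_0$ from \S \ref{sect: rel rays}, cuts and reglues it so that the two tori of Lemma \ref{lem:mapping class 2} appear as two polygons joined along a vertical slit of length $t$, observes that applying $\rel^{(v)}_{t-1}$ only shrinks the slit, and reads off the noded limit directly from the Hausdorff limit of the polygons; the identification with $x_{\mathrm{AYB}}$ is then reduced to checking areas and that the renormalization class is realized by an affine automorphism of derivative $\til g$. You instead argue abstractly from the $\til g$-symmetry of Theorem \ref{thm:completely periodic ray}(1) and a limiting pseudo-Anosov argument parallel to Theorem \ref{thm: identifying AY}. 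That architecture is sound, and your final identification step is essentially the one the paper leaves to the reader. (Two small remarks: the collapse of the slit needs no functional equation --- both $\sigma_i$ join the two labeled singularities vertically, so $\rel^{(v)}_{t-\beta}$ changes their holonomy by exactly $(0,t-\beta)$ and $\ell_i(t)=t$ directly; and note that it is $[\varphi]$ that swaps the two tori, so that $[\varphi^2]$ preserves each and can restrict to a pseudo-Anosov, which is what you in fact use.)

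There is, however, a concrete error in your third step. You claim that Theorem \ref{thm:completely periodic ray}(2) gives horizontal cylinders of $x_t$ ``with bounded circumferences'' and heights bounded below as $t\to 0^+$. The opposite is true: for $t\in[\alpha^{-k}\beta,\alpha^{-k-1}\beta)$ the circumferences are $\alpha^{k+2},\dots,\alpha^{k+\genus+2}$, and $t\to 0^+$ forces $k\to-\infty$, so the circumferences tend to $\infty$ while the heights tend to $0$ (this is just Theorem \ref{thm: real rel} run backwards through the renormalization $\til g$, which expands horizontal lengths as $t$ decreases). Consequently the horizontal cylinder decomposition degenerates completely as $t\to 0^+$ and cannot supply the polygonal presentations with positive-area Hausdorff limits required by the bordification criterion; as written, your verification that the limit exists and is a stable form on two tori joined at a single node does not go through. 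The fix is the observation that the paper's explicit presentation encodes: rel deformations preserve all absolute periods, so each of the two tori cut off by $\sigma_1,\sigma_2$ is, as a closed flat torus, \emph{constant} in $t$; only the slit length $t$ changes. Presenting each component as a fixed parallelogram with a slit of length $t$ gives Hausdorff limits of positive (indeed constant, equal by the swap symmetry) area, and the limit is visibly two fixed tori attached at a node. With that substitution your argument is correct.
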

\begin{proof}
We fix $\genus =2$ and define $\alpha, \beta, q_0$ via
\eqref{eq: alpha}, \eqref{eq: beta}, and \S\ref{sect: rel rays} respectively. 
We get $\alpha=\frac{\sqrt{5}-1}{2}$ and $\beta=1$, and 
the surface $q_0=x_1$ 
is shown in Figure \ref{fig: genus 2 step 0}.
The surface breaks into two tori (depicted as white and gray regions)
along vertical saddle connections of length $1$. 
We change the presentation by cutting along the dotted lines and regluing using edge identifications.
The resulting surface is shown on the left side of Figure \ref{fig:
  genus 2 more}. The two regions shaded white and gray are two tori
glued along a slit, and as we apply $\rel^{(v)}_{t}$ and let $t \to
-1$, the length of the slit goes to zero. By the preceding discussion we can apply
$\rel^{(v)}_{-1}$ to this presentation and obtain the limit of the
surfaces $x_t$ as $t \to -1$. This collapses the vertical saddle connections
resulting in the noded surface shown on the right side of Figure
\ref{fig: genus 2 more}. A curve $\gamma$ representing the measure
preserving suspension of $\IE$ is shown as a dotted line in the
Figure. (This curve is in the same homotopy class as the core curve of
the cylinder $C_0$ in the description of $q_0$ provided in \S
\ref{sect: rel rays}.) 
To see that $x_0$ is $x_{\mathrm{AYB}}$, it suffices to check that the areas
match and that the renormalization mapping  class of $(x_0,\gamma)$ is
realized by an affine automorphism with derivative $\til g$. 
We leave these details to the reader.
\end{proof}

\begin{figure}
  \hfill
  \begin{minipage}[c]{0.65\textwidth}
   \vspace{0pt}\raggedright
   \includegraphics[scale=0.60]{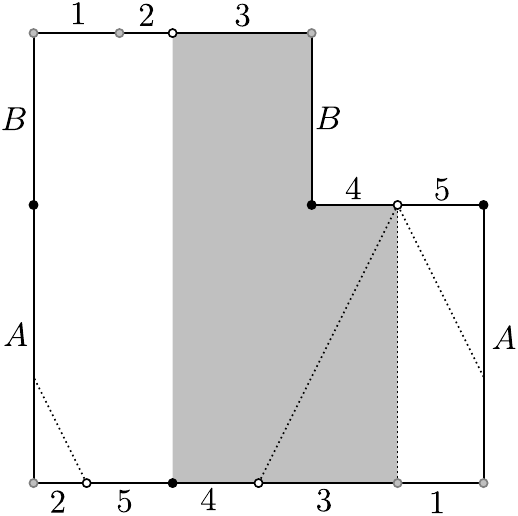}
  \end{minipage}
   \hfill
  \begin{minipage}[c]{0.3\textwidth}
\vspace{0pt}\raggedright
\begin{tabular}{ll}
Label & Edge length \\
\hline
\vspace{-1em} \\
$1$, $4$, $5$ & $\frac{3-\sqrt{5}}{4}$ \\
$2$ & $\frac{\sqrt{5}-2}{2}$ \\
$3$ & $\frac{\sqrt{5}-1}{4}$ \\
$A$ & $\frac{\sqrt{5}-1}{2}$ \\
$B$ & $\frac{3-\sqrt{5}}{2}$ \\
\end{tabular}  
  \end{minipage}
  \caption{The surface $q_0$ with $\genus=2$ as presented in \S \ref{sect: rel rays}.}
  \name{fig: genus 2 step 0}
  \end{figure}

\begin{figure}
  \begin{center}
  \begin{minipage}{0.495\textwidth}
  \begin{center}
   \includegraphics[scale=0.49]{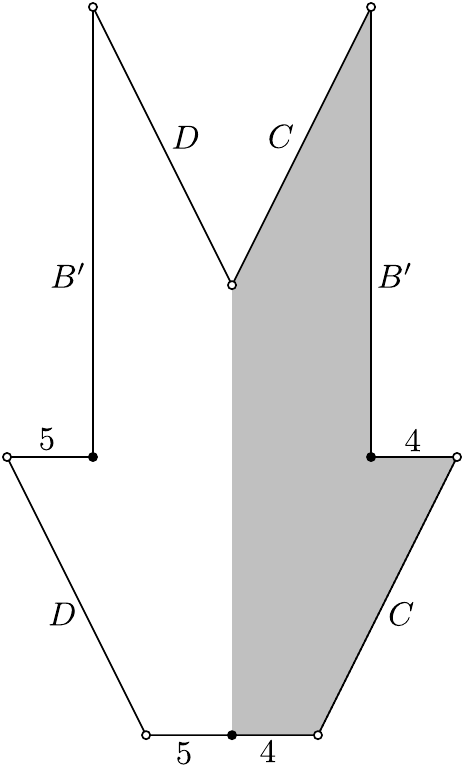}
   \end{center}
  \end{minipage}
  \begin{minipage}{0.495\textwidth}
   \begin{center}
   \includegraphics[scale=0.50]{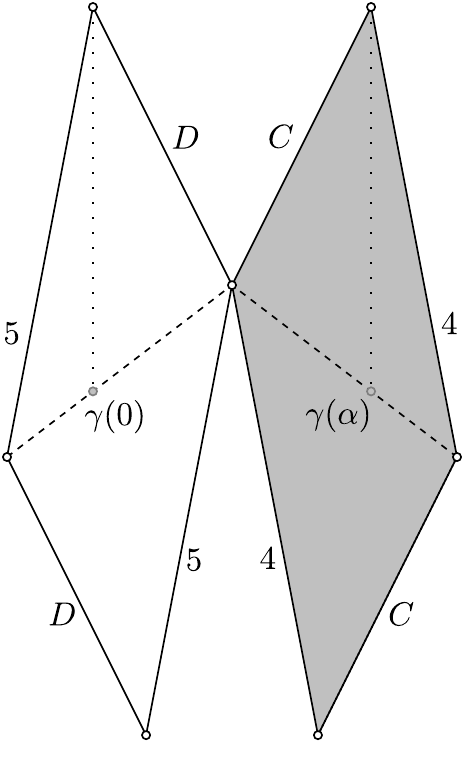}
   \end{center}
  \end{minipage}
  \end{center}
  \caption{Left: The surface $q_0$ with an alternate presentation. Right: The noded surface $x_0=\rel^{(v)}_{-1} q_0$
  is $x_{\mathrm{AYB}}$. 
  Edges labeled $C$ and $D$ have holonomy $(\pm \frac{\sqrt{5}-1}{4},\frac{\sqrt{5}-1}{2})$
  and edges on the right labeled $4$ and $5$ have holonomy $(\pm \frac{3-\sqrt{5}}{4},1)$.}
  \name{fig: genus 2 more}
  \end{figure}

\begin{proof}[Proof of Theorem \ref{thm: g=2}]
Statements (i) and (ii) follow from Theorem \ref{thm:completely
  periodic ray}, and statement (iii) follows from Theorem \ref{thm:
  this terminology}. 
%The above discussion shows that the convergence $x_t
%\to_{t \to 0+} x_0$ holds using a particular presentations of $x_t$
%and $x_0$ as unions of polygons. In \cite{Bainbridge}, Bainbridge defines a
%suitable topology on space of holomorphic 1-forms of surfaces of genus
%2 and certain `boundary surfaces'. These include the space $\HH(0) \times \HH(0)$ of pairs of tori with a
%node, and the above convergence can be seen to imply 
%the convergence in $\HH(1,1) \cup (\HH(0) \times \HH(0))$ in the sense
%considered by Bainbridge. 
Statement (iv) was proved by McMullen in
\cite[Theorem 9.3]{McMullen-foliations}. 
\end{proof}

\subsection{Genus three}
\name{sect: genus three}
In this subsection we fix $\genus=3$. Let $x_0$ be the Arnoux-Yoccoz surface of genus $3$.
This surface admits some special symmetries:
\begin{itemize}
\item The surface $x_0$ is hyperelliptic, and the hyperelliptic
  involution preserves the surface's singularities \cite[Lemma 2.1]{HLM} \cite[\S
  1.2]{Bowman10}. 
\item The surface $x_0$ admits an orientation-reversing involution
  preserving singularities with derivative  
$$h=\left[\begin{array}{rr}
0 & \alpha \\
\alpha^{-1} & 0 
\end{array}\right].$$
(This involution was described as $\rho'_1$ acting on an affine image
of $x_{0}$ in \cite[\S 2.3]{Bowman10}.) 
\end{itemize}

\begin{proof}[Proof of Corollary \ref{cor: real rel}]
The involutions above preserve the singularities and thus interact
with the rel deformation as described by Proposition \ref{prop: rel
  and G commutation}. In particular, 
for all $t \in \R$, 
$$-I \big ( \rel^{(v)}_t x_0\big)=\rel^{(v)}_{-t} x_0
\quad \text{and} \quad
h \big( \rel^{(v)}_t x_0\big)=\rel^{(h)}_{\alpha t} x_0.$$
In particular, the group generated by the matrices $-I$ and $h$ acts
transitively on the four horizontal and vertical rel rays leaving
$x_0$. The conclusion of the Corollary follows. 
\end{proof}

These symmetries are a special feature of the case $\genus =3$: 

\begin{prop}\name{prop: no involutions}
For $\genus \geq 2$, the surface $x_0$ admits no nontrivial
translation automorphisms. For $\genus \geq 4$, it admits no 
involutions with derivative
$-\mathrm{Id}$, and no orientation-reversing involutions with
derivative $(x,y) \mapsto (x, -y)$ fixing the singular points. 

\end{prop}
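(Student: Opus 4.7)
The proof will exploit the pseudo-Anosov automorphism $\varphi_{\mathrm{AY}}$ with derivative $\til g=\diag(\alpha^{-1},\alpha)$ constructed in \S\ref{subsec: AY surfaces general genus}, together with number-theoretic properties of the trace field $\Q(\alpha)$: it has degree $\genus$ over $\Q$ (\cite[Lemma 3]{AY}), is contained in $\R$, and hence contains $\pm 1$ as its only roots of unity.

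For the first assertion, let $\tau$ be a translation automorphism of $x_0$. Conjugation by $\varphi_{\mathrm{AY}}$ preserves the finite group of translation automorphisms (since $\til g\cdot\mathrm{Id}\cdot\til g^{-1}=\mathrm{Id}$), so some power $\varphi_{\mathrm{AY}}^N$ commutes with $\tau$. Now $\varphi_{\mathrm{AY}}^*$ acts on $H^1(S;\Q)$ with characteristic polynomial $m_\alpha(X)\,m_{\alpha^{-1}}(X)$, a product of two distinct irreducible factors of degree $\genus$ (distinct because $\alpha^{-1}$ is Pisot while $\alpha\in(0,1)$ is not), so its spectrum is simple and $H^1(S;\Q)$ splits as $V_\alpha\oplus V_{\alpha^{-1}}$, each a one-dimensional module over $\Q(\alpha)$. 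Commuting with $(\varphi_{\mathrm{AY}}^N)^*$ forces $\tau^*$ to preserve each eigenline over $\bar\Q$ and act on it as a scalar of finite order; rationality of $\tau^*$ makes this scalar lie in $\Q(\alpha)$, so it equals $\pm 1$. Because $\tau$ has derivative $\mathrm{Id}$ it pointwise fixes the horizontal and vertical holonomy cocycles $\mathrm{x}(x_0)\in V_\alpha$ and $\mathrm{y}(x_0)\in V_{\alpha^{-1}}$, so the scalar is $+1$ on both summands and $\tau^*=\mathrm{id}$ on $H^1(S;\Q)$. Combined with preservation of the relative periods this places $\tau$ in the relative Torelli subgroup of $\Mod(S,\Sigma)$, which is torsion-free for $\genus\geq 2$, so $\tau$ is isotopic to the identity; discreteness of the isometry group of $x_0$ then forces $\tau=\mathrm{id}$. (For $\genus=2$, where $x_0=x_{\mathrm{AYB}}$ is the noded surface of \S\ref{sect: genus two}, the conclusion is a direct check: a translation of either torus fixing the nodal point must be trivial.)

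For the statement about involutions with derivative $-\mathrm{Id}$, let $\iota$ be such an involution. Conjugation by $\iota$ preserves the derivative $\til g$, so by the first part $\iota$ centralizes $\varphi_{\mathrm{AY}}$. Replaying the cohomological analysis with $\iota^*\mathrm{x}(x_0)=-\mathrm{x}(x_0)$ yields $\iota^*=-\mathrm{Id}$ on $H^1(S;\Q)$. A local computation at a fixed singular point of cone angle $2\pi\genus$ shows that an affine involution with derivative $-\mathrm{Id}$ acts as a rotation of the cone by $\pi(2k+1)$ for some integer $k$, and being an involution forces $2k+1\equiv 0\pmod\genus$, which has a solution only when $\genus$ is odd. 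Hence for $\genus\geq 4$ even, $\iota$ cannot fix either singularity and therefore must swap the two singular points, putting $x_0$ into $\HH^{\mathrm{hyp}}(\genus-1,\genus-1)$ and contradicting Proposition \ref{prop:stratum}. For $\genus\geq 5$ odd the swap case is excluded identically; the principal obstacle is the remaining fix-both-singularities case, which I would rule out by showing that such an $\iota$ would lie in the finite symmetry subgroup of the centralizer of $\varphi_{\mathrm{AY}}$ in $\Mod(S,\Sigma)$, and then verifying (via a train-track analysis combined with Theorem \ref{thm: no real subfields} applied to the subfield of $\Q(\alpha)$ that would arise from the Prym decomposition associated to $\iota$) that no such nontrivial $\iota$ exists for these $\genus$.

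Finally, suppose $\iota$ is an orientation-reversing involution with derivative $\begin{pmatrix}1&0\\0&-1\end{pmatrix}$ fixing both singularities. Since this derivative fixes the horizontal direction pointwise, the fixed set of $\iota$ contains every horizontal leaf through a fixed point, and in particular all horizontal separatrices emanating from the two singularities. Such separatrices must be infinite rays, because a horizontal saddle connection of length $L$ on $x_0$ would yield, under iterates of $\til g^{-1}$, horizontal saddle connections of length $\alpha^nL\to 0$, violating positivity of the systole. But the fixed set of an involution of a compact surface is a closed $1$-submanifold, hence a disjoint union of circles; it therefore contains the closure of any such infinite separatrix, and by minimality of the horizontal foliation of $x_0$ this closure is all of $x_0$. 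Thus $\iota=\mathrm{id}$, contradicting that $\iota$ is a nontrivial involution.
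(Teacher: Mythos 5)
Your first and third assertions are proved correctly, and by a genuinely different route from the paper: the paper simply reads off from Bowman's explicit polygonal presentation that there is no second saddle connection parallel to, and of the same length as, a distinguished one (hence no nontrivial translation automorphism), quotes the proof of \cite[Cor.~1.6]{Bowman} for the absence of $-\mathrm{Id}$ involutions when $\genus\geq 4$, and disposes of the orientation-reversing case by composing a hypothetical involution fixing the singularities with the involution having the same derivative that \emph{swaps} them, which would produce a forbidden nontrivial translation automorphism. Your replacements --- the simple-spectrum/commutant analysis over $\Q(\alpha)$ for the first assertion, and the ``fixed set contains a dense horizontal separatrix'' argument for the third --- are sound and have the virtue of not depending on Bowman's explicit coordinates. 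Two small repairs: at a fixed cone point of angle $2\pi\genus$ only two of the $2\genus$ horizontal separatrices lie in the fixed locus, not all of them (one suffices for your argument); and in the even-$\genus$ case you should state explicitly that $\iota^*=-\mathrm{Id}$ on $H^1(S;\Q)$ forces the quotient surface to have genus $0$, i.e.\ that $\iota$ is hyperelliptic --- only then does ``$\iota$ swaps the zeros'' place $x_0$ in $\HH^{\mathrm{hyp}}(\genus-1,\genus-1)$ and contradict Proposition \ref{prop:stratum}.

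The genuine gap is the second assertion for odd $\genus\geq 5$. Your local cone-angle computation excludes a fixed singular point only when $\genus$ is even, and the homological normal form $\iota^*=-\mathrm{Id}$ you derive is exactly what a hyperelliptic involution fixing both zeros satisfies; this configuration really does occur in $\HH^{\mathrm{odd}}(2,2)$ (the genus-$3$ Arnoux-Yoccoz surface is hyperelliptic, with the involution fixing both singularities), so no soft argument of this type can rule it out --- one must use the specific geometry of $x_0$. The plan you sketch for this case does not work as stated: since $\iota^*=-\mathrm{Id}$ on all of $H^1(S;\Q)$, the $\iota$-invariant part of cohomology is trivial, so there is no Prym decomposition producing a proper subfield of $\Q(\alpha)$ to which Theorem \ref{thm: no real subfields} could be applied, and no candidate mechanism for a contradiction is actually identified. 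The paper closes precisely this case by invoking Bowman's explicit analysis of the higher-genus Arnoux-Yoccoz polygons; some concrete input of that kind appears unavoidable here.
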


\begin{proof}
The proof relies on work of Bowman \cite{Bowman}, and we will freely
use Bowman's notation and results. To see that the
Arnoux-Yoccoz surfaces have no translation automorphisms, note that
there is no saddle connection parallel to $P_{\genus -1}Q_{\genus -1}$
and of equal length. 
The proof of \cite[Cor. 1.6]{Bowman} shows that for $\genus \geq 4$,
the surface has no involutions
with derivative $-\mathrm{Id}$. On the other hand it does admit an
involution swapping singular points with derivative $(x, y) \mapsto
(x, -y)$. If it had a second involution with the same derivative,
fixing singular points, the composition of these two maps would be a
nontrivial translation automorphism. 
\end{proof}

We will now explain Figure \ref{fig: sequence}, which describes the
algebraic dynamics of the rel-deformed Arnoux-Yoccoz IET. Let
$x'_r=\rel^{(h)}_r x_0$. For $0 \leq r < \frac{\alpha^3}{2}$, the
surface $x'_r$ can be considered to be the surface obtained from the
presentation of $x_0$ in Figure \ref{fig:AY} by moving the white
singularity rightward by $r$. As such $x'_r$ is a suspension of an
interval exchange $\IE_r$ defined using the same permutation as the
Arnoux-Yoccoz IET viewed as an interval exchange on $[0,1)$, i.e. the permutation
$$\left(\begin{array}{rrrrrrr}
1 & 2 & 3 & 4 & 5 & 6 & 7 \\
2 & 5 & 4 & 7 & 6 & 3 & 1 
\end{array}\right)$$
and with the vector of lengths
$$\left(\frac{1-\alpha}{2},\alpha-\frac{1}{2}+r,\frac{a}{2}-r,\frac{\alpha^2}{2}+r,
  \frac{\alpha^2}{2}-r, \frac{\alpha^3}{2}+r,
  \frac{\alpha^3}{2}-r\right).$$ 

Observe that for any such $r$ and any point $p \in [0,1)$, the difference
$\IE_r(p)-p$ takes one of the following six values modulo one:
$$\pm \frac{1 - \alpha }{2}, \quad \pm \frac{1 - \alpha^2}{2}, \quad \pm \frac{1 - \alpha^3}{2}.$$
Let $H$ be the group of rotations of $\TTT=\R/\Z$ generated by these six
numbers, and let $\Gamma$ denote the Cayley graph of $H$ with respect
to these six
generators. As a graph $\Gamma$ is isomorphic to the graph
obtained from the edges of the tiling of the plane by equilateral triangles. A 
choice of $p \in [0,1)$ and $r$ as above determines an embedded curve
in $\Gamma$, i.e., a sequence of elements $h_n \in H$ so that 
$$\IE_r^n(p)=p + h_n \mod \Z,$$
where each $h_n$ is viewed as a rotation of $\TTT$. We call $\{h_n\}$ the {\em arithmetic orbit} of $p$.  

Since each $x'_r$ for $r>0$ has a vertical cylinder decomposition, the
corresponding interval exchanges $\IE_r$ are periodic. Based on
Theorem \ref{thm:completely periodic ray}, the periods decrease at
discrete times as $r$ increases. Through an understanding of $x_r'$
when $r$ is close to but slightly smaller than $\frac{\alpha^3}{2}$, it can
be observed that the simplest such periodic orbit has period three,
visiting the sequence of intervals with indices $1$ then $6$ then
$4$. (This periodic orbit is realized by starting near the right
endpoint of interval $1$.) Call the cyclically ordered list $164$ 
the {\em orbit type} of this periodic orbit. Every point of $[0,1)$
has such an {\em orbit type} under $\IE_r$ for $r$ as above: it is the
cyclic sequence of interval indices hit by the orbit. For each $p$ and each $r$
for which the orbit $\IE_r^\Z(p)$ is defined, there is an open set of
values $(p',r') \in \R^2$ which give rise to the same orbit
type. The orbit-type 
determines the arithmetic orbit, because
the translations applied to each interval do not vary with
$r$. The
arithmetic 
orbit corresponding to this simplest periodic orbit is a triangle,
depicted in the top left of Figure \ref{fig: sequence}. 

It follows from the discussion above and  Theorem \ref{thm:completely
  periodic ray}(1) that $\til g x'_{\alpha r}=x'_{r}$ for all $r$. Identifying
surfaces in the real-rel leaf through 
rel deformations, we see that
the affine homeomorphism $x'_{\alpha r} \to x'_{r}$ with derivative $\til g$
is in the renormalization mapping class $[\varphi]$ of $x_0$. Then
the image under $\varphi^{-1}$ of a (homotopy class of a) cylinder $C$
in $x_r'$ yields a (homotopy class of a) cylinder $C'$ in $x_{\alpha
  r}'$. Restrict to the case when $0<r<\frac{\alpha^3}{2}$. Then a
cylinder $C$ in $x_r'$ can be associated to an orbit type $O$ of
$\IE_r$, and the cylinder $C'$ in $x_{\alpha r}'$ is associated to a
different orbit type $O'$. 
Using a natural Markov partition for the action of $\varphi^{-1}$ on
$x_0$, it can be observed that $O'$ can be derived from $O$ by
applying the following ``substitution'' (which makes sense for cyclic
words): 
\eq{eq:substitution}{
\begin{array}{c}
1 \mapsto 34, 2 \mapsto 3 4, 4 \mapsto 1 6, 5 \mapsto 1 7, 6 \mapsto 2, 7 \mapsto 3 \\
3 \mapsto \begin{cases}
3 5 & \text{if the number preceding $3$ was $4$ or $7$},\\
1 5 & \text{if the number preceding $3$ was $3$ or $6$}.
\end{cases}\end{array}}
It follows from Theorem \ref{thm:completely periodic ray}
that the orbit of $164$ under this substitution of gives all orbit types
of periodic orbits of IETs in the family $\{\IE_r: 0 \leq r <
\frac{\alpha^3}{2}\}$. 
So the simplest orbit types of such periodic orbits are
$$164, ~ 34216, ~ 151634342 ~ \text{and} ~ 34173421516351634
%3516343351634341734215173421516,151734215163515173421516351634335163434173433516343417342,
$$
corresponding to the four arithmetic orbits depicted on the top left side of Figure \ref{fig: sequence}.

The substitution of (\ref{eq:substitution}) has the Tribonacci substitution
$$a \mapsto ab; \quad b \mapsto ac; \quad c \mapsto a$$
as a factor obtained by mapping $1$, $2$ and $3$ to $a$, mapping $4$ and $5$ to $b$
and mapping $6$ and $7$ to $c$. We speculate that after passing to a
subsequence, the arithmetic orbits (as depicted in Figure \ref{fig:
  sequence}) converge up to rescaling and a uniform affine coordinate
change to Rauzy's fractal \cite{Rauzy} in the Hausdorff topology. As
curves, these rescaled paths likely converge to the curve defined by
Arnoux's semi-conjugation \cite{Arnoux88} between the Arnoux-Yoccoz IET and the toral
rotation associated to Rauzy's fractal. We hope to investigate these
relations in future work.

\section{Results of Eskin-Mirzakhani-Mohammadi, 
  consequences and subsequent developments}
\name{subsec: breakthroughs}
Recent breakthrough results of Eskin, Mirzakhani and Mohammadi
\cite{EM, EMM2} give a wealth of information about orbit-closures for
the actions of $G$ and $P$ on strata of translation surfaces. The
following summarizes the results which we will need in this paper:
\begin{thm}\name{prop: EMM}
Let $x$ be a translation surface in a stratum $\HH$, and let $\HH_0$
be the subset of the connected component of $\HH$ containing $x$, and
consisting of surfaces whose area is the same 
area as the area of $x$. Then 
$$
\overline{Gx} = \overline{Px} = \MM \cap \HH_0,
$$
where $\MM$ is an immersed submanifold of $\HH$ of even (real)
dimension which is cut out by linear equations 
with respect to period coordinates, and $\MM \cap \HH_0$ is the support of a
finite smooth invariant measure $\mu$. Moreover 
\eq{eq: P action}{
\frac{1}{T} \int_0^T \int_0^1 (g_tu_s)_* \delta_x \, ds \, dt
\to_{T \to \infty} \mu,
}
where $\delta_x$ is the Dirac measure at $x$ and the convergence is weak-* convergence in
the space of probability measures on $\HH$. 
\end{thm}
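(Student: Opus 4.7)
The statement is a compilation of deep results from \cite{EM, EMM2}, so my plan is not to give an independent argument but to match each clause with the appropriate theorem from those papers. First, the orbit-closure theorem of Eskin-Mirzakhani-Mohammadi \cite{EMM2} asserts that for any translation surface $x$ the $P$-orbit closure $\overline{Px}$ is an \emph{affine invariant submanifold}, i.e.\ a properly immersed submanifold locally cut out, in period coordinates, by real-linear equations; the same theorem of \cite{EMM2} also gives $\overline{Gx} = \overline{Px}$. Setting $\MM$ to be this submanifold (inside the full connected component of $\HH$, before restricting the area), the identity $\overline{Gx} = \overline{Px} = \MM \cap \HH_0$ follows immediately because the $G$-action, and in particular the $P$-action, preserves the area of a translation surface, and $\HH_0$ is an area level set inside the connected component.

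Second, the even real dimension of $\MM$ is built into the theory: $\MM$ is $\SO(2)$-invariant (being $G$-invariant), so its tangent distribution in $H^1(S,\Sigma;\R^2) = H^1(S,\Sigma;\R) \otimes \R^2$ is preserved by the standard rotation action on the $\R^2$ factor, which forces the real dimension to be even. Equivalently, the defining linear equations in period coordinates come in pairs under the complex structure induced by $\SO(2)$. Third, the existence of the smooth finite $P$-invariant measure $\mu$ supported on $\MM \cap \HH_0$ is the content of Eskin-Mirzakhani \cite{EM}: every affine invariant submanifold carries a canonical $P$-invariant (in fact $G$-invariant) probability measure, obtained in period coordinates as Lebesgue measure on the linear slice defining $\MM$, coned off by the area and normalized. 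The only bookkeeping required is to fix the normalization so that $\mu$ lives on the area-$\mathrm{area}(x)$ slice rather than on the area-one slice.

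Finally, the equidistribution statement \eqref{eq: P action} is explicitly one of the main theorems of \cite{EMM2}: the two-parameter Cesaro averages of the push-forwards $(g_t u_s)_* \delta_x$ converge weak-$\ast$ to the canonical measure $\mu$ on $\overline{Px}$. Thus the entire proof amounts to stitching these three ingredients together. There is no genuine mathematical obstacle; the only delicate points are conventions, namely the choice of connected component, the normalization of area, and the verification that $\MM \cap \HH_0$ (rather than all of $\MM$) is the correct support of $\mu$ given the fixed-area convention in force throughout the paper.
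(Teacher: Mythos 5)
Your proposal is correct and matches the paper's treatment: the paper states this theorem as a summary of results imported from \cite{EM} and \cite{EMM2} and offers no independent proof, exactly as you do by attributing each clause (orbit closures, affine invariant submanifold structure, the canonical measure, and the Ces\`aro equidistribution) to the corresponding theorems of those papers. Your additional remarks on area normalization and the even-dimensionality are consistent with the conventions in force here.
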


\subsection{Surfaces whose horocycle orbit-closure contains their \texorpdfstring{$G$}{G}-orbit}
\label{sect:horocyle}
Recall that Proposition \ref{prop: V suffices} claimed that if $x$ periodic under $g_t$, then $\overline{Ux} = \overline{Vx}= \overline{Gx}$.

\begin{proof}[Proof of Proposition \ref{prop: V suffices}]
We prove for $U$, the proof for $V$ being similar. 
Suppose $g_{p_0}x=x,$ where $p_0>0$ is the period for the closed
geodesic. %Let $\delta_x$ denote the Dirac measure on $x$. 
Let $\mu$ be the smooth
$G$-invariant measure with $\overline{Gx} = \supp \, \mu$. By \equ{eq:
  P action},  
\eq{eq: lhs}{
\begin{split}
\mu & = 
\lim_{m \to \infty}
\frac{1}{mp_0} \int_0^{mp_0} \int_0^1 (g_tu_s)_* \delta_x \, ds \, dt \\
 & = \lim_{m \to \infty} \frac{1}{mp_0} \int_0^{p_0} 
 \sum_{i=0}^{m-1} \int_0^1 (g_{ip_0+p}u_s)_* \delta_{x} \, ds \, dp. 
\end{split}
}
For each $p \in [0, p_0)$ we write 
\eq{eq: rhs}{
\nu_{p, m} = \frac{1}{m}\sum_{i=0}^{m-1} 
\int_0^1 (g_{ip_0+p} u_s)_* \delta_x \, ds = \frac{1}{m} \sum_{i=0}^{m-1}
\frac{1}{e^{2(ip_0+p)}} \int_0^{e^{2(ip_0+p)}} (u_s g_{p})_*\delta_x
\, 
ds,}
where we have used the commutation relations $g_\tau u_s =
u_{e^{2\tau}s} g_\tau$ and the fact that $g_{ip_0}x =x $ for each $i$.  
Then the right hand-side of \equ{eq: lhs} is $\lim_{m \to \infty} \frac{1}{p_0} \int_0^{p_0}
\nu_{p, m} dp$. Let $\nu_{0,m}$ be the measure corresponding to
$p =0$, then for any $p$, $\nu_{p, m} = g_{p*}\nu_{0,m}$. Take
a subsequence $\{m_j\}$ along which $\nu_{0,m_j}$ converges to a measure
$\nu$ on $\HH$ (where $\nu(\HH) \leq 1$). Then $\nu_{p, m_j} \to_{j \to \infty} g_{p*}
\nu$. The right hand side of \equ{eq: rhs} shows that $\nu$ is
$U$-invariant and therefore so is each $g_{p*} \nu$, and by \equ{eq:
  lhs} we have $\mu = \frac{1}{p_0} \int_0^{p_0} g_{p*} \nu \, dp$
(and in particular $\nu(\HH)=1$). Since $\mu$ is
$G$-ergodic, by the Mautner property (see e.g. \cite{EW}) it is $U$-ergodic. This implies
that $g_{p*}\nu = \mu$ for almost every $p$, and (since $\mu$ is
$\{g_t\}$-invariant), $\nu=\mu$. These considerations are valid for
every convergent subsequence of the sequence $\nu_{0,m}$ and hence
$\nu_{0,m} \to_{m \to \infty} \mu$. Since $\nu_{0,m}$ is obtained by
averaging over the $U$-orbit of $x$,  the orbit $Ux$ is dense in $\supp \, \mu =
\overline{Gx}$, i.e. $\overline{Ux} = \overline{Gx}$.  
\end{proof} 

\subsection{Results of Hubert-Lanneau-M\"oller, and their extension}
\name{subsec: full rank}
In \cite{HLM}, Hubert, Lanneau and M\"oller determined
$\overline{Gx_0}$ for the Arnoux-Yoccoz surface $x_0$ in the
case $\genus =3$. The proof of \cite{HLM} preceded the recent
breakthroughs of \cite{EMM2} discussed above. 
In this section, using \cite{EMM2} and ideas of Alex Wright, we show how to extend the
results of \cite{HLM} to arbitrary $\genus \geq 3$.
For further background we refer the reader to the article of Wright
\cite{Wright survey} which surveys the theory of translation surfaces
taking into account the results of \cite{EMM2} and ensuing work.

A set $\MM$ as in Theorem \ref{prop: EMM}
will be called an {\em affine invariant manifold}. Note that we can
recover $\MM$ from $x$ as dilations of surfaces in $\overline{Gx}$, or
as the orbit-closure of $x$ under the group $\GL_2^\circ(\R)$ (the
connected component of the identity in the group of invertible $2
\times 2$ matrices). By choosing a
marking, lifting to $\HH_{\mathrm{m}}$, and using period coordinates, the tangent space
to $\MM$ at a point can be identified using period coordinates with a
subspace of $H^1(S, \Sigma; \R^2)$. Moreover if we identify $\R^2
\cong \C$ in the natural way, then this subspace is a $\C$-linear
subspace of $H^1(S, \Sigma; \C)$ (see \cite{EMM2, AEM}). 
The {\em field of definition} of $\MM$ is the
smallest field $k$ such that for any $x \in \MM$ and any identification of
$T_x(\MM)$ with a subspace $W \subset H^1(S, \Sigma; \R^2)$, using
period coordinates, $W$ can be defined by linear equations with
coefficients in $k$. 
The following summarizes results of Avila, Eskin,
Filip, M\"oller and Wright (see \cite{AEM, Wright_field, Filip} and the
references therein). 

\begin{thm}\name{thm: Wright Filip}
The field of definition of $\MM$ is the intersection of the holonomy
fields of all surfaces in $\MM$. It is a totally real field.
\end{thm}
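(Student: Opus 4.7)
The plan is to extract this theorem from the cited works of Wright \cite{Wright_field}, Filip \cite{Filip}, and Avila--Eskin--Mirzakhani \cite{AEM}, and I would present it as a packaging of three ingredients. First I would fix notation: for each $x \in \MM$ and each marking, period coordinates identify a neighborhood of $x$ in $\MM$ with an open set in a linear subspace $W \subset H^1(S,\Sigma;\C)$, and the field of definition $k_0$ of $\MM$ is the smallest subfield of $\R$ over which $W$ can be cut out by linear equations.

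To prove the first assertion, that $k_0 = \bigcap_{x \in \MM} k(x)$ where $k(x)$ is the holonomy field, I would follow Wright's strategy in \cite{Wright_field}. For the inclusion $k_0 \subset k(x)$, I would start with a surface $x \in \MM$, normalize by $\GL_2(\R)$ so that all absolute periods of $x$ lie in $k(x)^2$, and then observe that since $W$ contains the two-dimensional $G$-tangent direction and is cut out by real linear equations in $H^1(S,\Sigma;\R^2) = H^1(S,\Sigma;\R) \otimes \R^2$, one can produce defining equations whose coefficients are themselves real linear combinations of the absolute periods of $x$; a basis argument then exhibits equations with coefficients in $k(x)$. The reverse inclusion would use that $\MM$ is rigid (equivalently, that affine invariant submanifolds containing a common dense set must agree): a Galois automorphism $\sigma$ fixing $\bigcap k(x)$ pulls $\MM$ back to an affine invariant manifold $\sigma(\MM)$ sharing the same surfaces, hence equal to $\MM$, so $\sigma$ fixes $k_0$.

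For the totally real assertion, I would invoke Filip's theorem \cite{Filip} that $\MM$ is an algebraic subvariety of the Hodge bundle over $\mathcal{M}_g$. Given this algebraicity, any complex embedding $\sigma : k_0 \to \C$ produces a Galois conjugate $\sigma(\MM)$ which is again an affine invariant manifold with the same underlying topology. The central Hodge-theoretic input (used by Filip and earlier by Avila--Eskin--Mirzakhani \cite{AEM}) is that the restriction of the Hodge decomposition $H^1(X;\C) = H^{1,0} \oplus H^{0,1}$ to the ``absolute part'' of $T\MM$ defines a polarized variation of Hodge structure whose polarization, coming from the cup product, is positive definite on $H^{1,0}$. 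If $\sigma$ were non-real, the resulting Galois conjugate would exchange $H^{1,0}$ and $H^{0,1}$ in a manner incompatible with the positivity of the polarization, giving the required contradiction.

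The main obstacle is clearly the totally real statement, which is a deep result genuinely requiring Filip's algebraicity theorem; the first part is essentially linear algebra in period coordinates once the definitions are set up. A proof avoiding the citation to \cite{Filip} would need to reconstruct the variation of Hodge structure on the tautological bundle over $\MM$ and establish its semisimplicity and positivity directly, which amounts to redoing Filip's work. Accordingly, I would present the argument as a synthesis rather than a from-scratch proof, being careful to state precisely which inputs from \cite{AEM, Wright_field, Filip} are being used.
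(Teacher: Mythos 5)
The paper does not actually prove this theorem: it is stated explicitly as a summary of results from \cite{AEM, Wright_field, Filip}, so your decision to present it as a packaging of those references is exactly the paper's treatment, and your attribution of the two halves (the holonomy-field identity to \cite{Wright_field}, the totally real statement to Filip's algebraicity together with positivity of the polarization on the conjugates of the tautological subbundle) is the standard and correct one.

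One step of your internal sketch is, however, logically backwards. The Galois argument you offer for the ``reverse inclusion'' --- every $\sigma$ fixing $\bigcap_x k(x)$ fixes $k_0$, hence done --- would, if it worked, prove $k_0 \subseteq \bigcap_x k(x)$ once more: by Galois theory, a number field fixed pointwise by $\mathrm{Gal}(\overline{\Q}/F)$ is contained in $F$. It cannot yield $\bigcap_x k(x) \subseteq k_0$. The actual content of that inclusion in \cite{Wright_field} is the existence of surfaces in $\MM$ whose holonomy field is no larger than the field of definition (Wright produces a dense set of such surfaces), and your sketch supplies no substitute for this. Moreover the intermediate claim that the Galois conjugate $\sigma(\MM)$ ``shares the same surfaces'' as $\MM$ is unjustified: applying $\sigma$ to the defining linear equations gives a subspace with no reason to contain any surface of $\MM$, and taming this is precisely where the rigidity/algebraicity input is needed. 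Since you are ultimately deferring to the cited papers for the proofs, this does not invalidate the proposal as a synthesis, but as written your sketch of the first assertion establishes only one of the two inclusions.
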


%The method is valid in greater generality as follows. 

Let $\MM$ be an affine invariant manifold, contained in a stratum
$\HH$ of surfaces of genus $\genus$, and let $\Res$ be the map as in
\equ{eq: defn Res}. We will say that {\em $\MM$ is of full
rank in $\HH$} if the restriction of $\Res$ to the tangent space to
$\MM$ is surjective, that is if $\dim_\C \Res(W) = 2\genus$, where $W
\subset H^1(S, \Sigma; \R^2)$ is
the subspace corresponding to $T_x(\MM)$ for some $x\in \MM$ using
some marking. Similarly we will say that $\overline{Gx_0}$ is of full
rank if the corresponding affine invariant manifold $\MM$ is of full
rank.

Note that if $\MM$ is of full rank then 
\eq{eq: dimension formula}{
\dim_{\C} \MM \geq \dim_{\C} \HH - \dim_{\C}
\mathfrak{R},}
 where $\mathfrak{R}$ is as in \eqref{eq: natural
  subspace}.  
%An example is the subset $\LL$ of $\HH(2,2)^{\mathrm{odd}}$ consisting of  surfaces with a hyperelliptic
%involution.
We will prove the following: 
\begin{thm}\name{thm: full rank Wright idea}
Let $x$ be a translation surface of genus $\genus$ which 
admits a pseudo-Anosov map such that the holonomy field of $x$ is of
degree $\genus$ and does not contain
any totally real subfields other than $\Q$ (and in particular is not itself totally real).
Then $\overline{Gx}$ is of
full rank. 
\end{thm}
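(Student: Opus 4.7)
The plan is to first show that the field of definition of $\MM \defeq \overline{Gx}$ equals $\Q$, and then to exploit the resulting $\Q$-rationality together with the Galois symmetry forced by the pseudo-Anosov action to establish that the restriction of the tangent space onto absolute cohomology fills out $H^1(S;\R)$.

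\textbf{Rationality step.} Let $K$ denote the holonomy field of $x$. By Theorem \ref{thm: Wright Filip} the field of definition of $\MM$ is totally real and contained in the holonomy field of every surface in $\MM$, hence contained in $K$; our hypothesis then forces it to be $\Q$. Consequently the tangent space $W = T_x \MM \subset H^1(S,\Sigma;\R^2)$ is cut out by $\Q$-linear equations. Since $W$ is $G$-invariant and $\R^2$ is an irreducible $\SL_2(\R)$-module, I would write $W = V \otimes \R^2$ for a $\Q$-rational subspace $V \subset H^1(S,\Sigma;\R)$; moreover, since $\phi$ corresponds to a mapping class in the monodromy of $\MM$, its pullback $\phi^\ast$ preserves $W$, and therefore $V$ is $\phi^\ast$-invariant. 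Setting $U \defeq \Res(V) \subset H^1(S;\R)$, full rank for $\MM$ becomes the statement $U = H^1(S;\R)$. Finally, since $W$ contains the tangent to the $G$-orbit at $x$, its image under $\Res$ forces the cohomology classes $[\operatorname{Re}\omega]$ and $[\operatorname{Im}\omega]$ of the holomorphic 1-form $\omega$ of $x$ to lie in $U$; these are $\phi^\ast$-eigenvectors with eigenvalues $\lambda$ and $\lambda^{-1}$, where $\lambda>1$ is the dilation of $\phi$.

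\textbf{Eigenvalue-counting step.} Because $U$ is $\Q$-rational and $\phi^\ast$-invariant, the minimal polynomial of $\phi^\ast|_U$ lies in $\Q[t]$ and is divisible by the minimal polynomials over $\Q$ of both $\lambda$ and $\lambda^{-1}$. By Kenyon--Smillie, $K=\Q(\lambda+\lambda^{-1})$ has degree $\genus$, so $[\Q(\lambda):\Q]\in\{\genus, 2\genus\}$. In the first case, $\operatorname{minpoly}_\Q(\lambda)$ already has degree $2\genus=\dim_\R H^1(S;\R)$, forcing $\dim_\R U \geq 2\genus$ and hence $U=H^1(S;\R)$. In the second case $[\Q(\lambda):\Q]=\genus$, and I claim $\lambda$ and $\lambda^{-1}$ cannot be $\Q$-Galois conjugate: a Galois automorphism $\sigma$ sending $\lambda\mapsto\lambda^{-1}$ would restrict to an order-$2$ automorphism of $K=\Q(\lambda)$ whose fixed field would have degree $\genus/2$ over $\Q$, yet this fixed field would contain $\lambda+\lambda^{-1}$ and hence all of $\Q(\lambda+\lambda^{-1})=K$, an absurdity. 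Thus $\operatorname{minpoly}_\Q(\lambda)$ and $\operatorname{minpoly}_\Q(\lambda^{-1})$ are distinct irreducibles, their product of degree $2\genus$ divides the minimal polynomial of $\phi^\ast|_U$, and again $U=H^1(S;\R)$.

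The step I expect to require the most care is the rationality step: cleanly unwinding how the $G$-invariance, the $\Q$-structure coming from the field of definition, and the monodromy action of the mapping class of $\phi$ interact on the tangent space, so as to produce a subspace $U \subset H^1(S;\R)$ that is simultaneously $\Q$-rational, $\phi^\ast$-invariant, and contains $[\operatorname{Re}\omega]$ and $[\operatorname{Im}\omega]$. Once this linear-algebraic bookkeeping is in place, the Galois-theoretic conclusion above is essentially formal.
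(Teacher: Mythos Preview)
Your proof is correct and your rationality step coincides with the paper's, but your dimension-counting step takes a genuinely different route. The paper observes that the tautological plane $\spa_\R\{\hol_{\mathrm x}(x),\hol_{\mathrm y}(x)\}\subset H^1(S;\R)$ is defined over the holonomy field $k$, so once $V=\Res(W)$ is known to be $\Q$-rational it must contain all images of this plane under the field embeddings $k\hookrightarrow\C$; the paper then invokes \cite[Thm.~1.5]{Wright_field} (following M\"oller's Hodge-theoretic argument) to conclude these images are linearly independent and hence $\dim_\C V\geq 2\genus$. You instead use that $W$, being the tangent space to an affine invariant manifold, is invariant under the monodromy of $\MM$ and in particular under the mapping class of the pseudo-Anosov $\phi$; then $U=\Res(V)$ is a $\Q$-rational $\phi^\ast$-invariant subspace containing eigenvectors for $\lambda$ and $\lambda^{-1}$, and the minimal polynomial over $\Q$ of $\phi^\ast|_U$ must be divisible by the (coprime in Case~2) minimal polynomials of $\lambda$ and $\lambda^{-1}$, forcing $\dim_\R U\geq 2\genus$. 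Your Case~2 argument that $\lambda$ and $\lambda^{-1}$ cannot be $\Q$-conjugate is clean and correct.

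What each approach buys: the paper's argument imports the M\"oller--Wright linear independence result, which is a nontrivial Hodge-theoretic input but applies even without a pseudo-Anosov on the specific surface $x$. Your argument is more self-contained, replacing that input with the elementary observation that eigenvectors for distinct eigenvalues are independent, together with the standard fact that the tangent space to an affine invariant manifold is monodromy-invariant. Since the theorem already hypothesizes a pseudo-Anosov on $x$, your route is arguably the more economical one here. Your closing assessment slightly misplaces the difficulty: the monodromy-invariance of $W$ is routine (the linear locus $\pi^{-1}(\MM)\subset\HH_{\mathrm m}$ has locally constant tangent, and the geodesic loop through $x$ connects $\mathbf{x}$ to $\phi\cdot\mathbf{x}$ within it), so the bookkeeping you flag is lighter than you suggest.
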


\begin{proof}[Proof (Alex Wright, cf. \cite%[Proof of Cor. 8.1]
{Wright_field}, Cor. 8.1).]
By Theorem \ref{thm: Wright Filip}, the field of definition of $\MM$
is totally real and contained in the holonomy field of $x$. By
assumption this implies that the field of definition 
is $\Q$. 

Choosing $x_1 \in \MM$ and a marked surface $\x_1 \to x_1$, let $W \subset H^1(S, \Sigma; \R^2)$ be the
subspace tangent to $\pi^{-1}(\MM)$ at $\x_1$. Let $V = \Res(W)$, so
that when identifying
$H^1(S; \R^2) \cong H^1(S; \C)$, we have that $V$ is  a
$\C$-vector subspace of $H^1(S; \C)$. By
\cite[Thm. 5]{Wright_field}, $V$ is also defined over $\Q$. Since $V
\subset H^1(S;\C)$ and $\dim H^1(S;\C)=2\genus$, it suffices to show that
$\dim_{\C} V \geq 2\genus$. Let $H^1(S;
\C)^{\mathrm{(st)}}$ be the subspace of $H^1(S; \C)$ generated by the 1-forms
$\hol_{\mathrm{x}}(x), \hol_{\mathrm{y}}(x)$. This is a 2-dimensional complex linear 
subspace defined over the holonomy field $k$ of $x$. It is contained in
$V$ because $\MM$ is $G$-invariant. We can find a basis $v_1, v_2$ of $H^1(S;
\R)^{\mathrm{(st)}}$ consisting of vectors with coefficients in
$k$. The images of these vectors under the field 
embeddings $k \to \C$ are contained in $V$ because $V$ is defined over
$\Q$. According to \cite[Thm. 1.5]{Wright_field} (following arguments
developed by M\"oller \cite{Moller JAMS} for lattice surfaces), these vectors are
linearly independent, and so we find that $V$ contains $2\genus$ linearly
independent vectors. That is, $\dim_{\C} \MM \geq 2\genus = \dim_{\C} H^1(S;
\C)$, which shows that $\MM$ is of full rank.
%
%When $\genus=3$, $x_0$ is contained in $\LL$ and $\LL$ does not contain any 
%proper affine invariant submanifolds of full rank (see
%\cite{HLM}), so $\MM= \LL$. When $\genus \geq 4$, $x_0$ is not contained in $\LL$ (see
%\cite{Bowman}), so $\MM \neq \LL$. 
\end{proof}

%Using the above we recover the main result of \cite{HLM} and extend
%it to arbitrary $\genus$. 
Recall Theorem \ref{thm: no real subfields}, proved in Appendix
\ref{appendix: BSZ} as Corollary \ref{NoTrCor}: 

\begin{thm}[Bary-Soroker,
Shusterman and Zannier]
\name{thm: new name}
Let $\genus \geq 3$ and let $\alpha$ be the unique real number in
$[0,1]$ satisfying $\alpha + 
\cdots + \alpha^\genus =1$. Then $\Q(\alpha)$ contains no totally real
subfields other than $\Q$.  
\end{thm}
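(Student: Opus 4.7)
The plan is to pass to the Pisot root $\beta = \alpha^{-1}$, which generates $\Q(\beta) = \Q(\alpha)$ over $\Q$ and satisfies the polynomial $p(X) := X^\genus - X^{\genus-1} - \cdots - X - 1 = 0$. First I would verify that $p(X)$ is irreducible over $\Q$, so that $[\Q(\beta):\Q] = \genus$; this follows for instance from the factorization $(X-1)p(X) = X^{\genus+1} - 2X^\genus + 1$ combined with the Pisot property that $p$ has a unique root of modulus greater than one. I would then fix a Galois closure $L$ of $\Q(\beta)$ in $\C$, write $G = \Gal(L/\Q)$, view $G \subseteq S_\genus$ via its transitive action on the roots of $p$, and let $H = \Gal(L/\Q(\beta))$ denote the stabilizer of $\beta$.

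The theorem admits a clean group-theoretic reformulation via the Galois correspondence: if $c \in G$ denotes complex conjugation (for a fixed embedding $L \hookrightarrow \C$) and $N \trianglelefteq G$ is the normal subgroup it generates, then a subfield of $L$ is totally real if and only if it is contained in $L^N$. Consequently the maximal totally real subfield of $\Q(\beta) = L^H$ is $L^{\langle H, N\rangle}$, and the theorem is equivalent to the group-theoretic assertion $HN = G$.

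The principal step, which I expect to be the main obstacle, is to show that the Galois group is the full symmetric group, $G \cong S_\genus$, for every $\genus \geq 3$. My approach would be reduction modulo well-chosen primes to produce Frobenius elements of specific cycle types. Concretely, a prime $\ell$ modulo which $p$ stays irreducible supplies a $\genus$-cycle in $G$; a prime modulo which $p$ has exactly one irreducible factor of even degree with the remaining factors of pairwise distinct odd degrees supplies a transposition after raising the Frobenius to an appropriate odd power; and a prime modulo which $p$ factors as a linear times an irreducible of degree $\genus-1$ supplies a $(\genus-1)$-cycle. Combining these appropriately forces $G = S_\genus$ by classical group-theoretic criteria. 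The nontrivial task is establishing the existence of such primes uniformly in $\genus$, which presumably requires careful manipulation of the auxiliary polynomial $X^{\genus+1} - 2X^\genus + 1$ together with non-trivial input from effective Chebotarev or density arguments.

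Granted $G = S_\genus$, the conclusion is quick: $H \cong S_{\genus-1}$, and since for $\genus \geq 3$ the polynomial $p$ has at least one pair of non-real complex-conjugate roots (only one or two of its roots are real, by elementary sign considerations), $c$ is a nontrivial permutation. Its normal closure $N$ therefore contains $A_\genus$ (for $\genus \geq 5$ this is immediate from simplicity of $A_\genus$; for $\genus \in \{3,4\}$ the element $c$ is a single transposition whose normal closure is all of $S_\genus$). Since $A_\genus \cap S_{\genus-1} = A_{\genus-1}$, an order count yields $|A_\genus \cdot S_{\genus-1}| = \genus!$, so $HN = G$ and the maximal totally real subfield of $\Q(\beta)$ is $\Q$. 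If determining $G$ in this uniform manner proves too delicate, I would alternatively attempt to bypass the Galois group computation entirely by writing a putative totally real element as $f(\beta)$ and deriving a contradiction directly from the system of constraints $f(\beta_i) \in \R$ over all conjugates $\beta_i$, exploiting the Pisot inequalities $|\beta_i| < 1 < \beta$ together with trace and product identities for $p$.
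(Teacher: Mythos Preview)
Your overall strategy is sound and, for even $\genus$, essentially coincides with the paper's: there too the argument invokes $G \cong S_\genus$ (citing Martin \cite{M}) and then observes that $S_{\genus-1}$ is maximal in $S_\genus$, so $\Q(\beta)$ has no proper intermediate subfields and the only totally real one is $\Q$. Your normal-closure computation $HN = G$ is a correct variant of the same endgame.

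The gap in your proposal is precisely the step you flag as the ``main obstacle'': you do not actually prove $G = S_\genus$. Your Chebotarev outline is only a plan, and exhibiting primes with the required factorization types uniformly in $\genus$ is genuinely nontrivial. This result is in the literature (Martin \cite{M}), so you could cite it rather than reprove it, but then your argument is not self-contained and reduces to the paper's even-$\genus$ case.

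Where the paper diverges from your approach is for odd $\genus$. There the appendix bypasses the Galois group entirely via a general principle (their Corollary~\ref{PrimCor}): if $g \in E[X]$ is irreducible of degree prime to some $p$ and $G_K$ is pro-$p$, then the maximal totally-$K$ subfield of $E[X]/(g)$ has degree at most the number of roots of $g$ in $K$. Taking $E = \Q$, $K = \overline{\Q} \cap \R$, $p = 2$ (so $G_K \cong \Z/2\Z$ is pro-$2$ and $2 \nmid \genus$), and noting that $g(X) = X^\genus + \cdots + X - 1$ has exactly one real root when $\genus$ is odd, the bound gives $[L:\Q] \leq 1$ immediately. This is both shorter and conceptually different: it trades the global determination of $G$ for a local count of real embeddings plus a coprimality condition. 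Your fallback idea of writing a totally real element as $f(\beta)$ and exploiting the Pisot inequalities gestures in a similar direction but is undeveloped; the paper's pro-$p$ bound is the clean way to make that intuition rigorous.
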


\begin{cor}\name{cor: extending HLM}
Let $x_0$ be the Arnoux-Yoccoz surface in genus $\genus \geq 3$. Then
$\overline{Gx_0}$ is of full rank. 
%, let
%$\HH=\HH(\genus-1, \genus-1)$, 
%let $\LL$
%be the hyperelliptic locus in the connected component of $\HH$
%containing $x_0$ (see Proposition \ref{prop:stratum}) and let $\HH_0$
%be the subset of the connected component of 
%$\HH$ containing $x_0$, consisting of surfaces whose area is the same
%as the area of $x_0$. If $\genus =3$ then 
%$\overline{Gx_0} =  \LL \cap \HH_0$ and if $\genus \geq 4$ then
%$\overline{Gx_0} = \HH_0$. 
\end{cor}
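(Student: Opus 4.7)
The plan is to apply Theorem \ref{thm: full rank Wright idea} directly to $x = x_0$, checking the three hypotheses: existence of a pseudo-Anosov, degree of the holonomy field, and absence of nontrivial totally real subfields.

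First I would observe that $x_0$ admits a pseudo-Anosov map. Indeed, in \S \ref{subsec: AY surfaces general genus} we constructed an affine automorphism $\varphi_{\mathrm{AY}}: x_0 \to x_0$ whose derivative is the diagonal matrix $\til g = \mathrm{diag}(\alpha^{-1}, \alpha)$; since $\alpha \in (0,1)$, this matrix is hyperbolic, so $\varphi_{\mathrm{AY}}$ is pseudo-Anosov. (Alternatively, the renormalization mapping class is pseudo-Anosov by Lemma \ref{lem:mapping class 1} and is realized affinely on $x_0$.)

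Next I would identify the holonomy field of $x_0$. The explicit polygonal presentation of $x_0$ in Figure \ref{fig:AY} (and its analogue for general $\genus$, see \cite{Bowman}) shows that all holonomy vectors of saddle connections already lie in $\Q(\alpha)^2$, so the holonomy field is contained in $\Q(\alpha)$; conversely, $\alpha$ itself appears as a holonomy, so the holonomy field equals $\Q(\alpha)$. By \cite[Lemma 3]{AY}, $\alpha$ has degree $\genus$ over $\Q$, hence the holonomy field has degree $\genus$ as required. (As a sanity check, by the Kenyon--Smillie result cited in \S \ref{sec: translation surfaces}, the trace field equals the holonomy field in the presence of a pseudo-Anosov, consistent with the computation $\mathrm{tr}(\til g) = \alpha + \alpha^{-1} \in \Q(\alpha)$.)

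Finally, Theorem \ref{thm: new name} (proved as Corollary \ref{NoTrCor} in Appendix \ref{appendix: BSZ}) asserts that $\Q(\alpha)$ contains no totally real subfield other than $\Q$. All three hypotheses of Theorem \ref{thm: full rank Wright idea} are thus verified, and the theorem yields that $\overline{Gx_0}$ is of full rank. There is no real obstacle here beyond assembling the ingredients: the pseudo-Anosov comes from the construction of $x_0$, the degree computation is classical Arnoux-Yoccoz, and the nontriviality of the totally real subfield condition is precisely the content of the appendix — the hard work having been done in Theorem \ref{thm: full rank Wright idea} and Theorem \ref{thm: new name}.
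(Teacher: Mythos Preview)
Your proof is correct and follows the same approach as the paper: verify the hypotheses of Theorem~\ref{thm: full rank Wright idea} and apply it together with Theorem~\ref{thm: new name}. The only cosmetic difference is that the paper identifies the holonomy field via the Kenyon--Smillie theorem as $\Q(\alpha+\alpha^{-1})\subseteq\Q(\alpha)$ rather than by inspecting the polygonal presentation; your ``$\alpha$ appears as a holonomy, so the holonomy field equals $\Q(\alpha)$'' step is slightly informal (the definition of holonomy field permits a $\GL_2(\R)$ change of basis), but your Kenyon--Smillie sanity check supplies the rigorous route.
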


\begin{proof}
Let $k = \Q(\alpha)$. %We claim $k$ is the holonomy field of $x_0$.
Since $x_0$ admits a pseudo-Anosov diffeomorphism
with derivative $\til g$, its holonomy field is
$\Q(\alpha+\alpha^{-1})$ \cite[Theorem 28]{KS}. In particular the
holonomy field of $x_0$ is contained in $k$. (These fields actually
coincide, that is 
$\Q(\alpha + \alpha^{-1}) = \Q(\alpha)$, but we will not need this fact.) 
%So we need to show $\alpha \in \Q(\alpha+\alpha^{-1})$. Define $s_1=\alpha+\alpha^{-1}$ and extend inductively by defining
%$s_k=(\alpha+\alpha^{-1}) s_{k-1}-2 \in \Q(\alpha+\alpha^{-1})$ for integers $k \geq 2$. Observe that
%$s_k=\sum_{j=1}^k \alpha^j+\alpha^{-j}.$
%Since $1=\alpha+\alpha^2+\ldots+\alpha^\genus$ we have
%$s_\genus=1+\alpha^{-\genus-1}$ and so $\alpha^{-\genus-1} \in \Q(\alpha+\alpha^{-1})$.
%This number's reciprical also lies in this field so $\Q(\alpha+\alpha^{-1})$ contains
%$$s_\genus-\alpha^{-\genus-1}+\alpha^{\genus+1}=2 \alpha,$$
%which shows the holonomy field is $k$. 
%\compat{UPDATED ARGUMENT ABOVE. I think this works!}
%By Theorems \ref{thm:completely periodic ray} and
%\ref{thm: identifying AY}, we can construct $x_0$ from rectangles with
%sides parallel to the axes, and 
%whose sidelengths belong to $k$ 
%\combarak{is there a ref to previous sections which makes this more
%  obvious? Do we ever write down an
%  explicit presentation for $x_0$? }. Therefore the holonomy field of $x_0$
%is  contained in $k$. In fact, $k$ coincides with the trace field of $x_0$, but we will not
%be using this fact. \combarak{ I claim  this based on the
%  definition of ``special'' in Arnoux and Schmidt, and from the
%  theorem of Calta and Smillie that the directions in which SAF
%  vanishes belong to the holonomy field, therefore $\alpha$ does. Is there a simpler
%  argument? Maybe we can give it here. Or maybe it belongs at the end
%  of section 4. After all we want to be the world experts on the AY surface.} 
%
%\compat{Made this a new paragraph.}
According to 
Theorem \ref{thm: no real subfields}, $k$ has no totally real
subfields other than $\Q$ and hence neither does the holonomy field of
$x_0$. By Theorem \ref{thm: full rank
  Wright idea}, $\overline{Gx_0}$ is of
full rank.
%
% \compat{MC in first two sentences.} In case $\genus =3$, $x_0$ is contained in $\LL$ (the locus  of hyperelliptic surfaces in
%$\HH(2,2)^{\mathrm{odd}}$). See \cite[Lemma 2.1]{HLM}. Therefore $\MM \subset \LL$, where $\MM$ is
%the affine invariant manifold corresponding to
%$\overline{Gx_0}$. Since the stratum has two singular points, 
%$\dim_{\C} \mathfrak{R} =1$ and so by \textsl{•}ref{eq: dimension formula} we
%must have $\LL = \MM$. 
%
%In case $\genus \geq 4$ we know that $x_0$ is not contained in $\LL$ \cite[Corollary 2.6]{Bowman10}. Moreover $\HH(\genus
%-1, \genus -1)$ contains no proper affine invariant manifolds of full
%rank other than $\LL$ \compat{Cite Mirzakhani-Wright under prep.} This
%implies that when $\genus \geq 4$, $\MM$ must coincide with $\HH$. 
\end{proof}

\begin{remark}
\label{rem: Mirzakhani Wright}
In \cite{HLM}, the orbit-closure $\overline{Gx_0}$ is determined
explicitly in case $\genus =3$: it is the subset of hyperelliptic
surfaces in $\HH^{\mathrm{odd}}(2,2)$. In work of Mirzakhani and Wright
\cite{Mirzakhani-Wright}, it is shown that the only full rank 
sub-loci of $\HH(\genus -1, \genus -1)$ for $\genus \geq 3$ arise from the
space of quadratic differentials $\mathcal{Q}\left(\genus -2, \genus
  -2, -1^{2\genus} \right)$ by a double cover
construction. In particular the only proper affine invariant
submanifold of full rank consists of surfaces with a hyper-elliptic
involution. As we 
have seen in Proposition \ref{prop: no involutions}, for $\genus \geq
4$ the Arnoux-Yoccoz surface $x_0$ has no 
involutions with derivative $-\mathrm{Id}$, and therefore Corollary
\ref{cor: extending HLM} and the result of 
\cite{Mirzakhani-Wright} imply that when $\genus
\geq 4$, $Gx_0$ is dense in its stratum component, proving Theorem
\ref{thm: orbit closure}. 
\end{remark}

We will need the following:
\begin{prop}\name{prop: HLM}
Suppose $\MM$ is an affine invariant manifold that is of full rank in
a stratum $\HH$. For any $x \in \HH$ let $\HH_0$ be the subset of the
connected component of $\HH$ containing $x$, consisting of surfaces
whose area coincides with that of $x$. Let $\MM_0 = \MM \cap \HH_0$. Then the  subset 
$$
\mathcal{U} = 
\{\rel^u(z): z \in \MM_0, \, u\in \mathfrak{R}, \, \rel^u(z) \text{ is
  defined} \}
$$
contains an open subset of $\HH_0$ and is dense in $\HH_0$. 
\end{prop}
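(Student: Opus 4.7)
The plan is to combine a tangent-space calculation (to produce the open subset) with a $G$-invariance plus ergodicity argument (to promote that open subset to a dense one).

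First I would verify the key infinitesimal statement: at every $z \in \MM_0$,
\[
T_z \MM_0 + \mathfrak{R} \ = \ T_z \HH_0.
\]
Identifying $T_z \MM$ with a subspace $W \subset H^1(S, \Sigma; \R^2)$ via period coordinates, the full-rank hypothesis asserts that $\Res|_W$ surjects onto $H^1(S; \R^2)$; since $\mathfrak{R} = \ker \Res$ by \eqref{eq: natural subspace}, this gives $W + \mathfrak{R} = H^1(S, \Sigma; \R^2)$. The differential $dA_z$ of the area function cuts out $T_z \HH_0$ and vanishes on $\mathfrak{R}$ (because rel preserves area, as noted in \S \ref{subsec: rel and suspension}). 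For any $v \in T_z \HH_0$, writing $v = w + r$ with $w \in W$, $r \in \mathfrak{R}$ forces $dA_z(w) = dA_z(v) - dA_z(r) = 0$, so $w \in W \cap T_z \HH_0 = T_z \MM_0$, yielding the claimed decomposition.

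Next I would obtain the open set. For each $z_0 \in \MM_0$ the map $F(z,u) = \rel^u (z)$ is defined on a neighborhood of $(z_0, 0) \in \MM_0 \times \mathfrak{R}$ (by Proposition \ref{prop: real rel main}), and its differential there sends $(\dot z, \dot u)$ to $\dot z + \dot u \in T_{z_0} \HH_0$. The previous step shows that this differential is surjective, so by the submersion theorem the image of $F$ contains an $\HH_0$-neighborhood of $z_0$. Hence $\mathcal{U}$ contains a nonempty open subset $\Omega \subset \HH_0$.

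Finally I would promote this to density via $G$-invariance. Using Proposition \ref{prop: rel and G commutation}, $g(\rel^u z) = \rel^{gu}(gz)$ for $g \in G$; since $gz \in \MM_0$ (because $\MM$ and $\HH_0$ are each $G$-invariant) and $gu \in \mathfrak{R}$ (because $\mathfrak{R} = \ker \Res$ is $G$-invariant), the set $\mathcal{U}$ is $G$-invariant. Therefore $G \cdot \Omega \subset \mathcal{U}$ is a nonempty open $G$-invariant subset of $\HH_0$, and by Masur's theorem the $G$-action on $\HH_0$ is ergodic with respect to Masur--Veech measure, so this open set has full measure and hence is dense. Thus $\mathcal{U}$ is dense in $\HH_0$. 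The only substantive step is the tangent-space identity in the first paragraph; the rest is routine once that is in hand.
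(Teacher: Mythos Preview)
Your proof is correct and follows essentially the same route as the paper's: the full-rank hypothesis gives $W + \mathfrak{R} = H^1(S,\Sigma;\R^2)$, which makes $(z,u)\mapsto \rel^u(z)$ a local submersion onto $\HH_0$, and then $G$-invariance of $\mathcal{U}$ together with ergodicity of the $G$-action yields density. The only difference is cosmetic: you spell out the passage from $\HH$ to the fixed-area locus $\HH_0$ via $dA_z|_{\mathfrak R}=0$, whereas the paper simply asserts that the same argument works after restricting to $\MM_0$ and $\HH_0$.
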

\begin{proof}
Since $\MM$ is of full rank, $W \oplus \mathfrak{R} = H^1(S, \Sigma;
\R^2)$ (where as above, $W$ is the subspace above identified with the tangent
space to $\MM$). Since the map $(z, u) \mapsto \rel^u(z)$ is defined in
an open set in $\HH \times \mathfrak{R}$, for any $z \in \MM$ there
are open neighborhoods $U_1 \subset \MM$ of $z$ and $U_2 \subset
\mathfrak{R}$ of $0$ such that the map 
$$
U_1 \times U_2 \to \HH, \ \ (z',u) \mapsto \rel^u(z')
$$
is a homeomorphism onto its image. Since $\MM$ has full rank, the
image contains an open subset of $\HH$. Moreover the same is true if
we replace everywhere $\MM$ with $\MM_0$ and $\HH$ with $\HH_0$. In order to show that
$\mathcal{U}$ is dense, by ergodicity of the $G$-action on $\HH_0$ it suffices to
show that $\mathcal{U}$ is $G$-invariant. Let $z_2 = \rel^u(z_1) \in
\mathcal{U}$, where $u \in \mathfrak{R}$ and $z_1 \in \MM_0$, and let $g
\in G$. Then $gz_1 \in \MM_0$ since $\MM_0$ is $G$-invariant, and $gz_2 =
\rel^{gu} (gz_2) $ is defined and contained in $ \mathcal{U}$ by
Proposition \ref{prop: rel and G commutation}. This 
completes the proof. 
\end{proof}

\ignore{
The Arnoux-Yoccoz surface $x_0$ is contained
in $\LL$ in genus 3, but not in genus $\genus \geq 4$ (see \cite{Bowman}). 
%This immediately implies: 
%\begin{prop}\name{prop: codim 2}
%Let $\mathcal{U}$ denote the union of rel leaves in $\HH$ which
%intersect $\LL$. Then $\mathcal{U}$ contains an open subset of $\HH$. 
%\end{prop}
The following result extends results of Hubert-Lanneau-M\"oller. 
\begin{thm}\name{thm: HLM} Let $\genus \geq 3$, let $\alpha$ be as in \equ{eq: alpha} and let $k =
\Q(\alpha)$. Suppose $\Q$
    is the only totally real subfield of $k$. Let  $x_0$ be the Arnoux-Yoccoz surface in $ 
  \HH(\genus-1,\genus-1)$. Then 
$\MM =\overline{Gx_0} $ is of full
rank. In case $\genus=3$, $\MM = \LL$ (the subset of hyperelliptic
surfaces), and in case $\genus \geq 4$, $\MM \neq \LL$. 
\end{thm}

The case $\genus =3$ was proved in \cite[Theorem 1.3]{HLM} prior to the
results described in \S \ref{subsec: breakthroughs}, using
arguments inspired by McMullen's genus 2 arguments. 
\begin{remark}
1.
Note that $k$ is not totally real for any $\genus \geq 3$ (see \cite{AY})  so the assumption on $k$ is valid
whenever $\genus$ is prime, and in particular in case $\genus=3$. We have also
verified this assumption for $\genus=3, \ldots, 1000$. 

2. One may ask whether, for $\genus \geq 4$ satisfying our assumption on
$k$, we actually have that $\MM$ is the connected component of $ \HH(\genus-1,
\genus-1)$ containing $x_0$. We will not need this stronger
statement, and our argument does not imply it. 
\end{remark}

{\combarak{Hopefully this is true.}}
\begin{prop}\name{prop: hopefully}
Let $\genus \geq 3$, let $\alpha$ be as in \equ{eq: alpha} and let $k =
\Q(\alpha)$. Then $\Q$ is the only totally real subfield of $k$. 
\end{prop}
\begin{proof}

\end{proof}
\combarak{
If Proposition \ref{prop: hopefully} fails we can still continue with
the proof, but only for those $\genus$ for which its conclusion is valid. 
}

\begin{proof}%[Proof of Theorem \ref{thm: HLM}] 

\end{proof}
}
As above $\HH_0$ denote the subset of the component of $\HH(\genus
-1, \genus -1)$ containing $x_0$ and consisting of surfaces with the
same area as $x_0$. 
Combining Proposition \ref{prop: HLM}, Corollary \ref{cor: extending HLM} and
Proposition \ref{prop: V suffices} we obtain: 
\begin{cor}\name{cor: corollary V}
For $\genus \geq 3$, 
$\overline{U x_0}$ is an affine invariant manifold of full rank, and
the only closed subset of $\HH_0$ containing $Ux_0$ and invariant under the rel
foliation is $\HH_0$.
\end{cor}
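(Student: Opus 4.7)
The plan is to prove the two assertions of the corollary in order, using the ingredients developed earlier in the paper essentially as a black box; no new analysis is needed once those ingredients are in place.

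For the first assertion, I would invoke the fact that the Teichmüller geodesic through $x_0$ is periodic. By Theorem \ref{thm: identifying AY} and the construction in \S\ref{subsec: AY surfaces general genus}, the surface $x_0$ admits an affine automorphism $\varphi_0$ with derivative $\til g=\diag(\alpha^{-1},\alpha)$; since $\til g=g_{-\log\alpha}$, this gives $g_{-\log\alpha}\,x_0=x_0$ in $\HH_0$. Proposition \ref{prop: V suffices} then identifies $\overline{Ux_0}=\overline{Gx_0}$, which by Theorem \ref{prop: EMM} equals $\MM\cap\HH_0=:\MM_0$ for some affine invariant manifold $\MM$. That $\MM$ is of full rank is exactly Corollary \ref{cor: extending HLM}, completing the first assertion.

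For the second assertion, let $\mathcal{Z}\subset\HH_0$ be a closed subset that contains $Ux_0$ and is invariant under the rel foliation. Closedness of $\mathcal{Z}$ and the inclusion $Ux_0\subset\mathcal{Z}$ give $\MM_0=\overline{Ux_0}\subset\mathcal{Z}$. I would then apply Proposition \ref{prop: HLM} to the full-rank manifold $\MM_0$: the set
$$
\mathcal{U}=\{\rel^u(z):z\in\MM_0,\;u\in\mathfrak{R},\;\rel^u(z)\text{ is defined}\}
$$
is dense in $\HH_0$. On the other hand, the rel-invariance of $\mathcal{Z}$ together with $\MM_0\subset\mathcal{Z}$ forces $\mathcal{U}\subset\mathcal{Z}$; passing to closures gives $\HH_0=\overline{\mathcal{U}}\subset\mathcal{Z}$, and hence $\mathcal{Z}=\HH_0$.

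The hard part is not the assembly above but the inputs that feed into it. The full-rank conclusion in Corollary \ref{cor: extending HLM} depends on Theorem \ref{thm: no real subfields} (proved in the appendix) feeding into Wright's application of Filip's structure theorem, and Proposition \ref{prop: HLM} relies crucially on full rank so that the direct sum decomposition $W\oplus\mathfrak{R}=H^1(S,\Sigma;\R^2)$ guarantees that rel deformations of $\MM_0$ cover an open, and hence (by $G$-invariance and ergodicity) dense, subset of $\HH_0$. Without either of these ingredients neither half of the corollary would be available, but with them the proof is a short gluing argument of the kind outlined above.
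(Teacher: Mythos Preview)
Your proof is correct and is exactly the assembly the paper has in mind when it writes ``Combining Proposition \ref{prop: HLM}, Corollary \ref{cor: extending HLM} and Proposition \ref{prop: V suffices} we obtain.'' You have simply spelled out the short gluing argument that the paper leaves to the reader.
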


\section{Minimal tori for the horocycle flow and real-rel}\name{sec:
  minimal sets} 
In this section we give basic information about rel leaves of
periodic surfaces, and use this to complete the proof of Theorem
\ref{thm: rel}. 
%We will work with real-rel as this is what we
%will need in other parts of the paper. 
While the results below are
elementary, they are  of independent interest and we formulate them in
greater generality than we need.

\subsection{Twist coordinates for completely periodic
  surfaces}
\label{sect: twist coordinates}
Let $\x$ be a marked translation surface with a non-empty labeled singular set $\Sigma$,
and suppose $\x$ is completely periodic in the horizontal direction.
Then $\x$ admits a decomposition into horizontal cylinders, and
the surface can be recovered by knowing some related combinatorial
data and some geometric parameters. Denote the horizontal cylinders by $C_1,
\ldots, C_m$.  
A {\em separatrix diagram } for a horizontal cylinder decomposition
consists of the ribbon graph of right-pointing horizontal
saddle connections forming the union of boundaries of cylinders, whose
vertices are given by $\Sigma$ (with orientation at vertices induced
by the translation surface structure on a neighborhood of the singular
point), and an indication of which pairs of
circles in the diagram bound each cylinder $C_i$. For more information
see \cite{KZ}, where this notion was introduced. We add more combinatorial information by selecting 
for each horizontal cylinder $C_i$ an upward-oriented saddle
connection $\sigma_i$ whose image is inside $C_i$ and which  joins the
bottom boundary component of $C_i$ to the top boundary component. The
union of the separatrix diagram and the saddle connections 
$\sigma_i$ still has a ribbon graph structure induced by the surface. 
The complete {\em combinatorial data} for our cylinder decomposition
consists of a labeling of cylinders, and the ribbon graph which is the
union of 
the separatrix diagram and the saddle connections $\sigma_i,\ i=1,
\ldots, m$.

Fixing the number and labels of cylinders and the combinatorial data
above, the marked translation surface 
structure on $\x$ is entirely determined by the following {\em parameters}:
the circumferences $c_1, \ldots, c_m \in \R_{>0}$ of the cylinders; 
the lengths $\ell_1, \ldots, \ell_n\in \R_{>0}$
of the horizontal saddle connections along the boundaries of the
cylinders; 
and the holonomies 
\eq{eq: if you like it}{
\hol(\sigma_i,\x)=(x_i, y_i) \in \R \times \R_{>0}
}
of each saddle connection $\sigma_i$ for $i=1, \ldots, m$. Observe
that each $y_i$ records the height of the cylinder $C_i$.

We call the numbers $x_i$ the {\em twist parameters}. Observe that we
can vary the twist parameters at will. That is, given $\x$ as above,
there is a map 
\eq{eq:til Phi}{
\til \Phi:\R^m \to \HH_{\mathrm{m}}; \quad (\hat x_1, \ldots, \hat x_m) \mapsto \hat \x,}
where $\hat \x$ is the surface built to have the same parameters as $\x$ except
with twist parameters given by $\hat x_1, \ldots, \hat x_m$,
and where $\HH_{\mathrm{m}}$ is the space of marked translation
surfaces structures modeled on the same surface 
and singularity set as $\x$.
Let $\pi:
\HH_{\mathrm{m}} \to \HH$ be the natural projection. The image of
$\pi \circ \til \Phi$ is the set of all translation surfaces which
have a horizontal cylinder decomposition with the same combinatorial
data as $x = \pi(\x)$, and the same parameters describing cylinder
circumferences, lengths of horizontal saddle connections, and heights of
cylinders. We refer to this set as the {\em horizontal twist space at
  $x$}, and denote it by $\mathcal{HT}_x$. We wish to explicitly parameterize this space. 

Let $x_1, \ldots, x_m$ denote the twist parameters for $\x$, and choose a second set of twist parameters 
$\hat x_1, \ldots, \hat x_m$. Then $\hat \x$ can be obtained from $\x$ by slicing each cylinder $C_i$ 
along a geodesic core curve and regluing so that the right side has moved right by $\hat x_i-x_i$. 
Thus
for each $\gamma \in H_1(S,\Sigma; \Z)$, 
\begin{equation}
\label{eq:Phi holonomy}
\hol(\gamma,\hat \x)=\hol(\gamma,\x)+\sum_{i=1}^m 
\big(
(\hat x_i-x_i) (\gamma \cap C_i),0\big)
\end{equation}
where $\cap$ from $\gamma \cap C_i$ denotes the algebraic intersection pairing.
Recall from (\ref{eq:cohomology class of cylinder}) that $C_i^\ast \in H^1(S,\Sigma; \Z)$ to denotes the cohomology class defined by 
$C_i^\ast(\gamma)=\gamma \cap C_i$. We have observed:
\begin{prop}
\label{prop:Phi derivative}
The map $\til \Phi$ is an affine map whose derivative is 
\eq{eq: satisfies}{
D \til \Phi\left(\frac{\partial}{\partial x_i} \right)=(C_i^\ast, 0) \in H^1(S,
\Sigma; \R^2), \ \ i=1, \ldots, m.
}
\end{prop}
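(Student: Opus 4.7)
The plan is to derive the Proposition directly from equation \eqref{eq:Phi holonomy}, which has already been established in the paragraph preceding the statement. The whole proof is essentially a matter of repackaging that identity as a statement about cohomology classes and reading off the derivative.

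First I would check that $\til\Phi$ is well-defined as a map into the space $\HH_{\mathrm{m}}$ of marked translation surfaces. For any $(\hat x_1,\dots,\hat x_m)\in\R^m$, cutting each horizontal cylinder $C_i$ along a core curve and sliding the right side by $\hat x_i-x_i$ produces a new translation atlas on the same underlying surface. The cut-and-reglue homeomorphism can be taken to be supported inside the cylinders, to fix $\Sigma$ pointwise, and to be isotopic to the identity through maps fixing $\Sigma$ (by interpolating the amount of shear from $0$ on the cylinder boundary to $\hat x_i-x_i$ on the core). Consequently $\hat\x\in\HH_{\mathrm{m}}$ is well-defined, and $\til\Phi$ depends continuously (in fact smoothly) on the parameters.

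Next, recall that the period map $\hol:\HH_{\mathrm{m}}\to H^1(S,\Sigma;\R^2)$ is a local diffeomorphism providing the affine structure on $\HH_{\mathrm{m}}$. Thus to prove that $\til\Phi$ is affine it suffices to prove that $\hol\circ\til\Phi$ is affine in $(\hat x_1,\dots,\hat x_m)$. Equation \eqref{eq:Phi holonomy} asserts that for every $\gamma\in H_1(S,\Sigma;\Z)$,
$$
\hol(\gamma,\hat\x)=\hol(\gamma,\x)+\sum_{i=1}^m\bigl((\hat x_i-x_i)(\gamma\cap C_i),\,0\bigr).
$$
Using the definition $C_i^\ast(\gamma)=\gamma\cap C_i$ from \eqref{eq:cohomology class of cylinder}, this is exactly the identity of cohomology classes
$$
\hol(\hat\x)=\hol(\x)+\sum_{i=1}^m(\hat x_i-x_i)\,(C_i^\ast,0)\quad\text{in }H^1(S,\Sigma;\R^2),
$$
which is manifestly affine in the $\hat x_i$. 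Hence $\til\Phi$ is affine.

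Finally, differentiating the displayed identity with respect to $\hat x_i$ at any base point yields $D\til\Phi(\partial/\partial x_i)=(C_i^\ast,0)$, which is exactly \eqref{eq: satisfies}. There is no real obstacle in this argument; the only points requiring care are (a) that the shear deformation yields a genuine element of $\HH_{\mathrm{m}}$ rather than just of $\HH$, which was addressed above via the choice of isotopy class of the regluing homeomorphism, and (b) that the algebraic intersection pairing used in \eqref{eq:Phi holonomy} matches the sign convention implicit in the definition of $C_i^\ast$, so that the cohomology class appearing in the derivative is $C_i^\ast$ with the correct orientation of the core curve (the one pointing in the direction of positive horizontal holonomy).
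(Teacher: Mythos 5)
Your argument is exactly the paper's: the Proposition is stated there as an immediate consequence of equation \eqref{eq:Phi holonomy} (the text says "We have observed:"), and you simply make that deduction explicit by rewriting the holonomy identity as an affine identity in $H^1(S,\Sigma;\R^2)$ and differentiating. The additional remarks on well-definedness in $\HH_{\mathrm{m}}$ and on sign conventions are correct but not needed beyond what the paper already set up.
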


Recall that $\til \Phi$ defined in (\ref{eq:til Phi}) maps vectors to elements of
$\HH_{\mathrm{m}}$, i.e.  {\em marked} surfaces. Since holonomies of
the saddle connections $\sigma_i$ distinguish surfaces in the image,
$\til \Phi$ is injective. However, the map 
$\pi \circ \til\Phi:\R^m \to \HH$
is certainly not injective.
In the space $\HH$ we consider translation surfaces equivalent if they
differ by the action of an element
of $\Mod(S,\Sigma)$. Let
$\operatorname{M}(\x) \subset \Mod(S,\Sigma)$ denote the subgroup
consisting of equivalence classes of orientation preserving
homeomorphisms of $\x$ such that:
\begin{itemize}
\item Each cylinder $C_i$ is mapped to a cylinder $C_j$ of the same
  circumference and height.
\item Each horizontal saddle connection is mapped to a horizontal saddle
  connection of the same length, respecting the orientation. 
\end{itemize}
Note that each element
of $\mathrm{M}(\mathbf{x})$ preserves the image of $\til \Phi$, and 
that distinct twist parameters yield the same surface in $\HH$ if and
only if they differ by an element of $\operatorname{M}(\x)$. 
In light of Proposition
\ref{prop:Phi derivative}, $\operatorname{M}(\x)$ pulls back to an
affine action on the twist parameters, and we obtain an affine homeomorphism
\eq{eq:Phi}{\Phi: \R^m / \operatorname{M}(\x) \to \mathcal{HT}_x
  \subset \HH.}

Some elements of the subgroup $\operatorname{M}(\x)$ are clear. Each
Dehn twist $\tau_i \in \Mod(S,\Sigma)$ 
in each horizontal cylinder $C_i$ lies in $\operatorname{M}(\x)$. The
action of $\tau_i$ on twist parameters just affects the twist
parameter $x_i$ of $C_i$ and has the effect of adding $c_i$, the
circumference of $C_i$. The {\em multi-twist subgroup} $\operatorname{M}_0(\x) = \langle
\tau_1, \ldots, \tau_m \rangle$ 
of $\operatorname{M}(\x)$ is
isomorphic to $\Z^m$. Moreover, 
$\mathrm{M}(\mathbf{x})$ acts by permutations on the horizontal saddle connections, and
$\operatorname{M}_0(\x)$ is the kernel of this permutation action, and
hence is normal and of finite index in
$\operatorname{M}(\x)$. Thus we  have  
the following short exact sequence of groups
\eq{eq:exact}{
\{1\} \to \operatorname{M}_0(\x) \to \operatorname{M}(\x) \to \Delta \to \{1\},}
where $\Delta$ is a subgroup of the group of permutations of the horizontal
saddle connections. 
We set $\mathbb{T} = \R^m /\operatorname{M}_0(\x) \cong \prod_{i=1}^m \R / c_i
\Z $ (an $m$-dimensional torus). By normality, the action of
$\operatorname{M}(\x)$ on $\R^m$ descends to an action 
on ${\mathbb{T}}$ which factors through an action of  $\Delta$ via
(\ref{eq:exact}). Thus we have the sequence of covers 
$$\R^m \to \mathbb{T}  \to  \R^m / \operatorname{M}(\x) \cong
\mathbb{T}/\Delta.$$
We see that $\mathcal{HT}_x$ is isomorphic to the quotient of a torus by
a finite group of linear automorphisms. 
When $\Delta$ is trivial, we actually have $\R^m /
\operatorname{M}(\x) = \mathbb{T}$. Thus the following holds:

\begin{prop}
\name{prop:Phi}
Suppose that the horizontal cylinders of $x$ have distinct
circumferences or distinct heights, and that each 
cylinder has a saddle connection on its boundary whose length is distinct from the
lengths of other saddle connections on the boundary. 
Then $\operatorname{M}(\x)= \operatorname{M}_0(\x)$
and therefore $\Phi: \mathbb{T} \to \mathcal{HT}_x$ is an isomorphism of affine manifolds.
\end{prop}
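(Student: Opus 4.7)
The plan is to use the exact sequence \eqref{eq:exact} to reduce to showing that the finite group $\Delta$ is trivial, since once this is established we have $\operatorname{M}(\x) = \operatorname{M}_0(\x)$ and the map $\Phi$ in \eqref{eq:Phi} descends to an affine isomorphism of $\mathbb{T} = \R^m/\operatorname{M}_0(\x)$ with $\mathcal{HT}_x$ (injectivity comes from the fact that distinct twist parameters modulo $\operatorname{M}_0(\x)$ give distinct surfaces in $\HH$, and surjectivity is built into the definition of $\mathcal{HT}_x$). Concretely, I would pick an arbitrary representative $\varphi \in \operatorname{M}(\x)$ and show that $\varphi$ acts trivially on the set of horizontal saddle connections.

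The first step is to use the hypothesis that cylinders are distinguished by circumference or by height to conclude that $\varphi$ fixes each cylinder $C_i$ setwise, since $\varphi$ preserves both numerical invariants. Next I would argue that $\varphi$ preserves the top and bottom boundary components of each $C_i$ individually: a rightward-oriented horizontal saddle connection $\sigma$ on the bottom of $C_i$ has $C_i$ on its left with respect to the surface orientation; since $\varphi$ preserves the surface orientation, the rightward orientation of saddle connections, and the cylinder $C_i$, the image $\varphi(\sigma)$ still has $C_i$ on its left, hence lies on the bottom of $C_i$, and similarly for the top. It follows that $\varphi$ acts on each boundary circle of $C_i$ by a cyclic shift, preserving the cyclic order inherited from the orientation.

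Then I would invoke the second hypothesis: each cylinder $C_i$ carries a saddle connection $\sigma_i^\ast \subset \partial C_i$ of length not matched by any other saddle connection on $\partial C_i$. Since $\varphi$ preserves lengths and $\partial C_i$, it must fix $\sigma_i^\ast$ individually, and a cyclic shift of a cyclic sequence with a fixed element is trivial; hence $\varphi$ fixes every saddle connection on the boundary circle containing $\sigma_i^\ast$. To extend this to every boundary component of every cylinder, I would form the bipartite graph whose vertices are the top and bottom boundary circles of the cylinders and whose edges are the horizontal saddle connections (each saddle connection occurs on the top of one cylinder and on the bottom of another, possibly equal). Connectedness of $S$ forces this graph to be connected, and a fixed saddle connection gives a fixed element on both of its incident boundary circles, making the cyclic shift on each trivial. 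Thus triviality propagates along edges and $\varphi$ fixes every horizontal saddle connection, as needed.

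The main obstacle I expect is the orientation bookkeeping in the second step: to rule out that an element of $\operatorname{M}(\x)$ exchanges the top and bottom boundary components of $C_i$ one needs to combine three pieces of data, namely the orientation of $S$, the preservation of the rightward orientation on horizontal saddle connections, and the induced orientation on boundary circles of $C_i$, and to verify that they are compatible only with top mapping to top and bottom to bottom. Once this is in place, the remainder of the argument is purely combinatorial.
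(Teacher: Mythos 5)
Your argument is correct and complete up through the cyclic-rotation analysis, and this part matches what the paper leaves implicit (the proposition is stated there with no proof, as a consequence of the preceding discussion): the first hypothesis forces each representative $\varphi$ of a class in $\operatorname{M}(\x)$ to fix each cylinder setwise, your orientation bookkeeping correctly shows that the top and bottom boundary circles of each cylinder are preserved individually, and the induced permutation of the saddle connections on any single boundary circle is a cyclic rotation, hence trivial as soon as it has a fixed point. The gap is in the final propagation step. First, the assertion that connectedness of $S$ forces your bipartite graph to be connected is false: take two cylinders glued so that the top of $C_1$ coincides with the bottom of $C_2$ and the top of $C_2$ with the bottom of $C_1$; the surface is connected, but (writing $T_i$, $B_i$ for the top and bottom circles of $C_i$) the graph splits into the two components $\{T_1,B_2\}$ and $\{T_2,B_1\}$. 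Second, and more seriously, the hypothesis hands you only one length-distinguished saddle connection per cylinder, sitting on only one of that cylinder's two boundary circles, and nothing guarantees that every boundary circle lies in the same graph component as one of these seeds. Triviality of the rotation on the top circle of $C_i$ says nothing about the rotation on its bottom circle, so you cannot propagate ``through'' a cylinder either.

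This is not a removable technicality. Consider a three-cylinder configuration with $T_1=B_2\cup B_3$ and $B_1=T_2\cup T_3$, where $B_1$ carries four saddle connections $a,b,c,d$ in cyclic order with $|a|=|c|$, $|b|=|d|$, $T_2=\{a,c\}$, $T_3=\{b,d\}$, while the saddle connections on $T_1$ have lengths distinct from one another and from $|a|,|b|$. Both hypotheses, read literally (with ``boundary'' meaning the union of the two boundary components), are satisfied; yet the homeomorphism fixing $T_1$ pointwise, rotating $B_1$ by half its circumference, and correspondingly rotating $T_2$ and $T_3$, preserves cylinders, circumferences, heights, lengths and orientations, so it represents a class in $\operatorname{M}(\x)$ inducing the nontrivial permutation $(a\,c)(b\,d)$. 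Your scheme therefore cannot close under the literal reading, and the seeds must be placed on \emph{every} boundary component. The proof does work if the second hypothesis is interpreted as requiring a length-distinguished saddle connection on each boundary component of each cylinder (which is how it is satisfied for the surfaces $x_t$ to which the proposition is applied): then every boundary circle is seeded, each rotation is trivial by your step 3, every saddle connection is fixed, and $\operatorname{M}(\x)$ lies in the kernel of the permutation action, which is $\operatorname{M}_0(\x)$; the graph propagation is then unnecessary. You should either adopt that reading explicitly or add an argument guaranteeing that every boundary circle is reachable from a seed.
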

%\begin{proof}
%The assumptions on saddle connections lengths and cylinder heights
%imply that the permutation action in the preceding paragraph is
%trivial, so $\operatorname{M}(\x)= \operatorname{M}_0(\x)$ and $\Delta$ is trivial. 
%\end{proof}

\begin{remark}%[General case]
The above discussion equips the quotient $\mathcal{HT}_x
\cong \mathbb{T}/\Delta$ with the structure of an affine orbifold,
since it is the quotient of $\mathbb{T}$ by the action of a finite
group of affine automorphisms $\Delta$. For an example in which
$\Delta$ is nontrivial and $\mathcal{HT}_x $ is not a torus, let $x$
be the {\em Escher  
staircase surface} obtained by cyclically gluing $2m$ squares (see e.g.
\cite[Figure 3]{LW}). The surface has $m$ parallel cylinders, the
torus $\mathbb{T}$ is isomorphic to $(\R/\Z)^m$, and the
group $\Delta$ is a cyclic group generated by a homeomorphism of the
surface which goes one step up the staircase. The group $\Delta$ acts
on $(\R/\Z)^m$ by cyclic permutations of coordinates. 
\end{remark}

\begin{remark}
The maps \equ{eq:til Phi} and \equ{eq:Phi} were used in
\cite{calanque} in order to analyze the horocycle flow on completely
periodic surfaces, but the case in which $\Delta$ is
nontrivial was overlooked. Nevertheless \cite[Prop. 4]{calanque} is true as stated. Namely, in order
to prove this one can work in a finite cover $\HH'$ of $\HH$ in which $\mathbb{T}$ embeds,
analyzing the closure of the $U$-orbit in $\HH'$ via the arguments
used in 
\cite[Proof of Prop. 4]{calanque}. Then, using the fact that 
elements of $\operatorname{M}(\x)$ preserve circumferences and
heights of cylinders which they permute, it is not hard to
show that the validity of \cite[Prop. 4]{calanque} in $\HH'$ implies
that the same statement is valid in $\HH$. 
%
%, and analyzing the effect of the
%projection $\HH' \to \HH$ on orbit-closures for $U$. 
% Let $\mathbb{T}_0$ denote the orbifold points,
%i.e. the subset of points of $\mathbb{T}$ which have a
%nontrivial stabilizer in $\Delta$. This is a closed horocycle
%invariant set. Thus if $q_0$ in $\mathbb{T}$ then either $q_0 \in
%\mathbb{T}_0$ or the closure of the horocycle orbit of $q_0$ does not
%intersect $\mathbb{T}_0$. 
%
%
% In \cite[Prop. 4, case (2)]{calanque}, apriori, instead of a torus we
%should have the quotient of the torus by a finite group. However since
%the orbifold locus is invariant under the horocycle
%action, a surface contained in the 
%This does not affect the validity of other statements in
%\cite{calanque}. 
\end{remark}

\subsection{Real-rel flow in twist coordinates}
\name{sect: rel twist}
Now we will specialize to the setting where $\x \in \HH_{\mathrm{m}}$ has two singularities,
which we label by the symbols $\bullet$ and $\circ$. Figure \ref{fig:partial rel}
shows an example.
We continue to  suppose that the horizontal direction is completely periodic, but now we
will also assume that $\x$ admits no horizontal saddle connections joining distinct singularities.
That is, each boundary edge of each horizontal cylinder
contains only one of the two singularities. 
We will order the cylinders so that $C_1, \ldots, 
C_k$ have the singularity $\circ$ on their bottom and the 
singularity $\bullet$ on the top,
so that $C_{k +1}, \ldots, C_\ell$ have $\bullet$  on the
bottom and $\circ$ on the top, and $C_{\ell+1}, \ldots,
C_m$ have the same singularity on both boundary components. 

We observe that the real-rel flow applied to a surface in the
horizontal twist space can be viewed as only changing the twist
parameters. Concretely, $\rel_r^{(h)}$ decreases the twist parameters
of cylinders $C_1, \ldots, C_k$ by $r$, increases the twist parameters
of $C_{k+1}, \ldots, C_\ell$ by $r$, and does not change the twist
parameters of $C_{\ell+1}, \ldots, C_m$. Therefore we find:

\begin{prop}
\label{prop:conjugacy}
The horizontal twist space $\mathcal{HT}_x$ is invariant under $\rel^{(h)}$. Define 
$$\vec{w} \in \R^m, \ \ w_i=\begin{cases} -1 & \text{if $i \leq k$}\\
1 & \text{if $k < i \leq \ell,$}\\
0 & \text{if $i >\ell$}.
\end{cases}$$
Then the straightline flow 
$$F^r_{\vec{w}} : \vec{x} \mapsto \vec{x}+r \vec{w}$$ 
on $\R^m$ induces a well-defined straightline flow on $\R^m /
\operatorname{M}(\x)$  and $\Phi$ is a topological conjugacy from this
flow to the restriction of $\rel_r^{(h)}$ to $\mathcal{HT}_x$; that is
$\Phi \circ F^r_{\vec{w}} = \rel^{(h)}_r \circ \Phi$ for every $r$. 
\end{prop}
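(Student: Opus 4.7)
The plan is to verify the proposition by a direct holonomy computation, using the uniqueness of the parameterization of $\mathcal{HT}_x$ by twist coordinates.

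\medskip

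\noindent\textbf{Step 1: Preservation of the cylinder decomposition and its parameters.} First I would check that $\rel^{(h)}_r$ is defined on all of $\mathcal{HT}_x$ for every $r \in \R$, which follows from Corollary \ref{cor: real rel main} together with the standing assumption that surfaces in $\mathcal{HT}_x$ have no horizontal saddle connection joining distinct singularities. Then I would observe that the cylinder circumferences $c_i$, the cylinder heights $y_i$, and the lengths $\ell_j$ of horizontal saddle connections are all determined by the holonomies of absolute cycles (the core curves of cylinders, and the boundary saddle connections, which are loops at a single singularity by hypothesis). Since $\rel^{(h)}_r$ acts trivially on $H^1(S;\R^2)$ via the restriction map \eqref{eq: defn Res}, these parameters are preserved. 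Consequently the horizontal cylinder decomposition persists, with the same separatrix diagram and the same ordering conventions for singularities, so $\rel^{(h)}_r$ preserves $\mathcal{HT}_x$ and preserves the combinatorial data used to parameterize it.

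\medskip

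\noindent\textbf{Step 2: Effect on the twist parameters.} The definition of $\rel^{(h)}_r$, together with the identification of $\mathfrak{R}\cong \R^2$ via integration along a path oriented from $\bullet$ to $\circ$, implies that for any relative cycle $\delta$ joining $\bullet$ to $\circ$,
\[
\hol(\delta,\rel^{(h)}_r\x) \;=\; \hol(\delta,\x) + (r,0),
\]
while $\hol$ is unchanged on absolute cycles. Applying this to the cross saddle connections $\sigma_i$: if $\sigma_i$ goes from $\bullet$ (bottom) to $\circ$ (top), i.e.\ $k<i\le \ell$, then $x_i$ increases by $r$; if $\sigma_i$ goes from $\circ$ (bottom) to $\bullet$ (top), i.e.\ $i\le k$, then $x_i$ decreases by $r$; if $\sigma_i$ is a loop at one singularity, i.e.\ $i>\ell$, then $x_i$ is unchanged. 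These three cases match exactly the definition of $\vec w$.

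\medskip

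\noindent\textbf{Step 3: Conjugacy and descent to the quotient.} Since the combinatorial data together with the parameters $(c_i,\ell_j,y_i,x_i)$ uniquely determine the marked surface, Steps 1--2 yield the lifted identity $\til\Phi\circ F_{\vec w}^r = \til{\rel^{(h)}_r}\circ \til\Phi$ on $\R^m$, where $\til{\rel^{(h)}_r}$ is any continuous lift of $\rel^{(h)}_r$ to $\HH_{\mathrm{m}}$. To descend this to $\R^m/\mathrm{M}(\x)$ and the image $\mathcal{HT}_x\subset \HH$, I would note that elements of $\mathrm{M}(\x)$ permute cylinders only among those with identical circumferences and heights and with the same pattern of singularities on their boundary components; in particular the permutation preserves the three index sets $\{i\le k\}$, $\{k<i\le\ell\}$, $\{i>\ell\}$, so the vector $\vec w$ is fixed under the $\mathrm{M}(\x)$-action on $\R^m$. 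Hence the affine flow $F_{\vec w}^r$ commutes with $\mathrm{M}(\x)$ and descends to $\R^m/\mathrm{M}(\x)$, and the desired conjugacy $\Phi\circ F_{\vec w}^r = \rel^{(h)}_r\circ\Phi$ follows.

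\medskip

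\noindent The routine part is the holonomy computation in Step 2; the main obstacle I anticipate is the careful bookkeeping of Step 1, namely verifying that none of the geometric data defining the cylinder decomposition can degenerate or be altered along the real-rel trajectory, and the equivariance check in Step 3 that shows $\vec w$ is indeed $\mathrm{M}(\x)$-invariant so that the flow descends correctly to the quotient $\mathbb{T}/\Delta$.
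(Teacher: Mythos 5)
Your proposal is correct and follows essentially the same route as the paper: compute the effect of $\rel^{(h)}_r$ on the twist parameters (obtaining exactly $x_i\mapsto x_i+rw_i$), and then descend to $\R^m/\operatorname{M}(\x)$ by observing that the permutation action of $\operatorname{M}(\x)$ preserves the three blocks $\{i\le k\}$, $\{k<i\le\ell\}$, $\{i>\ell\}$ because singularities are labeled, so that $F^r_{\vec w}$ commutes with $\Delta$. One small slip in your Step 1: the heights $y_i$ of cylinders with distinct singularities on their two boundary components are \emph{not} absolute-period data (they are $\hol_{\mathrm y}(\sigma_i,\x)$ for relative cycles $\sigma_i$); they are nevertheless preserved for the simpler reason that $\rel^{(h)}_r$ leaves all vertical holonomies, relative as well as absolute, unchanged.
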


\begin{proof}
In the case $\operatorname{M}(\x) = \operatorname{M}_0(\x)$, the
horizontal twist space $\mathcal{HT}_x$ is isomorphic to the torus $\mathbb{T}$ and it is
straightforward to check that the effect of applying $\rel^{(h)}_r$ on
the twist coordinates is exactly $x_i \mapsto x_i + rw_i$,
giving the required conjugacy. In the general case we need to show
that the action of  $F^r_{\vec{w}}$ and of the group $\Delta$ on
$\mathbb{T}$ commute; indeed this will imply both that the action of
$F^r_{\vec{w}}$ on $\mathbb{T} /\Delta \cong \mathcal{HT}_x$
is well-defined, and that 
$\Phi$ intertwines the straightline flow $F^r_{\vec{w}}$ on
$\R^m/\operatorname{M}(\x)$ with the real-rel flow $\rel^{(h)}_r$ on
$\mathcal{HT}_x$. 
The definition of the $w_i$ implies that $F^r_{\vec{w}}$ and
$\Delta$ commute provided the permutation action of $\operatorname{M}(\x)$ on the horizontal 
cylinders preserves each of the three collections of cylinders $\{C_1, \ldots, C_k\}, \{C_{k+1}, \ldots,
C_\ell\}, \{C_{\ell+1}, \ldots, C_m\}$; this in turn follows from our assumption that
singularities are labeled, and the definition of $\operatorname{M}(\x).$
\end{proof}

Given a real vector space $V$, a {\em $\Q$-structure} on $V$ is a choice of
a $\Q$-linear subspace $V_0$ such that $V = V_0 \otimes_{\Q}
\R$ (i.e. there is a basis of $V_0$ as a vector space over $\Q$, which is a
basis of $V$ as a vector space over $\R$). The elements of $V_0$ are
then called {\em rational} points of 
$V$. If $V_1, V_2$ are vector spaces with $\Q$-structures, then a
linear transformation $T:
V_1 \to V_2$ is said to be {\em defined over $\Q$} if it maps rational
points to rational points. 
There is a natural $\Q$-structure on $H^1(S, \Sigma; \R^2)$, namely
$H^1(S, \Sigma; \Q^2)$. Since the action of $\Mod(S,
\Sigma)$ preserves $H_1(S, \Sigma; \Z)$ this induces a well-defined
$\Q$-structure on $\HH$. Moreover since $\HH$ is an affine manifold locally modeled on
$H^1(S, \Sigma ; \R^2)$, the tangent space to $\HH$ at any $x \in \HH$
inherits a $\Q$-structure. With respect to this $\Q$-structure, we obtain:

\begin{prop}\name{prop: real rel closure}
Retaining the notation above, let 
\eq{eq: retaining}{\mathcal{O}(x) = 
\overline{\{\rel^{(h)}_sx : s \in \R \}}\subset \mathcal{HT}_x.
}
Then ${\mathcal
  O}(x)$ is a $d$-dimensional affine sub-orbifold of $\HH$, where $d$ is the
dimension of the $\Q$-vector space 
\eq{eq: dim}{
%S= 
\spa_{\Q} \left\{\frac{-1}{c_1}, \cdots, \frac{-1}{c_k},
  \frac{1}{c_{k+1}}, \ldots, \frac{1}{c_\ell} \right\} \subset \R.} 
%The tangent space to $\mathcal{O}$ at $x$ 
%is a rational subspace of $H^1(S,\Sigma;\R^2)$ of dimension $d$. 
\ul{Moreover, for every $x$, the tangent space to $\mathcal{O}(x)$ is a
$\Q$-subspace of $H^1(S, \Sigma; \R^2)$.} \footnote{Underlined statements are erroneous. See the Erratum in Appendix \ref{erratum}.}
\end{prop}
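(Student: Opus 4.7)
The plan is to transfer the question to orbit closures of linear flows on tori via the conjugacy of Proposition \ref{prop:conjugacy}. That proposition identifies the real-rel flow on $\mathcal{HT}_x$ with the straightline flow $F^r_{\vec w}$ on the finite orbifold quotient $\mathbb{T}/\Delta$, where $\mathbb{T} = \R^m/\Lambda$ is a torus with lattice $\Lambda = \prod_{i=1}^m c_i \Z$ and $\Delta$ is the finite group from \eqref{eq:exact}. Since $\Delta$ acts by finitely many affine automorphisms, it suffices to analyze the closure of $\{t\vec w \bmod \Lambda\}$ inside $\mathbb{T}$ and then project.

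First I would study the orbit closure in $\mathbb{T}$ by rescaling coordinates via $y_i = x_i/c_i$, which normalizes $\Lambda$ to $\Z^m$ and turns the flow direction into $\vec w' = (w_1/c_1, \ldots, w_m/c_m)$. The classical Kronecker--Weyl theorem then identifies the closure as a connected subtorus $\mathbb{T}' \subseteq \mathbb{T}$ of dimension $\dim_\Q \spa_\Q\{w_i/c_i : 1 \leq i \leq m\}$. Since $w_i = 0$ for $i > \ell$, the nonzero entries match those appearing in \eqref{eq: dim}, so $\dim \mathbb{T}' = d$. Its Lie algebra $\mathfrak{h} \subseteq \R^m$ is the smallest $\R$-subspace containing $\vec w$ such that $\mathfrak{h} \cap \Lambda$ spans $\mathfrak{h}$; in $y$-coordinates, $\mathfrak{h}$ is the smallest standard-$\Q$-rational subspace containing $\vec w'$.

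The finiteness of $\Delta$ makes $\mathbb{T} \to \mathbb{T}/\Delta$ an orbifold covering, so the image of $\mathbb{T}'$ is a union of finitely many $\Delta$-translates of $\mathbb{T}'$, collapsed to an affine sub-orbifold of the same dimension $d$; transporting through $\Phi$ turns this into the asserted affine sub-orbifold structure on $\mathcal{O}(x) \subseteq \HH$. For the final claim about $\Q$-rationality, I would use Proposition \ref{prop:Phi derivative}: $D\til\Phi$ sends each basis vector $e_i \in \R^m$ to $(C_i^*, 0) \in H^1(S, \Sigma; \Z^2)$, making $D\til\Phi$ a $\Q$-linear map from the standard $\Q$-structure on $\R^m$ to the natural $\Q$-structure $H^1(S, \Sigma; \Q^2)$; the rationality of $\mathfrak{h}$ coming from $\mathbb{T}'$ then transports to the rationality of $D\til\Phi(\mathfrak{h})$, which is the tangent space to $\mathcal{O}(x)$, as a $\Q$-subspace of $H^1(S, \Sigma; \R^2)$.

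The main obstacle I anticipate is the careful bookkeeping of the two distinct natural $\Q$-structures on $\R^m$—the standard one, compatible with the integer cohomology target of $D\til\Phi$, versus the one coming from the Dehn-twist lattice $\Lambda$, with respect to which $\mathfrak{h}$ is rational by Kronecker--Weyl—and verifying that their interaction is clean enough for the tangent space to $\mathcal{O}(x)$ to come out $\Q$-rational in $H^1(S, \Sigma; \R^2)$.
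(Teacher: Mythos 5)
Your argument for the dimension statement is the paper's argument: rescale so that the Dehn-twist lattice becomes $\Z^m$, apply Kronecker--Weyl to the direction $(w_i/c_i)_i$, and identify $d$ with the dimension of the $\Q$-span of the nonzero $w_i/c_i$; the passage through the finite group $\Delta$ is handled the same way there. That part is complete.

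The gap is precisely the one you flag in your last paragraph, and it is not mere bookkeeping: the two $\Q$-structures genuinely diverge, and the transport you propose does not yield the ``Moreover'' clause. Kronecker--Weyl makes $\mathfrak{h}$ rational for the lattice $\Lambda=\bigoplus_i c_i\Z e_i$, so $D\til\Phi(\mathfrak{h})$ is spanned by vectors of the form $\sum_i n_i c_i (C_i^\ast,0)$ with $n_i\in\Z$; equivalently, it is cut out inside $\spa_\R\{(C_i^\ast,0)\}$ by linear forms whose coefficients in the dual basis are $M_i/c_i$ with $M\in\Q^m$ annihilating $(w_i/c_i)_i$. Such a form is proportional to a rational form only when the $c_i$ with $M_i\neq 0$ are pairwise commensurable, which fails in general whenever $d\geq 2$ (and in particular in the application of Lemma \ref{lem:tangent space}, where the circumferences are distinct powers of $\alpha$); the only rational vector guaranteed to lie in $D\til\Phi(\mathfrak{h})$ is the rel direction $\left(\sum_i w_iC_i^\ast,0\right)$ itself. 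So rationality with respect to $H^1(S,\Sigma;\Q^2)$ does not follow from $D\til\Phi$ carrying the standard basis to integral classes. Be aware that the paper's own proof of this clause rests on exactly the assertion you are uneasy about, namely that the relevant elements of $\operatorname{M}_0(\x)$ translate the leaf $\hol(\til\Phi(\R^m))$ by rational vectors; the displacement realized by $\tau_i$ is in fact $(c_iC_i^\ast,0)$, an integral class scaled by the circumference $c_i$. Your instinct to isolate this step is therefore correct, but neither your transport argument nor a direct appeal to the integrality of $D\til\Phi$ closes it: as written, what the construction delivers is rationality with respect to the lattice generated by the classes $c_i(C_i^\ast,0)$, which coincides with rationality in $H^1(S,\Sigma;\Q^2)$ only under additional commensurability hypotheses on the circumferences.
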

\begin{proof}
We will work with the
standard $m$-torus ${\mathbb T}^m=\R^m/\Z^m$, and 
define 
\eq{def:Psi}{
\Psi: {\mathbb T}^m \to \mathcal{HT}_x; \quad (t_1, \ldots, t_m) \mapsto
\Phi(c_1 t_1, \ldots, c_m t_m).
}
%The advantage of
%$\Psi$ is that it conjugates the action of $\operatorname{M}_0(\x)$ to
%an action by {\em integer} translations. 
Then the conjugacy from Proposition \ref{prop:conjugacy} leads to a
semi-conjugacy from the straight-line flow on ${\mathbb T}^m$ in
direction  
$$\vec{v}=\left(\frac{-1}{c_1}, \cdots, \frac{-1}{c_k},
  \frac{1}{c_{k+1}}, 
  \cdots, \frac{1}{c_\ell},0, \ldots, 0\right).$$
The orbit closure of the origin of this straight-line flow is a
rational subtorus, and the tangent space to the origin is the smallest
real subspace $V$ of $\R^m$ defined over $\Q$ and containing the
vector $\vec{v}$. Every rational relation among the coordinates of $\vec{v}$
gives a linear equation with $\Q$-coefficients satisfied by $\vec{v}$, and
vice versa; this implies that the dimension of $V$ is the same as the rational
dimension of \equ{eq: dim}. 

Similarly to \equ{def:Psi}, let $\til \Psi: \R^m \to
\HH_{\mathrm{m}}$ be defined by $\til \Psi (t_1, \ldots, t_m) = \til
\Phi(c_1t_1, \ldots, c_mt_m)$. 
Then $\til \Psi$ intertwines
the action of $\Z^m$ on $\R^m$ by translations, with the action of
$\operatorname{M}_0(\x)$ on the image of $\til \Psi$, and we have $\mathcal{O}(x) =
\pi (\til \Psi(V))$.  
Let $\vec{v}_1, \ldots, \vec{v}_d$ be a basis of $V$ contained in $\Z^m$, and let
$\Gamma \subset \Z^m$ be the sub-lattice $ \langle \vec{v}_1, \ldots, \vec{v}_d \rangle.$ 
The subspace $V$ is fixed by the action
$\Gamma \subset \Z^m$ by translations, and $\til \Psi$ intertwines this
translation action with  a translation action of a subgroup of 
$\operatorname{M}_0(\x)$. Thus in order to prove that the tangent space to
$\mathcal{O}(x)$ is defined over $\Q$, it is enough to prove that any
element of $\operatorname{M}_0(\x)$ acts by translation by a rational 
vector. 

As we have seen (see \equ{eq:Phi holonomy}), the action of each $\tau_i$ on $H^1(S,
\Sigma; \R^2)$ is induced by its action on $H_1(S, \Sigma ; \Z)$ via 
$$\gamma \mapsto \gamma + (\gamma \cap C_i) C_i,$$ 
i.e. by
translating by a vector in $H_1(S, \Sigma; \Z)$. \ul{Now the assertion
follows from the definition of the $\Q$-structure on
$H^1(S, \Sigma ; \R^2)$.} \footnote{The displacement in period coordinates, $\tau_i(\x)-\x$, is rational if and only if $\hol(C_i; \x) \in \Q^2$.}
%Concretely, if $b_1, \ldots, b_{d} \in S$ is a $\Q$-basis for $S$, and
%$q_{i,j}$ are the unique rationals such that
%\eq{eq: sign as}{\frac{\pm 1}{c_j}=\sum_{i=1}^d q_{i,j} b_i \ \ \ \text{(sign as
%  in  \equ{eq: dim})},}
%then the vectors of the form 
%$$\vec{u}_i=(q_{i,1}, \ldots, q_{i,\ell}, 0,\ldots, 0)$$ 
%are a basis for $V$. 
%
%This implies that the rescaled vectors $\vec{u}'_i = \frac{1}{c_i}
%\vec{u}_i$ are also a basis for $V$. Pushing the vectors $\vec{u}'_i$
%forward under $\Psi$ using \eqref{def:Psi}, \eqref{prop:Phi} and \eqref{eq: satisfies}, we obtain the vectors 
%%\eq{eq:eta}{
%$$
%\eta_i=\sum_{j=1}^\ell (0, q_{i,j} C_j^\ast), \ \ \ i=1, \ldots, d,
%$$
%%}
%which are a basis for the tangent space to $\mathcal{O}(x)$ at $x$. Since these vectors are
%rational, we find that the tangent space is a
%rational subspace. 
\end{proof}

%\begin{figure}
%    \includegraphics[scale=0.65]{twist}
%\caption{Cylinders on the surface $x_r$ with
%  $1<r<\alpha^{-1}$. \combarak{Need to rotate the picture so that
%    cylinders are horizontal.} \compat{I suggest deleting this figure and referring instead to Figure \ref{fig:partial rel}.}}
%\name{fig:cylinder data}
%\end{figure}

\subsection{Real-rel flow on deformations of the Arnoux-Yoccoz
  surface}\name{subsec: deformations etc}
We now specialize further  to $x_r = \rel^{(v)}_r x_0$. Throughout
this section we assume that $r>0$ and $r(1-\alpha)$ is not an integral
power of $\alpha$.
Then $x_r$ admits a decomposition into $\genus  +1$ horizontal cylinders by
Theorem \ref{thm:completely periodic ray}(2)(b).
Define $\mathcal{O}_r = \mathcal{O}(x_r)$ via \equ{eq:
  retaining}, and denote the tangent space to $\mathcal{O}_r$ at $x_r$
by $T_r$. Then we have:  
%\eq{eq:torus}{
%{\mathcal O}_r=\overline{\rel^{(h)}_\R x_r}.}
%We will show:
\begin{lem}
\name{lem:tangent space}
With  the notation above,  
$T_r$ is a $\genus$-dimensional \ul{$\Q$-subspace} of $H^1(S,\Sigma;\R^2)$, 
and ${\mathcal
  O}_r$ 
is a $\genus$-dimensional affine torus. 
%.
%, and
%the assignment $r \mapsto T_r$ is constant
%on the open interval $(\alpha^{-k},\alpha^{-(k+1)})$ for every $k \in
%\Z$. \combarak{Does this really get used? If so write a proof using
 % commutation relations on the rel leaf.}
\end{lem}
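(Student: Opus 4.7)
The plan is to deduce this lemma by combining the general twist-coordinate machinery of Propositions~\ref{prop: real rel closure} and~\ref{prop:Phi} with the explicit description of the cylinder decomposition of $x_r$ given by Theorem~\ref{thm:completely periodic ray}(2)(b).

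First I would invoke Theorem~\ref{thm:completely periodic ray}(2)(b) to enumerate the horizontal cylinders of $x_r$. Label them $C_0, C_1, \ldots, C_\genus$ in order of decreasing circumference; then $c_i = \alpha^{k+i+2}$ where $k$ is the integer determined by $r$ via \eqref{eq: alpha interval}. The theorem also tells us that $C_0$ has $\bullet$ on top and $\circ$ on bottom, while each $C_i$ for $i\ge 1$ has the reverse, so in the notation of \S \ref{sect: rel twist} we have $k=1$, $\ell = m = \genus + 1$, and no cylinder has the same singularity on both boundary components.

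Next I would apply Proposition~\ref{prop: real rel closure} to compute $\dim_\R \mathcal{O}_r$ as the $\Q$-dimension of
$$\spa_\Q \left\{\tfrac{-1}{c_0}, \tfrac{1}{c_1}, \ldots, \tfrac{1}{c_\genus}\right\} = \spa_\Q\{-1, \alpha^{-1}, \alpha^{-2}, \ldots, \alpha^{-\genus}\},$$
where the equality uses that scaling by $\alpha^{-(k+2)}$ preserves $\Q$-span. Since $\alpha$ has degree $\genus$ over $\Q$ (\cite[Lemma 3]{AY}), the powers $1, \alpha^{-1}, \ldots, \alpha^{-(\genus-1)}$ are $\Q$-linearly independent. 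Dividing the defining relation \eqref{eq: alpha} through by $\alpha^{\genus}$ yields $\alpha^{-\genus} = 1 + \alpha^{-1} + \cdots + \alpha^{-(\genus-1)}$, so $\alpha^{-\genus}$ lies in the span of the earlier powers. Hence the above span equals $\Q(\alpha)$, which has $\Q$-dimension $\genus$. The $\Q$-rationality of $T_r$ follows from the second sentence of Proposition~\ref{prop: real rel closure}.

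Finally I would promote the affine-suborbifold conclusion of Proposition~\ref{prop: real rel closure} to the affine-torus conclusion by checking the hypotheses of Proposition~\ref{prop:Phi}. The circumferences $c_i = \alpha^{k+i+2}$ are manifestly distinct because $0 < \alpha < 1$, so $\operatorname{M}(\mathbf{x}_r)$ cannot permute cylinders nontrivially. To see that each cylinder also possesses a boundary saddle connection of length distinct from its neighbors, I would use the $\til g$-equivariance from Theorem~\ref{thm:completely periodic ray}(1) to reduce to the case $r-\beta \in (0,\alpha)$, where the explicit presentation described in Proposition~\ref{prop:complete periodicity} and depicted in Figure~\ref{fig:partial rel} exhibits the boundary saddle-connection lengths as explicit $\Q$-linear combinations of $\{1,\alpha, \ldots, \alpha^\genus,s\}$, which are distinct by the $\Q$-linear independence of powers of $\alpha$ together with our hypothesis that $r(1-\alpha)$ is not an integer power of $\alpha$. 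This gives $\operatorname{M}(\mathbf{x}_r) = \operatorname{M}_0(\mathbf{x}_r)$, so $\mathcal{HT}_{x_r}$ is a torus, and under the conjugacy of Proposition~\ref{prop:conjugacy} the real-rel flow on $\mathcal{O}_r$ becomes straightline flow on a torus in the rational direction $\vec{v}$. The closure of such an orbit is a rational subtorus of dimension equal to the span computed above, so $\mathcal{O}_r$ is a $\genus$-dimensional affine torus.

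The main obstacle will be the bookkeeping required to confirm the distinctness hypothesis in Proposition~\ref{prop:Phi}: one must carefully read off the lengths of all horizontal saddle connections from the slit-construction presentation of $x_r$ and verify that no two on a common cylinder boundary coincide. The degree-$\genus$ algebraicity of $\alpha$ makes this essentially automatic once the lengths are written down, but the combinatorial enumeration itself is where care is needed.
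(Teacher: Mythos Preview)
Your proposal is correct and follows essentially the same approach as the paper: invoke Theorem~\ref{thm:completely periodic ray}(2)(b) for the circumferences, plug into the $\Q$-span formula of Proposition~\ref{prop: real rel closure} to get dimension $\genus$ (the paper multiplies through by $\alpha^{k+\genus+2}$ rather than $\alpha^{k+2}$, but of course the span is the same), and then appeal to Proposition~\ref{prop:Phi} plus distinctness of circumferences to conclude $\mathcal{O}_r$ is a genuine torus rather than an orbifold quotient.

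The one difference is that you are more careful than the paper about the second hypothesis of Proposition~\ref{prop:Phi} (the saddle-connection length condition). The paper's proof simply says ``we use Proposition~\ref{prop:Phi} and the fact that the circumferences of the cylinders are distinct,'' without explicitly addressing the saddle-connection clause. Your plan to verify it by reducing via $\til g$-equivariance to the fundamental domain and reading off lengths from the slit presentation is sound; in fact in this particular case each cylinder boundary component carries only one singularity (Theorem~\ref{thm:completely periodic ray}(2)(b)), which together with distinct circumferences already pins down the action of $\operatorname{M}(\x)$ enough to make the verification routine. So the ``main obstacle'' you flag is real but mild, and your handling of it is an improvement in rigor over the paper's terse invocation.
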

\begin{proof}
By Theorem
\ref{thm:completely periodic ray}(2)(b) the circumferences of
the horizontal cylinders on $x_r$ are $\alpha^{k+2}, \ldots,
\alpha^{k+\genus+2}$ for some $k$ depending on $r$. Multiplying
through by $\alpha^{k+\genus+2}$ in formula \eqref{eq: dim}, we see that the
dimension of $\mathcal{O}_r$ is $\genus$. To see that the affine
sub-orbifold $\mathcal{O}_r$ is a torus (i.e. not its quotient by a
nontrivial finite group) we use Proposition \ref{prop:Phi} and the fact
that the circumferences of the cylinders are distinct. The other assertions of the
Lemma follow from Proposition \ref{prop: real rel closure}. 
\end{proof}

Define the {\em horizontal twist cohomology subspace} $P  $ 
to be the real subspace of $H^1(S,\Sigma;\R^2)$ spanned by the classes of
the form $(C^\ast, 0)$, where $C$ varies over the cylinders of
$x_r$. As we have seen, $P$ is the tangent space to
$\mathcal{HT}_r$, and by Proposition \ref{prop: real rel closure}
the subspace $T_r$ lies in $P$ for all $r$ as above. Furthermore we
have: 
%Using Lemma \ref{lem: span of cylinders}, we see:
%\compat{Minor changes to this paragraph. Major ones below. I replaced the proposition which uses work and notation from \S 3.}

\begin{cor} \name{cor:action on P}
The subspace $P \subset H^1(S,\Sigma;\R^2)$ is $\genus+1$ dimensional
and is independent of the choice of $r$ selected as above. The action of $\varphi^\ast$ on $H^1(S,\Sigma;\R^2)$
preserves $P$ and has $\big(\hol_{\mathrm y}(x_0), 0\big)$ as its dominant eigenvector with eigenvalue $\alpha^{-1}$. 
\end{cor}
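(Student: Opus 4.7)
The plan is to combine Lemma \ref{lem: span of cylinders}, the identity \eqref{eq:phi action on cylinders}, and the affine automorphism $\varphi_0:x_0\to x_0$ with derivative $\tilde g$ from the proof of Theorem \ref{thm: identifying AY}. Let $P_1\subset H^1(S,\Sigma;\R)$ denote the first coordinate of $P$, so that $P = P_1\oplus\{0\}$.

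The dimension and $r$-independence of $P$ are immediate from Lemma \ref{lem: span of cylinders}: that lemma identifies the $\Q$-span of the horizontal cylinder cohomology classes $C^\ast$ at any admissible $x_r$ with a specific $(\genus+1)$-dimensional $\Q$-subspace of $H^1(S,\Sigma;\Q)$ that is manifestly independent of $r$; placing its $\R$-span in the first $\R$-slot of $H^1(S,\Sigma;\R^2)$ gives $P$. For invariance of $P$ under $\varphi^\ast$, I would dualize the identity $\varphi_\ast C_i(\alpha^{-1}t)=C_i(t)$ of \eqref{eq:phi action on cylinders}: one obtains $\varphi^\ast(C_i(t)^\ast)=C_i(\alpha^{-1}t)^\ast$, so $\varphi^\ast$ carries the cylinder span at $x_t$ onto the cylinder span at $x_{\alpha^{-1}t}$, and both equal $P_1$ by the preceding step.

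For the eigenvector claim, I realize $\hol_y(x_0)\in P_1$ via Lemma \ref{lem: span of cylinders}: after fixing a $\Q$-basis of $\Q(\alpha)$, the $\R$-linear functional $L:\Q(\alpha)+\Q t\to\R$ defined by $a+bt\mapsto a$ (via $\Q(\alpha)\subset\R$) is an $\R$-linear combination of the $\Q$-valued $\Q$-linear functionals in the lemma, and $L\circ\widetilde\hol_y=\hol_y(\cdot,x_0)$, so $(\hol_y(x_0),0)\in P$. The eigenvalue then follows from the affine automorphism $\varphi_0:x_0\to x_0$ obtained as the $t\to 0^+$ limit of the representatives $\varphi_t\circ\rho_t$ in the proof of Theorem \ref{thm: identifying AY}: the derivative $\tilde g=\diag(\alpha^{-1},\alpha)$ dictates that $\varphi^\ast(\hol_y(x_0),0)=\alpha^{-1}(\hol_y(x_0),0)$.

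The main technical step will be verifying dominance. Using Theorem \ref{thm:completely periodic ray}(1), the action induced by $\varphi_\ast$ on $\Q(\alpha)+\Q t$ admits an explicit description; passing to duals then identifies the remaining eigenvalues of $\varphi^\ast|_P$ with the other Galois conjugates of $\alpha^{-1}$ together with an eigenvalue $1$ coming from the $\Q t$-factor. The Pisot property of $\alpha^{-1}$ from \cite{AY} asserts that every Galois conjugate of $\alpha^{-1}$ other than itself has absolute value strictly less than $1<\alpha^{-1}$, giving the required dominance.
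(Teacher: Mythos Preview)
Your proposal is correct and follows essentially the same route as the paper: both use Lemma~\ref{lem: span of cylinders} for the dimension and $r$-independence, both identify $(\hol_y(x_0),0)$ via the linear functional $L:a+bt\mapsto a$, and both obtain dominance by observing that the action of $\varphi^\ast$ on the $\Q(\alpha)$-part has eigenvalues given by the Galois conjugates of $\alpha^{-1}$ (invoking Pisot) while the $\Q t$-part contributes eigenvalue~$1$. The only cosmetic difference is that for $\varphi^\ast$-invariance you dualize \eqref{eq:phi action on cylinders} directly, whereas the paper reproves invariance by computing $\varphi^\ast(L\circ\widetilde\hol_y)$ explicitly; note that under the paper's convention $\varphi^\ast(c)(\gamma)=c(\varphi^{-1}_\ast\gamma)$ your dualization actually gives $\varphi^\ast(C_i(t)^\ast)=C_i(\alpha t)^\ast$ rather than $C_i(\alpha^{-1}t)^\ast$, but since $P$ is independent of the parameter this does not affect the conclusion.
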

\begin{proof}
Recall the holonomy map for the family of surfaces $\{x_r\}$ is 
$$\widetilde \hol_y: H_1(S, \Sigma; \Q) \to \Q(\alpha)+\Q r; \quad \gamma \mapsto 
\hol_y(\gamma; x_r),$$
where $r$ is viewed as a free variable. See (\ref{eq: holonomy of family}). Because
$x_r=\rel^{(v)}_r x_0$, 
$$\widetilde \hol_y(\gamma)=\hol_y(\gamma, x_0)+\delta(\gamma)r$$
where the boundary of $\gamma$ in $H_0(\Sigma; \Q)$ is the scalar $\delta(\gamma) \in \Q$
times the class $[\circ]-[\bullet] \in H_0(\Sigma; \Z)$. 

Lemma \ref{lem: span of cylinders} tells us that $P$ is $\genus+1$ dimensional
and when restricted to $H_1(S,\Sigma; \Q)$
is the set of maps of the form 
$(L \circ \widetilde \hol_y,0)$ where $L:\Q(\alpha)+\Q r \to \R$ is $\Q$-linear. 
Since here we take $r$ as a free variable, $P$ is independent of $r$. 

Let $\gamma \in H_1(S, \Sigma; \Q)$ be arbitrary.
The action of $\varphi^\ast$ on the first coordinate of an element of $P$ is given by 
$$
\begin{array}{rcl}
\big(\varphi^\ast(L \circ \widetilde \hol_y)\big)(\gamma) & = &
L \circ \widetilde \hol_y(\varphi^{-1} \gamma) \\ 
& = &
L \Big( \hol_y\big(\varphi^{-1} \gamma; x_0)+\delta(\varphi^{-1} \gamma) r\big)\Big) \\
& = &
L \big( \alpha^{-1} \hol_y\big(\gamma; x_0)+\delta(\gamma) r\big)\Big).
\end{array}$$
The class $\big(\hol_y(x_0),0\big) \in P$ corresponds to the choice of $L$
which acts trivially on $\Q(\alpha)$ and sends $\Q r$ to zero. From the above, we can see that this class is an eigenvector
with eigenvalue $\alpha^{-1}$. Since $\alpha^{-1}$ is Pisot and
$\varphi^\ast$ acts as an integer matrix 
on cohomology, it dominates the action of its $\genus-1$ algebraic
conjugates. The remaining eigenvector is given by $\gamma \mapsto
\delta(\gamma)$ as above (corresponding to an $L$ which annihilates
$\Q(\alpha)$ and acts as the identity on $r \Q$), which has eigenvalue
one.   
\end{proof}

\begin{comment}
The subspace $P$ is related to vertical holonomy in
the surface $x_r,$ as follows. If $C_i$ are the horizontal cylinders on
$x_r$, $y_i$ are their heights as in 
\equ{eq: if you like it}, and $C_i^\ast \in H^1(S,\Sigma;\Z)$
are their dual classes as in (\ref{eq:cohomology class of cylinder}), then for
each $\gamma \in H_1(S,\Sigma;\Q)$, 
\eq{eq:holx}{
\hol_{\mathrm y}(\gamma,x_r)=\sum_i y_i C_i^\ast(\gamma) %\in
%\Q(\alpha) \oplus r \Q 
.} 
As in \S \ref{sect: rel rays}, let  $[\varphi]$ be the renormalization
mapping class of
$x_0$ and let
$\til g = D\varphi$. \combarak{Is this correct terminology? Note that
  in \S \ref{sect: rel rays} there was a composition with $\rho$.} From the above we see:
\end{comment}

\ignore{
\begin{prop}
\name{prop:bijection}
The subspace $P \subset H^1(S,\Sigma;\R^2)$ is in bijective correspondence with the collection of linear maps
$$L:\Q(\alpha) \oplus r \Q \to \R.$$
Namely, each $w \in H^1(S,\Sigma;\R^2)$ determines a pair of linear
maps $w_{\mathrm x}, w_{\mathrm y}:H_1(S,\Sigma;\Q) \to \R$. 
We have $w \in P$ if and only if $w_{\mathrm x}=0$ and $w_{\mathrm
  y}=L \circ \hol_{\mathrm x}$ for some $L$ as above, where  
$\hol_{\mathrm x}:H_1(S,\Sigma;\Q) \to \Q(\alpha) \oplus r \Q$ is defined as in (\ref{eq:holx}).
\end{prop}
\begin{proof}
By (\ref{eq:holx}), we can recover holonomy by evaluating with elements of $P$. Since $P$ is a rational subspace,
it must also contain every cohomology class associated to a pair $(0,L \circ \hol_{\mathrm x})$. 
Let $P'$ denote the collection of cohomology classes associated to such pairs.
We have shown $P' \subset P$. Observe that $P'$ is a vector space of
dimension four because the dimension of the rational vector space
$\Q(\alpha) \oplus r \Q$ is $4$. 
But $P$ is at most $4$ dimensional since the cylinder decompositions
of $x_r$ with $r$ positive and not an integral power of $\alpha$
consist of four cylinders. Since the dimensions match and we have one
inclusion, we know $P=P'$. 
\end{proof}

This point of view can be used to give a description of the action of $\varphi^\ast$ on the subspace $P$:
}

\ignore{
\begin{thm}\name{thm: crucial}
For any sequence $r_n \to 0$, with $r_n >0$ and not a power of
$\alpha$, we have
%\eq{eq: crucial}{
$$
V x_0 \subset \overline{\,\bigcup_n {\mathcal O}_{r_n}\,}.
$$
%}
\end{thm}
}
\begin{thm}\name{thm: crucial}
For any $r$ as above, let $r_n = \alpha^n r \to 0$. Then 
$$
U x_0 \subset \overline{\,\bigcup_n {\mathcal O}_{r_n}\,}.
$$
\end{thm}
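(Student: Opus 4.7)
The plan is to work in horizontal twist coordinates on the twist torus $\mathcal{HT}_{x_{r_n}}$ attached to each $x_{r_n}$, exploiting \propref{prop:conjugacy} (real-rel as a linear flow in twist coordinates) together with the analogous description of the horocycle flow from \cite{calanque}. By \thmref{thm:completely periodic ray}(2)(b), for all sufficiently large $n$ the surface $x_{r_n}$ admits a horizontal decomposition into $\genus+1$ cylinders with circumferences $\alpha^{k_n+2}, \ldots, \alpha^{k_n+\genus+2}$ (with $k_n \to -\infty$), so $\mathcal{HT}_{x_{r_n}}$ is a $(\genus+1)$-torus in $\HH$ containing $x_{r_n}$ and the $\genus$-dimensional sub-torus $\mathcal{O}_{r_n}$ (\lemref{lem:tangent space}). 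In normalized $t$-coordinates on $\mathcal{HT}_{x_{r_n}}$, the real-rel direction is (up to a positive scalar) the $n$-independent vector $\vec d = (-\alpha^\genus, \alpha^{\genus-1}, \ldots, \alpha, 1)$, while the horocycle direction is the modulus vector $\vec m_n = \vec h_n/\vec c_n$.

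By \propref{prop: real rel closure}, the tangent plane $T_{r_n}$ to $\mathcal{O}_{r_n}$ is the smallest $\Q$-subspace of $\R^{\genus+1}$ containing $\vec d$. Since $\alpha$ has degree $\genus$ over $\Q$ with the relation $\alpha + \alpha^2 + \cdots + \alpha^\genus = 1$, the unique (up to scaling) $\Q$-linear relation among the entries of $\vec d$ is $g(\vec v) \defeq v_0 - v_1 - \cdots - v_{\genus-1} + v_\genus = 0$. Hence $T_{r_n} = \ker g$ is the same codimension-one subspace for every $n$, and since $g$ takes integer values on $\Z^{\genus+1}$ it descends to a well-defined map $\mathcal{HT}_{x_{r_n}} \to \R/\Z$ whose zero set is exactly $\mathcal{O}_{r_n}$.

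The central computation is to show $|g(\vec m_n)| = O(\alpha^{2m_n})$ as $n \to \infty$, where $x_{r_n} = \til g^{m_n} q_{\sigma_n}$ (Theorem~\ref{thm:completely periodic ray}(2)) for $m_n \to \infty$ and $\sigma_n \in (0, \alpha)$. Using $\vec h_n = \alpha^{m_n}\til{\vec h}(\sigma_n)$ (\propref{prop:complete periodicity}) and $\vec c_n = (\alpha^{-m_n+i})_i$, one has $m_n^{(i)} = \alpha^{2m_n - i}\til h_i(\sigma_n)$. Viewed as a function of $\sigma_n$, the linear coefficient of $g(\vec m_n)$ must vanish: $\partial_{\sigma_n} \vec m_n$ is proportional to $\vec w/\vec c_n$ with $\vec w = (-1,1,\ldots,1)$, which is the real-rel direction in $t$-coordinates and hence lies in $\ker g$. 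The surviving constant term factors as $\alpha^{2m_n-\genus}$ times an $n$-independent polynomial in $\alpha$, nonzero for $\genus \geq 3$ (and vanishing identically for $\genus = 2$, reflecting the fact that in genus two the horocycle direction already lies in $T_\infty$ exactly).

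Given this decay, for any fixed $s \in \R$ the point $u_s x_{r_n} \in \mathcal{HT}_{x_{r_n}}$ sits at $t$-coordinate offset $s\vec m_n$ from $x_{r_n}$, so its distance to $\mathcal{O}_{r_n}$ in the quotient circle $\R/\Z$ is $|s| \cdot O(\alpha^{2m_n})$. Translating to the stratum metric via the map $\Psi$ of \eqref{def:Psi}, whose derivative scales $t$-coordinates by factors of $c_i = O(\alpha^{-m_n})$, yields a displacement of order $|s| \cdot \alpha^{m_n} \to 0$. Picking $y_n \in \mathcal{O}_{r_n}$ realizing this approximation and combining with $u_s x_{r_n} \to u_s x_0$ (which follows from $x_{r_n} \to x_0$ and continuity of $u_s$) gives $y_n \to u_s x_0$, so $u_s x_0 \in \overline{\bigcup_n \mathcal{O}_{r_n}}$. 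Since $s \in \R$ was arbitrary, this completes the proof. The main obstacle is the explicit algebraic computation showing $g(\vec m_n) = O(\alpha^{2m_n})$, in particular verifying the cancellation of the term linear in $\sigma_n$ and pinning down the order of decay of the remainder.
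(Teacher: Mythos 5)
Your setup is sound as far as it goes: the identification of the real-rel direction in normalized twist coordinates with $(-\alpha^\genus,\alpha^{\genus-1},\ldots,1)$, the identification $T_{r_n}=\ker g$ with $g(\vec v)=v_0-v_1-\cdots-v_{\genus-1}+v_\genus$ (this is the unique $\Q$-relation, coming from the minimal polynomial of $\alpha$, and it does use Proposition \ref{prop: real rel closure}), and the verification that the $\sigma_n$-linear part of $g(\vec m_n)$ cancels are all correct. The gap is in the last step, ``translating to the stratum metric via $\Psi$.'' The derivative of $\Psi$ sends $\partial/\partial t_i$ to $c_i\,(C_i^\ast,0)$, and you account for $c_i=O(\alpha^{-m_n})$ but treat $\|(C_i^\ast,0)\|$ as bounded. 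It is not: by \eqref{eq:phi action on cylinders} the cylinders of $x_{r_n}$ are $\varphi_\ast^n$-images of those of $x_{r_0}$, so the intersection cocycles $C_i^\ast(r_n)=(\varphi^\ast)^n C_i^\ast(r_0)$ grow like $\alpha^{-m_n}$ in any fixed period chart near $x_0$ (the core curves wind more and more around the surface). Hence a unit displacement in $t_i$ costs $\asymp\alpha^{-2m_n}$ in period coordinates, and your correction of $t$-size $|s|\,|g(\vec m_n)|\asymp|s|\alpha^{2m_n}$ costs $\asymp|s|$ in the stratum --- bounded, but not tending to $0$. The argument therefore does not produce $y_n\in\mathcal{O}_{r_n}$ with $y_n\to u_sx_0$.

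This failure is not an artifact of a clumsy choice of correction vector. Your own computation shows $|g(\vec m_n)|\asymp\alpha^{2m_n}\asymp\|\vec m_n\|$, i.e.\ in twist coordinates the horocycle direction makes an angle \emph{bounded away from zero} with $\ker g$. The convergence asserted by the theorem occurs only because the twist coordinates degenerate anisotropically as $n\to\infty$, and to see that the degeneration works in your favor you must diagonalize it: one needs that $(\hol_{\mathrm y}(x_0),0)$ is the dominant eigenvector of $\varphi^\ast$ on $P$ (Corollary \ref{cor:action on P}), that $T_{r_{n+1}}=\varphi^\ast T_{r_n}$, and --- crucially --- that $T_r$ is \emph{not} contained in the $\varphi^\ast$-invariant complement $P_2$ of the dominant eigenline. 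That last point is exactly where the paper uses the rationality of $T_r$ against the irrationality of $(\hol_{\mathrm y}(x_0),0)$; only then does iterating $\varphi^\ast$ on a vector of $T_r$ with nonzero dominant component produce vectors of $T_{r_n}$ converging projectively to the $U$-direction. Some version of this spectral input is unavoidable (a priori $T_{r_n}$ could have been an invariant complement to the dominant line, in which case the conclusion would be false), and it is entirely missing from your argument. If you want to keep the twist-coordinate framework, you would need to replace the single-coordinate correction by one taken along the eigendecomposition of $P$, which amounts to reproving the paper's Corollary \ref{cor:action on P} argument; the explicit computation of the constant term of $g(\vec m_n)$ then becomes unnecessary.
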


\begin{proof}
In the affine orbifold structure on $\HH$, the orbit $U x_0$ is a line,
and the sets $\mathcal{O}_{r_n}$ are linear submanifolds. Since
$x_{r_n} \to x_0$ it suffices to show that the set of accumulation
points of the tangent space $T_{r_n}$ contains the tangent direction
to $U$. By definition of the $U$-action, 
the derivative of the 
horocycle flow $\frac{d}{ds}[u_sx_0]$ 
(as an element of the tangent
space to $\HH$ at $x_0$, identified with $H^1(S, \Sigma; \R^2)$) is precisely 
$\big(\hol_{\mathrm y}(x_0), 0\big)$. By Corollary
\ref{cor:action on P},  $\big(\hol_{\mathrm y}(x_0), 0\big)$ is the
dominant eigenvector for the action of $\varphi^\ast$ on $P$. 

On the
other hand taking $\til g = D \varphi$ and using Proposition
\ref{prop: rel and G commutation} we
see 
$$\{\rel^{(h)}_s x_{r_{n+1}} : s \in \R\} = \{\rel^{(h)}_s
\til g x_{r_n} : s \in \R \} = \{\til g \rel^{(h)}_s  x_{r_n}: s \in
\R\},$$ and hence $\mathcal{O}_{r_{n+1}} = \til g
\mathcal{O}_{r_n}$. Let $P_1 = \spa \big\{ \big(\hol_{\mathrm{y}}(x_0),0\big) \big\} \subset P$. Since $P_1$
is generated by an eigenvector, there is a
$\varphi^\ast$-invariant complementary subspace $P_2$. We claim that
the subspace $T_r \subset P$ is not contained in $P_2$.  
Assume to the contrary that $T_r \subset P_2$. 
Since $P$ has dimension $\genus+1$, the subspace $P_2$ has dimension $\genus$. 
If $T_r$ was contained in $P_2$ then by Lemma \ref{lem:tangent space}
and dimension considerations $T_r=P_2$. 
But $P_2$ is not rational because $\big(\hol_{\mathrm{y}}(x_0),0\big)$ is not rational.
\ul{This contradicts the rationality of $T_r$; see Lemma {\ref{lem:tangent
  space}}.} \footnote{See the Erratum in Appendix \ref{erratum}.}

Since $T_r \not \subset P_2$ there is a vector $v \in T_r$ with a
nontrivial projection onto $P_1$, with respect to the
$\varphi$-invariant splitting $P= P_1 \oplus P_2$. Since 
$\mathcal{O}_{r_{n+1}} = \til g
\mathcal{O}_{r_n}$ and $\til g = D \varphi$, when applying iterates of
$\varphi$ to $v$ we get a sequence of vectors in $T_{r_n}$ whose span 
converges projectively to the direction of the dominant
eigenvector, as required.
%need to show that the set of accumulation points of the tangent spaces $T_{r_n}$ contains the tangent direction
%to $V$ contains the top eigenvalue for $\varphi^\ast$. 
%
%First suppose $r \in (1, \alpha^{-1})$, and from Lemma
%\ref{lem:tangent space} that $T_r$ is constant on this interval.  
%We'll refer to this subspace as $T_0$. 
%By Corollary \ref{cor:not invariant}, the subspace $T_0$ is not $\varphi^\ast$ invariant.
%Since $T_r$ is of codimension one in $P$, there must be a class $w
%\in T_0$ which is projectively attracted under iteration of
%$\varphi^\ast$ 
%to the direction of the top eigenvector. That is, there is a sequence $r_j \in \R$ so that
%$\big(\varphi^\ast\big)^j(r_j w)=\big(0,\hol_{\mathrm x}(x_0)\big)$. 
%
%Take a sequence $\{r_n\}$ tending to zero as in the statement of the
%Theorem. Then there is a sequence of integers $j_n$ tending to
%$+\infty$ such that $\alpha^{-j_n} r_n \in (1, \alpha^{-1})$. Then
%$\tilde g^{j_n} x_{r_n}$ 
%is in the family $\{x_r:~ 1<r<\alpha^{-1}\}$ and therefore $T_{r_n}=(\varphi^\ast)^{j_n}(T_r)$. Then
%$$\lim_{n \to \infty} \big(\varphi^\ast\big)^{j_n}(r_{j_n} w)=\big(0,\hol_{\mathrm x}(x_0)\big).$$
%Since each $r_{j_n} w \in T_{r_n}$, we see that 
%$$\big(0,\hol_{\mathrm x}(x_0)\big) \in \overline{\,\bigcup_n T_{r_n}\,}.$$
\end{proof}

%\section{The closure of a leaf} \name{sec: endgame}
\begin{proof}[Proof of Theorem \ref{thm: rel}]
Let $\HH_0$ be as in the statement of the Theorem and let 
$\Omega \subset \HH_0$ denote the 
closure of the rel leaf of $x_0$. For any $r>0$ 
and $r$ not an integral power of $\alpha$, $\Omega$ contains
$\mathcal{O}_r = \mathcal{O}(x_r)$ (as in \equ{eq: retaining}).
%the closure of
%the vertical rel orbit of $x_r=\rel^{(h)}_r x_0$,
%$${\mathcal O}_r=\overline{\{\rel^{(v)}_s x_r:~ s \in \R\}}.$$
%According to Corollary \ref{cor: apply to AY}, contains the
%torus $T_r$ for each $r$. According to \equ{eq: action on tori}, for
%any such $r$, 
%$\Omega$ contains $\til g^{-k}(T_r)$, and 
Hence by Theorem \ref{thm: crucial}, $\Omega$ contains
the orbit $Ux_0$. Applying Corollary \ref{cor: corollary V} gives the
required conclusion. 
\ignore{
$\Omega$ contains the hyperelliptic locus $\LL$. 

Since $\Omega$ is saturated with respect to the Rel foliation, 
for any $z \in \LL$ and any $u \in \mathfrak{R}$ for which $\rel^u (z)$
is defined, $\Omega$ contains $\rel^u(z)$. Given $z \in \LL$ and $g
\in G$, $gz \in \LL$ since $\LL$ is $G$-invariant. Moreover, if $\rel^u(z)$
is defined, by Proposition \ref{prop: rel and G commutation},
$\rel^{gu}(gz) = g\rel^u(z)$ is also defined and contained in 
$\Omega$. The set $$\{\rel^u(z): z \in \LL, \, u \in
\mathfrak{R},  \rel^u(z) \text{ is defined} \}$$ 
contains an open subset $\mathcal{U}$ of
$\HH(2,2)$ by Proposition \ref{prop: HLM}. Since
$\mathcal{U}$ has positive measure, with respect to the natural flat measure on
$\HH$, we can apply ergodicity of the $G$-action, to find that
$\mathcal{U}$ contains a point $z_0$ for which $\overline{Gz_0} =
\HH$. Since $\overline{Gz_0} \subset \Omega$  we find that $\Omega =
\HH$. }
\end{proof}

\ignore{
\combarak{Stuff from the older sketch begins here.}

Let $\alpha<1$ be the (unique) real number satisfying $\alpha +
\alpha^2 + \alpha^3 =1$. Then $1/\alpha$ is a Pisot number. 
Let $x_0$ be the Arnoux-Yoccoz surface, normalized by an element of $G$ so that the horizontal and vertical directions are the
contracting and expanding directions respectively of the pseudo-Anosov
map $\varphi: x_0 \to x_0$ defined by Arnoux and Yoccoz. In particular
$d \varphi$ multiplies the horizontal holonomy on $x_0$ by $\alpha$
and multiplies the vertical holonomy by $\alpha^{-1}$. 
the SAF invariant of $x_0$ vanishes in the horizontal and vertical
directions. 
Let $\rel^{(h)}_r, \, \rel^{(v)}_r$ be the horizontal
(resp. vertical) rel flows to distance $r$ (where defined), $U =
\{u_s\}, \, V= \{v_s\}$ be the upper and lower 
triangular unipotent groups, $g_t = \diag(e^t, e^{-t})$, $\HH =
\HH(2,2)^{\mathrm{odd}}$ the connected stratum component containing $x_0$, $\LL \subset \HH$ the
hyperelliptic locus in the non-hyperelliptic stratum of $\HH$. Let $P$
be the upper triangular group in $G$. 

1. For all $r>0$, $\rel^{(h)}_rx_0$ is vertically periodic, but $x_0$
is vertically uniquely ergodic. Rephrase in the language of interval
exchanges. Note: this is a counterexample to a conjecture in
\cite{MW}. 

Similarly swapping horizontal and vertical. 

2. $\left\{\rel^{(h)}_r g_t x_0 : r\geq 0, t\in \R \right\}$ is a properly embedded
cylinder in $\LL$. 

In particular: $\left\{\rel^{(h)}_r x_0: r \geq 0 \right\}$ is defined for all
$r>0$ and $\rel^{(h)}_rx_0 \to_{r \to \infty} \infty$ (first 
divergent rel trajectory ever observed by mankind). 

3.  $\bigcup_{r>0, t \in \R} \overline{V\rel^{(h)}_r g_t x_0 }$ is a torus bundle
over the above  properly embedded
cylinder in $\LL$. 

4. The set of accumulation points of the form $\{\lim_{k\to \infty}
v_k R_k x_0 : R_k  = \rel^{(h)}_{r_k}, r_k \to 0, v_k \in V\}$ is
equal to $\LL$. 

The set of accumulation points of the form $\{\lim_{k\to \infty}
S_k R_k x_0 : R_k  = \rel^{(h)}_{r_k}, r_k \to 0, S_k = \rel^{(v)}_{r'_k}\}$ is
equal to $\LL$.

Corollary: the rel leaf of $x_0$ is dense.

\begin{thm}
The $\rel$ leaf of $x_0$ is dense. 
\end{thm}

\begin{proof}[Sketch of proof]
Step 1. 
For any $r >0$, the set $\{\rel^{(v)}_s \rel^{(h)}_r x_0 : s \in \R\}$
is dense in a 3-dimensional torus, obtained by varying the twists. The
coordinates of this torus can be written explicitly in period
coordinates. We denote this torus by $T_r$. The system of tori $\{T_r:
r>0\}$ is invariant under the action of the pseudo-Anosov map
$\varphi$, that $\varphi(T_r) = T_{\alpha r}$. 

Step 2. For any fixed $r>0$, as $k \to \infty$, $\varphi^k(T_r)$ converges geometrically to a 3 plane
contained in the tangent direction to $\LL$. This is an explicit
computation using the explicit coordinates found in step 1. Denoting
the limiting 3-plane by $T_0$. It is contained in $T_{x_0}(\HH)$, the tangent space to
the stratum at $x_0$. In the sequel we work with $T_1$ instead of any
other $T_r, r>0$ but that is purely arbitrary and we only do this to
free the symbol $r$ for other uses.

I am not certain about the following two steps, first I formulate them
and if they turn out to be false I formulate two alternative steps
which have a better chance of being true but also require more work to
prove. 

Step 3. 
$T_0$ is tangent to the subspace of the tangent space of $\LL$ in
which only vertical holonomy varies. That is, the strong stable space
for $\{g_t\}$ passing through $x_0$. The fact that $T_0$ is tangent to
$\LL$ is an explicit computation and the fact that they are equal is a
dimension count. 

Step 4. The closure of the rel leaf of $x_0$ contains $\LL$. By step 3 
it is enough to show that the topological limit of the collection
$\{\varphi^k(T_1): k=1,2, \ldots \}$ (i.e. sets of accumulation points
of sequences $(y_k), $ where $y_k \in \varphi^k(T_1)$ for all $k$)
contains all of $\LL$. For this by step 3 it is enough to know that
the closure of the strong stable leaf of $x_0$ in $\LL$ is all of
$\LL$. This in turn follows from a mixing argument of Lindenstrauss
and Mirzakhani \cite{LM}. The way the theorem is stated in \cite{LM}
is not good enough for us but John and I have a work in progress in
which we generalize \cite{LM} to the form we need. 

In case step 3 turns out to be false we use:

Step 3'. The direction of $T_0$ contains the tangent direction to the
horocycle flow shearing in the vertical direction (i.e. the orbit
$Vx_0$). This is something we worked out in your office. 

Step 4'. Now using Proposition \ref{prop: V suffices} above, and \cite{HLM}, $\overline{Vx_0} =
\overline{Gx_0} = \LL$. So $\LL$ is contained in the closure of the
rel leaf of $x_0$. 

Step 5. Let $\Omega$ denote the closure of the rel leaf of $x_0$ (so
that 
$\Omega$ contains the rel leaf of
$x_0$ and by Step 4,  properly contains $\LL$). Then for any $r >0$, $\Omega$ contains the $G$-orbit of
$\rel^{(h)}_r x_0.$ For this, using \cite[Thm. 2.1]{EMM2} it suffices to show
that it contains the $P$ orbit of $\rel^{(h)}_r x_0.$ This is implied
by the following
commutation relation between the $P$-action and the action of
$\rel^{(h)}_r$, see \cite[Thm. 1.4]{MW}: if a surface $x$ has no horizontal
saddle connections and $p \in P$ then $\rel^{(h)}_{r'} px = p
\rel^{(h)}_{r}x,$ where 
$$r'= r'(p) = e^t r, \ \ 
\text{ for } p = \left(\begin{matrix} e^t & b \\ 0 & e^{-t} \end{matrix} \right)
$$

Step 6. For $r>0$ let $\FF_r$ denote the $G$-orbit closure of
$\rel^{(h)}_rx_0$. It suffices to show that for some $r>0$ we have
$\FF_r = \HH.$ For each $r$, let $\LL_r$ denote
$\rel_r^{(h)}(\LL_0)$ (where $\LL_0$ denotes the closed subset of
$\LL$ consisting of surfaces with no horizontal saddle connections of
length less than $r$). Then $\LL_r$ is a manifold with boundary whose
complex dimension  is the same as that of $\LL$, that is of complex
codimension 1 in $ \HH$. By the commutation
relation $\rel_r^{(h)}u =u \rel_r^{(h)}$ we have $\LL_r \subset \FF_r$
so that $\dim_{\C} \FF_r \geq \dim\_{\C} \LL = \dim_{\C} \HH -1.$
Moreover for all sufficiently small $r$, the translates $\LL_r$ are
disjoint since the $\rel^{(h)}$ direction is transverse to that of
$\LL$. This implies by \cite[Prop. 2.16]{EMM2} that for all but at
most countably many $r$, $\FF_r = \HH$. 
\end{proof}

\begin{remark}
The arguments used in the preceding proof actually show that for
$r>0$, the sets $T_{\alpha^k r}$ become denser and denser in $\HH$ as $k \to \infty$; i.e. for any $x
\in \HH$ there are sequences $y_k \in T_{\alpha^k r}$
such that $y_k \to x$. 
	\compat{Do you mean ${\mathcal O}_r$ rather than $T_r$? Also how do you know
	that $\lim_{r \to \infty} {\mathcal O}_r$ is larger than $\LL$?}
It it likely that a stronger 
equidistribution result holds, namely that the flat probability measures
on each $T_r$ (normalized Haar measure coming from the structure
of $T_r$ as a torus) converge weak-* to the globally supported flat
measure on $\HH$ as $r \to 0+$. 
\end{remark}
}

\newpage
\appendix

\section{Totally real subextensions}\name{appendix: BSZ}
% This appears too long (aesthetically)
%\markleft{\uppercase{Lior Bary-Soroker, Mark Shusterman and Umberto Zannier}}
\markleft{\uppercase{Bary-Soroker, Shusterman and Zannier}}

\begin{center}\uppercase{Lior Bary-Soroker, Mark Shusterman, and Umberto Zannier}
\end{center}

\maketitle
}

For a tower of fields $E \leq K \leq \bar{E}$ we say that an algebraic extension $M/E$ is totally-$K$ if every embedding of it into $\bar{E}$ over $E$ lands in $K$. Given an algebraic extension $F/E$ we give a bound on its maximal totally-$K$ subextension in case that the absolute Galois group $G_K$ is contained in a Sylow subgroup of $G_E$ of order prime to $[F : E]$. We use this bound to show that the maximal totally real subfield of $\mathbb{Q}[X]/(X^{n} + X^{n-1} + \dots + X - 1)$ is $\mathbb{Q}$. This confirms a conjecture made (and empirically tested) by Hooper and Weiss, allowing them to establish the density of the rel leaf of the Arnoux-Yoccoz surface in its connected component in the moduli space of translation surfaces of fixed genus, combinatorial characteristics of singularities, and area.

%\section{Preliminaries}
\subsection{Preliminaries}
In order to state our main result, we need to fix some notation. 

Let $E$ be a field, let $\bar{E}$ be an algebraic closure of $E$, and let $E \leq K \leq \bar{E}$ be an intermediate subfield. We say that an algebraic extension $M/E$ is \textbf{totally-$K$} if for every $\tau \in \mathrm{Hom}_E(M,\bar{E})$ we have $\mathrm{Im}(\tau) \leq K$, or equivalently, if $\mathrm{Hom}_E(M,\bar{E}) = \mathrm{Hom}_E(M,K)$. It is easily verified that the compositum (inside a given field) of a family of totally-$K$ extensions is once again totally-$K$, and that $E/E$ is totally-$K$, so there exists a unique \textbf{maximal totally-$K$ extension} of $E$ inside $\bar{E}$ which we denote by $$E_{\mathrm{t},K}.$$ We also have a relative version of this notion: given an algebraic extension $F/E$, we denote by $$E^F_{\mathrm{t},K} \defeq E_{\mathrm{t},K} \cap F$$ the unique \textbf{maximal totally-$K$ subextension} of $F/E$. 

For a set $A$, we let $\#A$ be the cardinality of $A$ in case that the latter is finite, and $\infty$ otherwise. We set $[F : E]_s \defeq \# \mathrm{Hom}_E(F,\bar{E})$, and use $E \leq_f F$ to stress that $[F : E] < \infty$. The maximal purely inseparable algebraic extension of $K$ is denoted by $K^{\mathrm{ins}}$, which is just the compositum of all finite extensions $R/K$ with $[R : K]_s = 1$. A prime number $p$ is said to divide $[F : E]_s$ if there exists some $$E \leq_f F_0 \leq F$$ such that $p \mid [F_0 : E]_s$. We would like to remind that the automorphism group of an algebraic field extension (e.g. $\mathrm{Aut}(\bar{E}/K)$) carries a topology (in our example the open subgroups are $\mathrm{Aut}(\bar{E}/K')$ for finite extensions $K'/K$) with respect to which it is a profinite group.

Recall that a directed poset is a nonempty set $I$ partially ordered by a relation $\leq$ such that for every $\alpha, \beta \in I$ there exists some \mbox{$\omega \in I$} such that $\omega \geq \alpha, \beta$. For example, the family of all finite subextensions $E \leq_f F_0 \leq F$ which we denote by $I_{F/E}$ is a directed poset with respect to inclusion. A subset $J \subseteq I$ is called cofinal if for each $\omega \in I$ there exists some $\pi \in J$ such that $\pi \geq  \omega$. Note that a cofinal subset is itself a directed poset, and that a subset $J \subseteq I_{F/E}$ is cofinal if and only if $\cup J = F$. Given a function $f \colon I \to [0,\infty)$, we say that a point $P \in [0,\infty]$ (viewed as the one-point compactification of the nonnegative real ray $\mathbb{R}_{\geq 0}$) is a limit point of $f$ if for every neighborhood $V \subseteq [0,\infty]$ of $P$, the inverse image $f^{-1}(V)$ is cofinal. A routine compactness argument shows that $f$ has at least one limit point. We define $$\liminf_{I} f$$ to be the infimum of the set of limit points of $f$, and notice that it is a limit point of $f$. 

%\section{Results}
\subsection{Results}
Our main result is a bound on the maximal totally-$K$ \mbox{subextension $E^F_{\mathrm{t},K}$.}

\begin{theorem} \label{FResThm}

Let $E$ be a field, let $\bar{E}$ be an algebraic closure, let $E \leq K \leq \bar{E}$ be an intermediate subfield, and let $F/E$ be an algebraic extension. Suppose that there exists a prime $p$ not dividing $[F : E]_s$, such that $\mathrm{Aut}(\bar{E}/K)$ is a pro-$p$ group. Then $$ [E^F_{\mathrm{t},K} : E]_{s} \leq \liminf_{I_{F/E}} |\mathrm{Hom}_E(-,K^{\mathrm{ins}})|.$$

\end{theorem}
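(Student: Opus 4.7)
The plan is to first reduce the claim to the finite-degree case, then translate it via Galois theory into an inequality about orbits of a finite $p$-group acting on a finite set, which we settle by Sylow's theorem.

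I would first reduce to the case where $F/E$ is finite. If the finite-degree bound $[E^{F_0}_{\mathrm{t},K}:E]_s \leq |\mathrm{Hom}_E(F_0, K^{\mathrm{ins}})|$ can be established for every finite $F_0 \in I_{F/E}$, then for any finite $M_0 \subseteq E^F_{\mathrm{t},K}$ and any $F_1 \in I_{F/E}$ containing $M_0$ we have $M_0 \subseteq E^{F_1}_{\mathrm{t},K}$ and hence $[M_0:E]_s \leq |\mathrm{Hom}_E(F_1, K^{\mathrm{ins}})|$; since $\{F_1 : F_1 \supseteq M_0\}$ is cofinal in $I_{F/E}$, this forces $\liminf |\mathrm{Hom}_E(-, K^{\mathrm{ins}})| \geq [M_0:E]_s$, and taking the supremum over such $M_0$ yields the theorem. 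A further reduction allows me to replace a finite $F_0$ by the separable closure $F_0^{\mathrm{s}}$ of $E$ in $F_0$: because $K^{\mathrm{ins}}$ is closed under taking $p^n$-th roots of its elements, every $E$-embedding $F_0^{\mathrm{s}} \to K^{\mathrm{ins}}$ extends uniquely to an $E$-embedding $F_0 \to K^{\mathrm{ins}}$, and a short argument shows $[E^{F_0}_{\mathrm{t},K}:E]_s = [E^{F_0^{\mathrm{s}}}_{\mathrm{t},K}:E]$.

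For finite separable $F_0/E$, let $L \subseteq \bar{E}$ be its Galois closure, $G_0 := \mathrm{Gal}(L/E)$, and $N := \mathrm{Gal}(L/F_0)$; then $[G_0:N] = [F_0:E]$ is coprime to $p$, and $\mathrm{Hom}_E(F_0, \bar{E}) \cong G_0/N$ as $G_0$-sets. Restriction to $L$ carries the pro-$p$ group $\mathrm{Aut}(\bar{E}/K)$ continuously onto $H_0 := \mathrm{Gal}(L/L \cap K) \leq G_0$, a finite $p$-group. Since $K^{\mathrm{ins}}$ is the fixed field in $\bar{E}$ of $\mathrm{Aut}(\bar{E}/K)$, one checks $\mathrm{Hom}_E(F_0, K^{\mathrm{ins}}) \cong (G_0/N)^{H_0}$. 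A direct calculation shows that a subfield $L^{N'} \subseteq F_0$ is totally-$K$ iff $g^{-1} H_0 g \subseteq N'$ for every $g \in G_0$, which is to say $N' \supseteq H_0^{\mathrm{ncl}}$, the normal closure of $H_0$ in $G_0$. Consequently $E^{F_0}_{\mathrm{t},K} = L^{N H_0^{\mathrm{ncl}}}$ and $[E^{F_0}_{\mathrm{t},K} : E]_s = [G_0 : N H_0^{\mathrm{ncl}}]$, so the theorem reduces to the group-theoretic inequality $[G_0 : N H_0^{\mathrm{ncl}}] \leq |(G_0/N)^{H_0}|$.

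Since $H_0^{\mathrm{ncl}}$ is normal in $G_0$, its orbits on $G_0/N$ coincide with the fibers of the projection $G_0/N \twoheadrightarrow G_0/N H_0^{\mathrm{ncl}}$, yielding exactly $[G_0 : N H_0^{\mathrm{ncl}}]$ orbits; it therefore suffices to produce one $H_0$-fixed point in each orbit. For the orbit through $gN$, such a fixed point corresponds to an element $h \in H_0^{\mathrm{ncl}}$ with $h H_0 h^{-1} \subseteq g N g^{-1} =: N'$. Now $[H_0^{\mathrm{ncl}} : H_0^{\mathrm{ncl}} \cap N'] = [H_0^{\mathrm{ncl}} N' : N']$ divides $[G_0 : N'] = [F_0:E]$ and is therefore coprime to $p$, so $H_0^{\mathrm{ncl}} \cap N'$ contains a Sylow $p$-subgroup of the finite group $H_0^{\mathrm{ncl}}$; Sylow's theorem applied inside $H_0^{\mathrm{ncl}}$ then conjugates the $p$-subgroup $H_0$ into this Sylow subgroup, producing the required $h$. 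The main technical obstacle is keeping track of inseparability in the absence of perfectness hypotheses; the initial reduction to $F_0^{\mathrm{s}}$ together with the systematic use of $K^{\mathrm{ins}}$ (the fixed field of $\mathrm{Aut}(\bar{E}/K)$) resolves this cleanly.
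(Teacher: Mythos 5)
Your proposal is correct, and its combinatorial heart --- every orbit of the normal closure of the ``$K$-group'' on the set of $E$-embeddings of $F_0$ contains a fixed point of a $p$-group, because the orbit lengths are coprime to $p$ --- is the same as in the paper. The packaging, however, is genuinely different. The paper never reduces to the separable or Galois case: it works inside the profinite group $G=\mathrm{Aut}(\bar{E}/E)$ (which is not a Galois group when $E$ is imperfect), takes the normal closure $H$ of $\Gamma=\mathrm{Aut}(\bar{E}/K)$ there, shows that $H$ fixes $E^{F_0}_{\mathrm{t},K}$ pointwise in order to bound the common orbit length $r$ by $[F_0:E^{F_0}_{\mathrm{t},K}]_s$, and applies a fixed-point lemma for continuous actions of pro-$p$ groups on sets of coprime cardinality; inseparability is absorbed by carrying $K^{\mathrm{ins}}$ and separable degrees throughout, and the infinite case is handled by the $\liminf$/inverse-limit formalism. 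You instead reduce upfront to a finite separable $F_0$, pass to its finite Galois closure $L$, and obtain the \emph{exact} identification $E^{F_0}_{\mathrm{t},K}=L^{NH_0^{\mathrm{ncl}}}$, so that the number of $H_0^{\mathrm{ncl}}$-orbits equals $[E^{F_0}_{\mathrm{t},K}:E]$ on the nose (the paper only derives the inequality $m\geq [L_0:E]_s$ from the orbit-length bound); you then replace the pro-$p$ fixed-point lemma by Sylow conjugacy inside $H_0^{\mathrm{ncl}}$, which is an equivalent but more classical tool. Each approach buys something: the paper's avoids the bookkeeping of your two reductions, while yours is more elementary (finite Galois theory plus Sylow) and more transparent about what $E^{F_0}_{\mathrm{t},K}$ actually is as a fixed field. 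Two small points to tidy in a write-up: the roots under which you need $K^{\mathrm{ins}}$ to be closed are $q^n$-th roots for $q$ the characteristic of $E$, not $p^n$-th roots for the prime $p$ of the hypothesis (this is exactly the perfection of $K^{\mathrm{ins}}$); and the final ``supremum over $M_0$'' step silently uses that $\#\mathrm{Hom}_E(E^F_{\mathrm{t},K},\bar{E})$ is bounded by the supremum of $[M_0:E]_s$ over finite subextensions $M_0$, which is the content of the paper's inverse-limit lemma and deserves an explicit sentence.
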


For simplified (and more concrete) versions of the theorem, where $E$ is assumed to be perfect, see \corref{PerfCor} and \corref{PrimCor}. From these corollaries we deduce the case of odd $n$ in \corref{NoTrCor}. The case of even $n$ follows from the work of Martin (see \cite{M} and the proof of \corref{NoTrCor}). 

Hooper and Weiss use \corref{NoTrCor} (see Theorem \ref{thm: no real subfields}) as a crucial step in their proof of Theorem \ref{thm: rel}.

%Relying on \corref{NoTrCor} (see Theorem \ref{} rem 5.5]{HW}), Hooper and Weiss show (see \cite[Theorem 1.5]{HW}) that the leaf of the Arnoux-Yoccoz surface under the foliation induced by local deformations of the surface (obtained by moving its singularities while keeping the holonomies of curves on the surface unchanged) is dense in its connected component inside the moduli space of compact oriented (almost everywhere) Euclidean surfaces of fixed genus, combinatorial characteristics of singularities, and area. 

%\section{Preparatory claims}
\subsection{Preparatory claims}
Let $G$ be a group acting on a set $X$, and pick some $x \in X$. We denote by $O_x \defeq \{gx : g \in G\}$ the orbit of $x$, and set $G_x \defeq \{g \in G : gx = x\}$ for the stabilizer of $x$. We call $|O_x|$ the length of the orbit of $x$. Also, $G_X$ denotes the (normal) subgroup of all elements of $G$ which act trivially on $X$.

\begin{proposition} \label{ProFixProp}

Let $p$ be a prime number, and let $\Gamma$ be a pro-$p$ group acting continuously on a finite set $X$ of cardinality coprime to $p$ ($X$ carries the discrete topology). Then there is an $x \in X$ such that $\gamma x=x$ for each $\gamma \in \Gamma$.

\end{proposition}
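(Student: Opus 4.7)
The plan is to reduce this to the classical fact that a finite $p$-group acting on a set whose cardinality is coprime to $p$ has a fixed point, and then apply the orbit equation.

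First I would observe that continuity of the action means that for each $x \in X$ the stabilizer $\Gamma_x$ is an open subgroup of $\Gamma$, since $\{x\} \subseteq X$ is open (as $X$ is discrete) and $\Gamma_x$ is the preimage of $\{x\}$ under the continuous orbit map $\gamma \mapsto \gamma x$. Therefore $\Gamma_X = \bigcap_{x \in X} \Gamma_x$ is an intersection of finitely many open subgroups, hence open, and in particular of finite index in $\Gamma$. Thus the action of $\Gamma$ on $X$ factors through the finite quotient $Q \defeq \Gamma/\Gamma_X$. Since $\Gamma$ is pro-$p$, every finite continuous quotient of $\Gamma$ is a $p$-group, so $Q$ is a finite $p$-group.

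Next I would invoke the orbit equation: the set $X$ decomposes into a disjoint union of $Q$-orbits, each of which has length dividing $|Q|$, hence each orbit has length a power of $p$. Reducing modulo $p$ we obtain
\[
\#X \equiv \#X^Q \pmod{p},
\]
where $X^Q$ is the set of fixed points. Since $\#X$ is coprime to $p$ by hypothesis, the right-hand side is nonzero modulo $p$, so $X^Q \neq \emptyset$. Any $x \in X^Q$ is then fixed by all of $\Gamma$, because the action factors through $Q$.

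There is no substantive obstacle here; the only point requiring care is to justify that the stabilizers are open (so that the action descends to a finite $p$-group quotient), which follows immediately from the continuity assumption and the discreteness of $X$. The rest is the standard orbit-count argument for finite $p$-groups.
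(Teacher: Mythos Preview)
Your proof is correct and follows essentially the same approach as the paper: reduce to a finite $p$-group quotient by observing that the stabilizers are open and their intersection $\Gamma_X$ is an open normal subgroup, then apply the standard orbit-counting argument. The only cosmetic difference is that the paper argues directly that not all orbit lengths can be divisible by $p$ (so some orbit has length coprime to $p$, hence equal to $1$), whereas you phrase this via the fixed-point congruence $\#X \equiv \#X^Q \pmod{p}$; these are the same argument.
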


\begin{proof}

Discreteness implies that for each $x \in X$ the subgroup $\Gamma_x$ is open in $\Gamma$, so $$\Gamma_X = \bigcap_{x \in X} \Gamma_x$$ is an open normal subgroup of $\Gamma$ since $X$ is finite. Hence, the action of $\Gamma$ on $X$ factors through the finite $p$-group $P \defeq \Gamma/\Gamma_X$. For each $x \in X$ we have $|O_x| = [P : P_x]$, and $X$ is a finite union of disjoint orbits, so it is impossible for all of the orbits to be of length divisible by $p$ as $p \nmid |X|$. Thus, there is an orbit of length prime to $p$. But, the length of an orbit is the index of a subgroup, so it divides the order of the $p$-group $P$. We conclude that there exists an orbit of length $1$.% $\blacksquare$
\end{proof}

A map $\beta \colon I \to J$ between directed posets is called a morphism if for all $x,y \in I$ with $x \geq y$ we have $\beta(x) \geq \beta(y)$.

\begin{proposition} \label{OrdMorProp}

Let $\beta \colon I \to J$ be a surjective morphism of directed posets, let $S \subseteq I$ be a cofinal subset, and let $f \colon J \to [0,\infty)$. Then:

\begin{enumerate}

\item The subset $\beta(S) \subseteq J$ is cofinal.

\item Each limit point of $f \circ \beta$ is a limit point of $f$.

\item We have the inequality $$\liminf_J f \leq \liminf_I (f \circ \beta).$$

\end{enumerate}

\end{proposition}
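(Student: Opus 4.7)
The proposition is a direct consequence of the definitions together with the hypotheses on $\beta$, so the plan is to unpack the three statements in sequence, with each one feeding into the next.

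For part (1), I would start with an arbitrary $\omega \in J$ and use surjectivity of $\beta$ to pick $\tilde{\omega} \in I$ with $\beta(\tilde{\omega}) = \omega$. Cofinality of $S$ in $I$ provides some $\sigma \in S$ with $\sigma \geq \tilde{\omega}$, and then applying the morphism property of $\beta$ gives $\beta(\sigma) \geq \beta(\tilde{\omega}) = \omega$. Hence $\beta(\sigma) \in \beta(S)$ witnesses cofinality of $\beta(S)$ in $J$.

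For part (2), the idea is to feed part (1) into the definition of a limit point. Let $P \in [0,\infty]$ be a limit point of $f \circ \beta$ and let $V \subseteq [0,\infty]$ be an arbitrary neighborhood of $P$; I need to show that $f^{-1}(V)$ is cofinal in $J$. By definition, the preimage $(f \circ \beta)^{-1}(V) = \beta^{-1}(f^{-1}(V))$ is cofinal in $I$. Applying part (1) to $S \defeq \beta^{-1}(f^{-1}(V))$ shows that $\beta(S) \subseteq f^{-1}(V)$ is cofinal in $J$, and cofinality is inherited by supersets, so $f^{-1}(V)$ is itself cofinal. Thus $P$ is a limit point of $f$.

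For part (3), the inequality is now a one-line consequence of (2) together with the definition $\liminf = \inf\{\text{limit points}\}$: since the set of limit points of $f \circ \beta$ is contained in the set of limit points of $f$, the infimum of the larger set is no bigger than the infimum of the smaller set.

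There is no real obstacle here beyond bookkeeping; the only subtlety worth being careful about is making sure in part (2) that $f^{-1}(V)$ (rather than only $\beta(\beta^{-1}(f^{-1}(V)))$) is the set certified cofinal, which is immediate because a superset of a cofinal set is cofinal. I would therefore write out the three parts in order with no separate lemmas.
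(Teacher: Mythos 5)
Your proof is correct and follows essentially the same route as the paper's: part (1) by chasing surjectivity, cofinality of $S$, and the morphism property; part (2) by feeding the cofinal set $(f\circ\beta)^{-1}(V)=\beta^{-1}(f^{-1}(V))$ into part (1); and part (3) by the containment of the sets of limit points. The only cosmetic difference is that in part (2) the paper uses surjectivity to get the exact equality $f^{-1}(V)=\beta\bigl(\beta^{-1}(f^{-1}(V))\bigr)$, whereas you pass through the (equally valid) observation that a superset of a cofinal set is cofinal.
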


\begin{proof}
In order to establish $(1)$, pick some $j \in J$. Surjectivity provides an $i \in I$ with $\beta(i) = j$. Since $S$ is cofinal, there exists some $s \in S$ such that $s \geq i$. We see that $\beta(s) \geq \beta(i) = j$ as required. For $(2)$ let $P \in [0,\infty]$ be a limit point of $f \circ \beta$, and let $V$ be a neighborhood of it. This time surjectivity implies that $$f^{-1}(V) =  \beta( \beta^{-1}(f^{-1}(V))) = \beta ((f \circ \beta)^{-1}(V))$$ where the rightmost set is cofinal by $(1)$. Therefore, $f^{-1}(V)$ is cofinal as required. The infimum of a subset (the set of limit points of $f \circ \beta$) is never smaller than that of the ambient set (the set of limit points of $f$) so $(3)$ follows. %$\blacksquare$
\end{proof} 

\begin{proposition} \label{InvLimProp}

Let $I$ be a directed poset, and let $\{A_i\}_{i \in I}$ be an inverse system of finite sets. Then $$ \# \varprojlim_{i \in I} A_i \leq \liminf_{i \in I} |A_i|.$$

\end{proposition}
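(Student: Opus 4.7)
The plan is to establish the key equality $|L| = \sup_{i \in I} |\pi_i(L)|$ in $[0,\infty]$, where $L \defeq \varprojlim_i A_i$ and $\pi_i : L \to A_i$ is the canonical projection, and then deduce the bound from it via a routine cofinality argument. Once this equality is in hand, combined with the obvious inequality $|\pi_i(L)| \leq |A_i|$, the desired bound drops out.

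First I would observe that $i \mapsto |\pi_i(L)|$ is monotone non-decreasing: for $j \geq i$ the transition map $A_j \to A_i$ restricts to a surjection $\pi_j(L) \twoheadrightarrow \pi_i(L)$. When $L$ is finite, any two distinct elements of $L$ differ in some coordinate, and taking an upper bound in $I$ of the finitely many witnessing indices yields an $i$ on which $\pi_i$ is injective, giving $|L| = |\pi_i(L)|$. When $L$ is infinite, suppose for contradiction that $\sup_i |\pi_i(L)| = N$ is a finite integer. Then monotonicity forces $|\pi_i(L)| = N$ on an upward-closed cofinal set $J \subseteq I$, and all restricted transition maps $\pi_k(L) \to \pi_j(L)$ with $k \geq j$ in $J$ become bijections between $N$-element sets. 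A diagonal argument, pushing any pair of distinct elements of $L$ simultaneously above both their witnessing index and a fixed $j \in J$, then forces $\pi_j$ to be injective on $L$, contradicting that $L$ is infinite.

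To conclude, let $P \in [0,\infty]$ be any limit point of $i \mapsto |A_i|$. If $P = \infty$ there is nothing to prove, so assume $P$ is a nonnegative integer; applying the limit-point property to the neighborhood $(P - \tfrac{1}{2}, P + \tfrac{1}{2})$ shows that $J \defeq \{i : |A_i| = P\}$ is cofinal. For arbitrary $i \in I$, directedness supplies $j \in J$ with $j \geq i$, yielding $|\pi_i(L)| \leq |\pi_j(L)| \leq |A_j| = P$ by monotonicity. Taking the supremum over $i$ gives $|L| \leq P$, and since $\liminf_I |A_i|$ is itself a limit point (as noted in the excerpt), the result follows. The main point of care is the infinite case of the equality $|L| = \sup_i |\pi_i(L)|$: it relies on the rigidity that a surjection between finite sets of equal cardinality is a bijection, which is precisely what collapses the inverse limit to a single finite level and prevents $L$ from being infinite while the $|\pi_i(L)|$ remain bounded.
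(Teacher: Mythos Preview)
Your argument is correct. The only stylistic gap is the clause ``so assume $P$ is a nonnegative integer'': you have only ruled out $P=\infty$, not that $P$ could be a non-integer real. This is easily repaired (any neighborhood of a non-integer $P$ avoiding all integers has empty preimage under the integer-valued function $i\mapsto|A_i|$, and the empty set is not cofinal in a nonempty directed poset), and in fact you implicitly use exactly this when you apply the neighborhood $(P-\tfrac12,P+\tfrac12)$, so it would be cleaner to state it.

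The paper's proof and yours share the same two ingredients --- the cofinality of $J=\{i:|A_i|=m\}$ and the fact that finitely many distinct elements of the limit can be separated at a single index --- but organize them differently. The paper first invokes a standard lemma (cited as \cite[Lemma 1.1.9]{RZ}) to identify $\varprojlim_I A_i$ with $\varprojlim_J A_j$, and then argues directly that the latter cannot contain $m+1$ distinct elements. You instead prove the auxiliary equality $\#L=\sup_i|\pi_i(L)|$ and use monotonicity of $i\mapsto|\pi_i(L)|$ to push any index up into $J$, thereby avoiding the external citation and keeping the argument self-contained. Your route is slightly longer but has the advantage of isolating a reusable lemma about inverse limits of finite sets; the paper's route is more direct once the cofinality reduction is granted.
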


\begin{proof}
We may assume that the right hand side takes a finite value $m$, for otherwise there is nothing to prove. Take $V$ to be any open interval of length less than $1$ containing $m$, and note that it contains at most one integer. Since $m$ is a limit point of the integer-valued function $i \mapsto |A_i|$, the subset $\{i \in I : |A_i| \in V \} \subseteq I$ is cofinal, and in particular nonempty, so $V$ contains an integer. We conclude that there exists an integer in every neighborhood of $m$, which means that $m$ is an integer - the unique integer in any such $V$. Therefore, $$J \defeq \{i \in I : |A_i| = m\} \subseteq I$$ is cofinal, so it is not difficult (see \cite[Lemma 1.1.9]{RZ}) to show that $$\# \varprojlim_{i \in I} A_i = \# \varprojlim_{j \in J} A_j.$$ It is therefore sufficient to demonstrate that $$X \defeq \varprojlim_{j \in J} A_j$$ contains at most $m$ distinct elements.

Suppose that there exist distinct $x_1, \dots, x_{m+1} \in X$, and for $j \in J$, denote by $\pi_j \colon X \to A_j$ the projections from the inverse limit. Also, for $s \geq t \in J$, we denote by $\tau_{s,t} \colon A_s \to A_t$ the compatible maps of the inverse system. For any $1 \leq k \neq \ell \leq m+1$ the fact that $x_k \neq x_\ell$ implies that there exists some $j_{k,\ell} \in J$ such that $\pi_{j_{k,\ell}}(x_k) \neq \pi_{j_{k,\ell}}(x_\ell)$. Since $J$ is directed, there exists some $r \in J$ with $r \geq j_{k,\ell}$ for all possible values of $k, \ell$. Let us now show that $\pi_r(x_1), \dots, \pi_r(x_{m+1})$ are distinct. Pick some $1 \leq k \neq \ell \leq m+1$ and suppose that $\pi_r(x_k) = \pi_r(x_\ell)$, so that $\tau_{r,j_{k,\ell}}(\pi_r(x_k)) = \tau_{r,j_{k,\ell}}(\pi_r(x_\ell))$ which means that $\pi_{j_{k,\ell}}(x_k) = \pi_{j_{k,\ell}}(x_\ell)$ contrary to our choice of $j_{k,\ell}$. We have thus shown that $|A_r| > m$ which is a contradiction to the definition of $J$. %$\blacksquare$
\end{proof}

%\section{The proof}
\subsection{The proof}
For a profinite group $G$, and a subgroup $\Gamma \leq G$, we denote by $\langle \Gamma \rangle^G$ the minimal closed normal subgroup of $G$ containing $\Gamma$ (i.e. the closure in $G$ of the subgroup generated by the conjugates of $\Gamma$ in $G$). We are now ready to establish \thmref{FResThm}. Since we may replace $F$ by any other field isomorphic to it over $E$, we may well assume (by \cite[Theorem 2.8]{L}) that $F \leq \bar{E}$. Thus, we need to prove:

\begin{theorem} \label{BoundMaxKThm}

Let $E \leq F,K \leq \bar{E}$ be field extensions. Suppose that there exists a prime $p$ not dividing $[F : E]_s$, such that $\mathrm{Aut}(\bar{E}/K)$ is a pro-$p$ group. Then $$ [E^F_{\mathrm{t},K} : E]_{s} \leq \liminf_{I_{F/E}} |\mathrm{Hom}_E(-,K^{\mathrm{ins}})|.$$

\end{theorem}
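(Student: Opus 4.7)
The plan is to bound $[E^F_{\mathrm{t},K}:E]_s = |\mathrm{Hom}_E(E^F_{\mathrm{t},K}, \bar{E})|$ by realizing this set as an inverse limit, applying \propref{InvLimProp}, and then estimating each finite-level term using the pro-$p$ hypothesis via \propref{ProFixProp}.

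First I would observe that $E^F_{\mathrm{t},K} = \bigcup_{F_0 \in I_{F/E}} E^{F_0}_{\mathrm{t},K}$, since every element of $E^F_{\mathrm{t},K}$ lies in some $F_0 \in I_{F/E}$ and hence in $F_0 \cap E_{\mathrm{t},K} = E^{F_0}_{\mathrm{t},K}$. Restriction maps then identify $\mathrm{Hom}_E(E^F_{\mathrm{t},K}, \bar{E})$ with $\varprojlim_{F_0} \mathrm{Hom}_E(E^{F_0}_{\mathrm{t},K}, \bar{E})$ over the directed poset $I_{F/E}$. Since each set in the system is finite (of cardinality $[E^{F_0}_{\mathrm{t},K}:E]_s \leq [F_0:E]_s$), \propref{InvLimProp} yields
$$[E^F_{\mathrm{t},K}:E]_s \;\leq\; \liminf_{F_0 \in I_{F/E}} |\mathrm{Hom}_E(E^{F_0}_{\mathrm{t},K}, \bar{E})|.$$

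The heart of the argument is the pointwise inequality $|\mathrm{Hom}_E(E^{F_0}_{\mathrm{t},K}, \bar{E})| \leq |\mathrm{Hom}_E(F_0, K^{\mathrm{ins}})|$ for every $F_0 \in I_{F/E}$, which I would establish by exhibiting a surjective restriction map from the right-hand side to the left. Given $\sigma \in \mathrm{Hom}_E(E^{F_0}_{\mathrm{t},K}, \bar{E})$, the totally-$K$ property forces $\sigma(E^{F_0}_{\mathrm{t},K}) \subseteq K$, so $G_K \defeq \mathrm{Aut}(\bar{E}/K)$ fixes $\sigma$ and consequently acts on the finite set $X_\sigma$ of extensions of $\sigma$ to embeddings $F_0 \hookrightarrow \bar{E}$. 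We have $|X_\sigma| = [F_0:E^{F_0}_{\mathrm{t},K}]_s$, which divides $[F_0:E]_s$ and is therefore coprime to $p$; by \propref{ProFixProp}, $X_\sigma$ then contains some $\tilde{\sigma}$ fixed by every element of $G_K$. Using the identification $\bar{E}^{G_K} = K^{\mathrm{ins}}$ (valid because $\bar{E}/K^{\mathrm{ins}}$ is Galois with Galois group canonically $G_K$), this means $\tilde{\sigma}(F_0) \subseteq K^{\mathrm{ins}}$, so $\tilde{\sigma} \in \mathrm{Hom}_E(F_0, K^{\mathrm{ins}})$ is a preimage of $\sigma$. Chaining this pointwise inequality with the displayed bound completes the proof.

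The main subtleties I expect are the identification $\bar{E}^{G_K} = K^{\mathrm{ins}}$ in positive characteristic (which requires noting that purely inseparable elements over $K$ are automatically fixed by every $K$-automorphism of $\bar{E}$), and the observation that the fixed-point argument must be applied separately to each $X_\sigma$ rather than to $\mathrm{Hom}_E(F_0,\bar{E})$ as a whole, so that the extensions we produce genuinely restrict to prescribed $\sigma$'s. Beyond this bookkeeping the proof is a clean orchestration of the three prepared propositions, and I anticipate no serious obstacle.
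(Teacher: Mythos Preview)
Your proof is correct and in fact streamlines the paper's argument. Both proofs establish the pointwise bound $[L_0:E]_s \leq |\mathrm{Hom}_E(F_0, K^{\mathrm{ins}})|$ (writing $L_0 = E^{F_0}_{\mathrm{t},K}$) for each $F_0 \in I_{F/E}$ and then pass to the limit via \propref{InvLimProp}, but they reach this bound by different decompositions of $A = \mathrm{Hom}_E(F_0, \bar{E})$. The paper introduces the normal closure $H = \langle \Gamma \rangle^G$ of $\Gamma = \mathrm{Aut}(\bar{E}/K)$ and uses transitivity of the $G$-action on $A$ to see that all $H$-orbits share a common length $r$ dividing $|A|$; then $\Gamma \leq H$ fixes a point in each $H$-orbit, giving at least $|A|/r \geq [L_0:E]_s$ embeddings into $K^{\mathrm{ins}}$. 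You instead fiber $A$ over $\mathrm{Hom}_E(L_0, \bar{E})$ by restriction: each fiber $X_\sigma$ is $\Gamma$-stable (because $L_0$ is totally-$K$), has size $[F_0:L_0]_s$ coprime to $p$, and hence contains a $\Gamma$-fixed point by \propref{ProFixProp}, producing the desired surjection directly. This bypasses both the normal-closure machinery and \propref{OrdMorProp}, which the paper invokes to transfer the $\liminf$ from $I_{L/E}$ back to $I_{F/E}$; you avoid this by indexing the inverse system over $I_{F/E}$ from the start. One small point worth making explicit: the $\Gamma$-action on each $X_\sigma$ is continuous because the $G$-action on $A$ factors through the finite quotient by $\mathrm{Aut}(\bar{E}/N)$ with $N$ the normal closure of $F_0$ in $\bar{E}$.
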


\begin{proof}
Let $E \leq F_0 \leq F$ be a finite subextension. Set $$G \defeq \mathrm{Aut}(\bar{E}/E), \ L \defeq E^F_{\mathrm{t},K}, \ L_0 \defeq E^{F_0}_{\mathrm{t},K}$$ $$A \defeq \mathrm{Hom}_E(F_0,\bar{E}), \  \Gamma \defeq \mathrm{Aut}(\bar{E}/K), \ H \defeq \langle \Gamma \rangle^G.$$ First we claim that $H$ fixes $L_0$ pointwise. Since $H$ acts continuously on $\bar{E}$ (endowed with the discrete topology) it is sufficient to check that a topological set of generators of $H$ acts trivially on $L_0$. Taking $\gamma \in \Gamma, \ g \in G,  \ \ell_0 \in L_0$ we see that $g\ell_0 \in K$ so $$(g^{-1} \gamma g) \ell_0 = g^{-1}(\gamma g \ell_0) = g^{-1} (g \ell_0) = \ell_0$$ since $\gamma$ is trivial on $K$, as claimed.

Since $F_0 \leq \bar{E}$, the inclusion map $\mathfrak{i} \colon F_0 \hookrightarrow \bar{E}$ is in $A$, so $A$ is nonempty, and finite because $|A| \leq [F_0 : E] < \infty$ (see \cite[Theorem 4.1]{L}). Given $g \in G, \ a \in A$ we see that $g \circ a \in A$, so it is immediate that $G$ acts on $A$ from the left. To see that the action is transitive, pick an $a \in A$ and extend it (using \cite[Theorem 2.8]{L}) to some $g \in G$, so that $g\mathfrak{i} = g \circ \mathfrak{i} = g|_{F_0} = a$ as required for transitivity. In order to show that the action of $G$ on $A$ (viewed as a discrete space) is continuous (i.e. factors through an open normal subgroup), denote by $N$ the normal closure of $F_0$ in $\bar{E}$, and note that the action of $\mathrm{Aut}(\bar{E}/N) \lhd_o G$ on $A$ is trivial since the image of any map $a \in A$ is contained in $N$. Therefore, the action factors through a finite quotient of $G$ as claimed. 

Restricting the action to $H$, we get a decomposition of $A$ into disjoint $H$-orbits: $$A = \bigcup_{j = 1}^m Ha_j$$ for some $a_1, \dots, a_m \in A$, and $m \in \mathbb{N}$. For $1 \leq j,k \leq m$, take some $g \in G$ for which $ga_k = a_j$ and recall that $H \lhd G$. We thus have $$|Ha_j| = |Hga_k| = |g^{-1}Hga_k| = |Ha_k|$$ so all the orbits of $H$ share the same length which we denote by $r$. Consequently, 
\begin{equation} \label{RmEq}
[F_0 : E]_s = |A| = rm
\end{equation} 
so $r$ is coprime to $p$, since $|A|$ is coprime to $p$ by our assumption. As $H$ acts trivially on $L_0$, we have  $H\mathfrak{i} \subseteq \mathrm{Hom}_{L_0}(F_0,\bar{E})$, so 
\begin{equation} \label{REq}
r  = |H\mathfrak{i}| \leq |\mathrm{Hom}_{L_0}(F_0,\bar{E})| = [F_0 : L_0]_s.
\end{equation}

Since $H$ acts continuously on each of its orbits, so does $\Gamma$. As a result, we have $m$ continuous actions of the pro-$p$ group $\Gamma$ on sets of cardinality prime to $p$. From \propref{ProFixProp}, we conclude that there are at least $m$ fixed points for the action of $\Gamma$ on $A$ (at least one fixed point in each orbit of $H$). On the other hand, an embedding $\varphi \in A$ is a fixed point of $\Gamma$ if and only if its image is contained in the fixed field of $\Gamma$ inside $\bar{E}$ which is $K^{\mathrm{ins}}$ as shown in \cite[Proposition 6.11]{L}. Summarizing, we find that 
\begin{equation} \label{LongEq}
\begin{split}
|\mathrm{Hom}_E(F_0,K^{\mathrm{ins}})| &= |A^{\Gamma}| \geq m \stackrel{\ref{RmEq}}{=} \frac{[F_0 : E]_s}{r} \\ &\stackrel{\ref{REq}}{\geq} \frac{[F_0 : E]_s}{[F_0 : L_0]_s} = [L_0 : E]_s
\end{split}
\end{equation}
where the last equality follows from \cite[Theorem 4.1]{L}. 

Finally, denote by $\beta \colon I_{F/E} \to I_{L/E}$ the epimorphism of directed posets given by $\beta(F_0) \defeq L_0 = E^{F_0}_{\mathrm{t},K}$ and observe that  
\begin{equation*}
\begin{split}
\liminf_{F_0 \in I_{F/E}} |\mathrm{Hom}_E(F_0,K^{\mathrm{ins}})| &\stackrel{\ref{LongEq}}{\geq}
\liminf_{F_0 \in I_{F/E}} [\beta(F_0) : E]_s \\
&\stackrel{\ref{OrdMorProp}}{\geq} \liminf_{L_0 \in I_{L/E}} [L_0 : E]_s \\ 
&= \liminf_{L_0 \in I_{L/E}} |\mathrm{Hom}_E(L_0, \bar{E})| \\
&\stackrel{\ref{InvLimProp}}{\geq} \# \varprojlim_{\mathclap{L_0 \in I_{L/E}}} \mathrm{Hom}_E(L_0, \bar{E}) \\ 
&= \# \mathrm{Hom}_E(\varinjlim_{\mathclap{L_0 \in I_{L/E}}} L_0, \bar{E}) \\
&= \# \mathrm{Hom}_E(L,\bar{E}) 
= [L : E]_s 
= [E^{F}_{\mathrm{t},K}: E]_s.
\end{split}
\end{equation*}
%$\blacksquare$
\end{proof}

%\section{Conclusions}
\subsection{Conclusions}
Let us now derive some consequences from our main result. For a field $K$, we denote by $G_K$ its absolute Galois group.

%Recall that for a field $K$ we denote by $\bar{K}$ its algebraic closure, by $K^{\mathrm{sep}}$ its separable closure, and by $G_K$ the Galois group of $K^{\mathrm{sep}}$ \mbox{over $K$.}

\begin{corollary} \label{PerfCor}

Let $E \leq K \leq \bar{E}$ be an extension of the perfect field $E$, and let $F/E$ be a finite extension. Suppose that there exists a prime $p$ not dividing $[F : E]$, such that $G_K$ is a pro-$p$ group. Then $$ [E^F_{\mathrm{t},K} : E] \leq |\mathrm{Hom}_E(F,K)|.$$

\end{corollary}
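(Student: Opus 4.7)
The plan is to derive Corollary \ref{PerfCor} as a straightforward specialization of Theorem \ref{FResThm} (equivalently Theorem \ref{BoundMaxKThm}) to the perfect and finite-degree setting. First I would unpack what perfection buys us: since $E$ is perfect, every algebraic extension of $E$ inside $\bar{E}$ is separable, so in particular $\bar{E}/K$ is separable, which forces $K^{\mathrm{ins}} = K$ inside $\bar{E}$. Likewise, for any intermediate finite subextension $E \leq F_0 \leq F$, separability gives $[F_0 : E]_s = [F_0 : E]$, so the hypothesis $p \nmid [F:E]$ implies $p \nmid [F:E]_s$ in the sense required by Theorem \ref{FResThm}, and the conclusion will simplify to $[E^F_{\mathrm{t},K} : E] \leq |\mathrm{Hom}_E(F,K)|$.

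Next I would dispose of the $\liminf$ on the right-hand side. Since $F/E$ is finite by hypothesis, the directed poset $I_{F/E}$ of finite subextensions has $F$ itself as a maximum element. Every cofinal subset of a poset with a maximum must contain that maximum, so the only possible limit point of the integer-valued function $F_0 \mapsto |\mathrm{Hom}_E(F_0, K^{\mathrm{ins}})|$ is its value at $F$. Combined with the first step this yields
\[
\liminf_{I_{F/E}} |\mathrm{Hom}_E(-, K^{\mathrm{ins}})| \;=\; |\mathrm{Hom}_E(F, K^{\mathrm{ins}})| \;=\; |\mathrm{Hom}_E(F, K)|.
\]

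Finally, I would invoke Theorem \ref{FResThm} verbatim: the hypotheses that $G_K = \mathrm{Aut}(\bar{E}/K)$ is pro-$p$ and that $p \nmid [F:E]_s$ are met, so the theorem gives
\[
[E^F_{\mathrm{t},K} : E]_s \;\leq\; \liminf_{I_{F/E}} |\mathrm{Hom}_E(-, K^{\mathrm{ins}})|,
\]
and the two reductions above rewrite this as the desired $[E^F_{\mathrm{t},K} : E] \leq |\mathrm{Hom}_E(F,K)|$. There is essentially no obstacle here beyond being careful about separability: the only point that might trip one up is confirming $K^{\mathrm{ins}} = K$ in $\bar{E}$, which holds because any purely inseparable extension of $K$ inside the separable closure of the perfect field $E$ must be trivial. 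No new ingredients beyond Theorem \ref{FResThm} are needed.
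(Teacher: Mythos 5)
Your proposal is correct and follows essentially the same route as the paper: observe that perfection of $E$ (hence of $K$) gives $[\,\cdot\,:E]_s=[\,\cdot\,:E]$, $K^{\mathrm{ins}}=K$, and $G_K\cong\mathrm{Aut}(\bar E/K)$, note that the $\liminf$ over $I_{F/E}$ collapses to the value at the maximum element $F$, and then apply Theorem \ref{FResThm}. No issues.
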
 

\begin{proof}
This is just a special case of \thmref{FResThm}. Let $K^{\mathrm{sep}}$ be a separable closure of $K$, and note that the separability of $E$ implies that $[M : E]_s = [M : E]$   for every finite extension $M/E$. Also, $K$ is perfect since it is an algebraic extension of $E$ (see \cite[Corollary 6.12]{L}). Hence, $$K^{\mathrm{ins}} = K, \ \ G_K \cong \mathrm{Gal}(K^{\mathrm{sep}}/K) \cong \mathrm{Aut}(\bar{E}/K).$$ At last, the cofinal subsets of $I_{F/E}$ are those containing $F$, so the $\liminf$ boils down to the value \mbox{at $F$}. %$\blacksquare$
\end{proof}

In view of the primitive element theorem (see \cite[Theorem 4.6]{L}), it is natural to "restrict" our attention to field extensions of $E$ of the form $E[X]/(g)$ for some $g(X) \in E[X]$.

\begin{corollary} \label{PrimCor}

Let $E \leq K \leq \bar{E}$ be an extension of the perfect field $E$, and let $g(X) \in E[X]$ be an irreducible polynomial. Suppose that there exists a prime $p$ not dividing $\deg(g(X))$, such that $G_K$ is a pro-$p$ group. Then for $F \defeq E[X]/(g(X)) $ we have $$[E^F_{\mathrm{t},K} : E] \leq |\{x \in K : g(x) = 0 \} |.$$

\end{corollary}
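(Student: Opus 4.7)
The plan is to deduce Corollary \ref{PrimCor} as a direct application of Corollary \ref{PerfCor}, combined with the standard description of $E$-algebra homomorphisms out of $E[X]/(g(X))$.

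First I would verify that the hypotheses of Corollary \ref{PerfCor} are in force. Set $F = E[X]/(g(X))$. Since $g$ is irreducible in $E[X]$, the quotient $F$ is a field, and $F/E$ is a finite extension of degree $[F:E] = \deg(g(X))$. By assumption $p$ does not divide $\deg(g(X))$, so $p \nmid [F:E]$. The remaining hypotheses, namely that $E$ is perfect and $G_K$ is pro-$p$, are shared with Corollary \ref{PerfCor}. Therefore Corollary \ref{PerfCor} applies and yields
\[
[E^F_{\mathrm{t},K} : E] \leq |\mathrm{Hom}_E(F, K)|.
\]

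The second and final step is to identify $|\mathrm{Hom}_E(F, K)|$ with the number of roots of $g$ in $K$. Let $\alpha \in F$ denote the class of $X$, so that $F = E[\alpha]$ and $g(\alpha) = 0$. Any $E$-algebra homomorphism $\varphi \colon F \to K$ is determined by the single value $\varphi(\alpha)$, and applying $\varphi$ to the relation $g(\alpha) = 0$ forces $\varphi(\alpha)$ to be a root of $g$ in $K$. Conversely, given any root $x \in K$ of $g$, the evaluation map $E[X] \to K$ sending $X \mapsto x$ has $(g(X))$ in its kernel and hence descends to a well-defined $E$-algebra homomorphism $F \to K$ sending $\alpha \mapsto x$. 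These constructions are mutually inverse, giving a bijection
\[
\mathrm{Hom}_E(F, K) \;\longleftrightarrow\; \{x \in K : g(x) = 0\}.
\]
Combining the two displayed relations yields the desired inequality
\[
[E^F_{\mathrm{t},K} : E] \leq |\{x \in K : g(x) = 0\}|.
\]

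There is no real obstacle here: the content lies entirely in Corollary \ref{PerfCor} (and ultimately in Theorem \ref{BoundMaxKThm}). The passage from Corollary \ref{PerfCor} to Corollary \ref{PrimCor} is purely a translation from the invariant $|\mathrm{Hom}_E(F,K)|$ into the concrete count of roots of a defining polynomial, which is a standard consequence of the universal property of $E[X]/(g(X))$.
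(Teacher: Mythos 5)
Your proposal is correct and matches the paper's own proof: both invoke Corollary \ref{PerfCor} after noting $[F:E]=\deg(g(X))$ is prime to $p$, and then identify $\mathrm{Hom}_E(F,K)$ with the set of roots of $g$ in $K$ via the universal property of $E[X]/(g(X))$. No gaps.
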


\begin{proof}
Since $[F : E] = \deg(g(X))$, we may invoke \corref{PerfCor}. An embedding of $F$ into $K$ over $E$ is uniquely determined by its value at $X \in F$ which has to be a root of $g$. Conversely, any root of $g$ in $K$ gives rise to a unique $E$-embedding of $F$ into $K$. Therefore, $$|\mathrm{Hom}_E(F,K)| = |\{x \in K : g(x) = 0 \} |$$ and the corollary follows. %$\blacksquare$
\end{proof}

\begin{corollary} \label{NoTrCor}

Let $n \geq 3$, set $g(X) \defeq X^{n} + X^{n-1} + \dots + X - 1$, and let $L \leq \mathbb{Q}[X]/(g(X))$ be a totally real subfield. Then $L = \mathbb{Q}$.

\end{corollary}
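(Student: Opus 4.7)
My plan is to prove $L = \mathbb{Q}$ for any totally real subfield $L \subseteq F := \mathbb{Q}[X]/(g(X))$. The argument will split according to the parity of $n$: the odd case is exactly what the machinery of this appendix was designed for, while the even case will be handed off to external Galois-theoretic input.

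First I would record that $g(X)$ is irreducible over $\mathbb{Q}$ (as follows from the Pisot property of its reciprocal root; cf. \cite[Lemma 3]{AY}), so that $F$ is a number field of degree $n$. Take $K = \overline{\mathbb{Q}} \cap \mathbb{R}$, the field of real algebraic numbers. A subfield $L \subseteq F$ is totally real in the usual sense (every embedding $L \hookrightarrow \mathbb{C}$ lands in $\mathbb{R}$) if and only if it is totally-$K$ in the sense of this appendix, so the maximal such subfield is precisely $\mathbb{Q}^F_{\mathrm{t}, K}$. Moreover $G_K = \mathrm{Gal}(\overline{\mathbb{Q}}/K)$ is generated by complex conjugation, hence has order two, and in particular is a pro-$2$ group.

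For the case of odd $n$, the prime $p = 2$ does not divide $\deg g = n$, so \corref{PrimCor} applies and yields
$$[\mathbb{Q}^F_{\mathrm{t}, K} : \mathbb{Q}] \;\leq\; \#\{x \in \mathbb{R} : g(x) = 0\}.$$
The task then reduces to counting the real roots of $g$, for which I would use the factorization $(X - 1) g(X) = X^{n+1} - 2X + 1 =: h(X)$. When $n$ is odd, the derivative $h'(X) = (n+1)X^n - 2$ has a unique real zero $X_0 = (2/(n+1))^{1/n} \in (0, 1)$, which is a global minimum of $h$; combined with $h(0) = 1 > 0$, $h(1) = 0$, and $h(X) \to +\infty$ at both ends (using that $n+1$ is even), this forces $h$ to have exactly two real zeros, namely $X = 1$ and one point in $(0, X_0)$. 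Dividing out the factor $X - 1$ leaves $g$ with exactly one real root, whence $[\mathbb{Q}^F_{\mathrm{t}, K} : \mathbb{Q}] \leq 1$ and $L = \mathbb{Q}$.

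The main obstacle is the case of even $n$: \corref{PrimCor} is not available here, since $p = 2$ is the only prime for which a nontrivial pro-$p$ extension $K \supseteq \overline{\mathbb{Q}} \cap \mathbb{R}$ exists (as $G_{\overline{\mathbb{Q}} \cap \mathbb{R}}$ itself is of order $2$), but this prime now divides $\deg g$. For this case I would invoke the separate result of Martin \cite{M}, which through a direct Galois-theoretic analysis of the relevant polynomial rules out nontrivial totally real subfields of $F$ whenever $n \geq 3$ is even. Combining the two cases completes the proof.
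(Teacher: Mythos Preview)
Your proof is correct and follows the same split as the paper: for odd $n$ apply \corref{PrimCor} with $p=2$ and count real roots of $g$, while for even $n$ defer to Martin \cite{M}. The paper makes the even case more explicit---Martin's theorem gives Galois group $S_n$, so $F$ corresponds to the maximal subgroup $S_{n-1}$, leaving $\mathbb{Q}$ and $F$ as the only subfields, and $F$ is not totally real since $g$ has non-real roots for $n\geq 3$.
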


\begin{proof}
Set
\begin{equation*}
\begin{split}
h(X) &\defeq -X^ng(\frac{1}{X}) = X^n - X^{n-1} - ... - X - 1, \\ F &\defeq \mathbb{Q}[X]/(g(X))
\end{split}
\end{equation*} 
and note that the splitting field of $g(X)$ over $\mathbb{Q}$ is the same as that of $h(X)$ since the roots of the former are just the inverses of the roots of the latter. By \cite[Corollary 2.2]{M}, $h(X)$ is irreducible over $\mathbb{Q}$, so $g(X)$ is irreducible over the rationals as well.

Suppose first that $n$ is even. By \cite[Theorem 2.6]{M}, the Galois group of $h(X)$ over $\mathbb{Q}$ is $S_n$, so the same holds for $g(X)$. By the Galois correspondence, $F$ corresponds to a stabilizer of a point in $S_n$ which can be naturally identified with $S_{n-1}$, and is readily seen to be a maximal subgroup of $S_n$. Since $L$ corresponds to a subgroup of $S_n$ containing the aforementioned stabilizer, it follows that either $L = F$ or $L = \mathbb{Q}$. By the arguments preceding \cite[Theorem 2.1]{M}, $h(X)$ has exactly two real roots, so as usual, $g(X)$ inherits this property. Since $n \geq 3$, $g(X)$ has at least one complex root not in $\mathbb{R}$. We infer that $F$ is not totally real, so necessarily $L = \mathbb{Q}$.

Suppose now that $n$ is odd, and set $E \defeq \mathbb{Q}, \ p = 2$. Let $\bar{E}$ be the set of all algebraic numbers in $\mathbb{C}$ (this is an algebraic closure of $E$) and put $K \defeq \bar{E} \cap \mathbb{R}$. Observe that $G_K \cong \mathbb{Z}/2\mathbb{Z}$ which is a pro-$p$ group. By \corref{PrimCor}, $[L : \mathbb{Q}]$ is bounded from above by the number of real roots of $g(X)$ which is $1$ as shown just before \cite[Theorem 2.1]{M}. It follows that $L = \mathbb{Q}$. %%$\blacksquare$
\end{proof}

As we have already mentioned earlier, Theorem \ref{thm: no real subfields} reformulates \corref{NoTrCor}. In this theorem, the field $F$ is realized as the field obtained by adjoining to $\mathbb{Q}$ the unique root of $g(X)$ in $[0,1]$. Clearly, this field is isomorphic over $\mathbb{Q}$ to $\mathbb{Q}[X]/(g(X))$ (the field in \corref{NoTrCor}), so there exists a one to one correspondence between their totally real subfields. 

%\section*{Acknowledgments}

\subsection{Acknowledgments}

We are grateful to Barak Weiss for telling us about the problem of finding the maximal totally real subfields of number fields arising in dynamics. Special thanks go to Patrick Hooper whose computer verification of \corref{NoTrCor} for all $n \leq 1000$ greatly stimulated our work. 
We would also 
like to thank Moshe Jarden for his comments on drafts of this work. The first and second authors were partially supported by the Israel Science Foundation grant no. 952/14. The third author was partially supported by the ERC-Advanced grant “Diophantine problems" (grant agreement no. 267273).

\section{Erratum}
\name{erratum}
Our arguments in \S \ref{sec:  minimal sets} of this paper contain an error. In this note we
explain the error and how to fix it. We are grateful to Florent Ygouf
for both pointing out the mistake, and for indicating the correct
argument included below.
All of the results stated in the introduction of the paper
remain valid, as a consequence of the amended argument which will be
given below. 
In the recent preprint \cite{Y20}, Ygouf proves related results about
existence of dense rel leaves in other loci.  

\subsection{The error}
The second assertion of Proposition \ref{prop: real rel closure} is wrong. Namely, the tangent
space to $\mathcal{O}(x)$ is not a $\Q$-space. This affects the
validity of Lemma \ref{lem:tangent space}: while it is true that $T_r$ and $\mathcal{O}_r$ are $\mathbf{g}$-dimensional, it is not true that $T_r$ is a
$\Q$-subspace. The rationality of $T_r$ is used once in the paper, in
the proof of 
Theorem \ref{thm: crucial}, at the end of the second paragraph of the proof.

\subsection{The area form}
Let $\Theta$ be the $\R$-valued anti-symmetric bilinear form on
$H^1(S, \Sigma; \R)$ defined by 
$\Theta(\beta_1, \beta_2) = \int_S  \beta_1\wedge \beta_2$. This
bilinear form restricts to the intersection form on $H^1(S ; \R)$ and
can also be used to compute the flat area of a surface, namely the
area of the translation surface $x_0$ is $\Theta(\hol_x(x_0),
\hol_y(x_0))$. See \cite[\S
3.3]{FM} for more information. Since $\Theta$ is defined purely in
topological terms,  for any homeomorphism $\varphi: (S, \Sigma) \to (S, \Sigma)$ we have
$\Theta(\varphi^* \beta_1, \varphi^*\beta_2) = \Theta (\beta_1,
\beta_2)$. 

\subsection{Correcting the proof of Theorem \ref{thm: crucial}}
Retain the notation of \S \ref{sec:  minimal sets}. 
We need to justify the claim made in the second paragraph of the proof
of Theorem \ref{thm: crucial}, that $T_r \not \subset P_2$, where $P_1 = \spa
\{(\hol_y(x_0), 0)\}$ and $P_2$ is the subspace of $P$ which is
$\varphi^*$-invariant and complementary to $P_1$.
Since $\Theta(\hol_x(x_0), \hol_y(x_0))$
is equal to the area of the surface $x_0$, it is not 0, and thus
\eq{eq: for direct computation}{P_2 = \ker F
\quad \text{for} \quad 
F:P \to \R, \ \quad F(\beta,0) = \Theta (\hol_x(x_0), \beta).}
%\{(\beta,0) \in P: \Theta (\hol_x(x_0), \beta)=0\}.}
We have $\dim P =
\mathbf{g}+1, \, \dim P_2 = \mathbf{g},$ and by Lemma \ref{lem:tangent space},   $\dim T_r
= \mathbf{g}$. 

Our proof proceeds by contradiction. Suppose $T_r \subset P_2$. Then by dimension considerations, $T_r = P_2$, and in particular,
since $T_r$ is the tangent space to the torus ${\mathcal O}_r \subset {\mathcal H}$, 
$\Psi^{-1}({\mathcal O}_r)$ is a subtorus of $\R^{\mathbf{g}+1}/\Z^{\mathbf{g}+1}$, where $\Psi$ is 
the map defined in \eqref{def:Psi}. Let $\til \Psi$ be the map on p. \pageref{def:Psi}. The tangent space to $\Psi^{-1}({\mathcal O}_r)$
is then 
$D[\til \Psi^{-1}](T_r)$. Since this tangent space is parallel to the subtorus $\Psi^{-1}({\mathcal O}_r)$ of $\R^{\mathbf{g}+1}/\Z^{\mathbf{g}+1}$,
any normal vector to $D[\til \Psi^{-1}](T_r)$ must be proportional to a rational vector.

We claim that a normal vector to $D[\til \Psi^{-1}](T_r)$
is given
by $\left(c_0^2, c_1^2, \ldots, c_{\mathbf{g}}^2\right)$. To see this,
observe that the function $F \circ D \til \Psi$ vanishes on $D[\til
\Psi^{-1}](T_r)$.  
By Proposition \ref{prop:Phi derivative} and \eqref{def:Psi}, we have
$$D \til \Psi (t_0, \ldots, t_{\mathbf{g}}) =\left (t_0 c_0 C_0^\ast +
\ldots + t_\mathbf{g} c_\mathbf{g} C_\mathbf{g}^\ast,0 \right).$$ 
Thus,
$$F \circ D \til \Psi (t_0, \ldots, t_{\mathbf{g}})=
\sum_{i=0}^\mathbf{g} t_i c_i \,\Theta\big(\hol_x (x_0),
C_i^\ast\big).$$
For $r>0$, the surface  $x_r$ is made of $\mathbf{g}+1$ horizontal
cylinders as in the discussion in the first paragraph of \S \ref{sect: twist coordinates}. The
definition of 
$C_i^*$ then implies that $C_i^*(\sigma_j) = \delta_{ij}$ (Kronecker
delta) and $C_i^*$ assigns 0 to all other saddle connections on
boundaries of cylinders. The area of $x_r$ can be computed by adding
the areas of the cylinders, and thus, by the interpretation of $\Theta$
as an area, we see that $\Theta\big(\hol_x (x_r), C_i^\ast\big) =
c_i,$ the horizontal holonomy of the core curve of $C_i$ on
$x_r$. Since the surfaces $x_0$ and $x_r$ assign the same horizontal
holonomy to absolute periods, we also have 
$\Theta\big(\hol_x (x_0), C_i^\ast\big)=c_i,$ and therefore
$$F \circ D \til \Psi (t_0, \ldots, t_{\mathbf{g}})=
\sum_{i=0}^\mathbf{g} t_i c_i^2.$$
Since we have seen that $T_r = \ker F$, this proves the claim. 

We have shown that $\left(c_0^2, c_1^2, \ldots,
  c_{\mathbf{g}}^2\right)$ is proportional to a rational vector.  
On the other hand, by Theorem \ref{thm:completely periodic ray} we have 
$\frac{c_1}{c_0} = \alpha$, and so $\alpha^2 = \frac{c_1^2}{c_0^2} \in
\Q$. But, this contradicts that the fact that $\dim_\Q \Q(\alpha) =
\mathbf{g} \geq 
3.$

\markleft{\uppercase{Hooper and Weiss}}

\end{document}